%
%

\documentclass[12pt,a4paper,twoside,reqno]{amsart}
\title[Gauge theory for string algebroids]{Gauge theory for string algebroids}
\author[M. Garcia-Fernandez]{Mario Garcia-Fernandez}
\author[R. Rubio]{Roberto Rubio}
\author[C. Tipler]{Carl Tipler}

\address{Dep. Matem\'aticas, Universidad Aut\'onoma de Madrid, and Instituto de Ciencias Matem\'aticas (CSIC-UAM-UC3M-UCM), Cantoblanco, 28049 Madrid, Spain}
\email{mario.garcia@icmat.es}

\address{Universitat Aut\`onoma de Barcelona, 08193 Barcelona, Spain}
\email{roberto.rubio@uab.es}

\address{Univ Brest, UMR CNRS 6205, Laboratoire de Math\'ematiques de Bretagne Atlantique, France}
\email{carl.tipler@univ-brest.fr}

\thanks{This project has received funding from the European Union’s Horizon 2020 research and innovation programme under the Marie Sk\l odowska-Curie grant agreement No 750885 GENERALIZED.
M.G.F. was partially supported by the Spanish Ministry of Science and Innovation, through the `Severo Ochoa Programme for Centres of Excellence in R\&D' (SEV-2015-0554 and CEX2019-000904-S), and under grants PID2019-109339GA-C32 and EUR2020-112265.
R.R. was supported by a Marie Sk\l odowska-Curie Individual Fellowship. C.T. was supported by the French government ``Investissements d'Avenir'' programme ANR-11-LABX-0020-01 and ANR project EMARKS No ANR-14-CE25-0010.}


\usepackage{hyperref,url}
\usepackage{amssymb,latexsym}
\usepackage{amsmath,amsthm}
\usepackage{enumerate}
\usepackage[all]{xy} \CompileMatrices
\usepackage{amscd}
\usepackage{leftidx}

\usepackage{slashed} 

\SelectTips{cm}{12} 
\theoremstyle{plain}
\newtheorem{theorem}{Theorem}[section]

\newtheorem{lemma}[theorem]{Lemma}
\newtheorem{corollary}[theorem]{Corollary}
\newtheorem{proposition}[theorem]{Proposition}
\newtheorem{conjecture}[theorem]{Conjecture}
\theoremstyle{definition}
\newtheorem{definition}[theorem]{Definition}
\newtheorem{definition-theorem}[theorem]{Definition-Theorem}
\newtheorem{example}[theorem]{Example}




\newenvironment{customcond}[1]
{\innercustomcond}
{\endinnercustomcond}

\theoremstyle{remark}
\newtheorem{remark}[theorem]{Remark}

\numberwithin{equation}{section} \setcounter{tocdepth}{1}

\setcounter{tocdepth}{1}
\setlength{\oddsidemargin}{25pt} \setlength{\evensidemargin}{25pt}
\setlength{\textwidth}{400pt} \setlength{\textheight}{650pt}
\setlength{\topmargin}{0pt}


\newcommand{\tr}{\operatorname{tr}}
\newcommand{\Id}{\operatorname{Id}}

\newcommand{\End}{\operatorname{End}}

\newcommand{\Ker}{\operatorname{Ker}}
\newcommand{\Img}{\operatorname{Im}}
\newcommand{\ad}{\operatorname{ad}}

\newcommand{\Aut}{\operatorname{Aut}}
\newcommand{\dbar}{\bar{\partial}}

\newcommand{\CC}{{\mathbb C}}

\newcommand{\RR}{{\mathbb R}}

\newcommand{\rk}{\operatorname{rk}}


\newcommand{\surj}{\to\kern-1.8ex\to}

\newcommand{\lra}[1]{\stackrel{#1}{\longrightarrow}}

\newcommand{\cA}{\mathcal{A}}
\newcommand{\cC}{\mathcal{C}}

\newcommand{\cM}{\mathcal{M}}

\newcommand{\cF}{\mathcal{F}}

\newcommand{\cW}{\mathcal{W}}

\newcommand{\cG}{\mathcal{G}}
\newcommand{\cL}{\mathcal{L}}
\newcommand{\cO}{\mathcal{O}}

\newcommand{\cR}{\mathcal{R}}
\newcommand{\cS}{\mathcal{S}}

\newcommand{\Lie}{\operatorname{Lie}}

\newcommand{\cH}{\mathcal{H}} 


\def\om{\omega}
\def\Om{\Omega}

\def\Lie{\mathrm{Lie}}

\def\Im{\mathrm{Im}}
\def\Id{\mathrm{Id}}

\def\cA{\mathcal{A}}

\def\cF{\mathcal{F}}
\def\cG{\mathcal{G}}
\def\cH{\mathcal{H}}

\def\cR{\mathcal{R}}


\newcommand{\st}{\;|\;}

\newcommand{\R}{{\mathbb{R}}}
\newcommand{\C}{{\mathbb{C}}}

\newcommand{\la}{\langle}
\newcommand{\ra}{\rangle}

\newcommand{\E}{\mathrm{E}}

\renewcommand{\Re}{\mathrm{Re}}
\renewcommand{\Im}{\mathrm{Im}}

\begin{document}

\begin{abstract}
We introduce a moment map picture for {\it holomorphic string algebroids} where the Hamiltonian gauge action is described by means of inner automorphisms of Courant algebroids. 
The zero locus of our moment map is given by the solutions of the {\it Calabi system}, a coupled system of equations which provides a unifying framework for the classical Calabi problem and the Hull-Strominger system. Our main results are concerned with the geometry of the moduli space of solutions, and assume a technical condition which is fulfilled in examples. We prove that the moduli space carries a pseudo-K\"ahler metric with K\"ahler potential given by the \emph{dilaton functional}, a topological formula for the metric, and an infinitesimal Donaldson-Uhlenbeck-Yau type theorem. 
\end{abstract}

\maketitle

\tableofcontents

\section{Introduction}
\label{sec:intro}

Back to the work of Atiyah and Bott \cite{AB}, the interaction of 
Yang-Mills theory with symplectic geometry and, in particular, the idea of \emph{moment map}, has had an important impact in our understanding of the moduli theory for holomorphic vector bundles in algebraic geometry. The seed relation between stable bundles on a Riemann surface and flat unitary connections observed in \cite{AB,NS} was largely expanded with the Donaldson-Uhlenbeck-Yau Theorem \cite{Don,UY}. This important result, initially conjectured by Hitchin and Kobayashi, establishes a correspondence between the moduli space of solutions of the Hermite-Yang-Mills equations and the moduli space of slope-stable bundles on a compact K\"ahler manifold. A key upshot is that certain moduli spaces in algebraic geometry, constructed via Mumford's theory of stability, are endowed with natural symplectic structures.

Our main goal in the present work is to explore a new scenario where the `moment map picture' arises, tightly bound up with higher gauge theory. Inspired by the Atiyah and Bott construction, our starting point is a class of holomorphic bundle-like objects on a compact complex manifold $X$, known as \emph{string algebroids} \cite{grt2}. A string algebroid $Q$ is a special class of holomorphic Courant algebroid, which can be thought of as the `higher Atiyah algebroid' of a holomorphic principal bundle for the (complexified) \emph{string group} \cite{Xu}. In the case of our interest, the geometric content of $Q$ comprises, in particular, a holomorphic principal $G$-bundle $P$ over $X$ with vanishing first Pontryagin class
$p_1(P) = 0$
and a holomorphic extension
\begin{equation*}\label{eq:defstringintro}
	\xymatrix{
		0 \ar[r] & T^*X \ar[r] & Q \ar[r] & A_P \ar[r] & 0
	}
\end{equation*}
of the holomorphic Atiyah algebroid $A_P$ of $P$ by the holomorphic cotangent bundle. We assume $G$ to be a complex reductive Lie group with a fixed non-degenerate invariant pairing $\la\, ,\ra$ on its Lie algebra.

In this work we shall study gauge theoretical aspects of holomorphic string algebroids. For this, we start by developing basic aspects of the theory, such as gauge symmetries and a \emph{Chern correspondence} in our setting. A natural complex gauge group is introduced in Section \ref{sec:Morita}, given by \emph{inner symmetries} of a smooth complex string algebroid $E$ (the analogous concept in the smooth category) canonically associated to a string algebroid $Q$ (see Lemma \ref{lemma:bricks}). 
The Chern correspondence (Lemmas \ref{lem:Cherncorr} and \ref{lem:Cherncorrsets}) requires the study of a notion of \emph{compact form} for $Q$ by means of \emph{real string algebroids}
$$
E_\RR \subset E.
$$
A compact form $E_\RR$ determines a reduction $P_h \subset P$ to a maximal compact subgroup $K \subset G$ (see Definition \ref{def:realform}). 
The Chern correspondence associates to each compact form on $Q$ a horizontal subspace 
$$
W \subset E_\RR,
$$
which provides the analogue of the Chern connection in our context. In agreement with structural properties of connections in higher gauge theory \cite{Gastel,SWolf}, 
any such $W \subset E_\RR$ determines the classical Chern connection $\theta^h$ of $P_h \subset P$ and a real $(1,1)$-form $\omega$ satisfying a structure equation (see Proposition \ref{propo:Chernclassic}).


We move on to study the geometry of the infinite-dimensional space of horizontal subspaces $W$ on a fixed compact form $E_\RR$ whose associated $(1,1)$-form $\omega$ is hermitian. Via the Chern correspondence, this space has a (possibly degenerate) pseudo-K\"ahler structure for each choice of a smooth volume form $\mu$ on $X$ and \emph{level} $\ell \in \RR$. There is a global K\"ahler potential given by $- \log$ of the \emph{dilaton functional} $M_\ell$, that is,
\begin{equation}\label{eq:Kintro}
- \log M_\ell := - \log \int_X e^{-\ell f_\omega} \frac{\omega^n}{n!},
\end{equation}
where $f_\omega := \tfrac{1}{2}\log (\frac{\omega^n}{n! \mu})$. In Proposition \ref{prop:mmapCalabil} we prove that there is a natural Hamiltonian action for a subgroup of $\Aut(E)$ preserving the compact form, with zero locus for the moment map given by solutions of the coupled equations
	\begin{equation}\label{eq:Calabilintro}
	\begin{split} 
	F\wedge \omega^{n-1} & = 0, \qquad  \qquad \qquad \;\;\;\, F^{0,2}  = 0,\\
	d (e^{-\ell f_\omega}\omega^{n-1}) & = 0, \qquad  dd^c \om + \la F\wedge F \ra  = 0.
	\end{split}
	\end{equation}
Here $F$ is the curvature of a connection on the principal $K$-bundle underlying $E_\RR$, which is determined by $W \subset E_\RR$. 

The equations \eqref{eq:Calabilintro} were first found in \cite{grst} for $\ell = 1$ in a holomorphic setting, in relation to the critical locus of the dilaton functional $M_1$. By Proposition \ref{prop:mmapCalabil}, they can be regarded as a natural analogue of the Hermite-Yang-Mills equations for string algebroids. Following \cite{grst}, we will refer to \eqref{eq:Calabilintro} as the \emph{Calabi system}. These moment map equations provide a unifying framework for the classical Calabi problem, which is recovered when $K$ is trivial (see Section \ref{sec:mmapHS}), and the Hull-Strominger system \cite{HullTurin,Strom} (see \cite{Fei,GF2,PPZ2} for recent surveys covering this topic). For the latter, we assume that $X$ is a (non-necessarily K\"ahler) Calabi-Yau threefold with holomorphic volume form $\Omega$ and we take $\ell = 1$ and
\begin{equation}\label{eq:muOmega2intro}
\mu = (-1)^{\frac{n(n-1)}{2}}i^n\Omega \wedge \overline{\Omega}.
\end{equation}
To our knowledge, Corollary \ref{cor:mmapHS} provides the first moment map interpretation of the Hull-Strominger system in the mathematics literature (see \cite{Waldram} for an alternative construction in the physics literature). As a matter of fact, this was our original motivation when we initiated the present work.

Our main results, discussed briefly over the next section, are devoted to the geometry of the moduli space of solutions of \eqref{eq:Calabilintro}. Assuming a technical Condition \ref{ConditionA} which is fulfilled in examples (see Section \ref{sec:example}), we shall prove that the moduli space carries a (possibly degenerate) pseudo-K\"ahler metric with K\"ahler potential \eqref{eq:Kintro} (see  Theorem \ref{thm:metric}), a topological formula for the metric (see Theorem \ref{thm:metricfibre}), and an infinitesimal Donaldson-Uhlenbeck-Yau type Theorem (see Theorem \ref{thm:DUYinfinitesimal}). Interestingly, the non-degeneracy of the metric seems to be very sensitive to the level $\ell \in \RR$.

\subsection*{Main results}

Throughout this section we fix a solution $W$ of the Calabi system \eqref{eq:Calabilintro} on a compact form $E_\RR$. Via the Chern correspondence, $W$ determines a string algebroid $Q$ with underlying holomorphic principal $G$-bundle $P$. In addition, $W$ determines two cohomological quantities which play an important role in the present paper, namely, a balanced class and an `Aeppli class of $Q$' 
\begin{equation}\label{eq:balancedAeppliintro}
\mathfrak{b}:= \frac{1}{(n-1)!}[e^{-\ell f_\omega} \omega^{n-1}] \in H^{n-1,n-1}_{BC}(X,\RR), \qquad \mathfrak{a} = [E_\RR] \in \Sigma_A(Q,\RR).
\end{equation}
The space $\Sigma_A(Q,\RR)$ is constructed via Bott-Chern secondary characteristic classes and is affine for a subspace of $H^{1,1}_A(X,\RR)$ (see Equation \eqref{eqannex:partialmap}). 

For the sake of clarity, we will assume throughout this introduction that $G$ is semisimple.
On the other hand, our main results assume Condition \ref{ConditionA}. In a nutshell, this technical condition states that any element in the kernel of the linearization of \eqref{eq:Calabilintro} along the Aeppli class $\mathfrak{a}$ determines an infinitesimal automorphism of $Q$ (see Remark \ref{rem:ConditionAgeometry}). This is very natural, as it typically follows for geometric PDE with a moment map interpretation. In Proposition \ref{prop:deformation} we discuss a class of non-K\"ahler examples of solutions of \eqref{eq:Calabilintro} where Condition \ref{ConditionA} applies, obtained via deformation of a K\"ahler metric.



Our main theorem relies on a
gauge fixing mechanism for infinitesimal variations $(\dot \omega,\dot b,\dot a ) \in \Omega^{1,1}_\RR \oplus \Omega^2 \oplus \Omega^1(P_h)$ of the Calabi system \eqref{eq:Calabilintro}, which requires Condition \ref{ConditionA} (see Proposition \ref{prop:Calabillineargaugefixed}). To state the result, we use the decomposition $\dot \omega = \dot \omega_0 + (\Lambda_\omega \dot \omega) \omega /n$ into primitive and non-primitive parts with respect to the hermitian form $\omega$. Denote by $\cM_\ell$ the moduli space of solutions of the Calabi system (see Section \ref{sec:gaugefixing}). A precise statement is given in Theorem \ref{thm:metric}.

\begin{theorem}\label{thm:metricintro}
Assume Condition \ref{ConditionA}. Then, the tangent space to $\cM_\ell$ at $[W]$ inherits a pseudo-K\"ahler structure with (possibly degenerate) metric
\begin{equation}\label{eq:glWmoduliintro}
	\begin{split}
	g_\ell (\dot \omega, \dot b,\dot a) & {} =  \frac{\ell - 2}{M_\ell} \int_X \la \dot a \wedge J \dot a \ra \wedge e^{-\ell f_\omega} \frac{\omega^{n-1}}{(n-1)!}\\
&  \phantom{ {} = } +  \frac{2 - \ell}{2 M_\ell} \int_X (|\dot \omega_0|^2 + |\dot b_0^{1,1}|^2) e^{-\ell f_\omega} \frac{\omega^{n}}{n!}\\
	& \phantom{ {} = } + \frac{2 - \ell}{2 M_\ell}\Bigg{(}\frac{\ell}{2} - \frac{n-1}{n}\Bigg{)} \int_X (|\Lambda_\omega \dot b|^2 + | \Lambda_\omega \dot \omega|^2) e^{-\ell f_\omega} \frac{\omega^{n}}{n!}\\
	& \phantom{ {} = }  + \left(\frac{2-\ell}{2M_\ell}\right)^2\left(\left(\int_X \Lambda_\omega \dot \omega e^{-\ell f_\omega} \frac{\omega^n}{n!}\right)^2 + 
	 \left(\int_X \Lambda_\omega\dot b e^{-\ell f_\omega} \frac{\omega^n}{n!}\right)^2\right).
	\end{split}
\end{equation}
\end{theorem}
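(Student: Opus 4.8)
The plan is to compute the pseudo-Kähler structure by pulling back the one already available on the infinite-dimensional space of horizontal subspaces $W$ (constructed via the Chern correspondence, with Kähler potential $-\log M_\ell$ from \eqref{eq:Kintro}) to the finite-dimensional moduli $\cM_\ell$, using the gauge-fixing slice provided by Proposition \ref{prop:Calabillineargaugefixed}. So the proof is organized in three stages: first, identify the tangent space $T_{[W]}\cM_\ell$ with the space of gauge-fixed infinitesimal variations $(\dot\omega,\dot b,\dot a)$ solving the linearized Calabi system; second, restrict the ambient pseudo-Kähler metric to this slice; third, carry out the explicit computation that produces the four terms in \eqref{eq:glWmoduliintro}.

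\medskip

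\noindent\textbf{Step 1 (Slice and tangent space).} Using Condition \ref{ConditionA}, Proposition \ref{prop:Calabillineargaugefixed} gives a gauge-fixed representative for each class in $T_{[W]}\cM_\ell$; I would first record the linearized equations satisfied by $(\dot\omega,\dot b,\dot a)$ in the slice — the linearization of $F\wedge\omega^{n-1}=0$, $F^{0,2}=0$, $d(e^{-\ell f_\omega}\omega^{n-1})=0$, and the anomaly/Bianchi constraint $dd^c\omega+\la F\wedge F\ra=0$ — together with the extra gauge-fixing conditions. These constraints are what will let me drop cross-terms and integrate by parts freely later on. I expect the linearized balanced condition $d(e^{-\ell f_\omega}\omega^{n-1})$ varied $=0$ to control $\dot\omega$ (and the variation of $f_\omega$, which itself depends on $\Lambda_\omega\dot\omega$), and the linearized Hermite–Yang–Mills equation to control $\dot a$ through $\Lambda_\omega(\partial\bar\partial)$-type operators.

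\medskip

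\noindent\textbf{Step 2 (Restricting the ambient metric).} The ambient metric is $-\partial\bar\partial$ of $-\log M_\ell$ in the appropriate complex structure on the space of $W$'s; its second variation splits into a ``naive'' Hessian of $\int_X e^{-\ell f_\omega}\omega^n/n!$ and a correction from $(\partial\log M_\ell)^2$, which is precisely the source of the last, rank-one-looking term $\left(\tfrac{2-\ell}{2M_\ell}\right)^2\left(\dots\right)$ in \eqref{eq:glWmoduliintro}. I would compute $\delta^2\!\int_X e^{-\ell f_\omega}\omega^n/n!$ in the slice: the variation of $\omega^n/n!$ contributes $\Lambda_\omega\dot\omega$ terms, the variation of $e^{-\ell f_\omega}$ contributes $-\ell\,\delta f_\omega=-\tfrac{\ell}{2}\Lambda_\omega\dot\omega$ (plus the $\dot a$-dependence of $f_\omega$ hidden in $\mu$ being fixed — here the anomaly equation enters). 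Collecting, the primitive parts $\dot\omega_0,\dot b_0^{1,1}$ assemble into the pointwise norm-squared term with coefficient $\tfrac{2-\ell}{2M_\ell}$, and the traces $\Lambda_\omega\dot\omega,\Lambda_\omega\dot b$ assemble with coefficient $\tfrac{2-\ell}{2M_\ell}(\tfrac{\ell}{2}-\tfrac{n-1}{n})$ — the combinatorial constant $\tfrac{n-1}{n}$ coming from $(\Lambda_\omega\dot\omega)\omega/n$ contracted against $\omega^{n-1}/(n-1)!$. The $\la\dot a\wedge J\dot a\ra\wedge e^{-\ell f_\omega}\omega^{n-1}/(n-1)!$ term with coefficient $\tfrac{\ell-2}{M_\ell}$ is the Atiyah–Bott / Hermite–Yang–Mills piece, arising from the second variation in the bundle directions once one uses $F^{0,2}=0$ and the linearized HYM equation to integrate by parts.

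\medskip

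\noindent\textbf{Main obstacle.} The hard part is tracking the interaction between the three kinds of variations under the anomaly constraint $dd^c\omega+\la F\wedge F\ra=0$: its linearization couples $\dot b$ (the $2$-form direction, i.e.\ the variation of the ``$B$-field'' splitting of $Q$), $\dot\omega$, and $\dot a$, so one cannot vary them independently, and the apparently-absent $\dot b$--$\dot a$ and $\dot\omega$--$\dot b$ cross-terms must be shown to vanish using the gauge-fixing of Proposition \ref{prop:Calabillineargaugefixed} together with integration by parts against $e^{-\ell f_\omega}\omega^{n-1}$ — this is where Condition \ref{ConditionA} and the cohomological hypotheses \eqref{eq:cohomology} (killing $H^{0,1}_A$, $H^{0,2}_{\dbar}$, $H^0(\ad P)$, hence the relevant obstruction/automorphism spaces) do the real work. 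The pseudo-Kähler compatibility (that $g_\ell$ is the real part of a Hermitian form and the complex structure on $T_{[W]}\cM_\ell$ is integrable along the slice) should then follow formally from the fact that we are restricting a Kähler-potential-given structure to a complex submanifold, but I would verify that the gauge-fixing slice is indeed complex, i.e.\ $J$-invariant, which is again where Condition \ref{ConditionA} is used.
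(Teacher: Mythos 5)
Your proposal follows essentially the same route as the paper: identify $T_{[W]}\cM_\ell$ with the gauge-fixed slice of Proposition \ref{prop:Calabillineargaugefixed}, check that the slice is $\mathbf{J}$-invariant, and restrict the ambient pseudo-K\"ahler structure with potential $-\log M_\ell$. The computation you outline in Step 2 is carried out in the paper once and for all on the ambient space $\cW_+$ (Lemma \ref{lemma:lambdaOmegalW}), and the theorem is then the two-line observation that $\mathbf{J}$ preserves the equations \eqref{eq:Calabillineargaugefixed}.

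One correction to your ``main obstacle'': there are no cross-terms to kill, and neither the linearized Calabi system nor Condition \ref{ConditionA} enters the derivation of formula \eqref{eq:glWmoduliintro}. The formula $g_\ell=\Omega_\ell(\cdot,\mathbf{J}\cdot)$ is computed at an arbitrary point $W\in\cW_+$, in the splitting induced by $W$ itself, and the block-diagonal shape of \eqref{eq:glWmoduliintro} is forced purely by the algebraic form of $\Omega_\ell$ in \eqref{eq:OmegalW} together with $\mathbf{J}(\dot\omega,\dot b,\dot a)=(-\dot b^{1,1},\dot\omega+i\dot b^{0,2}-i\overline{\dot b^{0,2}},J\dot a)$ from \eqref{eq:JW}; no integration by parts against the constraints is needed. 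Where Condition \ref{ConditionA} and the cohomological hypotheses actually do the work is earlier: since $g_\ell$ is indefinite, the existence and uniqueness of the gauge-fixed representative rests on the $g_\ell$-orthogonal decomposition \eqref{eq:perp}, which requires the kernel of $\widehat{\mathbf{P}}^*\circ\widehat{\mathbf{P}}$ to vanish (Lemma \ref{lem:gaugefixing}) — this is precisely Condition \ref{ConditionA} — and the cohomological vanishings identify $H^1(S^*)$ with the harmonic space $\cH^1(\widehat S^*)$. If you organize your write-up so that the metric formula is established on $\cW_+$ first and the hypotheses are spent only on constructing the slice and verifying its $\mathbf{J}$-invariance, you will reproduce the paper's proof.
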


Ignoring topological issues, the significance of our main theorem is that the smooth locus of the moduli space $\cM_\ell$
inherits a (possibly degenerate) pseudo-K\"ahler metric $g_\ell$ with K\"ahler potential \eqref{eq:Kintro}. An interesting upshot of our formula for the moduli space metric is that along the `bundle directions', given formally by the first line in formula \eqref{eq:glWmoduliintro}, the metric is conformal to the Atiyah-Bott-Donaldson pseudo-K\"ahler metric on the moduli space of Hermite-Yang-Mills connections with fixed hermitian metric $\omega$ (see \cite{AB,Don,lt}). 
This statement must be handled very carefully, since the hermitian metric $\omega$ in our picture varies in a complicated way from point to point in the moduli space.

Motivated by this observation, in Theorem \ref{thm:metricfibre} we study the structure of the metric \eqref{eq:glWmoduliintro} along the fibres of a natural map from $\cM_\ell$ to the moduli space of holomorphic principal $G$-bundles with $h^{0}(\ad P) = 0$, proving the following formula:
\begin{equation}\label{eq:metricfibreintro}
\begin{split}
g_\ell & = 
\frac{2 - \ell}{2 M_\ell}  \Bigg{(} \frac{2-\ell}{2M_\ell} (\Re \; \dot{\mathfrak{a}} \cdot \mathfrak{b})^2 - \Re \; \dot{\mathfrak{a}} \cdot \Re \; \dot{\mathfrak{b}} + \frac{2-\ell}{2M_\ell} (\Im \; \dot{\mathfrak{a}} \cdot \mathfrak{b})^2 - \Im \; \dot{\mathfrak{a}} \cdot\Im \; \dot{\mathfrak{b}} \Bigg{)}.
\end{split}
\end{equation}
Here, $\dot{\mathfrak{b}}\in H_{BC}^{n-1,n-1}(X)$ and $\dot{\mathfrak{a}} \in H^{1,1}_A(X)$ are `complexified variations' of the Bott-Chern class and the Aeppli class of the solution in \eqref{eq:balancedAeppliintro}, obtained via gauge fixing (see Lemma \ref{lem:Calabillinearfibre}). Formula \eqref{eq:metricfibreintro} shows that the moduli space metric \eqref{eq:glWmoduliintro} is `semi-topological', in the sense that fibre-wise it can be expressed in terms of classical cohomological quantities.

When the structure group $K$ is trivial, $X$ is a K\"ahler Calabi-Yau threefold, we take the volume form as in \eqref{eq:muOmega2intro} and $\ell = 1$, equation \eqref{eq:metricfibreintro} matches Strominger's formula for the special K\"ahler metric on the `complexified K\"ahler moduli' of $X$ \cite[Eq. (4.1)]{CD}. As a consequence of our Theorem \ref{thm:metricintro}, this classical moduli space is recovered, along with its special K\"ahler metric, by pseudo-K\"ahler reduction in Corollary \ref{cor:Calabimoduli}. As an application of \eqref{eq:metricfibreintro}, in Section \ref{sec:example} we show that any stable vector bundle $V$ over $X$ satisfying
$$
c_1(V) = 0, \qquad c_2(V) = c_2(X)
$$
determines a deformation of the special K\"ahler geometry of the complexified K\"ahler moduli of $X$ to an explicit family of pseudo-K\"ahler metrics (see Example \ref{example:CY}).

On a (non-necessarily K\"ahler) Calabi-Yau threefold $(X,\Omega)$, \eqref{eq:Calabilintro} is equivalent to the Hull-Strominger system \cite{HullTurin,Strom} provided that $\ell = 1$ and we take $\mu$ as in \eqref{eq:muOmega2intro}. For this interesting case, the physics of string theory predicts that the fibre-wise moduli metric \eqref{eq:metricfibreintro} should be positive definite (see Conjecture \ref{conj:Kahlerpotential0}). This way, we obtain a physical prediction relating the variations of the Aeppli classes and balanced classes of solutions.

\begin{conjecture}\label{conj:theorem2intro}
If $(X,P)$ admits a solution of the Hull-Strominger system, then \eqref{eq:metricfibreintro} is positive definite. In particular, the variations of the Aeppli and balanced classes of nearby solutions must satisfy
\begin{equation}\label{eq:ineqmoduliintro}
\Re \; \dot{\mathfrak{a}} \cdot \Re \; \dot{\mathfrak{b}} < \frac{1}{2\int_X \|\Omega\|_\omega \frac{\omega^3}{6}} (\Re \; \dot{\mathfrak{a}} \cdot \mathfrak{b})^2.
\end{equation}
\end{conjecture}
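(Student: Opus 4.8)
The plan is to derive the positive‑definiteness of the fibre metric~\eqref{eq:metricfibreintro} at $\ell=1$ from its identification with the moduli metric predicted by the low‑energy effective supergravity of the corresponding heterotic background, where positivity is built in by unitarity; inequality~\eqref{eq:ineqmoduliintro} then follows formally. The first step is to specialize Theorem~\ref{thm:metricfibre} to $\ell=1$ and to the volume form~\eqref{eq:muOmega2intro}, for which $f_\omega = -\log\|\Omega\|_\omega$ and $M_1 = \int_X \|\Omega\|_\omega\,\tfrac{\omega^3}{6}$, so that~\eqref{eq:metricfibreintro} becomes
\begin{equation*}
g_1 = \frac{1}{2M_1}\left( \frac{1}{2M_1}\big( (\Re\,\dot{\mathfrak{a}}\cdot\mathfrak{b})^2 + (\Im\,\dot{\mathfrak{a}}\cdot\mathfrak{b})^2 \big) - \Re\,\dot{\mathfrak{a}}\cdot\Re\,\dot{\mathfrak{b}} - \Im\,\dot{\mathfrak{a}}\cdot\Im\,\dot{\mathfrak{b}} \right).
\end{equation*}
Granting $g_1>0$, restricting this quadratic form to the gauge‑fixed variations of Lemma~\ref{lem:Calabillinearfibre} with $\Im\,\dot{\mathfrak{a}}=0$ gives $\tfrac{1}{2M_1}(\Re\,\dot{\mathfrak{a}}\cdot\mathfrak{b})^2 > \Re\,\dot{\mathfrak{a}}\cdot\Re\,\dot{\mathfrak{b}}$, which is exactly~\eqref{eq:ineqmoduliintro}. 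So the task reduces to establishing positive‑definiteness of $g_1$ along the fibre.

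For that, the route I would take is the one formalized in Conjecture~\ref{conj:Kahlerpotential0} and justified heuristically in Appendix~\ref{sec:physics2}: match $-\log M_1$ with the K\"ahler potential governing the kinetic terms of the massless four‑dimensional scalars of the $\mathcal{N}=1$ compactification on $(X,\Omega)$ with bundle $P$. Concretely, one would (i) identify the gauge‑fixed deformations $(\dot\omega,\dot b,\dot a)$ of Proposition~\ref{prop:Calabillineargaugefixed} with the physical moduli fields entering the effective action; (ii) check that fibre‑wise the Weil--Petersson‑type pairing produced by the potential~\eqref{eq:Kintro} coincides with $g_1$ --- this is precisely what formula~\eqref{eq:metricfibreintro} achieves, reducing the comparison to the classical Bott--Chern/Aeppli pairings $\dot{\mathfrak{a}}\cdot\mathfrak{b}$ and $\dot{\mathfrak{a}}\cdot\dot{\mathfrak{b}}$; and (iii) invoke positivity of the physical metric. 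Condition~\ref{ConditionA} is used here to guarantee that $\cM_1$ is smooth at $[W]$ and that the linearized Calabi system has no kernel beyond the infinitesimal automorphisms of $Q$, so that the field content on the two sides matches.

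A more self‑contained, analytic alternative would be to prove~\eqref{eq:ineqmoduliintro} directly, using the structure equations at $\ell=1$ --- the conformally balanced equation $d(\|\Omega\|_\omega\,\omega^2)=0$ and the Bianchi identity $dd^c\omega + \langle F\wedge F\rangle = 0$ --- together with the gauge‑fixing of Lemma~\ref{lem:Calabillinearfibre}, to represent $\Re\,\dot{\mathfrak{a}}\cdot\Re\,\dot{\mathfrak{b}}$ as the integral of a manifestly $L^2$‑type density against $\|\Omega\|_\omega\,\tfrac{\omega^3}{6}$ and then to estimate it by Cauchy--Schwarz against $(\Re\,\dot{\mathfrak{a}}\cdot\mathfrak{b})^2$, with $\mathfrak{b}$ supplying the reference length. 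The main obstacle, common to both approaches, is the definiteness itself: the moment‑map/K\"ahler‑reduction machinery behind Theorems~\ref{thm:metricintro}--\ref{thm:metricfibre} only yields a pseudo‑K\"ahler metric of a priori indefinite signature --- indeed, as stressed in the introduction, the signature is very sensitive to $\ell$ --- so the sign of $g_1$ on the fibre cannot be read off from the general theory and must come from an input specific to the value $\ell=1$ and to the existence of a solution. At present we know of no purely analytic estimate or convexity property of the dilaton functional $M_1$ along families of solutions in the non‑K\"ahler setting that would supply it, which is why the statement is recorded as a conjecture rather than a theorem.
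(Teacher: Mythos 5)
Your proposal matches the paper's own reasoning: the inequality \eqref{eq:ineqmoduliintro} is obtained exactly as you describe, by specializing Theorem \ref{thm:metricfibre} to $\ell=1$ with $\mu$ as in \eqref{eq:muOmega2intro} (so $M_1=\int_X\|\Omega\|_\omega\frac{\omega^3}{6}$) and granting positive-definiteness of the fibre metric, which the paper does not prove but instead derives from the physical Conjecture \ref{conj:Kahlerpotential0} via the gravitino-mass/superpotential argument of Appendix \ref{sec:physics2}. You also correctly locate the genuine gap --- the sign of $g_1$ cannot be extracted from the pseudo-K\"ahler reduction machinery --- which is precisely why the statement is recorded as a conjecture.
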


Formula \eqref{eq:ineqmoduliintro} provides a potential obstruction to the existence of solutions of the Hull-Strominger system around a given a solution. 
We expect this phenomenon to be related to some obstruction to the global existence, which goes beyond the slope stability of the bundle and the balanced property of the manifold (cf. \cite{Yau2}). It would be interesting to obtain a physical explanation for the inequality \eqref{eq:ineqmoduliintro}. 

Our last result can be regarded as an infinitesimal Donaldson-Uhlenbeck-Yau type theorem, relating the moduli space of solutions of the Calabi system with a Teichm\"uller space for string algebroids (see Section \ref{sec:DUY}). A precise formulation can be found in Theorem \ref{thm:DUYinfinitesimal}.

\begin{theorem}\label{thm:DUYinfinitesimalintro}
Assume Condition \ref{ConditionA} and $h^{0,1}_A(X) = h^{0}(\ad P) = 0$. Then, the tangent space to the moduli space $\cM_\ell$ at $[W]$ is canonically isomorphic to the tangent space to the Teichm\"uller space for string algebroids at $[Q]$.
\end{theorem}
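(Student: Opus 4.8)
The plan is to identify both tangent spaces with the same concrete vector space of gauge-fixed infinitesimal variations and check that the identification is the natural one. On the moduli side, we already have a description of $T_{[W]}\cM_\ell$: by the gauge-fixing mechanism of Proposition \ref{prop:Calabillineargaugefixed} (which uses Condition \ref{ConditionA}), every class of infinitesimal deformations of a solution of the Calabi system \eqref{eq:Calabilintro} has a unique representative $(\dot\omega,\dot b,\dot a)\in\Omega^{1,1}_\RR\oplus\Omega^2\oplus\Omega^1(P_h)$ lying in the kernel of the linearized equations and satisfying the gauge-fixing (slice) conditions, modulo the residual symmetries coming from infinitesimal automorphisms of $Q$ (which by Condition \ref{ConditionA} exactly account for the kernel of the linearization along $\mathfrak a$). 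First I would spell out this slice description precisely and record that the residual symmetries are $H^0(\ad P)$ together with the holomorphic part of the infinitesimal Morita-Picard action; under the cohomological hypotheses \eqref{eq:cohomology}, in particular $h^0(\ad P)=0$, this residual piece is controlled purely by the complex-analytic data.

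Next I would describe the Teichm\"uller space for string algebroids of Section \ref{sec:DUY}. Its tangent at $[Q]$ is the space of infinitesimal deformations of the holomorphic string algebroid $Q$ — equivalently, of the triple consisting of the complex structure on $X$ (here fixed), the holomorphic principal $G$-bundle $P$, and the holomorphic extension class defining $Q$ — modulo infinitesimal automorphisms of $Q$. Concretely this is computed by a deformation complex whose first cohomology assembles: deformations of the $\dbar$-operator on $P$ (i.e. $H^1(\ad P)$, using $H^{0,2}_{\dbar}(X)=0$ to see these are unobstructed at the infinitesimal level), together with the deformations of the Courant-algebroid extension, which are governed by the Aeppli-type space $\Sigma_A(Q,\RR)$ and its model $H^{1,1}_A(X,\RR)$ appearing in Proposition \ref{prop:ApPicA} — with $h^{0,1}_A(X)=0$ ensuring these deformations of the extension are again infinitesimally unobstructed and that the automorphism group acts with the expected kernel. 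The key point is that these are precisely the \emph{holomorphic} counterparts of the real analytic data $(\dot\omega,\dot b,\dot a)$: the Chern correspondence (Lemmas \ref{lem:Cherncorr}, \ref{lem:Cherncorrsets}) turns a compact form plus horizontal subspace $W$ into the holomorphic object $Q$, and its linearization sends the gauge-fixed variation $\dot a$ to a Dolbeault representative of the deformation of $\dbar_P$ and the pair $(\dot\omega,\dot b)$ to the deformation of the extension class.

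The core of the argument is then to show that the linearized Chern correspondence, restricted to the slice of Proposition \ref{prop:Calabillineargaugefixed}, is an isomorphism onto the deformation space of $Q$. For this I would: (i) show the map is well-defined on the slice, i.e. a gauge-fixed solution of the linearized Calabi system maps to an \emph{integrable} (i.e. genuinely $\dbar$-closed) infinitesimal deformation of $Q$ — this is where the equations $F^{0,2}=0$ and $dd^c\omega+\la F\wedge F\ra=0$ linearize to the closedness/cocycle conditions in the deformation complex of the string algebroid; (ii) show injectivity, i.e. that a gauge-fixed variation inducing a trivial deformation of $Q$ must come from an infinitesimal automorphism of $Q$, which by Condition \ref{ConditionA} lies in the part we have already quotiented out — here the cohomological vanishings \eqref{eq:cohomology} remove the remaining would-be kernel (the $(0,1)$-type automorphisms and the $H^0(\ad P)$ piece); (iii) show surjectivity, by producing, for each holomorphic deformation of $(P,Q)$, a compact-form representative — i.e. using the unobstructedness from $h^{0,2}_{\dbar}=0$ and $h^{0,1}_A=0$ to choose a hermitian representative and then applying the inverse Chern correspondence, followed by the gauge-fixing projection of Proposition \ref{prop:Calabillineargaugefixed} to land in the slice.

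The main obstacle I expect is step (iii), surjectivity, and more precisely the interplay between the \emph{linearized Calabi equations} and the \emph{deformation complex of $Q$}: a priori the moduli tangent space is cut out by four linearized equations (two of Hermite-Yang-Mills type, the balanced condition, and the anomaly/Bianchi condition), whereas the Teichm\"uller tangent space only remembers the holomorphic data, so I must show that the two Hermite-Yang-Mills equations together with the balanced equation do not over-cut — i.e. that imposing them picks out exactly one representative in each holomorphic deformation class rather than a proper subspace. This is the infinitesimal shadow of a Donaldson-Uhlenbeck-Yau statement and is exactly the place where Condition \ref{ConditionA} is indispensable: it guarantees that the "extra" kernel of the linearization along $\mathfrak a$ is automorphic, so that after gauge fixing the count matches. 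I would handle this by a dimension/Hodge-theoretic bookkeeping argument, writing both sides as the first cohomology of quasi-isomorphic elliptic complexes (the gauge-fixed Calabi complex on one side, the string-algebroid deformation complex on the other) and exhibiting the linearized Chern correspondence as an explicit quasi-isomorphism, with \eqref{eq:cohomology} ensuring the auxiliary cohomology groups that could spoil the match all vanish.
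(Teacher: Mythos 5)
Your overall architecture coincides with the paper's: both tangent spaces are computed as the first cohomology of elliptic complexes (the gauge-fixed Calabi complex $\widehat S^*$ and the string-algebroid deformation complex $\widehat C^*$, each related to the corresponding ``true'' complex $S^*$, $C^*$ by the same $\Ker \partial$ correction term, which the hypotheses \eqref{eq:cohomology} kill), and the identification is the linearized Chern correspondence $\Upsilon(\dot\omega,\dot b,\dot a)=(\dot b^{1,1+0,2}-i\dot\omega,\dot a^{0,1})$. Your steps (i) and (ii) are carried out in the paper essentially as you describe.

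The gap is in step (iii), and it is precisely the step you flag as the main obstacle. The projection of Proposition \ref{prop:Calabillineargaugefixed} is $g_\ell$-orthogonal only to $\Im\,\widehat{\mathbf{P}}$, the \emph{real} infinitesimal gauge orbit; applying it to an arbitrary element of $\Upsilon^{-1}(\Ker \mathbf{L}^c)$ does not produce a solution of the linearized Calabi system, because the two Hermite--Yang--Mills-type equations and the linearized balanced equation are not consequences of $\mathbf{L}^c v=0$. What makes the count come out right is a moment-map identity that you do not invoke: since $g_\ell=\Omega_\ell(\cdot,\mathbf{J}\cdot)$ and $\widehat{\mathbf{P}}^*$ is the $g_\ell$-adjoint of $\widehat{\mathbf{P}}$ (Lemma \ref{lem:P*}), the linearized equations $F\wedge\omega^{n-1}=0$ and $d(e^{-\ell f_\omega}\omega^{n-1})=0$ are exactly $\widehat{\mathbf{P}}^*\mathbf{J}v=0$, i.e.\ $g_\ell$-orthogonality to $\mathbf{J}\,\Im\,\widehat{\mathbf{P}}$. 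Hence the gauge-fixed harmonic space is $(\Im\,\widehat{\mathbf{P}}\oplus\mathbf{J}\,\Im\,\widehat{\mathbf{P}})^{\perp_{g_\ell}}\cap\Upsilon^{-1}(\Ker\mathbf{L}^c)$, and surjectivity requires projecting onto the complement of the full \emph{complexified} orbit. The paper does this with the composite $\Pi^c=-\mathbf{J}(\Id-\Pi)\mathbf{J}(\Id-\Pi)$, whose well-definedness and bijectivity on cohomology use the equivariance of $\mu_\ell$ together with $\mu_\ell(W)=0$ (so that $\Omega_\ell$ vanishes on pairs of infinitesimal gauge directions) and the transversality $\Im\,\widehat{\mathbf{P}}\cap\mathbf{J}\,\Im\,\widehat{\mathbf{P}}=\{0\}$, which is where $h^{0,1}_A(X)=0$ and $h^{0}(\ad P)=0$ enter. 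Condition \ref{ConditionA} alone only supplies the orthogonal decomposition relative to $\Im\,\widehat{\mathbf{P}}$ despite the indefiniteness of $g_\ell$; attributing the entire matching to it, as your sketch does, leaves the essential K\"ahler-reduction linear algebra unproved.
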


This strongly suggests that---if we shift our perspective and consider the Calabi system as equations for a compact form on a fixed Bott-Chern algebroid $Q$ along a fixed Aeppli class---the existence of solutions should be related to a stability condition in the sense of Geometric Invariant Theory. This was essentially the point of view taken in \cite{grst}. The precise relation with stability in our context is still unclear, as the balanced class $\mathfrak{b} \in H^{n-1,n-1}_{BC}(X,\RR)$ of the solution varies in the moduli space $\cM_\ell$. The conjectural stability condition which characterizes the existence of solutions should be related to the \emph{integral of the moment map}, given by the dilaton functional $M_\ell$. We speculate that there is a relation between this new form of stability and the conjectural inequality \eqref{eq:ineqmoduliintro}. The global structure of the moduli space $\cM_\ell$ is also a mistery to us. An important insight for future studies of this structure might be provided by the moduli space metric in our Theorem \ref{thm:metricintro}.

The moduli space of solutions of the Hull-Strominger system has been an active topic of research over the last years, both in the mathematics and physics literature (see \cite{AGS,AOMSS,BeckerTsengYau,OssaSvanes,grt} and references therein). 
Recent advances on the moduli metric for heterotic flux compactifications including bundles, as described by the Hull-Strominger system, can be found in \cite{AQS,CDM,GLM,McSisca}. In particular, our formula for the K\"ahler potential \eqref{eq:K0} can be compared with \cite[Eq. (1.3)]{CDM}, where a construction of the metric was provided. While we were finishing the present work, an alternative moment map interpretation of the Hull-Strominger system using spinorial methods has been independently obtained in the physics literature \cite{Waldram}. There is indeed a match between their formula for the moduli K\"ahler potential and our formula \eqref{eq:K0}.  We also expect a relation between our gauge theory picture for string algebroids and the generalization of the holomorphic Casson invariant proposed in \cite{AOMSS}. 



The paper is organized as follows. In Section 2 we recall the definition of string algebroids and their classification, we introduce a way to describe them in terms of smooth data by using liftings, and give some explicit models. In Section 3 we discuss reductions of complex string algebroids and define the complex gauge group for the theory. Section 4 is devoted to Bott-Chern algebroids and the Chern correspondence. In Section 5, we give the moment map picture for the Calabi system. Finally, in Section 6 we describe, provided the technical hypothesis, a pseudo-K\"ahler metric on the space of solutions. Appendix A deals with the relation between compact forms of a complex string algebroids and the complex gauge group, whereas Appendix B offers an explanation to the variation of complexified Aeppli classes in the pseudo-K\"ahler metric.

\textbf{Acknowledgments:} The authors would like to thank Luis \'Alvarez-C\'onsul, Vestislav Apostolov, Jean-Michel Bismut, Marco Gualtieri, Nigel Hitchin, Fernando Marchesano, Jock McOrist, Carlos Shahbazi and Martin Ziegler for helpful conversations.

\section{String algebroids and liftings}

\subsection{Holomorphic string algebroids}\label{sec:string}

Let $X$ be a complex manifold of dimension $n$. We denote by $\cO_X$ and $\underline{\CC}$ the sheaves of holomorphic functions and $\CC$-valued locally constant functions on $X$, respectively. A holomorphic Courant algebroid $(Q,\la\, , \ra,[\,,],\pi)$ over $X$ consists of a holomorphic vector bundle $Q \to X$, with sheaf of sections denoted also by $Q$, together with a holomorphic non-degenerate symmetric bilinear form $\la\, ,\ra$, 
a holomorphic vector bundle morphism $\pi:Q\to TX$, and a homomorphism of sheaves of $\underline{\CC}$-modules
$$
[\, ,] \colon Q \otimes_{\underline{\CC}} Q \to Q, 
$$
satisfying natural axioms, 
for sections $u,v,w$ of $Q$ and $\phi\in \cO_X$,
\begin{itemize}
	\item[(D1):] $[u,[v,w]] = [[u,v],w] + [v,[u,w]]$,
	\item[(D2):] $\pi([u,v])=[\pi(u),\pi(v)]$,
	\item[(D3):] $[u,\phi v] = \pi(u)(\phi) v + \phi[u,v]$,
	\item[(D4):] $\pi(u)\la v, w \ra = \la [u,v], w \ra + \la v,
	[u,w] \ra$,
	\item[(D5):] $[u,v]+[v,u]=\mathcal{D}\la u, v\ra$,
  \end{itemize}
where $\mathcal{D} \colon \mathcal{O}_X \to Q$ denotes the composition of the exterior differential, the natural map $\pi^* \colon T^*X \to Q^*$, and the isomorphism $Q^* \to Q$ provided by $\la\cdot,\cdot\ra$.

Given a holomorphic Courant algebroid $Q$ over $X$ with surjective anchor map $\pi$, there is an associated holomorphic Lie algebroid 
$$
A_Q := Q/(\Ker \pi)^\perp.
$$ 
Furthermore, the holomorphic subbundle
$$
\ad_Q := \Ker \pi/(\Ker \pi)^\perp \subset A_Q
$$
inherits the structure of a holomorphic bundle of quadratic Lie algebras.

Let $G$ be a 
complex Lie group with Lie algebra $\mathfrak{g}$, and consider an invariant non-degenerate pairing $\la\, , \ra : \mathfrak{g} \otimes \mathfrak{g} \to \mathbb{C}$. 
Let $p \colon P \to X$ be a holomorphic principal $G$-bundle over $X$.
Consider the holomorphic Atiyah Lie algebroid $A_{P} := TP/G$ of $P$, with anchor map $dp \colon A_P \to TX$ and bracket induced by the Lie bracket on $TP$. The holomorphic bundle of Lie algebras 
$
\Ker dp = \ad P \subset A_{P}
$
fits into the short exact sequence of holomorphic Lie algebroids
$$
0 \to \ad P \to A_{P} \to TX \to 0.
$$

\begin{definition}\label{def:Courant}
	A \emph{string algebroid} is a tuple $(Q,P,\rho)$, where $P$ is a holomorphic principal $G$-bundle over $X$, $Q$ is a holomorphic Courant algebroid over $X$, and $\rho$ is a bracket-preserving morphism inducing a short exact sequence
	\begin{equation}\label{eq:defstring}
	\xymatrix{
		0 \ar[r] & T^*X \ar[r] & Q \ar[r]^\rho & A_P \ar[r] & 0,
	}
	\end{equation}
	such that the induced map of holomorphic Lie algebroids $\rho \colon A_Q \to A_P$ is an isomorphism restricting to an isomorphism $\ad_Q \cong (\ad P,\la\,,\ra)$.
\end{definition}

We are interested in the classification of these objects up to isomorphism, as given in the following definition.

\begin{definition}[\cite{grt2}]\label{def:stringholCourmor}
	A morphism from $(Q,P,\rho)$ to $(Q',P',\rho')$ is a pair $(\varphi,g)$, where $\varphi \colon Q \to Q'$ is a morphism of holomorphic Courant algebroids and $g \colon P \to P'$ is a homomorphism of holomorphic principal bundles covering the identity on $X$, such that the following diagram is commutative.
	\begin{equation*}\label{eq:defstringiso}
	\xymatrix{
		0 \ar[r] & T^*X \ar[r] \ar[d]^{id} & Q \ar[r]^\rho \ar[d]^{\varphi} & A_P \ar[r] \ar[d]^{dg} & 0,\\
		0 \ar[r] & T^*X \ar[r] & Q' \ar[r]^{\rho'} & A_{P'} \ar[r] & 0.
	}
	\end{equation*}
	We say that $(Q,P,\rho)$ is isomorphic to $(Q',P',\rho')$ if there exists a morphism $(\varphi,g)$ such that $\varphi$ and $g$ are isomorphisms.
\end{definition}

To recall the basic classification result that we need, we introduce now some notation which will be used in the rest of the paper. Given a holomorphic principal $G$-bundle $P$ over $X$, denote by $\mathcal{A}_{P}$ the space of connections $\theta$ on the underlying smooth bundle $\underline{P}$ whose curvature $F_\theta$ satisfies $F_\theta^{0,2} = 0$ and whose $(0,1)$-part induces $P$. Given $\theta \in \mathcal{A}_{P}$, the Chern-Simons three-form $CS(\theta)$ is a $G$-invariant complex differential form of degree three on the total space of $\underline{P}$ defined by
\begin{equation*}\label{eq:CS}
CS(\theta) = -  \frac{1}{6}\la \theta\wedge [\theta,\theta]\ra  + \la F_\theta \wedge \theta\ra \in \Omega^3_\CC(\underline{P}),
\end{equation*}
which satisfies 
\begin{equation*}\label{eq:dCS}
d CS(\theta) = \la F_\theta \wedge F_\theta \ra.
\end{equation*}

\begin{proposition}[\cite{grt2}, Prop. 2.9]\label{lemma:deRhamC}
	The isomorphism classes of string algebroids are in one-to-one correspondence  with the set
	\begin{equation*}\label{eq:lescEind3}
	H^1(\cS) = \{(P,H,\theta): (H,\theta) \in \Omega^{3,0} \oplus \Omega^{2,1} \times \cA_{P} \; | \; dH + \la F_\theta \wedge F_\theta \ra = 0\}/ \sim,
	\end{equation*}
	where $(P,H,\theta)\sim (P',H',\theta')$ if there exists an isomorphism $g \colon P \to P'$ of holomorphic principal $G$-bundles and, for some $B\in \Omega^{2,0}$,
	\begin{equation}\label{eq:anomaly}
	H'  = H + CS(g\theta) - CS(\theta') - d\la g\theta \wedge \theta'\ra + dB.
	\end{equation}
\end{proposition}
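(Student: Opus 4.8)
The plan is to realize the correspondence by explicit constructions in both directions: from a string algebroid one produces a triple $(P,H,\theta)$ by choosing a connection $\theta\in\cA_P$ together with an isotropic splitting of $\rho$, and conversely one reconstructs a string algebroid from such a triple; then one shows that the only ambiguity in the choices is precisely the equivalence relation $\sim$.

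First I would go from string algebroids to the set on the right. Starting from $(Q,P,\rho)$, I would fix $\theta\in\cA_P$; this is possible since $\cA_P$ is a non-empty affine space modelled on $\Omega^{1,0}(X,\ad P)$, the condition $F_\theta^{0,2}=0$ being automatic for any connection whose $(0,1)$-part induces the holomorphic structure of $P$. Such a $\theta$ determines a $C^\infty$-splitting $A_P\cong TX\oplus\ad P$ compatible with the $\dbar$-operator of $A_P$ and with the $\dbar$-operator on $\ad P$ twisted by $\theta^{0,1}$. Choosing in addition an isotropic $C^\infty$-splitting $\sigma$ of $\rho$ identifies the underlying smooth bundle $Q\cong T^*X\oplus\ad P\oplus TX$, with the standard pairing $\la\xi+r+x,\xi'+r'+x'\ra=\tfrac12(\xi(x')+\xi'(x))+\la r,r'\ra$ and with anchor the projection onto $TX$. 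The Courant bracket is then encoded in this frame by $\theta$, $F_\theta$, and the complex $3$-form $H$ defined by $H(x,y,z)=\la[\sigma x,\sigma y],\sigma z\ra$ on horizontal lifts of vector fields. I would then argue that compatibility of the bracket, and of the holomorphic structure of $Q$, with $\dbar$ and $\theta^{0,1}$ kills the $(1,2)$- and $(0,3)$-components of $H$, so $H\in\Omega^{3,0}\oplus\Omega^{2,1}$, while axioms (D1) and (D5) together become $dH+\la F_\theta\wedge F_\theta\ra=0$. This produces a point $[(P,H,\theta)]$, whose independence of $\theta$ and $\sigma$ is addressed in the last step.

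Next I would build the inverse map. Given $(P,H,\theta)$ with $H$ of the prescribed type and $dH+\la F_\theta\wedge F_\theta\ra=0$, I would take $Q$ to be the holomorphic bundle obtained from the smooth bundle $T^*X\oplus\ad P\oplus TX$ by twisting the $\dbar$-operator with $\theta$ and $H$, equip it with the pairing and anchor above, let $\rho$ be the projection onto $A_P\cong TX\oplus\ad P$, and define the Courant bracket by the standard formula for a transitive Courant algebroid determined by the connection $\theta$, its curvature $F_\theta$, and the $3$-form $H$. Axioms (D2)--(D5) would hold formally; axiom (D1) is equivalent to $dH+\la F_\theta\wedge F_\theta\ra=0$ (the ``anomaly''/Bianchi computation), and the type restriction on $H$ is exactly what makes the bracket and pairing holomorphic. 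Then $\rho$ is bracket-preserving, induces an isomorphism $A_Q\cong A_P$, and restricts to an isomorphism $\ad_Q\cong(\ad P,\la\,,\ra)$, so $(Q,P,\rho)$ is a string algebroid; and I would check that this assignment is inverse to the previous one.

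Finally I would match the ambiguities with $\sim$. Changing the isotropic splitting with $\theta$ fixed shifts $H$ by $dB$ for some $B\in\Omega^{2,0}$, the type being forced by holomorphicity. Changing $\theta$ to $\theta'=\theta+a$ inside $\cA_P$ (so $a$ of type $(1,0)$, valued in $\ad P$) shifts $H$ by $CS(\theta)-CS(\theta')-d\la\theta\wedge\theta'\ra$ up to a $(2,0)$-exact term absorbed into $B$, using the standard transformation law of the Chern--Simons form and $dCS(\theta)=\la F_\theta\wedge F_\theta\ra$. An isomorphism $g\colon P\to P'$ of holomorphic principal $G$-bundles identifies the frame built from $\theta$ on $P$ with the one built from $g\theta$ on $P'$, producing exactly the transformation \eqref{eq:anomaly} when compared with a frame built from $\theta'$ on $P'$. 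Conversely, any isomorphism $(\varphi,g)\colon(Q,P,\rho)\to(Q',P',\rho')$ is the identity on $T^*X$ and covers $g$, and transporting the frame of $Q$ through $\varphi$ shows the two triples are $\sim$-equivalent; hence the two assignments are mutually inverse bijections. The hard part will be the holomorphic bookkeeping --- making precise the sense in which $\theta\in\cA_P$ is compatible with the holomorphic structure of $Q$, and tracing how this pins down the types $(3,0)+(2,1)$ of $H$ and $(2,0)$ of $B$ --- together with the lengthy but routine verification that, in the chosen frame, all the Courant axioms collapse to the single equation $dH+\la F_\theta\wedge F_\theta\ra=0$ and that the resulting bracket and pairing are genuinely holomorphic rather than merely smooth.
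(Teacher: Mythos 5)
This proposition is quoted from \cite{grt2} and the paper gives no proof of it, so there is no in-paper argument to compare you against; the remark immediately following the statement indicates that the proof in the reference realizes the isomorphism classes as the first \v{C}ech cohomology of a sheaf $\cS$, i.e.\ it is a local-to-global sheaf argument, whereas you work with a single global frame. Your route is the natural direct one and is consistent with the explicit model of Definition \ref{def:Q0}: a choice of $\theta\in\cA_P$ and of a smooth isotropic splitting identifies $Q$ with $T^{1,0}X\oplus\ad P\oplus(T^{1,0}X)^*$, the Courant and holomorphic structures are then encoded by $(H,\theta)$, the axioms collapse to $dH+\la F_\theta\wedge F_\theta\ra=0$, and the residual choices produce exactly \eqref{eq:anomaly}. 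What the sheaf route buys is that every verification is local, where holomorphic splittings exist; what your route buys is the explicit normal form that the rest of the paper actually uses.

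There is, however, one genuine gap in the first step. The bracket of a holomorphic Courant algebroid is a homomorphism of sheaves of $\underline{\CC}$-vector spaces defined only on \emph{holomorphic} sections, so the formula $H(x,y,z)=\la[\sigma x,\sigma y],\sigma z\ra$ on smooth horizontal lifts of smooth vector fields is not literally meaningful: $\sigma x$ is in general not holomorphic, and no extension of $[\,,]$ to $C^\infty$ sections is given. You must either (a) work on open sets with local holomorphic frames, check the resulting expression is tensorial modulo the derivative terms already absorbed into $\theta$ and the Dolbeault operator, and patch --- which is essentially the $H^1(\cS)$ argument you were trying to bypass --- or (b) first pass to the smooth complex string algebroid $E_Q$ of Lemma \ref{lem:red-for-all-Q}, whose bracket is defined on all smooth sections, classify there via Proposition \ref{lemma:deRhamCsmooth}, and then use Lemma \ref{lemma:liftings} and Proposition \ref{prop:QLexp} to descend and pin down the types $(3,0)+(2,1)$ of $H$ and $(2,0)$ of $B$; this is precisely the machinery the paper sets up and is the cleanest repair. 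A smaller imprecision: the anomaly equation is not carried by (D1) and (D5) alone --- (D5) is automatic in the model, and the $(2,2)$-component $\dbar H^{2,1}+\la F_\theta^{1,1}\wedge F_\theta^{1,1}\ra=0$ of the equation is the integrability $\dbar_0^2=0$ of the holomorphic structure of $Q$, which must be tracked separately from the Jacobi identity.
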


\begin{remark}
	The notation $H^1(\cS)$ comes from the fact that the isomorphism classes can be understood as the first cohomology of a certain sheaf $\cS$ (see \cite[Sec. 3.1]{grt2} for more details). Implicitly, we shall use a smooth version of this sheaf (and its first cohomology) in Proposition \ref{lemma:deRhamCsmooth}.
\end{remark}

\begin{remark}\label{rem:CSexp}
Recall that given a pair of connections $\theta$, $\theta'$ on a smooth principal $G$-bundle $\underline P$ over $X$, there is an equality (see e.g. \cite{grt2})
	$$
	CS(\theta') - CS(\theta) - d\la \theta' \wedge \theta\ra = 2 \la a,F_\theta\ra + \la a, d^\theta a \ra + \frac{1}{3} \la a, [a,a] \ra \in \Omega^3_\CC
	$$
	where $a = \theta' - \theta$ is a smooth $1$-form with values in the adjoint bundle of $\underline P$. 
	By an abuse of notation, we omit the pullback of the right-hand side to the total space of $\underline P$.
\end{remark}

\subsection{Liftings}

Our next goal is to understand string algebroids in terms of smooth data. For this, we will extend the \emph{lifting plus reduction} method introduced in \cite{G2}. Our construction can be regarded as an 
analogue of the well-known construction of holomorphic vector bundles in terms of Dolbeault operators.

Let $X$ be a complex manifold. We denote by $\underline{X}$ the underlying smooth manifold. A smooth complex Courant algebroid $(E,\la\, , \ra,[\,,],\pi)$ over $X$ consists of a smooth complex vector bundle $E \to X$ together with a smooth non-degenerate symmetric bilinear form $\la\, ,\ra$, a smooth vector bundle morphism $\pi:E\to T\underline X\otimes \C$ and a bracket $[\, ,]$ on smooth sections 
satisfying the same axioms (D1)-(D5) as a holomorphic Courant algebroid (see Section \ref{sec:string}), replacing $\cO_X$ by the sheaf of smooth $\mathbb{C}$-valued functions on $X$.

We fix the data $G$, $\mathfrak{g}$, $\la\,,\ra$ as in the previous section. Let $\underline{P}$ be a smooth principal $G$-bundle over $X$ with
vanishing first Pontryagin class
$$
p_1(\underline{P}) = 0 \in H^4(X,\CC).
$$
We consider the Atiyah Lie algebroid $A_{\underline P}$, fitting into the short exact sequence of smooth complex Lie algebroids
\begin{equation}\label{eq:Atiyahseq}
0 \to \ad \underline{P} \to A_{\underline{P}} \to T\underline{X} \otimes \CC \to 0,
\end{equation}
where $T\underline{X} \otimes \CC$ denotes the complexified smooth tangent bundle of $X$. Recall that $A_{\underline P}$ is defined as the quotient of the complexification of the real Atiyah algebroid of $\underline{P}$, regarded as a principal bundle with real structure group, by the ideal $\ad P^{0,1}$, whereby $\ad \underline{P} \cong \ad P^{1,0}$ in \eqref{eq:Atiyahseq}.

\begin{definition}\label{def:Courantcx}
	A \emph{complex string algebroid} is a tuple $(E,\underline P,\rho_c)$, where $\underline P$ is a smooth principal $G$-bundle over $X$, the bundle $E$ is a smooth complex Courant algebroid over $X$, and $\rho_c$ is a bracket-preserving morphism inducing a short exact sequence
	\begin{equation*}\label{eq:defstringcomplex}
	\xymatrix{
		0 \ar[r] & T^*\underline X \otimes \CC \ar[r] & E \ar[r]^{\rho_c} & A_{\underline P} \ar[r] & 0,
	}
	\end{equation*}
	such that the induced map of complex Lie algebroids $\rho_c \colon A_E \to A_{\underline P}$ is an isomorphism restricting to an isomorphism $\ad_E \cong (\ad \underline P,\la\, ,\ra)$.
\end{definition}

Here, the notion of morphism is analogous to Definition \ref{def:stringholCourmor}, and it is therefore omitted. The basic device to produce a string algebroid out of a complex string algebroid is provided by the following definition.

\begin{definition}\label{def:lifting}
	Given $(E,\underline{P},\rho_c)$ a complex string algebroid, a lifting of $T^{0,1} X$ to $E$ is an isotropic, involutive subbundle $L \subset E$ mapping isomorphically to $T^{0,1} X$ under $\pi \colon E \to T\underline{X} \otimes \CC$.
\end{definition}

Our next result shows how to obtain a string algebroid from any lifting $L \subset E$. Given a smooth vector bundle $Q$ over X, we will denote by $\Gamma(Q)$ the corresponding vector space of global sections.

\begin{proposition}\label{prop:QL}
	Let $(E,\underline{P},\rho_c)$ be a complex string algebroid. Then, a lifting $L \subset E$ of $T^{0,1} X$ determines a string algebroid $(Q_L,P_L,\rho_L)$, with
	\begin{equation*}\label{eq:QL}
	Q_L = L^\perp / L
	\end{equation*}
	where $L^\perp$ denotes the orthogonal complement of $L \subset E$.
\end{proposition}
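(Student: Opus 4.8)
The plan is to construct $Q_L = L^\perp/L$ as a holomorphic Courant algebroid together with its principal bundle $P_L$ and morphism $\rho_L$, checking the axioms in turn. First I would observe that since $L$ is isotropic, $L \subset L^\perp$, so the quotient $Q_L = L^\perp/L$ is a well-defined smooth complex vector bundle; the rank drops by $2\dim_\CC X$, which matches the expected rank of a string algebroid (the fibre of $E$ has dimension $2n + \dim A_{\underline P} = 2n + (n + \dim \fg)$, and removing $L$ and passing to $L^\perp/L$ leaves $2n + \dim\fg$, correct for $T^*X \oplus \ad P \oplus TX$). The pairing $\la\,,\ra$ on $E$ descends to a non-degenerate symmetric pairing on $Q_L$ precisely because we quotient the isotropic $L$ by its own orthogonal: non-degeneracy follows from the standard linear algebra fact that $L^\perp/L$ carries an induced non-degenerate form when $L$ is isotropic. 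The bracket $[\,,]$ descends to $L^\perp$ because $L$ is involutive and isotropic (so $[L^\perp, L^\perp] \subset L^\perp$ and $[L^\perp, L] \subset L$, using axioms (D1), (D4), (D5)), hence to the quotient $Q_L$; likewise the anchor $\pi$ restricted to $L^\perp$ has image a coisotropic subbundle of $T\underline X \otimes \CC$ and descends.

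The key new point is the \emph{holomorphic} structure. Since $L$ maps isomorphically to $T^{0,1}X$ under $\pi$, the anchor identifies $L \cong T^{0,1}X$, and the involutivity of $L$ together with the Courant bracket endows sections of $Q_L$ with a flat $L$-connection (a $\dbar$-operator): for $\lambda \in L$ and $q \in Q_L$ represented by $\tilde q \in L^\perp$, set $\dbar_{\lambda} q := [\lambda, \tilde q] \bmod L$. This is well-defined (independent of the lift $\tilde q$ by isotropy of $L$ and (D3)/(D4)), and the Jacobi identity (D1) gives flatness, i.e. $\dbar^2 = 0$, so by the Koszul--Malgrange / Newlander--Nirenberg type argument this defines a holomorphic structure on $Q_L$. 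One then checks that $\la\,,\ra$, $[\,,]$ and $\pi$ are holomorphic for this structure, again using the Courant axioms — the pairing is holomorphic by (D4), the anchor by (D2), and the bracket restricts to a holomorphic bracket on the holomorphic sections.

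For the principal bundle: the composite $\ad \underline P \hookrightarrow A_{\underline P} \xrightarrow{\rho_c^{-1}} \ad_E \subset E$ lands in $L^\perp$ (it is orthogonal to $L$ since $L$ projects isomorphically to the tangent directions and $\ad$ is orthogonal to those), giving $\ad P_L := \ad\underline P$ inside $Q_L$; the induced $\dbar$-operator above restricts to a $\dbar$-operator on $\ad\underline P$, equivalently (after the usual identification) a connection $\theta$ of type $(0,1)$ on $\underline P$, hence a holomorphic principal $G$-bundle $P_L$ with underlying smooth bundle $\underline P$. The map $\rho_c$ descends: $\rho_c(L^\perp) \subset A_{\underline P}$ is the preimage of the coisotropic image, and modding out $L$ (which maps to the $T^{0,1}$-part, killed in $A_{\underline P}$ since $A_{\underline P}$ was defined by quotienting $\ad P^{0,1}$ — here one uses precisely the definition of $A_{\underline P}$ recalled before Definition \ref{def:Courantcx}) yields $\rho_L \colon Q_L \to A_{P_L}$ with kernel the image of $T^*\underline X \otimes \CC \cap L^\perp$. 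Identifying this kernel with $T^*X$ and verifying the short exact sequence \eqref{eq:defstring} and that $\rho_L$ restricts to an isomorphism $\ad_{Q_L} \cong (\ad P_L, \la\,,\ra)$ finishes the construction.

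The main obstacle I expect is not any single axiom but the verification that the descended data are genuinely \emph{holomorphic} and mutually compatible — i.e. that the $\dbar$-operator built from $[\lambda, -]$ is integrable, is compatible with the descended pairing, bracket and anchor, and that the resulting holomorphic bracket satisfies all of (D1)--(D5) with the holomorphic (rather than smooth $\underline\CC$-linear) sheaf structure. This is where the isotropy and involutivity of $L$, and the precise definition of $A_{\underline P}$ as a quotient by $\ad P^{0,1}$, must all be used in concert; everything else is a routine, if somewhat lengthy, diagram chase in linear algebra over the base.
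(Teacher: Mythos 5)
Your overall architecture --- the quotient $L^\perp/L$, the Dolbeault operator $\dbar^L_V[e]=[s(V),\tilde e]\bmod L$, and the holomorphic principal bundle structure on $\underline P$ induced from the lifting --- is the same as the paper's, which delegates the Courant part of the reduction to \cite{BuCaGu,G2} and then writes down exactly this $\dbar$-operator. However, one step of your plan is concretely false: the claims $[L^\perp,L^\perp]\subset L^\perp$ and $[L^\perp,L]\subset L$, which you use to descend the bracket to all smooth sections of $L^\perp/L$. Neither holds. Take the simplest complex string algebroid with trivial structure group, $E=(T\underline X\oplus T^*\underline X)\otimes\CC$ with the standard Dorfman bracket and $L=T^{0,1}X$; then $L^\perp=(T\underline X\otimes\CC)\oplus (T^{1,0}X)^*$, and $[\,\bar z\,\partial_z,\,dz\,]=d(i_{\bar z\partial_z}dz)=d\bar z\notin L^\perp$, while $[\,f\partial_z,\,\partial_{\bar z}\,]=-(\partial_{\bar z}f)\,\partial_z$ lies in $L^\perp$ but not in $L$. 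What the axioms (D4), (D5) and involutivity of $L$ actually give is only $[L,L]\subset L$ and $[L,L^\perp]\subset L^\perp$ (hence also $[L^\perp,L]\subset L^\perp$): enough to make the Dolbeault operator well defined on every smooth section of $L^\perp/L$, but not enough to descend the bracket at the smooth level.

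The repair is to reverse the order of operations in your plan: first define $\dbar^L$ on all of $L^\perp/L$ (well defined by the inclusions above, and square-zero by (D1) plus involutivity), and only then define the bracket on the sheaf of \emph{holomorphic} sections $\ker\dbar^L$ --- which is all that the definition of a holomorphic Courant algebroid asks for, since $[\,,]$ is a morphism of sheaves of holomorphic sections. There the descent works: if $\dbar^L[e_1]=0$ then $[e_1,\ell]=-[\ell,e_1]\in L$ for every $\ell\in\Gamma(L)$, so (D4) gives $\la[e_1,e_2],\ell\ra=-\la e_2,[e_1,\ell]\ra=0$, i.e.\ $[e_1,e_2]\in\Gamma(L^\perp)$, and the same identity shows independence of the lifts modulo $L$. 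With this correction the remainder of your outline (non-degeneracy of the induced pairing, holomorphicity of pairing and anchor, and the holomorphic bundle $P_L$ obtained from the restriction of $\dbar^L$ to $\ad\underline P$ --- equivalently, as in the paper, from the involutive image of $\rho_{c|L^\perp}$ in $A_{\underline P}$) goes through as you describe.
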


\begin{proof}
We will follow closely \cite[App. A]{G2}. The reduction of $E$ by $L$, defined as the quotient $Q_L=L^\perp/L$, inherits a non-degenerate pairing. Then, $Q_L$ inherits a structure of holomorphic vector bundle given by the Dolbeault operator
$$
\dbar^L_{V} u = [s(V),\tilde u] \qquad
\operatorname{mod} \;\; L,
$$
where $V \in \Gamma(T^{0,1}X)$, $\tilde u \in \Gamma(L^\perp)$ is any lift of $u \in \Gamma(Q_L)$ to $L^\perp$, and $s = \pi_{|L}^{-1} \colon T^{0,1}X \to L$. By construction, the surjective (anchor) map
\begin{align*}
		\pi_{Q_L} \colon Q_L & \to T^{1,0}X \cong TX \\ [u] & \mapsto \pi(u),
\end{align*}
for $\pi$ the anchor map of $E$, is holomorphic. Finally, on the sheaf of holomorphic sections of $Q_L$, we define a bracket via
$$
[u,v]_{Q_L} = [\tilde u,\tilde v] \qquad
\operatorname{mod} \;\; L.
$$
Using that $\dbar^L u  = \dbar^L v = 0$ and $[L,L] \subset L$, combined with the axioms of Courant algebroid for $E$, it is not difficult to see that $(Q_L,[,]_{Q_L},\pi_{Q_L})$ satisfies the axioms of a transitive holomorphic Courant algebroid over $X$. 

To endow $Q_L$ with the structure of a string algebroid, we note that the image of $\rho_{L} := \rho_{E|L^\perp}$ is an involutive subbundle of $A_{\underline{P}}$. This determines uniquely a $G$-invariant (integrable) almost complex structure on $\underline{P}$, such that $T^{1,0}P/G = \Im \; \rho_L$ and the induced map $A_{Q_L} \to T^{1,0}P/G$ is an isomorphism of holomorphic Lie algebroids.
\end{proof}

In the following lemma we observe that every string algebroid comes from reduction.

\begin{lemma}\label{lem:red-for-all-Q}
Let $(Q,P,\rho)$ be a string algebroid.

\begin{enumerate}[i)]

\item There is a structure of complex string algebroid with lifting on
\begin{equation*}\label{eq:EQ}
E_Q=Q\oplus T^{0,1} X \oplus (T^{0,1}X)^*, \qquad \qquad L=T^{0,1} X,
\end{equation*}
such that, for any $e,q \in \Gamma(Q)$, $V,W \in \Gamma(T^{0,1}X)$, $\xi,\eta \in \Omega^{0,1}$, the anchor map, the pairing, the bracket, and the bracket-preserving map are given respectively by
\begin{equation*}\label{eq:EQstructure}
\begin{split}
\pi(e + V + \xi) & := \pi(e) + V,\\
\la e + V + \xi, e + V + \xi\ra & := \la e,e\ra + \xi(V),\\
[e + V + \xi,q + W + \eta]  & := [e,q] + \dbar^{Q}_V q - \dbar^{Q}_W e + 2 \dbar \langle e,q \rangle - 2 \langle e,\dbar^{Q} q \rangle\\
& {} + L_{\pi(e)}\eta - i_{\pi(q)}d\xi + [V,W] + L_V \eta - i_W d\xi,\\
\rho(e + V + \xi) & := \rho(e) + \theta^{0,1}V,
\end{split}
\end{equation*}
where $\theta^{0,1}$ denotes the partial connection on $\underline{P}$ determined by the holomorphic principal bundle $P$ and $\pi$, $\la\,,\ra$, $[\,,]$ and $\rho$ in the right-hand side refer to the natural extensions to smooth sections of $Q$ (cf. Definition \ref{def:Q0}).
\item The reduced string algebroid $Q_L$ is canonically isomorphic to $Q$ via the map induced by the natural projection $L^\perp = Q \oplus T^{0,1} X \to Q$.

\end{enumerate}

\end{lemma}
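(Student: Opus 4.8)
The plan is to verify the two claims by direct construction, using Lemma \ref{lem:red-for-all-Q} as a smooth analogue of the standard description of a transitive Courant algebroid plus the standard $T^{0,1}X \oplus (T^{0,1}X)^*$ "doubling" trick. For part (i), I would first check that the given data $(\pi,\la\,,\ra,[\,,],\rho)$ on $E_Q = Q\oplus T^{0,1}X\oplus (T^{0,1}X)^*$ satisfy axioms (D1)--(D5). Most of this is bookkeeping: the pairing is clearly non-degenerate and symmetric; the anchor is $\CC$-linear and surjective since $\pi_Q$ is; the Leibniz rule (D3) and the compatibility (D4), (D5) follow by splitting sections into their three components and using that these same axioms hold separately for the holomorphic Courant algebroid $Q$ (with its Dolbeault operator $\dbar^Q$ viewed as a partial connection) and for the standard Courant bracket on $T^{0,1}X\oplus(T^{0,1}X)^*$. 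The Jacobi-type identity (D1) is the only one requiring care: the cross terms mixing $\dbar^Q_V q$ with $L_V\eta - i_W d\xi$ must cancel, and this is exactly where one uses that $\dbar^Q$ is a flat partial connection (i.e.\ $Q$ is a holomorphic bundle) together with the compatibility of $\dbar^Q$ with the bracket of $Q$ encoded in axioms (D1)--(D4) for $Q$ itself. Then I would exhibit $L = T^{0,1}X$ (embedded as $0\oplus T^{0,1}X\oplus 0$) as isotropic and involutive — isotropy is immediate from the formula for the pairing, involutivity from $[V,W]\in\Gamma(T^{0,1}X)$ — and mapping isomorphically onto $T^{0,1}X$ under $\pi$, so that it is a lifting in the sense of Definition \ref{def:lifting}. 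Finally, that $\rho_c := \rho$ makes $E_Q$ into a complex string algebroid amounts to checking the short exact sequence $0\to T^*\underline X\otimes\CC\to E_Q\to A_{\underline P}\to 0$: here $T^*\underline X\otimes\CC = T^{*1,0}X\oplus (T^{0,1}X)^*$ sits inside $E_Q$ as $(\Ker\pi_Q)^\perp\oplus 0\oplus(T^{0,1}X)^*$, using that $Q$ is a string algebroid so $(\Ker\pi_Q)^\perp\cong T^{*1,0}X$, and $A_{\underline P} = A_Q\oplus T^{0,1}X$ with the complex structure on $\underline P$ the one given by the holomorphic structure of $P$.

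For part (ii), I would compute $L^\perp$ for $L = T^{0,1}X$: using the pairing, $L^\perp = Q\oplus T^{0,1}X\oplus 0 = Q\oplus T^{0,1}X$, so $Q_L = L^\perp/L \cong Q$ as smooth vector bundles via the projection onto the first factor. It then remains to check that this isomorphism respects all the structure: the anchor, pairing, bracket, holomorphic (Dolbeault) structure, and the bracket-preserving map $\rho$. The pairing and anchor match by inspection of the formulas in (i). For the holomorphic structure, one uses the formula for $\dbar^L$ from Proposition \ref{prop:QL}: taking $s = \pi_{|L}^{-1}\colon V\mapsto (0,V,0)$ and lifting $[q]\in\Gamma(Q_L)$ to $\tilde q = (q,0,0)\in\Gamma(L^\perp)$, the bracket formula in (i) gives $[s(V),\tilde q] = \dbar^Q_V q \mod L$, so the induced Dolbeault operator on $Q_L$ is precisely $\dbar^Q$ under the identification $Q_L\cong Q$. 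Similarly $\rho_L = \rho_{E|L^\perp}$ restricted to $Q\subset L^\perp$ is the original $\rho$, and the induced complex structure on $\underline P$ is the holomorphic one, so $(Q_L,P_L,\rho_L)\cong(Q,P,\rho)$ as string algebroids.

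I expect the main obstacle to be the verification of the Jacobi identity (D1) for the bracket on $E_Q$ — in particular making sure the mixed terms involving $\dbar^Q$, the Lie derivative $L_V$, and $i_Wd$ on $\Omega^{0,1}$ combine correctly. This is a computation essentially identical to the one in \cite[App. A]{G2} and \cite[Thm. 3.3]{BuCaGu} for the reduction/extension of Courant algebroids, so I would organize it by reducing to the separate known cases: the bracket on $Q$ (an honest holomorphic Courant algebroid bracket, hence satisfying (D1)--(D5)), the standard Dorfman bracket on $T^{0,1}X\oplus(T^{0,1}X)^*$ (satisfying (D1)--(D5)), and the compatibility terms between them, which reduce to the statement that $\dbar^Q$ is a flat partial connection along $T^{0,1}X$ compatible with the bracket and pairing of $Q$ — equivalently, that $Q$ is holomorphic and $\rho$, $\pi_Q$ are holomorphic. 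The rest of the proof is then routine identification of structures under the two natural projections.
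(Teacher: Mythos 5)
Your proposal is correct and follows essentially the same route as the paper: the authors also treat part (i) as a direct (``laborious but straightforward'') verification of axioms (D1)--(D5), and your organization of the check via the matched pair of $Q$ with the standard Courant structure on $T^{0,1}X\oplus(T^{0,1}X)^*$ is precisely what their Remark \ref{rem:matchedpair} points to. Part (ii) is likewise handled in the paper exactly as you do, by computing $L^\perp=Q\oplus T^{0,1}X$ and invoking Proposition \ref{prop:QL}.
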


\begin{proof}
A direct proof of $i)$ follows by a laborious but straightforward check using the axioms in Definition \ref{def:Courant} and it is omitted (see Remark \ref{rem:matchedpair} and Remark \ref{rem:red-for-all-Q} below for an alternative, shorter proof). Part $ii)$ follows easily from Proposition \ref{prop:QL}.
\end{proof}

\begin{remark}\label{rem:matchedpair}
	The construction of $E_Q$ in Lemma \ref{lem:red-for-all-Q} boils down to the fact that $Q$ forms a \emph{matched pair} with the standard Courant structure on $T^{0,1} X \oplus (T^{0,1} X)^*$ (cf. \cite{GrSt,G2}).
\end{remark}

To finish this section, we recall the classification of complex string algebroids. Given a smooth principal $G$-bundle, we denote by $\cA_{\underline{P}}$ the space of connections on $\underline{P}$. 

\begin{proposition}[{\cite[App. A]{grt2}}]\label{lemma:deRhamCsmooth}
	The isomorphism classes of complex string algebroids are in one-to-one correspondence  with the set
	\begin{equation}\label{eq:lescEind3smooth}
	H^1(\underline \cS) = \{(\underline P,H_c,\theta_c): (H_c,\theta_c) \in \Omega^3_\CC \times \cA_{\underline P} \; | \; dH_c + \la F_{\theta_c} \wedge F_{\theta_c} \ra = 0\}/ \sim,
	\end{equation}
	where $(\underline P,H_c,\theta_c) \sim (\underline P',H'_c,\theta'_c)$ if there exists an isomorphism $g \colon \underline P \to \underline P'$ of smooth principal $G$-bundles and \eqref{eq:anomaly} is satisfied for some $B\in \Omega^2_\CC$.
\end{proposition}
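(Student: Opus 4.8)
The plan is to mimic the proof of the holomorphic classification (Proposition \ref{lemma:deRhamC}, i.e. \cite[Prop.~2.8]{grt2}) but in the smooth category, where everything becomes simpler because there is no integrability constraint to impose on the Dolbeault side. First I would fix the underlying smooth principal $G$-bundle $\underline{P}$ (with $p_1(\underline P)=0$) and show that, given a complex string algebroid $(E,\underline P,\rho_c)$, a choice of isotropic splitting of the exact sequence $0 \to T^*\underline X \otimes \CC \to E \to A_{\underline P} \to 0$ together with a choice of connection $\theta_c \in \cA_{\underline P}$ (equivalently, an isotropic splitting of the Atiyah sequence \eqref{eq:Atiyahseq}) trivializes $E$ as $T^*\underline X \otimes \CC \oplus \ad \underline P \oplus T\underline X \otimes \CC$. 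Transporting the Courant bracket through this trivialization, axioms (D1)--(D5) force it to be the standard twisted form: the only freedom is a closed $\CC$-valued $3$-form twist, and compatibility of the bracket with $\rho_c$ and with the quadratic structure on $\ad \underline P$ produces exactly the constraint $dH_c + \la F_{\theta_c}\wedge F_{\theta_c}\ra = 0$ (the $\la F \wedge F\ra$ term appears because $CS(\theta_c)$ is the natural primitive that must be absorbed). This is the content of \cite[Thm.~3.3]{BuCaGu} adapted to the $G$-equivariant/transitive setting, so I would cite that and \cite{G2} rather than redo it.

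Next I would analyze how the pair $(H_c,\theta_c)$ changes under the two kinds of choices made. Changing the isotropic splitting of $E$ by a $2$-form $B \in \Omega^2_\CC$ shifts $H_c \mapsto H_c + dB$. Changing the connection $\theta_c \mapsto \theta_c' = \theta_c + a$ with $a \in \Omega^1(\ad \underline P)$ shifts $H_c$ by the Chern--Simons transgression term, which by Remark \ref{rem:CSexp} is $CS(\theta_c') - CS(\theta_c) - d\la \theta_c' \wedge \theta_c\ra$ up to an exact form that can be reabsorbed into $B$; finally applying a bundle automorphism $g \colon \underline P \to \underline P'$ accounts for the $CS(g\theta)$ versus $CS(\theta')$ asymmetry. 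Assembling these three moves gives precisely the equivalence relation \eqref{eq:anomaly} with $B \in \Omega^2_\CC$, establishing a well-defined map from isomorphism classes of complex string algebroids to $H^1(\underline \cS)$.

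For surjectivity I would run the construction backwards: given $(\underline P, H_c, \theta_c)$ satisfying $dH_c + \la F_{\theta_c}\wedge F_{\theta_c}\ra = 0$, define $E := T^*\underline X \otimes \CC \oplus \ad \underline P \oplus T\underline X \otimes \CC$ with the standard pairing, the twisted-by-$H_c$ Courant bracket built from $\theta_c$ (the analogue of the bracket in \cite[Example]{G2}), anchor the projection to $T\underline X \otimes \CC$, and $\rho_c$ the projection to $A_{\underline P}$ determined by $\theta_c$; the cocycle condition is exactly what makes (D1) hold. Injectivity follows because an isomorphism of complex string algebroids intertwining the structures must, after adjusting splittings, be realized by some $(g,B)$, hence the two representatives are $\sim$-equivalent. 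The main obstacle I anticipate is bookkeeping: carefully tracking how the $\ad \underline P$-component of the splitting, the connection, and the $3$-form twist interact under simultaneous changes, so that the resulting relation lands on the nose as \eqref{eq:anomaly} rather than some cohomologous variant — this is where the $-d\la g\theta \wedge \theta'\ra$ cross-term and the identification $\ad \underline P \cong \ad P^{1,0}$ have to be handled with care. Since this is entirely parallel to the holomorphic case treated in \cite{grt2}, I would present it tersely, emphasizing only the points where the absence of the $F^{0,2}=0$ condition and of the holomorphicity of $\theta$ genuinely simplifies matters.
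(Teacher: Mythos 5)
Your proposal is correct and follows essentially the same route as the source: the paper itself does not reprove this statement but defers to \cite[App.~A]{grt2}, and the surrounding material (the explicit model $E_0$ of Definition \ref{def:E0}, where the Jacobi identity is observed to be equivalent to the four-form constraint, together with Remark \ref{rem:CSexp} for the Chern--Simons transgression) confirms that the intended argument is exactly the splitting-plus-connection trivialization you describe. The only small imprecision is that the constraint $dH_c + \la F_{\theta_c}\wedge F_{\theta_c}\ra = 0$ arises from the Jacobi axiom (D1) for the twisted bracket rather than from compatibility with $\rho_c$ or the quadratic structure on $\ad \underline P$ --- as you yourself correctly state in the surjectivity step.
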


\subsection{Explicit models}

We describe now concrete models for string algebroids, either in the holomorphic or smooth categories, which will be used throughout the paper. We refer to \cite[Prop. 2.4]{grt2} for the fact that the model in the next definition satisfies the axioms in Definition \ref{def:Courant}.

\begin{definition}\label{def:Q0}
	For any triple $(P, H,\theta)$ as in Proposition \ref{lemma:deRhamC}, we denote by 
	\begin{equation*}\label{eq:Qexpb}
	Q_0=T^{1,0}X\oplus\ad P\oplus (T^{1,0} X)^*
	\end{equation*}
	the string algebroid with Dolbeault operator
	$$\dbar_0 (V + r + \xi)  = \dbar V + i_V F_\theta^{1,1} + \dbar^\theta r + \dbar \xi + i_{V}H^{2,1} + 2\la F_\theta^{1,1}, r\ra,$$ non-degenerate symmetric bilinear form, or pairing,
	$$\langle V + r + \xi , V + r + \xi \rangle_0  = \xi(V) + \la r, r\ra,$$
	bracket on holomorphic sections defined by
	\begin{equation*}
	\begin{split}
	[V+ r + \xi,W + t + \eta]_0   = {} & [V,W] - F^{2,0}_\theta(V,W) + \partial^\theta_V t - \partial^\theta_W r - [r,t]\\
	& {} + i_V \partial \eta + \partial (\eta(V)) - i_W\partial \xi + i_Vi_W H^{3,0},\\
	& {} + 2\la \partial^\theta r, t\ra + 2\la i_V F_\theta^{2,0}, t\ra - 2\la i_W F_\theta^{2,0}, r\ra,	
	\end{split}
	\end{equation*}
	anchor map $\pi_0(V+ r + \xi) = V$, and bracket-preserving map $\rho_0(V+ r + \xi) = V + r,$
	where we use the connection $\theta$ to identify $A_P \cong T^{1,0}X\oplus\ad P$.
\end{definition}

We turn next to the case of complex string algebroids. Since this case has not been considered previously in the literature, we give a few more details of the construction. Given a triple $(\underline P, H_c,\theta_c)$ as in Proposition \ref{lemma:deRhamCsmooth}, we can associate a complex string algebroid as follows: consider the smooth complex vector bundle
\begin{equation*}\label{eq:Eexp}
E_0 = (T\underline{X} \otimes \CC) \oplus \ad \underline{P} \oplus  (T^*\underline{X} \otimes \CC)
\end{equation*}
with the $\CC$-valued pairing 
\begin{equation}\label{eq:pairing3}
\la V + r + \xi, V + r + \xi\ra = \xi(V) + \la r,r \ra
\end{equation}
and anchor map $\pi(V + r + \xi)= V$. Endowed with the bracket
\begin{equation}\label{eq:bracket3}
\begin{split}
[V+ r + \xi,W + t + \eta]  = {} & [V,W] - F_{\theta_c}(V,W) + d^{\theta_c}_V t - d^{\theta_c}_W r - [r,t]\\
& + L_V \eta - i_W d\xi + i_Vi_W H_c\\
& + 2\la d^{\theta_c} r,t\ra  + 2\la i_V F_{\theta_c},t \ra - 2\la i_W F_{\theta_c},r\ra,
\end{split}
\end{equation}
the bundle $E_0$ becomes a smooth complex Courant algebroid (the Jacobi identity for the bracket is equivalent to the four-form equation \eqref{eq:lescEind3smooth} in Proposition \ref{lemma:deRhamCsmooth}). The connection $\theta_c$ gives a splitting of the Atiyah sequence \eqref{eq:Atiyahseq}, so that $A_{\underline P} \cong (T\underline{X} \otimes \CC) \oplus \ad \underline{P}$, and in this splitting the Lie bracket on sections of $A_{\underline P}$ is
$$
[V+ r,W + t]  = [V,W] - F_{\theta_c}(V,W) + d^{\theta_c}_V t - d^{\theta_c}_W r - [r,t].
$$
Then, one can readily check that 
\begin{equation}\label{eq:rho0}
\rho_0(V+ r + \xi) = V + r
\end{equation} 
defines a structure of complex string algebroid $(\underline P, E_0,\rho)$, in the sense of Definition \ref{def:Courantcx}, where we again use $\theta_c$ to identify $A_{\underline P} \cong (T\underline{X} \otimes \CC) \oplus \ad \underline{P}$.

\begin{definition}\label{def:E0}
	For any triple $(\underline P, H_c,\theta_c)$ as in \eqref{eq:lescEind3smooth}, we denote by 
	\begin{equation*}\label{eq:Eexpb}
	E_0 = (T\underline{X} \otimes \CC) \oplus \ad \underline{P} \oplus  (T^*\underline{X} \otimes \CC)
	\end{equation*} the complex string algebroid described by the pairing \eqref{eq:pairing3}, the bracket \eqref{eq:bracket3}, and the bracket-preserving map \eqref{eq:rho0}.
\end{definition}

\begin{remark}\label{rem:red-for-all-Q}
By using the explicit models $Q_0$ and $E_0$ in Definition \ref{def:Q0} and Definition \ref{def:E0}, combined with  Propositions \ref{lemma:deRhamC} and \ref{lemma:deRhamCsmooth}, one can obtain a short proof of Lemma \ref{lem:red-for-all-Q}.
\end{remark}

We next obtain explicit characterizations of liftings of $T^{0,1} X$ in terms of differential forms. Given $(\gamma,\beta) \in \Omega^{2}_\CC \oplus \Omega^{1}(\ad \underline P)$ we can define an orthogonal automorphism $(\gamma,\beta)$ of $E_0$  by (see \cite{grt2})
\begin{equation}\label{eq:BA}
(\gamma,\beta)(V + r + \xi) = V + i_V \beta + r + i_V \gamma - \la i_V\beta,\beta  \ra - 2\la \beta,r\ra + \xi.
\end{equation}

\begin{lemma}\label{lemma:liftings}
	Let $E_0$ be the complex string algebroid determined by a triple $(\underline P,H_c,\theta_c)$, as in Definition \ref{def:E0}. There is a one-to-one correspondence between liftings of $T^{0,1} X$ to $E_0$ and elements
	$$
	(\gamma,\beta) \in \Omega^{1,1 + 0,2} \oplus \Omega^{0,1}(\ad \underline P)
	$$
	satisfying 
	\begin{equation}\label{eq:liftingcond}
	\begin{split}
	\Big{(}H_c + d \gamma - 2 \la \beta,F_{\theta_c}\ra - \la \beta, d^{\theta_c} \beta \ra - \frac{1}{3} \la \beta, [\beta,\beta] \ra\Big{)}^{1,2 + 0,3} & = 0,\\
	F_{\theta_c}^{0,2} + \dbar^{\theta_c} \beta + \frac{1}{2}[\beta,\beta] & = 0.\\
	\end{split}
	\end{equation}
	More precisely, given $(\gamma,\beta)$ satisfying \eqref{eq:liftingcond}, the lifting is
	\begin{equation}\label{eq:L}
	L = \{(-\gamma,-\beta)(V^{0,1}), \; V^{0,1} \in T^{0,1} X\},
	\end{equation}
	and, conversely, any lifting is uniquely expressed in this way. 
\end{lemma}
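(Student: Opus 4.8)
The plan is to show that a lifting $L \subset E_0$ of $T^{0,1}X$ is uniquely the graph of a map $T^{0,1}X \to (\ker \pi)^{0,1} \oplus \ad \underline P$ modified by the standard isotropic gauge, and then translate the two defining conditions (isotropic + involutive) into the two equations in \eqref{eq:liftingcond}. First I would observe that since $\pi|_L \colon L \to T^{0,1}X$ is an isomorphism and $\pi(V+r+\xi) = V$ on $E_0$, the subbundle $L$ is the image of a bundle section of the form $V^{0,1} \mapsto V^{0,1} + \sigma(V^{0,1})$ with $\sigma$ taking values in $\ker \pi = \ad \underline P \oplus (T^*\underline X \otimes \CC)$. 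Writing $\sigma(V^{0,1}) = i_{V^{0,1}}(-\beta) + \text{(correction)} + i_{V^{0,1}}(-\gamma) + \dots$, one sees that applying the orthogonal automorphism $(-\gamma,-\beta)$ of \eqref{eq:BA} to $T^{0,1}X$ gives exactly such a graph, and conversely any such $\sigma$ is of this form for a unique $\gamma \in \Omega^{1,1+0,2}$ and $\beta \in \Omega^{0,1}(\ad \underline P)$. The type constraints on $(\gamma,\beta)$ come precisely from demanding that the $\ad \underline P$-component lies in $\ad P^{1,0} \cong \ad\underline P$ (forcing $\beta$ of type $(0,1)$) and that $\gamma$ contribute only via $i_{V^{0,1}}\gamma$, which only sees the $(1,\cdot)$-part; the $(2,0)$-part of $\gamma$ acts trivially on $T^{0,1}X$ and is discarded.

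Next I would impose isotropy: using the pairing \eqref{eq:pairing3} and the fact that $(-\gamma,-\beta)$ is orthogonal, $L$ is automatically isotropic for \emph{any} $(\gamma,\beta)$, because $T^{0,1}X \subset E_0$ is isotropic (the pairing of $V + r + \xi$ with itself restricted to $r = \xi = 0$ vanishes). So isotropy imposes no condition — this is the point of using the gauge transformation rather than an arbitrary graph. The real content is involutivity: I would compute the bracket $[(-\gamma,-\beta)(V^{0,1}), (-\gamma,-\beta)(W^{0,1})]$ using the bracket formula \eqref{eq:bracket3}. Since $(-\gamma,-\beta)$ is an orthogonal Courant automorphism of $E_0$ if and only if it twists $(H_c,\theta_c)$ by the anomaly relation, the cleanest route is: $(-\gamma,-\beta)$ carries the bracket of $E_0$ to the bracket of the isomorphic model $E_0'$ built from the twisted data $(H_c', \theta_c') = (H_c + d\gamma - 2\la\beta,F_{\theta_c}\ra - \la\beta,d^{\theta_c}\beta\ra - \tfrac13\la\beta,[\beta,\beta]\ra,\ \theta_c + \beta)$ — this is exactly Remark \ref{rem:CSexp} combined with \eqref{eq:anomaly}. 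In the model $E_0'$, the subbundle $L' = T^{0,1}X$ is involutive precisely when the mixed-type components of the bracket of two $(0,1)$-vector fields vanish, i.e. when $(F_{\theta_c'})^{0,2} = 0$ and $(H_c')^{1,2+0,3} = 0$; unwinding $F_{\theta_c'}^{0,2} = F_{\theta_c}^{0,2} + \dbar^{\theta_c}\beta + \tfrac12[\beta,\beta]$ gives the second equation of \eqref{eq:liftingcond}, and the $(1,2)+(0,3)$-part of $H_c'$ gives the first.

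The main obstacle I anticipate is bookkeeping the type decompositions carefully: $\gamma$ has components in $\Omega^{2,0}$, $\Omega^{1,1}$, $\Omega^{0,2}$, and only the $(1,1)+(0,2)$ part is constrained and only the $(1,1)$ part genuinely enters the graph (since $i_{V^{0,1}}\gamma$ kills the $(2,0)$-part), so I must be precise about which ambiguity is a gauge redundancy absorbable into $B \in \Omega^{2,0}$ versus genuine data; similarly for $\beta$, only the $(0,1)$-part survives the requirement that the resulting almost complex structure on $\underline P$ be the one compatible with an integrable holomorphic structure. I would handle this by first fixing representatives so that $\gamma^{2,0}=0$, doing the involutivity computation at the level of the twisted model $E_0'$ via the established isomorphism (avoiding a direct Jacobi-type grind), and then checking that the conditions obtained are independent of the choice of representative. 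The uniqueness statement ("any lifting is uniquely expressed in this way") follows since the map $L \mapsto (\gamma,\beta)$ is the inverse of \eqref{eq:L} on the level of graphs, both constrained to $\gamma^{2,0}=0$.
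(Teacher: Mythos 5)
Your proposal is correct and follows essentially the same route as the paper: parametrize isotropic lifts of $T^{0,1}X$ as $(-\gamma,-\beta)(T^{0,1}X)$ with $(\gamma,\beta)$ normalized to $\Omega^{1,1+0,2}\oplus\Omega^{0,1}(\ad \underline P)$, then use the twist formula \eqref{eq:BAshift} to reduce involutivity to $F_{\theta_c+\beta}^{0,2}=0$ and $(H_c')^{1,2+0,3}=0$. The only imprecision is your stated reason for $\beta$ being of type $(0,1)$: it is not a constraint on where the $\ad\underline P$-component lands, but simply that $i_{V^{0,1}}\tilde\beta$ sees only $\tilde\beta^{0,1}$, so the $(1,0)$-part can be absorbed by replacing $\gamma$ with $\tilde\gamma+\la\tilde\beta^{0,1}\wedge\tilde\beta^{1,0}\ra$, exactly as the paper does.
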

\begin{proof}
	An isotropic subbundle $L \subset E_0$ mapping isomorphically to $T^{0,1} X$ under $\pi$ is necessarily of the form \eqref{eq:L} for a suitable $(\tilde \gamma,\tilde \beta) \in \Omega^{1,1 + 0,2} \oplus \Omega^1(\ad \underline P)$ (see \cite[Sec. 3.1]{GF}). Observe that, for any $ V^{0,1} \in T^{0,1} X$,
$$
(-\tilde \gamma,-\tilde \beta)(V^{0,1}) = (-\gamma,-\beta)(V^{0,1}),
$$
where $\beta = \tilde \beta^{0,1}$ and $\gamma = \tilde \gamma + \la \tilde{\beta}^{0,1} \wedge \tilde \beta^{1,0} \ra$, and the pair 
$$
(\gamma,\beta) \in \Omega^{1,1 + 0,2} \oplus \Omega^{0,1}(\ad \underline P)
$$ 
is uniquely determined by $L$. By the proof of \cite[Prop. 4.3]{grt} we have
	\begin{equation}\label{eq:BAshift}
	(\gamma,\beta)[(-\gamma,-\beta) \cdot ,(-\gamma,-\beta)\cdot ]_{\theta_c, H_c} = [\cdot,\cdot]_{\theta_c + \beta,H'_c}
	\end{equation}
	where $[\cdot,\cdot]_{\theta_c, H_c}$ denotes the Dorfman bracket \eqref{eq:bracket3} and
	\begin{equation}\label{eq:Hc'}
	H'_c = H_c + d\gamma - 2\la \beta, F_{\theta_c} \ra - \la \beta, d^{\theta_c} \beta \ra - \frac{1}{3}\la \beta, [\beta,\beta] \ra.
	\end{equation}
	Then, by formula \eqref{eq:bracket3} for the bracket, $L$ is involutive if and only if
	\begin{equation}\label{eq:liftingcond2}
	F_{\theta_c + \beta}^{0,2} = F_{\theta_c}^{0,2} + \dbar^{\theta_c} \beta + \frac{1}{2}[\beta,\beta] = 0, \qquad (H'_c)^{1,2 + 0,3}= 0,
	\end{equation}
	and the proof follows.
\end{proof}

We describe now the isomorphism class of the reduced string algebroid in Proposition \ref{prop:QL}, in terms of the explicit model in the previous lemma.

\begin{proposition}\label{prop:QLexp}
	Let $E_0$ be the complex string algebroid determined by a triple $(\underline P,H_c,\theta_c)$, as in Definition \ref{def:E0}. If $L=(-\gamma,-\beta)T^{0,1} X$, as in \eqref{eq:L}, then the isomorphism class of $(Q_L,P_L,\rho_L)$ is (see Proposition \ref{lemma:deRhamC})
	\begin{equation}\label{eq:QLisom}
	[(P_L,H_c^{3,0 + 2,1} + \partial \gamma^{1,1} - 2\la \beta, F_{\theta_c}^{2,0} \ra ,\theta_c + \beta)] \in H^1(\cS),
	\end{equation}
	where $P_L$ denotes $\underline{P}$ endowed with the holomophic structure $\theta^{0,1}_c+\beta$.
\end{proposition}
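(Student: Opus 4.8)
The plan is to straighten the lifting by a $B$-field transformation, reducing the statement to a computation of the reduction along the \emph{standard} lifting $T^{0,1}X$, and then to match that reduction with the explicit model $Q_0$ of Definition~\ref{def:Q0}.

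\emph{Step 1: straightening $L$.} The orthogonal automorphism $(\gamma,\beta)$ of $E_0$ in \eqref{eq:BA} preserves the pairing and commutes with the anchor by construction, and by \eqref{eq:BAshift} it conjugates the bracket $[\cdot,\cdot]_{\theta_c,H_c}$ of $E_0$ into the bracket $[\cdot,\cdot]_{\theta_c+\beta,H'_c}$, i.e. into the complex string algebroid of Definition~\ref{def:E0} attached to the triple $(\underline P,H'_c,\theta_c+\beta)$, with $H'_c$ given by \eqref{eq:Hc'}; it also intertwines the bracket-preserving maps once one passes from the $\theta_c$-splitting to the $(\theta_c+\beta)$-splitting of $A_{\underline P}$, since $\rho_0((\gamma,\beta)(V+r+\xi)) = V + i_V\beta + r$. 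A short computation with \eqref{eq:BA} gives $(\gamma,\beta)\big((-\gamma,-\beta)V^{0,1}\big) = V^{0,1}$, so $(\gamma,\beta)$ carries the lifting $L=(-\gamma,-\beta)T^{0,1}X$ of \eqref{eq:L} onto the standard lifting $T^{0,1}X$. Hence $(\gamma,\beta)$ descends to an isomorphism of string algebroids from $Q_L$ onto the reduction of $E_0$ (with the new structure) along $T^{0,1}X$, matching $P_L$ — with holomorphic structure $\theta_c^{0,1}+\beta=(\theta_c+\beta)^{0,1}$ — with the corresponding principal bundle.

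\emph{Step 2: the bidegree of $H'_c$.} Since $L$ is involutive, \eqref{eq:liftingcond} forces $(H'_c)^{1,2+0,3}=0$, so $H'_c$ is of type $(3,0)+(2,1)$. Counting bidegrees in \eqref{eq:Hc'}, with $\gamma\in\Omega^{1,1+0,2}$ and $\beta\in\Omega^{0,1}(\ad\underline P)$, one has $(d\gamma)^{3,0+2,1}=\partial\gamma^{1,1}$ and $\langle\beta,F_{\theta_c}\rangle^{3,0+2,1}=\langle\beta,F_{\theta_c}^{2,0}\rangle$, while $\langle\beta,d^{\theta_c}\beta\rangle$ and $\langle\beta,[\beta,\beta]\rangle$ have no $(3,0)+(2,1)$ component; therefore $H'_c=H_c^{3,0+2,1}+\partial\gamma^{1,1}-2\langle\beta,F_{\theta_c}^{2,0}\rangle$.

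\emph{Step 3: reduction along the standard lifting.} It remains to identify, for any triple $(\underline P,H,\theta)$ with $H$ of type $(3,0)+(2,1)$ and $F_\theta^{0,2}=0$, the reduction $Q_{T^{0,1}X}$ of $E_0$ with the string algebroid $Q_0$ of $(P_\theta,H,\theta)$ from Definition~\ref{def:Q0}, where $P_\theta$ is $\underline P$ equipped with the holomorphic structure $\theta^{0,1}$. Here $L=T^{0,1}X$ is isotropic with $L^\perp=(T\underline X\otimes\CC)\oplus\ad\underline P\oplus(T^{1,0}X)^*$, so $L^\perp/L\cong T^{1,0}X\oplus\ad\underline P\oplus(T^{1,0}X)^*$, which is the underlying bundle of $Q_0$. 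Under this identification one checks from \eqref{eq:bracket3} and the formulas of Proposition~\ref{prop:QL} that the pairing, the anchor $\pi_{Q_L}$, the bracket and $\rho_L$ reduce to those of Definition~\ref{def:Q0}, and that the Dolbeault operator $\dbar^L_V e=[s(V),\tilde e]\bmod L$ with $s(V)=V^{0,1}$ reproduces $\dbar_0$; in particular, using $F_\theta^{0,2}=0$, one has $i_{V^{0,1}}F_\theta=i_{V^{0,1}}F_\theta^{1,1}$ and $i_{V^{0,1}}i_W H=i_{V^{0,1}}i_W H^{2,1}$, which yield exactly the $F_\theta^{1,1}$- and $H^{2,1}$-terms of $\dbar_0$. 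Since by Definition~\ref{def:Q0} and Proposition~\ref{lemma:deRhamC} the class of $Q_0(P_\theta,H,\theta)$ is $[(P_\theta,H,\theta)]\in H^1(\cS)$ (the four-form equation holding because the reduction is a string algebroid), applying this with $\theta=\theta_c+\beta$, $H=H'_c$ and combining with Steps~1 and~2 gives \eqref{eq:QLisom}.

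The main obstacle is the unglamorous bookkeeping in Step~3: performing the reduction of the Courant bracket and of the Dolbeault operator on $L^\perp/L$ and recognizing the outcome inside the explicit model $Q_0$, while keeping track of the connection-induced splittings of $A_{\underline P}$ and of every bidegree. Step~1 is essentially \cite[Prop.~4.3]{grt}, and Step~2 is a routine type count.
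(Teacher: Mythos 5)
Your proposal is correct and follows essentially the same route as the paper's proof: both hinge on the transformation law \eqref{eq:BAshift} to replace $(H_c,\theta_c)$ by $(H'_c,\theta_c+\beta)$, identify the reduction with the explicit model $Q_0$ of Definition~\ref{def:Q0}, and conclude via the classification of Proposition~\ref{lemma:deRhamC}. The only (cosmetic) difference is that you straighten $L$ to $T^{0,1}X$ by a global automorphism before reducing, whereas the paper conjugates inside the bracket computation on $L^\perp/L$ directly; your Step~2 also spells out the bidegree count identifying $H'_c$ with $H_c^{3,0+2,1}+\partial\gamma^{1,1}-2\la\beta,F_{\theta_c}^{2,0}\ra$, which the paper leaves implicit.
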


\begin{proof} 
	By the second equation in \eqref{eq:liftingcond} it follows that $\theta^{0,1}_c+\beta$ induces the structure of a holomorphic principal bundle on $\underline{P}$, called $P_L$. 
	Now, we have 
	\begin{equation*}\label{eq:Lperp}
	L^\perp = \{(-\gamma,-\beta)(W + t + \eta^{1,0}) \st \; W \in T\underline{X} \otimes \CC, \; t \in \ad \underline{P}, \; \eta^{1,0} \in (T^{1,0} X)^* \}
	\end{equation*}
	and therefore there is a smooth bundle isomorphism
	\begin{align}\label{eq:QLstd}
	Q_L &\to T^{1,0} X\oplus \ad \underline{P} \oplus (T^{1,0} X)^* \nonumber \\
	[(-\gamma,-\beta)(W + t + \eta^{1,0})] &\mapsto W^{1,0} + t + \eta^{1,0}.
	\end{align}
	Let us now express the holomorphic Courant structure in terms of \eqref{eq:QLstd}. Firstly, note that (see \eqref{eq:BAshift})
	\begin{align*}
	(\gamma,\beta)[(-\gamma,-\beta)(V^{0,1}),(-\gamma,-\beta)(W&^{1,0} + t + \eta^{1,0})]  \\
	{} = {} & \dbar_{V^{0,1}} W^{1,0} - F_{\theta_c'}(V^{0,1},W^{1,0}) + \dbar^{\theta_c'}_{V^{0,1}} t \\
	& + \dbar_{V^{0,1}} \eta^{1,0} + i_{V^{0,1}} i_{W^{1,0}} H'_c + 2\la i_{V^{0,1}} F_{\theta_c'},t \ra,
	\end{align*} 
	where $H_c'$ is as in \eqref{eq:Hc'} and $\theta_c' = \theta_c + \beta$. Since $L$ is involutive, we have $(H_c')^{1,2 + 0,3} = 0$ (see \eqref{eq:liftingcond2}), and
	\begin{align*}
	\dbar^L(W^{1,0} + t + \eta^{1,0}) = {} & \dbar W^{1,0} + i_{W^{1,0}}F_{\theta_c'}^{1,1} + \dbar^{\theta_c'} t\\
	& + \dbar\eta^{1,0} + i_{W^{1,0}} (H_c'^{2,1}) + 2\la F_{\theta_c'}^{1,1},t \ra.
	\end{align*} 
	Therefore, using the connection $\theta_c'$ to identify
	$$
	A_{P_L} = T^{1,0} X \oplus \ad \underline{P}
	$$
	it follows that
	\begin{align*}
	\rho_L\colon Q_L &\to A_{P_L}\\ [(-\gamma,-\beta)(W + t + \eta^{1,0})] &\mapsto W^{1,0} + t
	\end{align*}
	is holomorphic, and hence $Q_L$ is a string algebroid. 
	To finish, arguing as for the Dolbeault operator, we notice that, in terms of \eqref{eq:QLstd}, the bracket of $Q_L$ is given by
	\begin{equation*}\label{eq:bracketQL}
	\begin{split}
	[V+ r + \xi,W + t + \eta]  = {} & [V,W] - F_{\theta_c'}(V,W) + \partial^{\theta_c'}_V t - \partial^{\theta_c'}_W r - [r,t]\\
	& + \partial (i_V \eta) + i_V\partial \eta  - i_W\partial \xi + i_Vi_W H_c'^{3,0}\\
	& + 2\la \partial^{\theta_c'} r,t\ra  + 2\la i_V F_{\theta_c'}^{2,0},t \ra - 2\la i_W F_{\theta_c'}^{2,0},r\ra,
	\end{split}
	\end{equation*}
	for $V+ r + \xi,W + t + \eta$ holomorphic sections of $T^{1,0} X\oplus \ad \underline{P} \oplus (T^{1,0} X)^*$. Then, by \cite[Prop. 2.4]{grt2} it follows that the isomorphism class of $(Q_L,P_L,\rho_L)$ is \eqref{eq:QLisom}, as claimed.
\end{proof}

\section{Complex gauge group}\label{sec:Morita}

\subsection{Classification of reduction diagrams}

We introduce next the complex gauge group of our theory as a suitable subgroup of automorphisms of a complex string algebroid. For this, in the present section we show that, essentially, a holomorphic string algebroid can be obtained by reduction in a unique way (see Proposition \ref{prop:QL}). This will lead us to a canonical notion of complex gauge group, which will be introduced in Section \ref{sec:Ham}. We follow the notation in Proposition \ref{prop:QL}. For simplicity, when it is clear from the context, we will denote a complex string algebroid $(E,\underline{P},\rho_c)$ (resp. string algebroid $(Q,P,\rho)$) over a complex manifold $X$ simply by $E$ (resp. $Q$). We fix the structure group of all our principal bundles (either smooth or holomorphic) to be a complex Lie group $G$.

\begin{definition}\label{def:Moritabrick}
	Let $Q$ be a string algebroid. A \emph{reduction diagram} for  $Q$ is a tuple $(E,L,\psi)$, given by a complex string algebroid $E$, a lifting $L \subset E$ of $T^{0,1} X$ and a string algebroid isomorphism $\psi \colon Q_L \to Q$, fitting in a diagram
\begin{equation}\label{eq:Morita}
	\xymatrix{
E  \ar@{-->}[dr] & & \\
 & Q_{L} \ar[r]^{\psi} & Q,
	}
\end{equation}
where the discontinuous arrows refer to the partial map $ L^\perp\to Q_L=L^\perp/L$.
	
\end{definition}

By Lemma \ref{lem:red-for-all-Q}, there always exists a reduction diagram (from now on, simply a \emph{diagram}) for a given string algebroid $Q$, given by
\begin{equation}\label{eq:Brickmodel}
E = E_Q, \qquad L= T^{0,1}X, \qquad \psi = \Id_Q.
\end{equation}
Here $\Id_Q$ denotes the isomorphism $Q_L := L^\perp/L \to Q$ induced by the natural projection $L^\perp = Q \oplus T^{0,1}X \to Q$. Furthermore, as we will see shortly, this is essentially the unique diagram, up to the right notion of isomorphism. 

 To introduce the following definition, observe that given pairs $(E,L)$ and $(E',L')$, an isomorphism 
$$
\underline f \colon E \to E' \qquad \textrm{such that} \qquad \underline f(L) = L'
$$
induces, upon restriction to $L^\perp$, an isomorphism $f \colon Q_L \to Q_{L'}$ and a commutative diagram
\begin{equation}\label{eq:Morita2.0}
	\xymatrix{
		E \ar@{-->}[r] \ar[d]_{\underline f} &  Q_{L} \ar[d]_{f} \\
		E' \ar@{-->}[r] & Q_{L'}.
	}
\end{equation}

\begin{definition}\label{def:brickiso}
	We say that two diagrams $(E,L,\psi)$, $(E',L',\psi')$ for the same string algebroid $Q$ are isomorphic if there exists an isomorphism $\underline f:E\to E'$ such that $\underline{f}(L)=L'$, thus inducing an isomorphism $f:Q_L\to Q_{L'}$, and $\psi=\psi'\circ f$. That is, the following diagram commutes
	\begin{equation*}
	\xymatrix @R=.5pc {
		E \ar@{-->}[dr] \ar[dd]_{\underline f} & & \\
		& Q_{L} \ar[rd]^{\psi} \ar[dd]_{f} & \\ 
		E' \ar@{-->}[dr] & & Q. \\
		& Q_{L'} \ar[ru]_{\psi'}  & 
	}
	\end{equation*}
\end{definition}

In the following result, we observe that there is a natural forgetful map from the set of isomorphism classes of string algebroids $H^1(\cS)$ to the set of isomorphism classes of complex string algebroids $H^1(\underline \cS)$ (see Proposition \ref{lemma:deRhamC} and Proposition \ref{lemma:deRhamCsmooth}). This provides a lift of the map which sends a holomorphic principal $G$-bundle $P$ to the underlying smooth principal bundle $\underline{P}$ (see Remark \ref{rem:smap}).

\begin{lemma}\label{lem:smap}
Using the notation in Lemma \ref{lem:red-for-all-Q}, there is a well-defined map
\begin{align}\label{eq:forgetful}
s \colon  H^1(\cS) &\to H^1(\underline \cS) \nonumber\\ [Q]  & \mapsto [E_Q].
\end{align}
\end{lemma}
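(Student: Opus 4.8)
The plan is to show that the assignment $[Q] \mapsto [E_Q]$ does not depend on the choice of representative $(P,H,\theta)$ of the class $[Q] \in H^1(\cS)$. So I would take two triples $(P,H,\theta)$ and $(P',H',\theta')$ representing the same string algebroid class, meaning there is an isomorphism $g \colon P \to P'$ of holomorphic principal bundles and a $B \in \Omega^{2,0}$ such that the anomaly relation \eqref{eq:anomaly} holds, $H' = H + CS(g\theta) - CS(\theta') - d\la g\theta \wedge \theta'\ra + dB$. I need to produce from this data an isomorphism of complex string algebroids $\wt f \colon E_Q \to E_{Q'}$, i.e.\ an element realizing the equivalence relation $\sim$ in Proposition \ref{lemma:deRhamCsmooth}: an isomorphism of smooth principal bundles together with some $B_c \in \Omega^2_\CC$ satisfying \eqref{eq:anomaly} at the smooth complexified level.

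The key step is to identify the smooth data classifying $E_Q$ in $H^1(\underline\cS)$. Concretely, using the explicit model $E_0$ of Definition \ref{def:E0}, I would check that for the representative $(P,H,\theta)$, the complex string algebroid $E_Q$ of Lemma \ref{lem:red-for-all-Q} is isomorphic to the $E_0$ built from the triple $(\underline P, H_c, \theta_c)$ where $\theta_c$ is the connection on $\underline P$ whose $(0,1)$-part is the Dolbeault operator of $P$ and whose $(1,0)$-part is $\theta$ (so that $F_{\theta_c}^{0,2}=0$), and $H_c$ is the natural complexification of $H$ (of type $(3,0)+(2,1)$ plus the pieces forced by $dH_c + \la F_{\theta_c}\wedge F_{\theta_c}\ra = 0$). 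In other words, $s$ is represented on triples by $(P,H,\theta) \mapsto (\underline P, H_c, \theta_c)$. This is essentially bookkeeping: comparing the bracket, pairing and anchor of $E_Q$ as defined in Lemma \ref{lem:red-for-all-Q} with those of $E_0$ in \eqref{eq:pairing3}--\eqref{eq:bracket3}, after using $\theta$ to split $A_P$ as in Definition \ref{def:Q0}.

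Once that identification is in place, the well-definedness follows by transporting the equivalence. Given the holomorphic isomorphism $g \colon P \to P'$, the underlying smooth map $\underline g \colon \underline P \to \underline P'$ is a smooth isomorphism, and $g\theta$ corresponds at the smooth level to the connection $\underline g \cdot \theta_c$ whose $(1,0)$-part is $g\theta$ and $(0,1)$-part is $g$ applied to the Dolbeault operator of $P$; but since $g$ is holomorphic, $\underline g \cdot \theta_c$ and $\theta_c'$ differ only in a controlled way, and Remark \ref{rem:CSexp} lets me rewrite $CS(g\theta_c) - CS(\theta_c') - d\la g\theta_c \wedge \theta_c'\ra$ in terms of $a = g\theta_c - \theta_c'$. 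Combining the holomorphic anomaly relation \eqref{eq:anomaly} with the complexification, I would produce $B_c \in \Omega^2_\CC$ (namely $B$ plus terms of lower Hodge type coming from the difference between the complexified $H_c, H_c'$ and the $(3,0)+(2,1)$ parts $H,H'$) such that the smooth anomaly relation holds for $(\underline P, H_c,\theta_c)$ and $(\underline P', H_c',\theta_c')$. Hence $[E_Q] = [E_{Q'}]$ in $H^1(\underline\cS)$, and $s$ is well defined.

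The main obstacle I anticipate is purely in the Hodge-type bookkeeping: the four-form constraint $dH_c + \la F_{\theta_c} \wedge F_{\theta_c}\ra = 0$ does not determine $H_c$ uniquely from $H \in \Omega^{3,0}\oplus\Omega^{2,1}$, so I must either make a canonical choice and check it is preserved under the equivalence, or verify that different choices differ by $dB_c$ for some $B_c\in\Omega^2_\CC$ and hence represent the same class — this is where one has to be careful that all the type components of $CS$, the mixed curvature terms, and the correction $B_c$ assemble correctly. Everything else is a routine but lengthy unwinding of the explicit models, which (per Remark \ref{rem:red-for-all-Q}) is exactly the computation that also shortcuts the proof of Lemma \ref{lem:red-for-all-Q}.
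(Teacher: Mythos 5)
Your proposal is correct, but it takes a genuinely different route from the paper. The paper's proof is a one-liner and entirely intrinsic: given an isomorphism $f \colon Q \to Q'$ of string algebroids, the map $\underline f := f \oplus \Id_{T^{0,1}X} \oplus \Id_{(T^{0,1}X)^*} \colon E_Q \to E_{Q'}$ is an isomorphism of complex string algebroids, because the Courant structure on $E_Q$ in Lemma \ref{lem:red-for-all-Q} is built naturally out of the structure on $Q$ and $f$ intertwines the two. No classification data, no Chern--Simons terms, no choice of representative is needed. Your route instead goes through the cocycle description: identify $[E_Q]$ with $[(\underline P, H, \theta)]$ via the explicit models and check that the equivalence relation of Proposition \ref{lemma:deRhamC} maps into that of Proposition \ref{lemma:deRhamCsmooth}. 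This is exactly the alternative description the authors record in Remark \ref{rem:smap}, and it does work; what it buys you is an explicit formula for $s$ on representatives, at the cost of the model comparison of Remark \ref{rem:red-for-all-Q}. One simplification you are missing: the ``main obstacle'' you anticipate is not actually there. You do not need to complexify anything --- the form $H \in \Omega^{3,0}\oplus\Omega^{2,1} \subset \Omega^3_\CC$ already satisfies $dH + \la F_\theta \wedge F_\theta\ra = 0$ with $\theta \in \cA_P \subset \cA_{\underline P}$, so the triple $(\underline P, H, \theta)$ is verbatim an admissible representative in \eqref{eq:lescEind3smooth}, and the holomorphic anomaly relation \eqref{eq:anomaly} with $g$ holomorphic and $B \in \Omega^{2,0} \subset \Omega^2_\CC$ is literally an instance of the smooth one. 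Hence $s([(P,H,\theta)]) = [(\underline P, H,\theta)]$ with no correction terms, and well-definedness is immediate.
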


\begin{proof}
Given an isomorphism $f \colon Q \to Q'$, we can define an induced isomorphism of complex string algebroids
$$
\underline f : = f \oplus \Id_{T^{0,1} X} \oplus \Id_{(T^{0,1} X)^*} \colon E_Q \to E_{Q'}.
$$
\end{proof}

\begin{remark}\label{rem:smap}
Alternatively, relying on the classification in Proposition \ref{lemma:deRhamC} and Proposition \ref{lemma:deRhamCsmooth}, we can also write \eqref{eq:forgetful} as
$$
s([(P,H,\theta)]) = [(\underline P, H,\theta)],
$$
where $\underline P$ denotes the smooth complex principal $G$-bundle underlying $P$.
\end{remark}

We are now ready to prove the uniqueness of diagrams \eqref{eq:Morita} up to (unique) isomorphism.

\begin{lemma}\label{lemma:bricks}
Let $Q$ be a string algebroid. Given a diagram $(E,L,\psi)$ for $Q$, there exists a unique isomorphism 
	\begin{equation}\label{eq:Moritab}
	\xymatrix @R=.5pc {
		E \ar@{-->}[dr] \ar[dd]_{\underline f} & & \\
		& Q_{L} \ar[rd]^{\psi} \ar[dd]_{f} & \\ 
		E_Q \ar@{-->}[dr] & & Q. \\
		& Q \ar[ru]_{\Id_Q}  & 
	}
	\end{equation}
to the diagram $(E_Q,T^{0,1}X,\Id_Q)$ in \eqref{eq:Brickmodel}. Consequently, any diagram $(E,L,\psi)$ for $Q$ satisfies, for the map $s$ in \eqref{eq:forgetful},
$$
[E] = s([Q]) \in H^1(\underline \cS).
$$
\end{lemma}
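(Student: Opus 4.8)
The plan is to produce an explicit isomorphism of complex string algebroids $\underline f \colon E \to E_Q$ carrying $L$ to $T^{0,1}X$ and inducing $\psi$ on reductions, and then to invoke the uniqueness statement of Definition~\ref{def:brickiso}. First I would set up data: choose an explicit model, writing $E = E_0$ for some triple $(\underline P, H_c, \theta_c)$ as in Definition~\ref{def:E0}, and write the lifting $L = (-\gamma,-\beta)T^{0,1}X$ via Lemma~\ref{lemma:liftings} for a uniquely determined pair $(\gamma,\beta) \in \Omega^{1,1+0,2} \oplus \Omega^{0,1}(\ad \underline P)$ satisfying \eqref{eq:liftingcond}. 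Applying the orthogonal automorphism $(\gamma,\beta)$ of $E_0$ from \eqref{eq:BA}, I may assume without loss of generality that $L = T^{0,1}X$ sits inside $E_0$ in the standard way, at the cost of replacing $(H_c,\theta_c)$ by $(H'_c, \theta_c + \beta)$ as in \eqref{eq:BAshift}, \eqref{eq:Hc'}; by the involutivity conditions \eqref{eq:liftingcond2}, $\theta_c^{0,1} + \beta$ is a holomorphic structure and $(H'_c)^{1,2+0,3} = 0$. With $L = T^{0,1}X$, the reduction $Q_L = L^\perp/L$ is, by Proposition~\ref{prop:QLexp}, precisely the holomorphic model $Q_0$ attached to the triple $(P_L, (H'_c)^{3,0+2,1}, \theta_c + \beta)$, via the bundle isomorphism \eqref{eq:QLstd}.

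Next I would use $\psi \colon Q_L \to Q$ to transport this: $\psi$ exhibits $Q$ as isomorphic to the string algebroid $Q_0$ above, and by Lemma~\ref{lem:red-for-all-Q}(i) the complex string algebroid $E_Q$ is $Q \oplus T^{0,1}X \oplus (T^{0,1}X)^*$ with the structure listed there. Unwinding the construction of $E_Q$ in that lemma (equivalently, using the explicit model $E_0$ for the triple underlying $Q$, as in Remark~\ref{rem:red-for-all-Q}), one checks that $E_Q$ with its standard lifting $T^{0,1}X$ is exactly the $E_0$-model built from the triple $(\underline P, (H'_c)^{3,0+2,1}, \theta_c+\beta)$ — note that only the $(3,0+2,1)$-part of $H'_c$ survives when passing to $Q$ and back, while the defining four-form equation $dH'_c + \la F \wedge F\ra = 0$ forces the higher-degree parts to be determined, consistently with $(H'_c)^{1,2+0,3}=0$. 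Thus the composition of the orthogonal shift $(\gamma,\beta)$, the identification \eqref{eq:QLstd} promoted to the full $E_0$ (i.e.\ acting by $\psi$ on the $Q$-summand and the identity on $T^{0,1}X \oplus (T^{0,1}X)^*$), gives the desired isomorphism $\underline f \colon E \to E_Q$ with $\underline f(L) = T^{0,1}X$ and $f = \psi$, so that $\psi = \Id_Q \circ f$; this is exactly the commutativity of diagram \eqref{eq:Moritab}.

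For uniqueness I would argue directly: if $\underline f, \underline f'$ are two such isomorphisms, then $\underline h := \underline f' \circ \underline f^{-1} \colon E_Q \to E_Q$ is an automorphism of $E_Q$ fixing $L = T^{0,1}X$ and inducing the identity on $Q_L = Q$ (since both induce $\psi$). An automorphism of a complex string algebroid fixing the lifting $L$ is, restricted to $L^\perp$, determined by its action on $L^\perp/L = Q$ together with the constraint that it be the identity on $T^*\underline X \otimes \CC \subset L^\perp$ and compatible with the pairing; combined with isotropy and involutivity of $L$, a short computation (parallel to the classification of automorphisms used implicitly in Lemma~\ref{lemma:liftings} and \eqref{eq:BA}) shows the only such $\underline h$ is the identity. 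The final sentence, $[E] = s([Q]) = [E_Q] \in H^1(\underline\cS)$, is then immediate from the existence of $\underline f$ together with the definition \eqref{eq:forgetful} of $s$.

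The main obstacle I anticipate is bookkeeping rather than conceptual: correctly tracking how the orthogonal automorphism \eqref{eq:BA}, the shift of $(H_c,\theta_c)$ in \eqref{eq:Hc'}, and the projection to the $(3,0+2,1)$-part of $H'_c$ interact, so as to genuinely land on the triple underlying $E_Q$ and not merely an isomorphic one — and, for uniqueness, pinning down that an automorphism of $E_Q$ fixing the standard lifting and inducing the identity on the reduction must be trivial, which requires carefully using the axioms (D1)--(D5) on the $(T^{0,1}X)^*$-summand. Both steps follow the pattern of \cite[App.~A]{G2} and \cite[Prop.~4.3]{grt}, so I would lean on those references for the routine verifications.
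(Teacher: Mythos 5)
Your proof is correct, but it takes a genuinely different route from the paper's. The paper's argument is intrinsic and coordinate-free: since any morphism of (complex) string algebroids restricts to the identity on $T^*\underline{X}\otimes \CC$ and is compatible with the anchors, the canonical subbundle $(T^{0,1}X)^* \subset E$ together with $L$ produces a canonical orthogonal decomposition $E \cong Q_L \oplus T^{0,1}X \oplus (T^{0,1}X)^*$ (with $Q_L$ realised inside $E$ as $(L \oplus (T^{0,1}X)^*)^\perp$ and $L \cong T^{0,1}X$ via $\pi$), and $\underline f$ is then forced to be $\psi \oplus \pi|_L \oplus \Id$ on each summand; both existence and uniqueness drop out of the canonicity of this splitting at once. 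You instead work in the explicit models: gauging $L$ to the standard $T^{0,1}X$ by the orthogonal automorphism $(\gamma,\beta)$ of \eqref{eq:BA}, invoking Lemma \ref{lemma:liftings} and Proposition \ref{prop:QLexp}, and using the identification $E_{Q_0}=E_0$. This is valid and yields explicit formulas (useful later, e.g.\ for Lemma \ref{lem:HomQQ}), but it costs the bookkeeping you flag and turns uniqueness into a separate verification, namely that an automorphism of $E_Q$ fixing $T^{0,1}X$ and inducing $\Id_Q$ on the reduction is trivial. That verification does go through: in the $(g,\tau)$ coordinates of Lemma \ref{lem:HomQQ}, preserving $T^{0,1}X$ forces $(a^g)^{0,1}=0$ and $\tau\in\Omega^{2,0}$, and inducing the identity on $Q_0 = T^{1,0}X\oplus\ad P\oplus(T^{1,0}X)^*$ then forces $a^g=0$ and $\tau=0$ (up to the usual ambiguity by central gauge transformations acting trivially, which is a convention issue already present in the paper). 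One small slip in your sketch: $T^*\underline{X}\otimes\CC$ is \emph{not} contained in $L^\perp$; only $(T^{1,0}X)^*$ is, so the constraint you invoke on the $L^\perp$-restriction should be phrased accordingly.
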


\begin{proof} The isotropic splitting $L\subset E$ gives a decomposition of $E$ into $L^\perp$, which contains $L$, and $(T^{0,1}X)^*\subset E$. Combining this with $L\cong T^{0,1}X$, the definition of $Q_L$ and $\psi$, we get
$$E= L^\perp \oplus (T^{0,1} X)^*\cong  Q_L \oplus T^{0,1} X \oplus (T^{0,1} X)^*\cong_{\psi} Q \oplus T^{0,1} X \oplus (T^{0,1} X)^*=E_Q.$$
This is an isomorphism of Courant algebroids with the Courant algebroid structure given in Lemma \ref{lem:red-for-all-Q}. This isomorphism tautologically sends $L$ to $T^{0,1}X$, and induces a map from $Q_L$ to $Q$ which makes the diagram \eqref{eq:Moritab} commutative.

The uniqueness follows from the fact that the first isomorphism above is the only one that sends $L$ to $T^{0,1} X$ via projection, and the second one is induced by $\psi$. Finally, the last statement follows from the fact that $[E]=[E_Q]=s([Q])$, as defined in Lemma \ref{lem:smap}.
\end{proof}

Building on Lemma \ref{lemma:bricks}, we show next that
a pair of diagrams for $Q$ admit a unique isomorphism.

\begin{lemma}\label{lemma:bricksglue}
Let $Q$ be a string algebroid and a pair of diagrams $(E_1,L_1,\psi_1)$ and $(E_2,L_2,\psi_2)$ for $Q$. Then, there exists a unique isomorphism $\underline f \colon E_1 \to E_2$ such that $\underline{f}(L_1) = L_2$ making the following diagram commutative
\begin{equation*}\label{eq:gluingbricks2OLD}
  \xymatrix{
\ar@{-->}[dr]  E_1 \ar[rrrr]^{\underline f} & &  & & E_2.  \ar@{-->}[dl]\\\
 & Q_{L_1} \ar[r]^{\psi_1}  \ar@/^2pc/[rr]^{f} & Q  & \ar[l]_{\psi_2} Q_{L_2}  & \\\
  }
\end{equation*}
\end{lemma}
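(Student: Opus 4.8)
The plan is to reduce Lemma \ref{lemma:bricksglue} directly to the uniqueness statement already established in Lemma \ref{lemma:bricks}, rather than re-doing the construction from scratch. First I would apply Lemma \ref{lemma:bricks} to each of the two Morita bricks separately: this produces unique isomorphisms
\[
\underline f_1 \colon E_1 \to E_Q, \qquad \underline f_2 \colon E_2 \to E_Q,
\]
each carrying its lifting to $T^{0,1}X$ and each fitting into the commutative triangle \eqref{eq:Moritab}, i.e. inducing maps $f_1 \colon Q_{L_1} \to Q$, $f_2 \colon Q_{L_2} \to Q$ with $\psi_1 = \Id_Q \circ f_1$ and $\psi_2 = \Id_Q \circ f_2$.

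Then I would define $\underline f := \underline f_2^{-1} \circ \underline f_1 \colon E_1 \to E_2$. Since $\underline f_1(L_1) = T^{0,1}X = \underline f_2(L_2)$ and $\underline f_2^{-1}$ is an isomorphism of complex string algebroids sending $T^{0,1}X$ to $L_2$, we get $\underline f(L_1) = L_2$. By the discussion around diagram \eqref{eq:Morita2.0}, restriction of $\underline f$ to $L_1^\perp$ induces an isomorphism $f \colon Q_{L_1} \to Q_{L_2}$, and by functoriality of this induced-map construction (composition of restrictions is the restriction of the composition) we have $f = f_2^{-1} \circ f_1$. Chasing the two triangles gives $\psi_2 \circ f = \psi_2 \circ f_2^{-1} \circ f_1 = \Id_Q \circ f_1 = \psi_1$, which is exactly the commutativity asserted in the displayed diagram (the curved arrow $f$ in the statement being this induced $f$).

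For uniqueness, suppose $\underline g \colon E_1 \to E_2$ is another isomorphism with $\underline g(L_1) = L_2$ and $\psi_1 = \psi_2 \circ g$ for the induced $g \colon Q_{L_1} \to Q_{L_2}$. Then $\underline f_2 \circ \underline g \colon E_1 \to E_Q$ is an isomorphism carrying $L_1$ to $T^{0,1}X$, and the induced map $Q_{L_1} \to Q$ is $f_2 \circ g$, which satisfies $\Id_Q \circ (f_2 \circ g) = \psi_2 \circ g = \psi_1$; hence $(\underline f_2 \circ \underline g, f_2 \circ g)$ is a morphism of Morita bricks from $(E_1,L_1,\psi_1)$ to $(E_Q, T^{0,1}X, \Id_Q)$. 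By the uniqueness clause of Lemma \ref{lemma:bricks}, $\underline f_2 \circ \underline g = \underline f_1$, so $\underline g = \underline f_2^{-1} \circ \underline f_1 = \underline f$.

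The only point that needs a little care — and what I would flag as the main (mild) obstacle — is making rigorous the functoriality claim that the assignment "$\underline f \mapsto$ induced $f$ on the reductions $Q_L \to Q_{L'}$" respects composition and identities, since the whole argument hinges on identifying the induced map of $\underline f_2^{-1}\circ \underline f_1$ with $f_2^{-1}\circ f_1$. This is immediate from the fact that $Q_L = L^\perp/L$ and an isomorphism sending $L$ to $L'$ sends $L^\perp$ to $L'^\perp$, so the induced map is literally the map on quotients; composition of quotient maps is the quotient of the composition. With that observation in place, no further computation with the explicit Courant structures of Section~2 is needed, and the lemma follows formally.
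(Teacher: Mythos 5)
Your proposal is correct and follows essentially the same route as the paper: the paper's proof also invokes Lemma \ref{lemma:bricks} for each brick to obtain $\underline{f}_1 \colon E_1 \to E_Q$ and $\underline{f}_2 \colon E_2 \to E_Q$ and sets $\underline{f} = \underline{f}_2^{-1}\circ \underline{f}_1$. Your added care about uniqueness and the functoriality of the induced maps on reductions only makes explicit what the paper leaves to the reader.
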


\begin{proof}
The statement follows as a direct consequence of Lemma \ref{lemma:bricks}. 
\end{proof}


\subsection{Automorphisms}\label{sec:picard}

Let $(Q,P,\rho)$ be a string algebroid with complex structure group $G$ over a complex manifold $X$. As usual, $(Q,P,\rho)$ will be denoted simply by $Q$. Let $\underline{P}$ be the smooth $G$-bundle underlying $P$, and let $\cG_{\underline{P}}$ be the corresponding gauge group. In this section we study a group of gauge symmetries canonically associated to $Q$. By Lemma \ref{lem:red-for-all-Q}, it is natural to consider the group of automorphisms of $E_Q$. This is an infinite-dimensional complex Lie group with a natural action on liftings of $T^{0,1}X$, which will be used for the definition of the complex gauge group of our theory in the next section.

Let $Q$ be a string algebroid and denote by $\Aut(E_Q)$ the group of automorphisms of the complex string algebroid $E_Q$ in Lemma \ref{lem:red-for-all-Q}. Recall from \cite[App. A]{grt2} that there is a group homomorphism
$$
\sigma_{\underline{P}} \colon \cG_{\underline{P}} \to H^3(X,\mathbb{C}),
$$
defined by
$$
\sigma_{\underline{P}}(g) = [CS(g \theta_c) - CS(\theta_c) - d \la g \theta_c \wedge \theta_c\ra] \in H^3(X,\mathbb{C})
$$
for any choice of connection $\theta_c$ on $\underline{P}$. This defines a short exact sequence of groups (cf. \cite[Prop. 2.12]{grt2})
\begin{equation*}
  \xymatrix{
0 \ar[r] & \Omega^2_{\CC,cl} \ar[r] & \Aut(E_Q) \ar[r] &  \ar[r] \Ker \sigma_{\underline{P}} & 1,
  }
\end{equation*}
where $\Omega^2_{\CC,cl}$ is the additive group of closed complex $2$-forms on $X$. The proof of the next result is immediate.

\begin{corollary}\label{cor:PicQ}
There is a canonical exact sequence
\begin{equation}\label{eq:Picardses}
  \xymatrix{
0 \ar[r] & \Omega^2_{\CC,cl} \ar[r] & \Aut(E_Q) \ar[r] &  \ar[r] \Ker \sigma_{\underline{P}} &  \ar[r]^{\sigma_{\underline{P}} \qquad} \cG_{\underline{P}} & H^3(X,\CC).
  }
\end{equation}
\end{corollary}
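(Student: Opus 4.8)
The plan is to derive \eqref{eq:Picardses} formally from the identification $\Pic(Q) \cong \Aut(E_Q)$ of Proposition \ref{prop:HomQQ}, combined with the structure of $\Aut(E_Q)$ recorded just above the corollary, so that the only real input is the description of automorphisms of a complex string algebroid from \cite[Prop. 2.11]{grt2}.

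First I would recall the short exact sequence
$$
0 \to \Omega^2_{\CC,cl} \to \Aut(E_Q) \to \Ker\sigma_{\underline P} \to 1,
$$
obtained by specializing \cite[Prop. 2.11]{grt2} to the complex string algebroid $E_Q$ associated to $Q$ in Lemma \ref{lem:red-for-all-Q}: an automorphism of $E_Q$ covering the identity on $X$ is uniquely a $B$-field shift by a closed complex $2$-form composed with a gauge transformation $g \in \cG_{\underline P}$, and such a $g$ lifts to an automorphism of $E_Q$ exactly when the obstruction class $\sigma_{\underline P}(g) = [CS(g\theta_c) - CS(\theta_c) - d\la g\theta_c \wedge \theta_c\ra] \in H^3(X,\CC)$ vanishes. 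This simultaneously identifies the kernel of the map $\Aut(E_Q) \to \cG_{\underline P}$ with the $B$-field shifts $\Omega^2_{\CC,cl}$ and its image with $\Ker\sigma_{\underline P}$.

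Next I would transport this sequence along the canonical group isomorphism $\varphi \colon \Aut(E_Q) \to \Pic(Q)$ of Proposition \ref{prop:HomQQ}, obtaining a canonical short exact sequence
$$
0 \to \Omega^2_{\CC,cl} \to \Pic(Q) \to \Ker\sigma_{\underline P} \to 1,
$$
whose surjection is $\varphi^{-1}$ followed by $\Aut(E_Q) \to \Ker\sigma_{\underline P}$; canonicity is inherited from that of $\varphi$, of $E_Q$, and of $\sigma_{\underline P}$, the latter following from the connection-independence of the Chern--Simons class. Finally I would splice on the right the tautological piece $\Ker\sigma_{\underline P} \hookrightarrow \cG_{\underline P} \xrightarrow{\sigma_{\underline P}} H^3(X,\CC)$: exactness at $\Ker\sigma_{\underline P}$ is the surjectivity just obtained together with injectivity of the inclusion, while exactness at $\cG_{\underline P}$ is, by definition of $\Ker\sigma_{\underline P}$, the trivial equality $\Ker(\sigma_{\underline P}) = \Ker\sigma_{\underline P}$. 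Assembling these gives \eqref{eq:Picardses}.

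There is no genuinely hard step here: the whole argument is bookkeeping once Proposition \ref{prop:HomQQ} is in hand. The only point deserving care is checking that the automorphism description of \cite[Prop. 2.11]{grt2}, originally phrased for string algebroids, applies verbatim to the complex string algebroid $E_Q$ and yields precisely the displayed sequence --- in particular that $\sigma_{\underline P}$ is a well-defined group homomorphism independent of the chosen connection $\theta_c$ --- after which the conclusion is immediate, as the paper indicates.
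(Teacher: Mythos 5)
Your proposal is correct and follows exactly the paper's route: the paper records the short exact sequence $0 \to \Omega^2_{\CC,cl} \to \Aut(E_Q) \to \Ker \sigma_{\underline{P}} \to 1$ from \cite[Prop. 2.11]{grt2} immediately before the corollary and then declares the result immediate from the isomorphism $\Pic(Q) \cong \Aut(E_Q)$ of Proposition \ref{prop:HomQQ}. Your write-up simply makes the transport and splicing explicit, which is a faithful elaboration of the same argument.
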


To obtain a more explicit description of $\Aut(E_Q)$, we choose a representative $[(P,H,\theta)] = [Q] \in H^1(\cS)$ and consider the model $Q_0 \cong Q$ in Definition \ref{def:Q0}. Then, we have an identification (see Lemma \ref{lem:red-for-all-Q})
$$
E_{Q_0} = E_0,
$$
for the complex string algebroid $E_0$ determined by $(\underline P,H,\theta)$ (see Definition \ref{def:E0}). Relying on \cite[Cor. 4.2]{grt}---which characterizes $\Aut(E_0)$ in terms of differential forms (cf. \cite[Lem. 2.10]{grt2})---, we obtain the following result.

\begin{lemma}\label{lem:HomQQ}
Let $Q_0$ be given by $(P,H,\theta)$. There is a canonical bijection between $\Aut(E_0)$ and the set of pairs $(g,\tau) \in \cG_{\underline{P}} \times \Omega^2_\CC$ satisfying
\begin{equation}\label{eq:MoritaPicexp}
d \tau = CS(g^{-1}\theta) - CS(\theta) - d \la g^{-1} \theta \wedge \theta \ra,
\end{equation}
where $(g,\tau)$ acts on $V + r + \xi \in E_0$ by
\begin{equation}\label{eq:gtauaction}
(g,\tau) \cdot (V + r + \xi) = V + g(r + i_V a^g) + \xi + i_V \tau - \la i_V a^g,a^g \ra - 2\la a^g,r \ra
\end{equation}
for $a^g := g^{-1}\theta - \theta$. Via this bijection, the group structure on $\Aut(E_0)$ reads
\begin{equation*}\label{eq:productPic}
(g,\tau)(g',\tau') = (gg',\tau + \tau' + \la g'^{-1}a^g \wedge a^{g'} \ra).
\end{equation*}
\end{lemma}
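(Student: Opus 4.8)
The plan is to apply Proposition \ref{prop:HomQQ} directly, using the identification $E_{Q_0} = E_0$ from Lemma \ref{lem:red-for-all-Q}, and then feed in the description of $\Aut(E_0)$ in terms of differential forms provided by \cite[Cor. 4.2]{grt} (cf. \cite[Lem. 2.9]{grt2}). Concretely, one knows that an automorphism of $E_0$ (the complex string algebroid attached to the triple $(\underline P, H, \theta)$) is determined by a pair $(g,\tau)$, with $g \in \cG_{\underline P}$ a gauge transformation and $\tau \in \Omega^2_\CC$ a complex $2$-form, subject to the compatibility condition that makes the resulting bundle map preserve the bracket \eqref{eq:bracket3}; this compatibility is exactly \eqref{eq:MoritaPicexp}, which encodes the change of the three-form $H$ under the gauge transformation up to the exact correction $d\tau$. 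Composing $\varphi$ from Proposition \ref{prop:HomQQ} with this bijection $\Aut(E_0) \cong \{(g,\tau)\}$ yields the claimed canonical bijection between $\Pic(Q_0)$ and the set of pairs satisfying \eqref{eq:MoritaPicexp}.

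First I would recall verbatim (or cite precisely) the normal form of an element of $\Aut(E_0)$ from \cite[Cor. 4.2]{grt}: it acts on $V + r + \xi \in E_0$ by the formula \eqref{eq:gtauaction}, where $a^g := g^{-1}\theta - \theta \in \Omega^1(\ad \underline P)$ measures the change of connection. One checks, or quotes, that this map is orthogonal for the pairing \eqref{eq:pairing3}, covers the identity on $T\underline X \otimes \CC$ and on $T^*\underline X \otimes \CC$, and induces $g$ on $\ad \underline P$; the requirement that it intertwine the Dorfman brackets $[\cdot,\cdot]_{\theta,H}$ is precisely equivalent — via the Chern--Simons transgression formula recalled in Remark \ref{rem:CSexp} — to \eqref{eq:MoritaPicexp}. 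Next I would transport the group law: $\Aut(E_0)$ is a group under composition, and a direct computation of the composition of two maps of the form \eqref{eq:gtauaction} gives $(g,\tau)(g',\tau') = (gg', \tau + \tau' + \la g'^{-1} a^g \wedge a^{g'}\ra)$; the cross term $\la g'^{-1}a^g \wedge a^{g'}\ra$ arises from the non-linearity in $r$ of the terms $g(i_V a^g)$ and $-2\la a^g, r\ra$ in \eqref{eq:gtauaction}. Then Proposition \ref{prop:HomQQ} being a group isomorphism $\Aut(E_{Q_0}) \cong \Pic(Q_0)$, the group structure on $\Pic(Q_0)$ is the one just computed, which is the final displayed formula.

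The main obstacle, such as it is, is bookkeeping rather than conceptual: one must be careful that the identification $E_{Q_0} = E_0$ of Lemma \ref{lem:red-for-all-Q} is compatible with the explicit model of Definition \ref{def:E0}, so that the differential-form description of $\Aut(E_0)$ from \cite{grt} can legitimately be invoked; and one must verify that the cocycle-type term in the product formula has the sign and the $g'^{-1}$-twist exactly as stated (this is where a sign or a placement of $g'^{-1}$ versus $g$ could go wrong). Both points are routine given \cite[Cor. 4.2]{grt} and \cite[Lem. 2.9]{grt2}, so the proof is short: invoke Proposition \ref{prop:HomQQ}, substitute the known description of $\Aut(E_0)$, and read off the transported group law.
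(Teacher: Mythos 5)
Your proposal matches the paper's argument essentially verbatim: the paper obtains this lemma as a direct application of Proposition \ref{prop:HomQQ}, using the identification $E_{Q_0}=E_0$ from Lemma \ref{lem:red-for-all-Q} and quoting \cite[Cor. 4.2]{grt} (cf. \cite[Lem. 2.9]{grt2}) for the description of $\Aut(E_0)$ by pairs $(g,\tau)$, exactly as you propose. The extra verifications you flag (that bracket-preservation is equivalent to \eqref{eq:MoritaPicexp} via Remark \ref{rem:CSexp}, and the computation of the composition law) are precisely the content of the cited corollary, so nothing further is needed.
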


The following result---characterizing the Lie algebra of $\Aut(E_0)$---has been stated in \cite{grt,grt2} without a proof. As it is key for our development in Section \ref{sec:Ham}, we include a detailed proof here. We follow the notation in Lemma~\ref{lem:HomQQ}. 

\begin{lemma}\label{lem:LiePic}
Let $Q_0$ be given by $(P,H,\theta)$. There is a canonical bijection
\begin{equation*}\label{eq:LiePic}
\Lie \Aut(E_0) = \{(s,B) \; | \; d(B - 2\la s, F_\theta \ra) = 0 \} \subset \Omega^0(\ad \underline{P}) \times \Omega^2_\CC.
\end{equation*}
Via this bijection, the adjoint action of $ \Aut(E_0)$ reads
\begin{equation}\label{eq:AdPic}
(g,\tau)(s,B) = (gs,B - \la a^g \wedge [s,a^g]\ra - 2 \la d^\theta s \wedge a^g \ra),
\end{equation}
for any $(g,\tau) \in  \Aut(E_0)$, and the Lie bracket structure is
\begin{equation}\label{eq:bracket}
[(s_0,B_0),(s_1,B_1)] = ([s_0,s_1],2 \la d^\theta s_0 \wedge d^\theta s_1 \ra).
\end{equation}
\end{lemma}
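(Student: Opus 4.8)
Here is how I would prove Lemma \ref{lem:LiePic}.

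The plan is to work entirely inside the explicit model for $\Pic(Q_0)$ furnished by Lemma \ref{lem:HomQQ}, in which a self-equivalence is a pair $(g,\tau)\in\cG_{\underline P}\times\Omega^2_\CC$ satisfying \eqref{eq:MoritaPicexp}, with the stated product. Since $\Lie\cG_{\underline P}=\Omega^0(\ad\underline P)$, a tangent vector at the identity $(\Id,0)$ is obtained by differentiating a curve $(g_t,\tau_t)$ with $(g_0,\tau_0)=(\Id,0)$, and setting $s:=\dot g_0\in\Omega^0(\ad\underline P)$, $B:=\dot\tau_0\in\Omega^2_\CC$. Thus $\Lie\Pic(Q_0)$ is cut out inside $\Omega^0(\ad\underline P)\times\Omega^2_\CC$ by the linearization of \eqref{eq:MoritaPicexp}, while the remaining two formulas follow by differentiating, respectively, the conjugation action $(g,\tau)(g_t,\tau_t)(g,\tau)^{-1}$ and the adjoint action once it is known.

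First I would linearize \eqref{eq:MoritaPicexp}. By Remark \ref{rem:CSexp} applied with $\theta'=g^{-1}\theta$ (so $a=a^g$), its right-hand side equals $2\la a^g,F_\theta\ra+\la a^g,d^\theta a^g\ra+\tfrac13\la a^g,[a^g,a^g]\ra$. Along the curve one has $a^{g_0}=0$ and $\tfrac{d}{dt}\big|_0 a^{g_t}=d^\theta s$, the standard infinitesimal gauge variation of the connection; hence only the term linear in $a^g$ contributes at $t=0$, giving $\tfrac{d}{dt}\big|_0(\mathrm{RHS})=2\la d^\theta s,F_\theta\ra=2\,d\la s,F_\theta\ra$ by the Bianchi identity $d^\theta F_\theta=0$. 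Comparing with $\tfrac{d}{dt}\big|_0(d\tau_t)=dB$ yields $d(B-2\la s,F_\theta\ra)=0$, which is the asserted description of $\Lie\Pic(Q_0)$.

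Next I would compute the adjoint action by differentiating $t\mapsto(g,\tau)(g_t,\tau_t)(g,\tau)^{-1}$ in the product of Lemma \ref{lem:HomQQ}, for fixed $(g,\tau)$ and a curve with $\dot g_0=s$, $\dot\tau_0=B$. The group component gives $\tfrac{d}{dt}\big|_0(gg_tg^{-1})=\Ad_g s=:gs$ at once. For the two-form component the ingredients are: the cocycle identity $a^{gh}=a^h+\Ad_{h^{-1}}a^g$ for the assignment $g\mapsto a^g$ (so in particular $a^{g^{-1}}=-\Ad_g a^g$, which also gives $(g,\tau)^{-1}$); the variation $\tfrac{d}{dt}\big|_0 a^{gg_t}=d^{\theta+a^g}s=d^\theta s+[a^g,s]$; the $\Ad$-invariance of $\la\,,\ra$; and the graded antisymmetry $\la\alpha\wedge\beta\ra=-\la\beta\wedge\alpha\ra$ for $\ad$-valued $1$-forms. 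Substituting these into the two-cocycle $\la g'^{-1}a^g\wedge a^{g'}\ra$ appearing in the product and collecting terms produces exactly $(g,\tau)(s,B)=(gs,\,B-\la a^g\wedge[s,a^g]\ra-2\la d^\theta s\wedge a^g\ra)$, that is \eqref{eq:AdPic}. (Equivalently, one may represent $(s,B)$ by the derivation of $E_0$ read off from \eqref{eq:gtauaction}, namely $V+r+\xi\mapsto[s,r]+i_Vd^\theta s+i_VB-2\la d^\theta s,r\ra$, and conjugate it by the automorphism \eqref{eq:gtauaction}; this is a touch more laborious but gives the same output.)

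Finally, the Lie bracket is $\tfrac{d}{dt}\big|_0\Ad_{(g_t,\tau_t)}(s_1,B_1)$ with $\dot g_0=s_0$: in \eqref{eq:AdPic} the term $\la a^{g_t}\wedge[s_1,a^{g_t}]\ra$ is quadratic in $a^{g_t}$ and drops out to first order, one has $\tfrac{d}{dt}\big|_0 a^{g_t}=d^\theta s_0$ in the last term, and $\tfrac{d}{dt}\big|_0(g_ts_1)=[s_0,s_1]$; after one use of the antisymmetry of $\la\cdot\wedge\cdot\ra$ this yields $[(s_0,B_0),(s_1,B_1)]=([s_0,s_1],\,2\la d^\theta s_0\wedge d^\theta s_1\ra)$, i.e. \eqref{eq:bracket}. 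One then checks that this bracket preserves the defining constraint, i.e. that $d\la d^\theta s_0\wedge d^\theta s_1\ra=d\la[s_0,s_1],F_\theta\ra$, which follows from $(d^\theta)^2s=[F_\theta,s]$, the Bianchi identity, and invariance of $\la\,,\ra$. The argument is elementary throughout; the only real obstacle is the bookkeeping, namely tracking the sign conventions for $\la\cdot\wedge\cdot\ra$ and $[\cdot,\cdot]$ on $\ad$-valued forms, the inhomogeneous term in the $\cG_{\underline P}$-action on connections, and the non-abelian two-cocycle $\la g'^{-1}a^g\wedge a^{g'}\ra$ in the product law of Lemma \ref{lem:HomQQ}.
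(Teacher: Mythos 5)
Your linearization of \eqref{eq:MoritaPicexp}, your computation of the adjoint action by differentiating the conjugation $(g,\tau)(g_t,\tau_t)(g,\tau)^{-1}$ in the product of Lemma \ref{lem:HomQQ}, your derivation of \eqref{eq:bracket}, and your closing check that the bracket preserves the constraint all agree in substance with the paper's argument. However, there is one genuine gap: you only establish the inclusion $\Lie \Pic(Q_0) \subseteq \{(s,B) \; | \; d(B - 2\la s, F_\theta\ra) = 0\}$. The lemma asserts a \emph{bijection}, so you must also show that every pair $(s,B)$ satisfying the constraint arises as the velocity at $t=0$ of an actual curve in $\Pic(Q_0)$ through the identity. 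Your sentence ``Thus $\Lie\Pic(Q_0)$ is cut out \ldots by the linearization of \eqref{eq:MoritaPicexp}'' asserts precisely this surjectivity without argument: differentiating the defining equation along curves gives only a necessary condition on tangent vectors, and the converse is an integrability statement that does not come for free in this infinite-dimensional setting.

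The paper closes this gap by explicit integration. Given $(s,B)$ with $d(B - 2\la s,F_\theta\ra) = 0$, it sets $g_t = e^{ts}$, $\theta_t = g_t^{-1}\theta$, $a_t = \theta_t - \theta$, and
\begin{equation*}
\tau_t = t\big(B - 2\la s, F_\theta\ra\big) + \int_0^t \Big(2\la s, F_{\theta_u}\ra + \la a_u \wedge d^{\theta_u} s\ra\Big)\, du,
\end{equation*}
then uses the identity
\begin{equation*}
\frac{d}{dt}\Big(CS(\theta_t) - CS(\theta) - d\la \theta_t \wedge \theta\ra\Big) = d\Big(2\la s, F_{\theta_t}\ra + \la a_t \wedge d^{\theta_t} s\ra\Big)
\end{equation*}
to conclude that $d\tau_t$ agrees with the right-hand side of \eqref{eq:MoritaPicexp} for all $t$ (both sides vanish at $t=0$ and have equal $t$-derivatives), so $(g_t,\tau_t) \in \Pic(Q_0)$ with $(\dot g_0,\dot\tau_0) = (s,B)$. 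You should supply this step, or an equivalent one, before claiming the bijection.
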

\begin{proof}
Let $(g_t,\tau_t)$ be a one-parameter family in $ \Aut(E_0)$ with $(g_0,\tau_0) = (\Id_{\underline{P}},0)$. Set $a_t = a^{g_t}$, and note that $(\dot a_t)_{|t = 0} = d^\theta s$. Taking derivatives in \eqref{eq:MoritaPicexp} at $t = 0$, it follows that 
\begin{equation}\label{eq:sB}
(s,B) := (\dot g_t, \dot \tau_t)_{|t=0} \in \Omega^0(\ad \underline{P}) \times \Omega^2_\CC
\end{equation}
satisfies 
\begin{equation}\label{eq:dBF}
d(B - 2 \la s, F_\theta \ra) = 0
\end{equation}
(see Remark \ref{rem:CSexp}). Conversely, given $(s,B) \in \Omega^0(\ad \underline{P}) \times \Omega^2_\CC$ satisfying \eqref{eq:dBF}, we define 
$$
(g_t,\tau_t) \in \cG_{\underline{P}} \times \Omega^2_\CC
$$
by $g_t = e^{ts}$ and $\tau_t = t(B - 2 \la s, F_{\theta}\ra) + \mu_t$, where
$$
\mu_t = \int_0^t (2 \la s, F_{\theta_u} \ra + \la a_u \wedge d^{\theta_u} s \ra) du
$$
and $\theta_t = g_t^{-1}\theta$. Notice that $(\dot \tau_t)_{|t = 0} = B$, as required. Setting
$$
C_t := CS(\theta_t) - CS(\theta) - d\la \theta_t \wedge \theta \ra,
$$
we have (see \cite[Lem. 3.23]{grst})
$$
\dot C_t = 2 \la d^{\theta_t} s, F_{\theta_t} \ra + d \la a_t \wedge d^{\theta_t} s \ra = d(2 \la s, F_{\theta_t} \ra + \la a_t \wedge d^{\theta_t} s \ra),
$$
and therefore
$$
d \dot \tau_t - \dot C_t = d \dot \mu_t - \dot C_t = 0.
$$
From $\tau_0 = 0 = C_0$ it follows that $(g_t,\tau_t) \in \Aut(E_0)$ for all $t$.

We prove next formula \eqref{eq:AdPic} for the adjoint action. For $(g_j,\tau_j) \in  \Aut(E_0)$, with $j = 0,1$, denote $a_j := g_j^{-1}\theta - \theta$. Using that
$$
a^{g_0g_1} = g_1^{-1}g_0^{-1} \theta - \theta = g_1^{-1}a_0 + a_1,
$$
we obtain
\begin{align*}
 (g_0,\tau_0) (g_1,\tau_1) (g_0,\tau_0)^{-1} & =  (g_0,\tau_0) (g_1,\tau_1)(g_0^{-1},-\tau_0)\\
 & = (g_0 g_1g_0^{-1},\tau_1 + \la g_1^{-1}a_0 \wedge a_1\ra + \la g_0 a^{g_0g_1} \wedge a^{g_0^{-1}} \ra )\\
 & = (g_0 g_1g_0^{-1},\tau_1 + \la a_0 \wedge g_1^{-1}a_0\ra + \la a^{g_1^{-1}} \wedge a_0\ra + \la a_0 \wedge a_1\ra ).
\end{align*}
Assume now that $(g^1_t,\tau^1_t)$ is a one-parameter family of elements in $\Aut(E_0)$ with $(g^1_0,\tau^1_0)=(g_1,\tau_1)$, and define $(s_1,B_1)$ as in \eqref{eq:sB}. Taking derivatives in the previous expression it follows that
$$
(g_0,\tau_0)(s_1,B_1) = (g_0s_1, B_1 - \la a_0 \wedge [s_1, a_0] \ra - \la d^\theta s_1 \wedge a_0 \ra + \la a_0 \wedge d^\theta s_1 \ra),
$$
as claimed in  \eqref{eq:AdPic}.

Finally, assume that $(g^0_t,\tau^0_t)$ is a one-parameter family of elements in $ \Aut(E_0)$ with $(g^0_0,\tau^0_0)=(g_0,\tau_0)$ and define $(s_0,B_0)$ as in \eqref{eq:sB}. By taking derivatives in the last formula we have
$$
[(s_0,B_0),(s_1,B_1)] = ([s_0,s_1], - 2 \la d^\theta s_1 \wedge d^\theta s_0 \ra),
$$
which proves \eqref{eq:bracket}. 
\end{proof}

To finish, we observe from the first part of the proof of Lemma \ref{lem:LiePic} that the differential of $\sigma_{\underline{P}}$ in \eqref{eq:Picardses} applied to $s \in \Lie \; \cG_{\underline{P}} = \Omega^0(\ad \underline{P})$ vanishes identically, $d \sigma_{\underline{P}}(s) = - [d\la s, F_{\theta_c}\ra] = 0$. Therefore, at the infinitesimal level \eqref{eq:Picardses} induces a short exact sequence
\begin{equation}\label{eq:PicardsesLie}
  \xymatrix{
0 \ar[r] & \Omega^2_{\CC,cl} \ar[r] & \Lie \;  \Aut(E_Q) \ar[r] &  \ar[r]  \Omega^0(\ad \underline{P}) &  0.
  }
\end{equation}

\subsection{Hamiltonian automorphisms}\label{sec:Ham}

In this section we define a normal subgroup
$$
\Aut_{dR}(E_Q) \subset  \Aut(E_Q)
$$
by means of the de Rham cohomology of the complex manifold $X$, which is the key to our moment map picture in Section \ref{sec:mmapmod}. In the literature about Courant algebroids, elements of the group $\Aut_{dR}(E_Q)$ receive the name of \emph{inner symmetries} (see e.g. \cite{grt}), but we shall take a more fundamental approach inspired by symplectic geometry. To fix ideas, we shall think of $\Aut(E_Q)$ as an analogue of the group of symplectomorphisms of a complex symplectic manifold, while the elements in $\Aut_{dR}(E_Q)$ will play the role of complex Hamiltonian symplectomorphisms.

Our first goal is to define a Lie algebra homomorphism
\begin{equation*}\label{eq:aepplimap}
\mathbf{d} \colon \Lie \Aut(E_Q) \to H^{2}(X,\CC),
\end{equation*}
where the de Rham cohomology group $H^{2}(X,\CC)$ is regarded as an abelian Lie algebra. For this, notice that for any choice of representative $[(P,H,\theta)] = [Q] \in H^1(\cS)$ and isomorphism $Q \cong Q_0$, Lemma \ref{lem:LiePic} implies that there is a natural map
\begin{align}\label{eq:aepplimap0}
\begin{split}
\mathbf{d}_0 \colon \Lie \Aut(E_0) & \to H^{2}(X,\CC)\\
(s,B) & \mapsto [B - 2\la s,F_\theta \ra]. 
\end{split}
\end{align}

\begin{lemma}\label{lem:PicAeppli}
There is a canonical linear map
\begin{equation}\label{eq:aepplimap2}
\mathbf{d} \colon \Lie \Aut(E_Q) \to H^{2}(X,\CC),
\end{equation}
which is invariant under the adjoint action of $\Aut(E_Q)$. In particular, \eqref{eq:aepplimap2} is a Lie algebra homomorphism and there is a normal Lie subalgebra 
$$
\Ker \mathbf{d} \subset \Lie \Aut(E_Q).
$$ 
Moreover, for any choice of representative $[(P,H,\theta)] = [Q] \in H^1(\cS)$ and isomorphism $Q \cong Q_0$, the induced homomorphism $\mathbf{d}_0$ coincides with \eqref{eq:aepplimap0}.
\end{lemma}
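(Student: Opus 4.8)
The plan is to \emph{define} $\mathbf{a}$ by the formula \eqref{eq:aepplimap0} in a chosen model, check it lands in Aeppli cohomology, establish $\Ad$-invariance, and then verify that the construction is independent of the model. Fix a representative $[(P,H,\theta)]=[Q]$ and an isomorphism $Q\cong Q_0$; by Lemma~\ref{lem:LiePic} an element of $\Lie\Pic(Q_0)$ is a pair $(s,B)$ for which $\Xi:=B-2\la s,F_\theta\ra$ is $d$-closed. Writing $\Xi=\Xi^{2,0}+\Xi^{1,1}+\Xi^{0,2}$ and reading off the $(1,2)$-component of $d\Xi=0$ gives $\dbar\Xi^{1,1}=-\partial\Xi^{0,2}$, hence $\partial\dbar\Xi^{1,1}=0$, i.e.\ $dd^c\Xi^{1,1}=0$. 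As $F_\theta^{0,2}=0$ we have $\Xi^{1,1}=B^{1,1}-2\la s,F_\theta^{1,1}\ra$, so \eqref{eq:aepplimap0} indeed defines a class in $H^{1,1}_A(X)$.

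For $\Ad$-invariance, the heart of the matter, take $(g,\tau)\in\Pic(Q_0)$ and set $a:=a^g=g^{-1}\theta-\theta$ as in Lemma~\ref{lem:HomQQ}. By \eqref{eq:AdPic} the transformed element is $(gs,B')$ with $B'=B-\la a\wedge[s,a]\ra-2\la d^\theta s\wedge a\ra$, and its associated closed $2$-form is $\Xi'=B'-2\la gs,F_\theta\ra$. Using $F_{g^{-1}\theta}=F_{\theta+a}=F_\theta+d^\theta a+\tfrac12[a,a]$ together with the $\Ad$-invariance of $\la\,,\ra$ (so that $\la gs,F_\theta\ra=\la s,F_{g^{-1}\theta}\ra$), the Leibniz rule $d\la s,a\ra=\la d^\theta s\wedge a\ra+\la s,d^\theta a\ra$, and the algebraic identity $\la a\wedge[s,a]\ra=-\la s,[a,a]\ra$ (again a consequence of invariance of $\la\,,\ra$), one checks that the terms quadratic in $a$ cancel and $\Xi'-\Xi=-2\,d\la s,a\ra$. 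Passing to $(1,1)$-parts, $(\Xi'-\Xi)^{1,1}=\partial(-2\la s,a^{0,1}\ra)+\dbar(-2\la s,a^{1,0}\ra)\in\Im(\partial\oplus\dbar)$, so $\mathbf{a}_0$ is constant along $\Ad$-orbits. (Differentiating this at the identity recovers $\mathbf{a}_0([(s_0,B_0),(s_1,B_1)])=0$, which also follows directly from \eqref{eq:bracket} and the identity $\la d^\theta s_0\wedge d^\theta s_1\ra^{1,1}-\la[s_0,s_1],F_\theta^{1,1}\ra=-\partial\la\dbar^\theta s_0,s_1\ra-\dbar\la\partial^\theta s_0,s_1\ra$, itself a Leibniz-rule computation using the Bianchi identity.)

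To see the construction is canonical, let $Q_0,Q_0'$ be models of $Q$ attached to triples $(P,H,\theta)$ and $(P',H',\theta')$. The isomorphisms $Q\cong Q_0$, $Q\cong Q_0'$ and Proposition~\ref{prop:HomQQ} identify $\Lie\Pic(Q_0)\cong\Lie\Pic(Q_0')$ by conjugation with some isomorphism $\underline\Psi\colon E_0\to E_0'$; since any two such $\underline\Psi$ differ by composition with automorphisms of $E_0$ and $E_0'$, the previous step reduces the comparison of $\mathbf{a}_0$ and $\mathbf{a}_0'$ to a single convenient $\underline\Psi$, namely the one built out of the gauge transformation $g\colon P\to P'$ and the $B$-field shift provided by the relation \eqref{eq:anomaly} between the two triples (cf.\ Proposition~\ref{lemma:deRhamC}). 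A computation parallel to the one above shows that transporting $\Xi$ along $\underline\Psi$ alters it by a $d$-exact $2$-form, so its $(1,1)$-Aeppli class is preserved. Hence $\mathbf{a}_0$ descends to a well-defined $\Ad(\Pic(Q))$-invariant linear map $\mathbf{a}\colon\Lie\Pic(Q)\to H^{1,1}_A(X)$ agreeing with \eqref{eq:aepplimap0} in every model.

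Finally, differentiating $\mathbf{a}(\Ad_g\xi)=\mathbf{a}(\xi)$ in $g$ at the identity gives $\mathbf{a}([\eta,\xi])=0$ for all $\eta,\xi\in\Lie\Pic(Q)$; as $H^{1,1}_A(X)$ is abelian this is exactly the statement that $\mathbf{a}$ is a Lie algebra homomorphism, so $\Ker\mathbf{a}$ is an ideal, and it is $\Ad$-invariant because $\mathbf{a}$ is, hence a normal Lie subalgebra. The only real obstacle is the cancellation in the $\Ad$-invariance step (and its reprise in the canonicity step): once one recognizes $\la a\wedge[s,a]\ra+\la s,[a,a]\ra=0$, the whole invariance collapses to the assertion that $\Xi$ changes by an exact form; everything else is bookkeeping with the explicit formulas of Lemmas~\ref{lem:HomQQ} and~\ref{lem:LiePic}.
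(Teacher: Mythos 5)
Your proposal is correct and follows essentially the same route as the paper's proof: define $\mathbf{a}_0$ in a model, establish $\Ad$-invariance via the key cancellation showing the closed form $B-2\la s,F_\theta\ra$ changes by the exact term $-2d\la s,a^g\ra$, and then run the parallel computation for a change of representative $(P,H',\theta')$. The only differences are cosmetic: you spell out explicitly why $\Xi^{1,1}$ is $\partial\dbar$-closed and why independence of the chosen isomorphism $Q\cong Q_0$ reduces to the invariance step, both of which the paper leaves implicit.
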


\begin{proof}
Let $\underline{f} \in  \Aut(E_Q)$. Given an isomorphism $\psi \colon Q \to Q_0$ (for a choice of representative $(P,H,\theta)$ of $[Q] \in H^1(\cS)$), arguing as in the proof of Lemma \ref{lem:smap} we obtain an isomorphism
$$
\underline \psi : = \psi \oplus \Id_{T^{0,1} X} \oplus \Id_{(T^{0,1} X)^*} \colon E_Q \to E_0
$$
inducing an identification $\Aut(E_Q) \cong \Aut(E_0)$. Thus, by Lemma \ref{lem:LiePic}, an element $\zeta \in \Lie \Aut(E_Q)$ determines uniquely a pair $(s,B) \in \Lie \Aut(E_0)$.
Then, we define
$$
\mathbf{d}(\zeta) = \mathbf{d}_0(s,B) = [B - 2\la s,F_\theta \ra] \in H^{2}(X,\CC).
$$
To check that $\mathbf{d}$ is invariant under the adjoint $\Aut(E_Q)$-action, it is enough to check that $\mathbf{d}_0$ is invariant under the adjoint $\Aut(E_0)$-action. Following Lemma \ref{lem:LiePic}, we define a closed complex two-form
$$
D := B - \la a^g \wedge [s,a^g]\ra - 2 \la d^\theta s \wedge a^g \ra - 2\la gs, F_\theta \ra,
$$
so that $[D] = \mathbf{d}_0((g,\tau)(s,B)) \in H^{2}(X,\CC)$, and calculate
\begin{equation}\label{eq:Picinvariance}
\begin{split}
D & = B + \la [a^g,a^g],s\ra - 2d\la s , a^g \ra + 2\la s , d^\theta a^g \ra - 2\la s, F_{g^{-1}\theta} \ra\\
& = B - 2\la s, F_\theta \ra - 2d\la s, a^g\ra,
\end{split}
\end{equation}
which proves the invariance of $\mathbf{d}_0$.
Here we have used the invariance of the pairing $\la\,,\ra$ combined with
$$
g^{-1}F_\theta = F_{g^{-1}\theta} = F_\theta + d^\theta a^g + \frac{1}{2}[a^g,a^g].
$$
Checking that \eqref{eq:aepplimap2} is independent of choices is left as an exercise.

\end{proof}

We are now ready to define the normal subgroup $\Aut_{dR}(E_Q) \subset \Aut(E_Q)$. Let $\Aut_0(E_Q)$ denote the component of the identity $\Id_{E_Q}$ in $\Aut(E_Q)$. Given an element $\underline{f} \in \Aut_0(E_Q)$ and a smooth family $\underline{f}_t \in \Aut(E_Q)$ such that $\underline{f}_0 = \Id_{E_Q}$ and $\underline{f}_1 = \underline{f}$, there exists a unique family $\zeta_t \in \Lie \Aut(E_Q)$ such that
$$
\frac{d}{dt} \underline{f}_t = \zeta_t \circ \underline{f}_t.
$$
Here, we regard $\zeta_t$ as a vector field on the total space of $E_Q$.

\begin{definition}\label{def:PicA}
Define $\Aut_{dR}(E_Q) \subset \Aut(E_Q)$ as the set of elements $\underline{f} \in \Aut_0(E_Q)$ such that there exists a smooth family $\underline{f}_t \in \Aut(E_Q)$ with $t \in [0,1]$, satisfying $\underline{f}_0 = \Id_{E_Q}$, $\underline{f}_1 = \underline{f}$, and 
\begin{equation}\label{eq:Hamcond}
\mathbf{d}(\zeta_t) = 0, \quad \textrm{ for all $t$.}
\end{equation}
\end{definition}

By analogy with symplectic geometry, a family $\underline{f}_t \in \Aut(E_Q)$ satisfying \eqref{eq:Hamcond} will be called a \emph{Hamiltonian isotopy} on $\Aut(E_Q)$. Notice that any smooth family $\zeta_t \in \Lie \Aut(E_Q)$ satisfying \eqref{eq:Hamcond} generates a Hamiltonian isotopy. 

\begin{proposition}\label{prop:PicA}
The subset $\Aut_{dR}(E_Q) \subset \Aut(E_Q)$ defines a normal subgroup of $\Aut(E_Q)$ with Lie algebra $\Ker \mathbf{d}$.
\end{proposition}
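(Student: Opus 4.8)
The plan is to show three things in turn: that $\Pic_A(Q)$ is a subgroup, that it is normal, and that its Lie algebra is $\Ker \mathbf{a}$. The crucial structural input is that $\mathbf{a} \colon \Lie\Pic(Q) \to H^{1,1}_A(X)$ is a Lie algebra homomorphism to an abelian Lie algebra which is moreover invariant under the adjoint action of $\Pic(Q)$, as established in Lemma \ref{lem:PicAeppli}. In other words, $\Pic_A(Q)$ is, up to the connectedness subtlety, the normal integral subgroup of $\Pic_0(Q)$ associated to the ideal $\Ker\mathbf{a}$; the argument is the standard one for defining a subgroup by a condition on the logarithmic derivative of paths, and the proof will make this precise in our setting via Proposition \ref{prop:HomQQ}, which identifies $\Pic(Q) \cong \Aut(E_Q)$ so that elements of $\Lie\Pic(Q)$ may be treated as vector fields on the total space of $E_Q$ and composition/inversion behaves as in an ordinary (infinite-dimensional) group.

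First I would prove closure under multiplication and inversion. Given $\underline f, \underline g \in \Pic_A(Q)$ with Hamiltonian isotopies $\underline f_t, \underline g_t$ generated by $\zeta_t, \eta_t$ with $\mathbf{a}(\zeta_t) = \mathbf{a}(\eta_t) = 0$, I would concatenate: first run $\underline g_t$ from $\Id$ to $\underline g$, then run $\underline f_t \circ \underline g$ from $\underline g$ to $\underline f \circ \underline g$. The generator of the second leg at time $t$ is $\tfrac{d}{dt}(\underline f_t \circ \underline g)\circ(\underline f_t\circ \underline g)^{-1} = \zeta_t$, so the concatenated path is again a Hamiltonian isotopy (after the usual reparametrisation to make it smooth), giving $\underline f \circ \underline g \in \Pic_A(Q)$. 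For inversion, the path $\underline f_t^{-1}$ from $\Id$ to $\underline f^{-1}$ has generator $-\mathrm{Ad}(\underline f_t^{-1})\zeta_t$ (differentiate $\underline f_t \circ \underline f_t^{-1} = \Id$), and by the adjoint-invariance of $\mathbf{a}$ in Lemma \ref{lem:PicAeppli} we get $\mathbf{a}(-\mathrm{Ad}(\underline f_t^{-1})\zeta_t) = -\mathbf{a}(\zeta_t) = 0$, so $\underline f^{-1} \in \Pic_A(Q)$. Normality is the same computation: if $\underline f \in \Pic_A(Q)$ via $\underline f_t$ generated by $\zeta_t$ and $\underline h \in \Pic(Q)$ is arbitrary, then $\underline h \underline f_t \underline h^{-1}$ is a path from $\Id$ to $\underline h \underline f \underline h^{-1}$ whose generator is $\mathrm{Ad}(\underline h)\zeta_t$, again killed by $\mathbf{a}$.

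For the Lie algebra statement, the inclusion $\Lie\Pic_A(Q) \subseteq \Ker\mathbf{a}$ is immediate: a one-parameter subgroup through $\Id$ with $\mathbf{a}$-vanishing generator lies in $\Pic_A(Q)$ by definition of the latter, so $\Ker\mathbf{a}$ is contained in $\Lie\Pic_A(Q)$; conversely any $\zeta \in \Lie\Pic_A(Q)$ is the velocity of a curve in $\Pic_A(Q)$, and $\mathbf{a}(\zeta)$ can be computed as the Aeppli-cohomology velocity of the corresponding curve of classes, which is constant (in fact zero) along a Hamiltonian isotopy, so $\mathbf{a}(\zeta) = 0$. Here I would use the explicit model: fixing $[(P,H,\theta)] = [Q]$ and working in $\Pic(Q_0)$ via Lemma \ref{lem:HomQQ}, a curve $(g_t,\tau_t)$ with generator $(s_t, B_t) \in \Lie\Pic(Q_0)$ has $\mathbf{a}_0(s_t,B_t) = [B_t^{1,1} - 2\la s_t, F_\theta^{1,1}\ra]$, and the condition that this vanish for all $t$ is exactly \eqref{eq:Hamcond}, so the two descriptions of $\Lie\Pic_A(Q)$ agree.

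The main obstacle is the one genuinely analytic/topological point hidden in Definition \ref{def:PicA}: $\Pic_A(Q)$ is defined by the existence of \emph{some} Hamiltonian isotopy, and to conclude that it is a subgroup one must know that concatenation and the adjoint-conjugation constructions above actually produce paths lying in $\Pic(Q)$ and that the generators behave as claimed — i.e. that $\Pic(Q) \cong \Aut(E_Q)$ carries enough of a (Fréchet/diffeological) Lie group structure that $\tfrac{d}{dt}(\underline f_t)\circ \underline f_t^{-1}$ makes sense and the chain/product rules hold. All of this is implicit in the setup of Section \ref{sec:picard} (Corollary \ref{cor:PicQ}, Lemma \ref{lem:LiePic}), and in the explicit model $\Pic(Q_0)$ of Lemma \ref{lem:HomQQ} it is a finite-order differential-form computation, so the proof will reduce normality and the subgroup property to the adjoint-invariance of $\mathbf{a}$ already proved in Lemma \ref{lem:PicAeppli}, and the Lie-algebra identification to the formula \eqref{eq:aepplimap0} for $\mathbf{a}_0$.
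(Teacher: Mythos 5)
Your proof is correct and follows essentially the same route as the paper, which likewise reduces everything to the adjoint-invariance of $\mathbf{a}$ from Lemma \ref{lem:PicAeppli} and the standard isotopy computations of \cite[Prop. 10.2]{McS} (generator of a product, of an inverse, and of a conjugate). The only cosmetic difference is that for closure under composition you concatenate the isotopy of $\underline g$ with $\underline f_t\circ\underline g$, whereas the paper uses the pointwise product path $\underline f^0_t\circ\underline f^1_t$ with generator $\zeta^0_t+\Ad(\underline f^0_t)\zeta^1_t$; both work, and your treatment of the Lie algebra identification is in fact slightly more explicit than the paper's one-line "follows by definition".
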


\begin{proof}
The proof is a formality, following Lemma \ref{lem:PicAeppli} and \cite[Prop. 10.2]{McS}. 
\end{proof}

\begin{remark}\label{rem:flux}
By analogy with symplectic geometry, it is natural to consider a notion of \emph{flux homomorphism} on the universal cover of $\Aut(E_Q)$ (see \cite[Sec. 10.2]{McS}). We leave this interesting perspective for future work.
\end{remark}

\begin{remark}\label{rem:PicAeppli}
A different normal subgroup $\Aut_A(E_Q) \subset \Aut(E_Q)$ associated to the Aeppli cohomology group $H^{1,1}_A(X)$ will be considered in Appendix \ref{sec:moduliapp}.
\end{remark}

\section{The Chern correspondence}

\subsection{Background on Bott-Chern theory}\label{sec:BCback}

The goal of this section is to prove an analogue of the classical Chern correspondence
in the context of string algebroids. We first recall some background about Bott-Chern theory which we will need.

Let $G$ be a complex reductive Lie group. Let $P$ be a holomorphic principal $G$-bundle over a complex manifold $X$. We fix a maximal compact subgroup $K \subset G$, and an invariant non-degenerate pairing $\la\,, \ra$ on the Lie algebra $\mathfrak{g}$ of $G$.  We will assume that it satisfies the reality condition 
\begin{equation*}\label{eq:creal}
\la \mathfrak{k}\otimes \mathfrak{k}\ra \subset \R
\end{equation*}
for the Lie algebra $\mathfrak{k} \subset \mathfrak{g}$ of $K$. Given a reduction $h \in \Omega^0(P/K)$ of $P$ to $K$, there is a uniquely defined Chern connection $\theta^h$, whose curvature $F_h := F_{\theta^h}$ satisfies
\begin{equation*}\label{eq:F02}
F_h^{0,2} = F_h^{2,0} = 0.
\end{equation*}
We denote by $P_h \subset P$ the corresponding principal $K$-bundle.

The following result considers secondary characteristic classes introduced by Bott and Chern \cite{BottChern} (see also \cite{BGS,Don}). We denote by $\Omega^{1,1}_\RR$ the space of real $(1,1)$-forms on $X$. 

\begin{proposition}[\cite{BGS,Don}]\label{prop:Donaldson}
For any pair of reductions $h_0,h_1 \in \Omega^0(P/K)$ there is a secondary characteristic class
\begin{equation}\label{eq:BCinvariant}
R(h_1,h_0) \in \Omega^{1,1}_\R/\operatorname{Im}(\partial \oplus \dbar)
\end{equation}
with the following properties:
\begin{enumerate}

\item $R(h_0,h_0) = 0$, and, for any third reduction $h_2$,
$$
R(h_2,h_0) = R(h_2,h_1) + R(h_1,h_0),
$$

\item if $h$ varies in a one-parameter family $h_t$, then
\begin{equation}\label{eqref:BCinvariantder}
\frac{d}{dt}R(h_t,h_0) = -2i \la \dot h_t h_t^{-1},F_{h_t} \ra,
\end{equation}

\item the following identity holds
$$
dd^c R(h_1,h_0) = \la F_{h_1}\wedge F_{h_1}\ra - \la F_{h_0}\wedge F_{h_0}\ra.
$$
\end{enumerate}
\end{proposition}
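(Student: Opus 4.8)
The plan is to construct $R(h_1,h_0)$ by integration, using property (2) as the defining differential relation, and then verify the remaining properties by differentiating under the integral sign. First I would fix the target: the space $P/K$ is an affine bundle over $X$ whose fibers are contractible (indeed $G/K$ is a symmetric space of noncompact type, hence diffeomorphic to a vector space), so any two reductions $h_0,h_1$ can be joined by a smooth path $h_t$, $t\in[0,1]$, and in fact the space of such paths is contractible. I would then \emph{define}
\begin{equation*}
R(h_1,h_0) := -2i\int_0^1 \la \dot h_t h_t^{-1}, F_{h_t}\ra\, dt \;\;\in\;\; \Omega^{1,1}_\RR/\operatorname{Im}(\partial\oplus\dbar),
\end{equation*}
where one must check the integrand is a real $(1,1)$-form: it is of type $(1,1)$ because $F_{h_t}^{2,0}=F_{h_t}^{0,2}=0$ for a Chern connection, and it is real because of the reality condition $\la\mathfrak k\otimes\mathfrak k\ra\subset\RR$ together with the fact that $\dot h_t h_t^{-1}$ takes values in the relevant real form of $\ad P$ (the ``$i\mathfrak k$'' directions transverse to $K$). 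Property (2) is then immediate from this definition for the standard path, and extends to arbitrary one-parameter families by the path-independence argument below.

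The crux is well-definedness modulo $\operatorname{Im}(\partial\oplus\dbar)$, i.e.\ independence of the chosen path $h_t$. Here I would use the standard transgression computation from Bott--Chern theory: given a two-parameter family $h_{t,u}$ interpolating between two paths (with fixed endpoints $h_0,h_1$), one computes $\tfrac{\partial}{\partial u}$ of the integrand and shows, using the Bianchi identity $d^{\theta}F_\theta=0$, the variation formula $\dot F_{h_t} = d^{\theta^{h_t}}(\dot\theta^{h_t})$, and integration by parts in $t$, that the difference of the two integrals lies in $\partial(\cdot)+\dbar(\cdot)$. This is the step I expect to be the main obstacle — not because it is deep, but because it requires carefully tracking the Chern-connection variation $\dot\theta^{h_t}$ in terms of $\dot h_t h_t^{-1}$ (it has a $(1,0)$ and a $(0,1)$ piece which are $\partial^{\theta}$ and $\dbar^{\theta}$ of the endomorphism, essentially), and then recognizing the resulting exact terms as $\partial$ and $\dbar$ of honest global forms. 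I would lean on the references \cite{BGS,Don} for the precise bookkeeping rather than reproduce it in full.

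With well-definedness in hand, the three listed properties follow quickly. For (1): $R(h_0,h_0)=0$ is clear from the constant path, and the cocycle identity $R(h_2,h_0)=R(h_2,h_1)+R(h_1,h_0)$ follows by concatenating a path from $h_0$ to $h_1$ with one from $h_1$ to $h_2$ and using additivity of the integral (path-independence lets us use any representative path). For (3): differentiate $dd^c R(h_t,h_0)$ in $t$; using (2) and $dd^c = 2i\partial\dbar$ one gets $\tfrac{d}{dt}dd^cR(h_t,h_0) = -2i\, dd^c\la\dot h_t h_t^{-1},F_{h_t}\ra$, and a direct computation — again via the Bianchi identity and the formula $\dot F_{h_t}=d^{\theta^{h_t}}(\ldots)$ — identifies this with $\tfrac{d}{dt}\la F_{h_t}\wedge F_{h_t}\ra$, since $\tfrac{d}{dt}\la F_{h_t}\wedge F_{h_t}\ra = 2\la \dot F_{h_t}\wedge F_{h_t}\ra = 2\, d\la \dot\theta^{h_t}\wedge F_{h_t}\ra$ and one matches the $(2,2)$-parts. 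Integrating from $h_0$ to $h_1$ and using $R(h_0,h_0)=0$ gives the stated identity $dd^cR(h_1,h_0)=\la F_{h_1}\wedge F_{h_1}\ra - \la F_{h_0}\wedge F_{h_0}\ra$.
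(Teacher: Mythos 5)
Your proposal is correct and follows essentially the same route the paper takes (and attributes to Donaldson): define $\tilde R(h_1,h_0) = -2i\int_0^1 \la \dot h_t h_t^{-1},F_{h_t}\ra\,dt$ along a path of reductions, observe that a different path changes the result by an element of $\operatorname{Im}(\partial\oplus\dbar)$, and deduce properties (1)--(3) from the integral representation, deferring the transgression bookkeeping to \cite{BGS,Don}. Nothing further is needed.
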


As observed by Donaldson in \cite[Prop. 6]{Don}, the \emph{Bott-Chern class} \eqref{eq:BCinvariant} can be defined by integration of \eqref{eqref:BCinvariantder} along a path in the space of reductions of $P$. More precisely, given $h_0$ and $h_1$, one defines
\begin{equation}\label{eq:Rtilde}
\tilde R(h_1,h_0) = -2i \int_0^1 \la \dot h_t h_t^{-1},F_{h_t}\ra dt \in \Omega^{1,1}_\R,
\end{equation}
for a choice of path $h_t$ joining $h_0$ and $h_1$. For a different choice of path, $\tilde R(h_1,h_0)$ differs by an element in $\operatorname{Im}(\partial \oplus \dbar)$, and hence there is a well-defined class $R(h_1,h_0) = [\tilde R(h_1,h_0)]$ in \eqref{eq:BCinvariant}. 

The other piece of information which we will need is the following technical lemma from \cite{grst}. Given a reduction $h \in \Omega^0(P/K)$, using the polar decomposition 
\begin{equation*}\label{eq:polar}
G = \exp(i \mathfrak{k})\cdot K
\end{equation*}
we regard $h$ as a $K$-equivariant map $h \colon P \to \exp(i \mathfrak{k})$. Recall that given an element $g \in \cG_{\underline{P}}$ regarded as an equivariant map $g \colon P \to G$, there is a well-defined covariant derivative
$$
d^h g = g^*\omega^L\circ (\theta^h)^\perp \in \Omega^1(\ad \underline P) 
$$
where $\omega^L$ is the (left-invariant) Maurer-Cartan $1$-form on $G$ and $(\theta^h)^\perp$ denotes the horizontal projection with respect to the Chern connection of $h$.

\begin{lemma}[\cite{grst}]\label{lem:CSRinvariant}
Let $h,h'$ be reductions of $P$. Define $\tilde R(h',h) \in \Omega_\R^{1,1}$ as in \eqref{eq:Rtilde}, where $h' = e^{iu} h$, for $iu \in \Omega^0(i \ad P_{h})$, and $h_t = e^{tiu} h$. Then,
\begin{equation*}\label{eq:hermitianformula}
2i\partial \tilde R(h',h) + CS(\theta^{h'}) - CS(\theta^{h}) - d\la \theta^{h'} \wedge \theta^{h} \ra = d B^{2,0},
\end{equation*}
where
\begin{equation*}\label{eq:B20exp}
B^{2,0} = - \int_0^1 \la a_t \wedge \dot a_t \ra dt  \in \Omega^{2,0}
\end{equation*}
and $a_t: = \theta^{h} - \theta^{h_t} = - \partial^{h} (e^{-2tiu})$ and $\dot a_t = 2i \partial^{h_t} u$.
\end{lemma}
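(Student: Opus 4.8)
The statement to prove is Lemma~\ref{lem:CSRinvariant}: for reductions $h' = e^{iu}h$ with $iu \in \Omega^0(i\ad P_h)$ and the path $h_t = e^{tiu}h$, one has the identity
\[
2i\partial \tilde R(h',h) + CS(\theta^{h'}) - CS(\theta^{h}) - d\la \theta^{h'} \wedge \theta^{h} \ra = d B^{2,0}
\]
with $B^{2,0} = - \int_0^1 \la a_t \wedge \dot a_t \ra dt$, $a_t = \theta^h - \theta^{h_t}$, $\dot a_t = 2i\partial^{h_t} u$. Since this is attributed to \cite{grst}, the plan is to reconstruct the proof by differentiating both sides of the asserted identity along the family $h_t$, checking that the $t$-derivatives agree and that both sides vanish at $t = 0$, so that integration in $t$ yields the claim.

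\emph{Step 1: vanishing at $t=0$.} At $t=0$ we have $h_0 = h$, hence $\theta^{h_0} = \theta^h$, so $CS(\theta^{h_0}) - CS(\theta^h) - d\la\theta^{h_0}\wedge\theta^h\ra = CS(\theta^h) - CS(\theta^h) - d\la\theta^h\wedge\theta^h\ra$. Using Remark~\ref{rem:CSexp} with $a = 0$ this whole expression is $0$; also $\tilde R(h_0,h) = 0$ by \eqref{eq:Rtilde} and $a_0 = 0$ so $B^{2,0}|_{t=0} = 0$. Thus both sides of the claimed identity vanish at $t=0$, and it suffices to match derivatives.

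\emph{Step 2: differentiating the right-hand side and the $\tilde R$ term.} For the left-hand side, $\tfrac{d}{dt}\tilde R(h_t,h) = -2i\la \dot h_t h_t^{-1}, F_{h_t}\ra$ by definition \eqref{eq:Rtilde}; since $\dot h_t h_t^{-1} = iu$ (one should check this for the specific family $h_t = e^{tiu}h$, using the equivariant-map description of reductions from Section~\ref{sec:BCback}), this derivative is $2\la u, F_{h_t}\ra$. Applying $2i\partial$ gives $\tfrac{d}{dt}(2i\partial\tilde R) = 4i\partial\la u, F_{h_t}\ra$. For the right-hand side, $\tfrac{d}{dt}B^{2,0} = -\la a_t \wedge \dot a_t\ra$, and $\tfrac{d}{dt}(dB^{2,0}) = -d\la a_t\wedge\dot a_t\ra$; one expands $d\la a_t\wedge\dot a_t\ra = \la d^{\theta^{h_t}} a_t \wedge \dot a_t\ra - \la a_t \wedge d^{\theta^{h_t}}\dot a_t\ra + (\text{curvature correction terms})$, being careful because $a_t$ and $\dot a_t$ are $\ad$-valued and $\theta^{h_t}$ itself varies.

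\emph{Step 3: the Chern--Simons variation and matching.} The main input is the variational formula for the Chern--Simons term: I would cite or re-derive (as in \cite[Lem. 3.24]{grst}, already invoked in the proof of Lemma~\ref{lem:LiePic} of this excerpt) that along a path of connections $\theta_t$ with $\dot\theta_t = -\dot a_t$, one has $\tfrac{d}{dt}\bigl(CS(\theta^{h_t}) - CS(\theta^h) - d\la\theta^{h_t}\wedge\theta^h\ra\bigr)$ expressed via $\la \dot a_t, F_{\theta^{h_t}}\ra$ and exact terms. Then I would substitute $\dot a_t = 2i\partial^{h_t} u$ and $a_t = -\partial^h(e^{-2tiu})$, use $F_{h_t}^{0,2} = F_{h_t}^{2,0} = 0$ (so $F_{h_t} = F_{h_t}^{1,1}$) and the Bianchi identity $d^{h_t} F_{h_t} = 0$, and collect terms. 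The $(1,1)$-type constraint on $F_{h_t}$ and the Kähler-type identity $\dbar^{h_t}\partial^{h_t} u + \partial^{h_t}\dbar^{h_t} u$ relating to curvature brackets should make the $4i\partial\la u, F_{h_t}\ra$ term coming from Step~2 cancel against the $2i$-times-$\partial$-of-$\dbar$ part of the Chern--Simons variation, leaving exactly $-d\la a_t\wedge\dot a_t\ra$.

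\emph{Main obstacle.} The delicate point is bookkeeping the bidegree decomposition and the $\ad$-valued exterior algebra signs: $CS(\theta)$ is built from $\la\theta\wedge[\theta,\theta]\ra$ and $\la F_\theta\wedge\theta\ra$, and after shifting the connection by a non-$(1,0)$ piece $a_t$ one must carefully track which components are $(2,0)$, $(1,1)$, etc., because the claimed correction $B^{2,0}$ is purely of type $(2,0)$ whereas $\tilde R$ is of type $(1,1)$ — so the identity is secretly an identity of $(2,1)+(3,0)$ forms whose $(2,1)$ and $(3,0)$ parts must be checked separately. Reconciling $\tilde R \in \Omega^{1,1}_\R$ hit by $\partial$ (landing in $\Omega^{2,1}$) with the $(3,0)$ and $(2,1)$ parts of the Chern--Simons difference, and verifying that no $(1,2)$ or $(0,3)$ part survives (which uses $F_{h_t}^{0,2}=0$ crucially), is where the real work lies; everything else is a mechanical consequence of Proposition~\ref{prop:Donaldson}(2) and the standard Chern--Simons transgression formula.
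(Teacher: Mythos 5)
The paper itself gives no proof of this lemma---it is imported verbatim from \cite{grst}---so there is nothing internal to compare against; judged on its own terms, your strategy is correct and is the natural one: both sides vanish at $t=0$, and their $t$-derivatives along $h_t=e^{tiu}h$ agree. Concretely, setting $C_t = CS(\theta^{h_t})-CS(\theta^h)-d\la\theta^{h_t}\wedge\theta^h\ra$ and $\alpha_t=\theta^{h_t}-\theta^h=-a_t$, the transgression formula of Remark \ref{rem:CSexp} differentiates to $\dot C_t = 2\la\dot\alpha_t\wedge F_{h_t}\ra - d\la\alpha_t\wedge\dot\alpha_t\ra = -4i\la\partial^{h_t}u\wedge F_{h_t}\ra - d\la a_t\wedge\dot a_t\ra$, which is exactly the computation the paper itself performs in the proofs of Lemma \ref{lem:LiePic} and Lemma \ref{lemma:TBQ}. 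One correction to your Step 3: no K\"ahler-type identity involving $\dbar^{h_t}\partial^{h_t}u+\partial^{h_t}\dbar^{h_t}u$ is needed. The required cancellation is just $\partial\la u,F_{h_t}\ra = \la\partial^{h_t}u\wedge F_{h_t}\ra$, which follows immediately from the Bianchi identity $d^{h_t}F_{h_t}=0$ together with $F_{h_t}$ being of type $(1,1)$ (take the $(2,1)$-part of $d\la u,F_{h_t}\ra=\la d^{h_t}u\wedge F_{h_t}\ra$). Likewise your worry about $(1,2)$ and $(0,3)$ components is moot: since $a_t$ and $\dot a_t=2i\partial^{h_t}u$ are $(1,0)$-forms and $F_{h_t}$ is $(1,1)$, every term in sight already lives in $\Omega^{3,0}\oplus\Omega^{2,1}$, so no separate type-by-type check is required.
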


\subsection{Bott-Chern algebroids and compact forms}\label{sec:BCreal}

Our next goal is to study a special type of string algebroids---known as Bott-Chern algebroids---which appear in the Chern correspondence. These are tight to Bott-Chern secondary characteristic classes and a notion of `reduction to a maximal compact subgroup' for string algebroids, which we introduce next. 

A smooth Courant algebroid $(E_\R,\la\, , \ra,[\,,],\pi)$ over a smooth manifold $\underline X$ consists of a smooth vector bundle $E_\R \to \underline X$ together with a non-degenerate symmetric bilinear form $\la\, ,\ra$, a vector bundle morphism $\pi:E_\R\to T\underline X$ and a bracket $[\, ,]$ on sections 
satisfying the Courant algebroid axioms (see (D1)-(D5) in  Section \ref{sec:string}). 

For our next definition, we fix a compact Lie group $K$ and an invariant non-degenerate pairing $\la\,, \ra$ on the Lie algebra $\mathfrak{k}$ of $K$.

\begin{definition}\label{def:Courantreal}
	A \emph{real string algebroid} with structure group $K$ is a tuple $(P_\RR,E_\RR,\rho_{\RR})$, where $P_\RR$ is a smooth principal $K$-bundle over $\underline X$, $E_\RR$ is a smooth (real) Courant algebroid over $\underline X$, and $\rho_{\RR}$ is a bracket-preserving morphism inducing a short exact sequence
	\begin{equation*}\label{eq:defstringsmooth}
	\xymatrix{
		0 \ar[r] & T^*\underline X \ar[r] & E_\RR \ar[r]^{\rho_{\RR}} & A_{P_\RR} \ar[r] & 0,
	}
	\end{equation*}
	such that the induced map of Lie algebroids $\rho_{\RR} \colon A_{E_\RR} \to A_{P_\RR}$ is an isomorphism restricting to an isomorphism $\ad_{E_\RR} \cong (\ad P_\RR,\la\, ,\ra)$.
\end{definition}

Analogously to holomorphic and complex string algebroids, we denote by $H^1(\cS_\RR)$ the set of isomorphism classes of real string algebroids on $\underline{X}$ with structure group $K$. By \cite[Prop. A.6]{grt2}, elements in $H^1(\cS_\RR)$ are represented by equivalence classes of triples $(P_\RR,H_\RR,\theta_\RR)$ satisfying
$$
d H_\RR + \la F_{\theta_\RR} \wedge F_{\theta _\RR}\ra = 0,
$$
where $P_\RR$ is a principal $K$-bundle, $H_\RR$ is a real $3$-form on $\underline{X}$, and $\theta_\RR$ is a connection on $P_\RR$. The triple $(P_\RR,H_\RR,\theta_\RR)$ is related to $(P'_\RR,H'_\RR,\theta'_\RR)$ if there exists an isomorphism $g \colon P_\RR \to P'_\RR$ 
such that, for some real two-form $B\in \Omega^{2}$,
\begin{equation*}\label{eq:anomalyreal}
H'_\R  = H_\R + CS(g\theta_\RR) - CS(\theta'_\RR) - d\la g\theta_\RR \wedge \theta'_\RR\ra + dB.
\end{equation*}

When there is no possibility of confusion, a real string algebroid  $(E_\RR,P_\RR,\rho_\RR)$ will be denoted simply by $E_\RR$. We consider now a complex reductive Lie group $G$, with maximal compact $K \subset G$. Given a principal $K$-bundle $P_\RR$, we can induce uniquely a smooth principal $G$-bundle
\begin{equation}\label{eq:PKG}
\underline P = P_\RR \times_K G.
\end{equation}
Similarly, any real string algebroid over $\underline X$ induces uniquely a complex string algebroid---in the sense of Definition \ref{def:Courantcx}. The underlying principal $G$-bundle is $\underline P$ as in \eqref{eq:PKG}, the complex vector bundle is $E=E_\R\otimes \C$, and there is a commutative diagram,
	\begin{equation}\label{eq:realform}
	\begin{gathered}
	\xymatrix{
		0 \ar[r] & T^*\underline X \otimes \CC \ar[r] & E \ar[r]^{\rho_c} & A_{\underline P} \ar[r] & 0\\
		0 \ar[r] & T^* \underline X \ar[r] \ar[u]^{\cup} & E_\RR \ar[r]^{\rho_\RR}\ar[u]^{\cup}  & A_{P_\RR} \ar[r] \ar[u]& 0,
	}
	\end{gathered}
	\end{equation}
where the vertical arrows are canonical, such that the $\CC$-linear extension of the bracket, the pairing, and the morphism $\rho_{\RR}$ in the bottom sequence induce an isomorphism (this follows by using the universal property of the Atiyah algebroid $A_{\underline P}$). 
Note that the map $A_{P_\R}\to A_{\underline P}$ is not set-theoretically an inclusion, but a canonical injective map (following the definition of $A_{\underline P}$ in \eqref{eq:Atiyahseq}). This construction will be referred to as the `complexification' of $E_\RR$. Conversely, we have the following.

\begin{definition}\label{def:realform}
Let $E$ be a complex string algebroid. A \emph{compact form} of $E$ is a real string algebroid $E_\RR$ with structure group $K$ fitting into a diagram \eqref{eq:realform}. Compact forms will be denoted simply by $E_\RR \subset E$.
\end{definition}

\begin{example}\label{ex:real-form-in-standard}
 Let $E_0$ be the complex string algebroid given by $(\underline P,H_\RR,\theta_\RR)$ with $H_\RR\in \Omega^3\subset \Omega^3_\C$ a real three-form and $\theta_\RR$ a connection on $\underline{P}$ induced by a connection on some reduction $P_\RR \subset \underline P$ to the maximal compact subgroup (cf. Definition \ref{def:E0}). Then, the tuple $(P_\RR,H_\RR,\theta_\RR)$ defines a compact form
$$
 E_{0,\R}:=T\underline X \oplus \ad P_\RR \oplus  T^*\underline X \subset E_0.
$$
\end{example}

Let $Q$ be a string algebroid over a complex manifold $X$, with underlying holomorphic principal $G$-bundle $P$ and smooth manifold $\underline X$. From Lemma \ref{lem:red-for-all-Q}, $Q$ has a canonically associated complex string algebroid $E_Q$. 

\begin{definition}\label{def:BCtype}
A Bott-Chern algebroid over $X$ is a string algebroid $Q$ such that $E_Q$ admits a compact form $E_\RR \subset E_Q$.
\end{definition}

We provide next a handy characterization of the notion of Bott-Chern algebroid, which recovers the definition given originally in \cite{grst}. The proof requires the Bott-Chern classes considered in Proposition \ref{prop:Donaldson} and Lemma \ref{lem:CSRinvariant}. 

\begin{lemma}\label{lemma:BCrealforms}
A string algebroid $Q$ is Bott-Chern if and only if there exists $(\omega,h) \in \Omega^{1,1}_\RR \times \Omega^0(P/K)$ satisfying
\begin{equation}\label{eq:misteriouseqb}
dd^c \omega + \la F_h \wedge F_h \ra = 0
\end{equation}
and $[Q]=[(P,-2i\partial \omega,\theta^h)] \in H^1(\cS)$.
\end{lemma}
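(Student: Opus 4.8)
The plan is to unravel both directions through the explicit models of Section~2, using Lemma~\ref{lem:CSRinvariant} as the bridge between the Chern-Simons obstruction and the Bott-Chern class. For the ``if'' direction, suppose $(\omega,h)$ satisfies \eqref{eq:misteriouseqb} and $[Q] = [(P,-2i\partial\omega,\theta^h)]$. Since $F_h^{0,2} = F_h^{2,0} = 0$, the connection $\theta^h$ on $P_h \subset P$ is automatically in $\cA_P$, and $H_\RR := -2i\partial\omega + \overline{-2i\partial\omega}$ is a real $3$-form; the key identity to verify is that $H := -2i\partial\omega = H_\RR^{3,0 + 2,1}$ together with \eqref{eq:misteriouseqb} gives $dH_\RR + \la F_{\theta^h}\wedge F_{\theta^h}\ra = 0$, so that $(P_h, H_\RR, \theta^h)$ defines a real string algebroid in the sense of the classification after Definition~\ref{def:Courantreal}. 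I would then take $E_\RR$ to be the compact form described in Example~\ref{ex:real-form-in-standard} for the triple $(\underline P, H_\RR, \theta^h)$, whose complexification $E$ is the standard model $E_0$ for $(\underline P, H_\RR, \theta^h)$. It remains to show $E \cong E_{Q}$, i.e.\ that $s([Q]) = [(\underline P, H_\RR, \theta^h)] \in H^1(\underline\cS)$; by Remark~\ref{rem:smap} this means showing $(\underline P, -2i\partial\omega, \theta^h) \sim (\underline P, H_\RR, \theta^h)$ as \emph{complex} string algebroids, which follows because $H_\RR - (-2i\partial\omega) = \overline{-2i\partial\omega} = 2i\bar\partial\omega$ is $\partial$-exact... more precisely one checks it has the form $dB$ for $B \in \Omega^2_\CC$ allowed by Proposition~\ref{lemma:deRhamCsmooth}. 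Hence $E_Q$ admits the compact form $E_\RR$ and $Q$ is Bott-Chern.

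For the ``only if'' direction, suppose $E_Q$ admits a compact form $E_\RR \subset E_Q$, with underlying $K$-bundle $P_\RR$ and $[E_\RR] = [(P_\RR, H_\RR, \theta_\RR)] \in H^1(\cS_\RR)$. Complexifying, $[E_Q] = [(\underline P, H_\RR, \theta_\RR)] \in H^1(\underline\cS)$ with $H_\RR$ real and $\theta_\RR$ a $K$-connection, hence $F_{\theta_\RR}^{0,2}$ need not vanish a priori — but since $[E_Q] = s([Q])$ by Lemma~\ref{lemma:bricks}, and $s([Q])$ is represented by the holomorphic data $(\underline P, H, \theta)$ with $\theta \in \cA_P$ and $H \in \Omega^{3,0+2,1}$, comparing the two representatives via the equivalence relation in Proposition~\ref{lemma:deRhamCsmooth} produces an isomorphism $g\colon \underline P \to \underline P$ of smooth $G$-bundles and $B \in \Omega^2_\CC$ with
\begin{equation*}
H = H_\RR + CS(g\theta_\RR) - CS(\theta) - d\la g\theta_\RR \wedge \theta\ra + dB.
\end{equation*}
Replacing $\theta_\RR$ by $g\theta_\RR$ (and $P_\RR$ by the corresponding gauge-transformed reduction), one may assume $g = \Id$. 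Now $g\theta_\RR$ is the connection of a reduction, which defines $h := $ the associated point of $\Omega^0(P/K)$, so $g\theta_\RR = \theta^h$ up to the identification of $P_\RR$ with $P_h$; here one uses that a $K$-reduction of the smooth bundle underlying the holomorphic $P$ together with its compatibility data yields precisely the Chern connection $\theta^h$ — this is where the reductive/reality hypotheses on $G$, $K$, $\la\,,\ra$ from Section~\ref{sec:BCback} enter.

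With $\theta_\RR = \theta^h$, the displayed equation becomes $H = H_\RR + CS(\theta^h) - CS(\theta) - d\la\theta^h\wedge\theta\ra + dB$. The goal is to produce $\omega \in \Omega^{1,1}_\RR$ with $H \equiv -2i\partial\omega$ modulo the equivalence, and \eqref{eq:misteriouseqb}. The natural ansatz is to use the Bott-Chern secondary class: set $\tilde R = \tilde R(h, h_0)$ from \eqref{eq:Rtilde} relative to a fixed reference $h_0$, or more directly exploit Lemma~\ref{lem:CSRinvariant}, which gives exactly
\begin{equation*}
2i\partial \tilde R(h',h) + CS(\theta^{h'}) - CS(\theta^h) - d\la\theta^{h'}\wedge\theta^h\ra = dB^{2,0}
\end{equation*}
for an explicit $B^{2,0}$. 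Applied with a suitable pair of reductions, this identity lets one trade the Chern-Simons difference $CS(\theta^h) - CS(\theta)$ for $-2i\partial$ of a real $(1,1)$-form up to $d(\Omega^{2,0})$, yielding $\omega$ (possibly after adding an exact piece) with $[Q] = [(P, -2i\partial\omega, \theta^h)]$. Finally, \eqref{eq:misteriouseqb} is obtained by applying $dd^c$: from $dH_\RR + \la F_{\theta_\RR}\wedge F_{\theta_\RR}\ra = 0$ and $\theta_\RR = \theta^h$, writing $H_\RR = -2i\partial\omega + 2i\bar\partial\omega$ and using $d(-2i\partial\omega) + d(2i\bar\partial\omega) = -2i\partial\bar\partial\omega + 2i\bar\partial\partial\omega = -4i\partial\bar\partial\omega = 2\,dd^c\omega$ (with the paper's normalization $d^c = \tfrac{i}{2}(\bar\partial - \partial)$ or similar, to be pinned down), one reads off $dd^c\omega + \la F_h\wedge F_h\ra = 0$.

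\textbf{Main obstacle.} The delicate point is the bookkeeping in the ``only if'' direction: converting the abstract statement ``$\theta_\RR$ is a $K$-connection whose complexification, up to the smooth equivalence relation, agrees with a holomorphic $\theta \in \cA_P$'' into the concrete statement ``$\theta_\RR = \theta^h$ for a genuine reduction $h$ of the \emph{holomorphic} bundle $P$.'' This requires carefully matching the smooth $G$-bundle isomorphism $g$ coming from Proposition~\ref{lemma:deRhamCsmooth} with a holomorphic structure, and then checking that the residual $(2,0) + (0,2)$ freedom in $B$ and in $H_\RR$ can be absorbed exactly into a real $(1,1)$-form $\omega$ — i.e.\ that no genuine cohomological obstruction survives. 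Lemma~\ref{lem:CSRinvariant} is designed precisely to handle this, so the proof should reduce to citing it with the right choice of reductions and tracking the $(p,q)$-types; the risk is a sign or factor-of-$2$ mismatch between the conventions for $d^c$, $CS$, and $\tilde R$.
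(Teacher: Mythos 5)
Your overall architecture matches the paper's: in the ``if'' direction, replace $-2i\partial\omega$ by an equivalent \emph{real} three-form and invoke Example \ref{ex:real-form-in-standard}; in the ``only if'' direction, compare the real and holomorphic representatives inside $H^1(\underline\cS)$ and use Lemma \ref{lem:CSRinvariant} to trade the Chern--Simons discrepancy for $-2i\partial$ of a real $(1,1)$-form. But both directions as written contain concrete errors, not just the convention risks you flag. In the ``if'' direction, your choice $H_\RR := -2i\partial\omega + \overline{-2i\partial\omega} = 2i(\dbar-\partial)\omega = 2d^c\omega$ fails: the difference $H_\RR - (-2i\partial\omega) = 2i\dbar\omega$ is not of the form $dB$ for any $B\in\Omega^2_\CC$ (it is not even closed, since $d(2i\dbar\omega) = 2i\partial\dbar\omega$, which equals $-\la F_h\wedge F_h\ra\neq 0$ in general), so the claimed equivalence of triples does not hold; moreover $dH_\RR = -2\la F_h\wedge F_h\ra$, so your triple violates the four-form constraint in \eqref{eq:lescEind3smooth}. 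The correct real representative is $H_\RR = d^c\omega = i(\dbar-\partial)\omega$, which satisfies $d^c\omega = -2i\partial\omega + d(i\omega)$ (identity \eqref{eq:delta-tauR-dc-tauR}) and $dd^c\omega = -\la F_h\wedge F_h\ra$.

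In the ``only if'' direction, the step ``one may assume $g=\Id$'' is not justified: the $g$ produced by Proposition \ref{lemma:deRhamCsmooth} lives in the \emph{complex} gauge group $\cG_{\underline P}$, whereas changing the representative $(P_\RR,H_\RR,\theta_\RR)$ of the compact form only permits unitary gauge transformations of $P_\RR$ together with exact shifts of $H_\RR$, so $g$ cannot be absorbed. Note the internal tension in your sketch: if $g$ really were the identity, a bare type decomposition of $H_\RR = H + dB'$ would already finish the argument, and the ``suitable pair of reductions'' you want to feed into Lemma \ref{lem:CSRinvariant} would not exist. The paper's resolution is exactly to keep $g$ and convert it into the pair of reductions $(h,\,g^{-1}h)$ of the single holomorphic bundle $P$: inserting the auxiliary connection $g\theta^{g^{-1}h}$ on $P_h$ and using gauge invariance of $CS$ rewrites the discrepancy as $CS(\theta^h)-CS(\theta^{g^{-1}h})-d\la\theta^h\wedge\theta^{g^{-1}h}\ra$, to which Lemma \ref{lem:CSRinvariant} applies and produces the real $(1,1)$-form $R$; the desired $\omega$ is then $\Im B^{1,1}-R$, read off from the reality of the resulting three-form by type decomposition. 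You correctly locate this as the delicate point, but the proposal does not supply the mechanism that resolves it.
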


\begin{proof}
	Let $Q$ be represented by a tuple $(P,-2i\partial \omega,\theta^h)$. By the equality 
	\begin{equation}\label{eq:delta-tauR-dc-tauR}
	-2i\partial \omega =  d^c \omega - i \partial \omega - i \dbar \omega = d^c\omega - d(i\omega)
	\end{equation}
	 combined with Proposition \ref{lemma:deRhamCsmooth}, we have that 
$$[E_Q] = [(\underline P,-2i\partial \omega,\theta^h)] = [(\underline P,d^c\omega,\theta^h)].$$
Let $\underline{f}$ be an isomorphism of $E_Q$ with a standard $E_0$ given by $(\underline P,d^c\omega,\theta^h)$. By Example  \ref{ex:real-form-in-standard}, there exists a compact form $E_{0,\R}\subset E_0$. We then have that $\underline{f}^{-1}(E_{0,\R})$ is a compact form of $E_Q$.

For the converse, let $E_\RR\subset E_Q$ be a compact form with underlying $K$-bundle $P_h \subset P$. Then, the isomorphism class of $E_\RR$ is represented by $(P_h,H_\RR,\theta^h)$ (we can choose the connection on $P_h$ at will by changing the real three-form accordingly). Let $(P,H,\theta^h)$ represent the class of $Q$ (by Proposition \ref{lemma:deRhamC} we can choose the connection on $\cA_P$ at will by changing $H$ accordingly). In $H^1(\underline\cS)$, we have 
$$ [(\underline P,H,\theta^h)]=[(\underline P,H_\RR,\theta^h)]\in H^1(\underline\cS).$$
Therefore, by Proposition \ref{lemma:deRhamCsmooth} there exists $g \in \cG_{\underline{P}}$ and $B' \in \Omega^2_\CC$ such that
$$
H_\RR = H + CS(g \theta^h) - CS(\theta^h) - d \la g \theta^h \wedge \theta^h \ra + dB'.
$$
Notice that $g\theta^{g^{-1}h}$ defines a connection on $P_h$. Setting
$$
H_\RR' = H_\RR + CS(\theta^h) - CS(g\theta^{g^{-1}h}) - d \la \theta^h \wedge g\theta^{g^{-1}h}\ra
$$
we have that
$$
[(\underline{P},H'_\RR,g\theta^{g^{-1}h})] = [(\underline{P},H_\RR,\theta^h)] \in H^1(\underline{\cS})
$$ 
and
\begin{align*}
H'_\RR & = H + CS(g \theta^h) - CS(g\theta^{g^{-1}h}) - d \la g \theta^h \wedge g \theta^{g^{-1}h} \ra + dB\\
& = H + CS(\theta^h) - CS(\theta^{g^{-1}h}) - d \la \theta^h \wedge \theta^{g^{-1}h} \ra + dB
\end{align*}
where 
$$
B = B' - \la g \theta^h \wedge \theta^h \ra - \la \theta^h \wedge g\theta^{g^{-1}h}\ra + \la \theta^h \wedge \theta^{g^{-1}h} \ra \in \Omega^2_\CC.
$$
By Lemma \ref{lem:CSRinvariant}, there exists a real $(1,1)$-form $R \in \Omega_\RR^{1,1}$ such that
$$
H'_\RR = H - 2i\partial R + dB,
$$
possibly for a different choice of $B$. Since $H'_\RR$ is real, its $(3,0)+(2,1)$-part must equal the conjugate of its $(1,2)+(0,3)$-part, so we obtain
$$
H - 2i \partial R + dB^{2,0} + \partial B^{1,1} = d \overline{B^{0,2}} + \partial \overline{B^{1,1}},
$$
and hence
$$
H = -2i \partial (\Img B^{1,1}-R) + d(\overline{B^{0,2}} - B^{2,0}).
$$
Therefore, $[(P,H,\theta^h)] = [(P,-2i \partial (\Img B^{1,1}-R),\theta^h)] \in H^1(\cS)$, as claimed.
\end{proof}

Observe that the complexification of real string algebroids induces a well-defined map
\begin{equation*}\label{eq:inclusionstring}
c \colon H^1(\cS_\RR) \to H^1(\underline \cS).
\end{equation*}
Recall also that there is a forgetful map $s \colon H^1(\cS) \to H^1(\underline \cS)$ (see Lemma \ref{lem:smap}). We shall use the notation
$H^1_{BC}(\cS)$ for the set of classes of Bott-Chern algebroids inside $H^1(\cS)$. Then, by Definition \ref{def:BCtype},
$$
s(H^1_{BC}(\cS)) \subseteq c(H^1(\cS_\RR)).
$$
In the next proposition we show that compact forms on a Bott-Chern algebroid are unique up to isomorphism. Consequently, we can actually define a map $$r \colon H^1_{BC}(\cS)\to H^1(\cS_\RR)$$ such that $c\circ r=s$, which sends $[Q]$ to $[E_\RR]$ for any compact form $E_\RR \subset E_Q$. The class $[E_\RR]$ is closely related to the notion of real string class for principal bundles (see Remark \ref{rem:BCstringclass}). Our proof will use some results from Section \ref{sec:AeppliPic}. 

\begin{proposition}\label{prop:BCreal}	
Compact forms of a Bott-Chern algebroid are unique up to isomorphism of real string algebroids. Consequently, there exists a unique map $r:H^1_{BC}(\cS)\to H^1(\cS_\R)$ fitting into the commutative diagram
	\begin{equation*}\label{eq:triangle-Bott-Chern}
	\xymatrix @R=.5pc {
		H^1_{BC}(\cS) \ar[dr]_s \ar[rr]^r & & H^1(\cS_\R). \ar[dl]^{c}\\
		& H^1(\underline \cS)  & 
	}
	\end{equation*}
\end{proposition}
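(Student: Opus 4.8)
The plan is to reduce the statement to the uniqueness, up to isomorphism, of the Chern connection together with a Bott-Chern form, so that two compact forms of a fixed Bott-Chern algebroid $Q$ are related by the action of some $g \in \cG_{\underline P}$ covering a biholomorphism. First I would fix a Bott-Chern algebroid $Q$ and two compact forms $E_\RR \subset E_Q \supset E'_\RR$, with underlying $K$-reductions $P_h \subset P$ and $P_{h'} \subset P$. By Lemma \ref{lemma:BCrealforms}, the class $[Q]$ is represented simultaneously by $(P,-2i\partial\omega,\theta^h)$ and $(P,-2i\partial\omega',\theta^{h'})$ for suitable $(\omega,h)$, $(\omega',h')$ satisfying the structure equation \eqref{eq:misteriouseqb}; here I use the construction in the proof of Lemma \ref{lemma:BCrealforms}, where $E_\RR$ is identified with $\psi^{-1}(E_{0,\RR})$ for an isomorphism $\psi$ of $E_Q$ with a standard model $E_0$ given by $(\underline P, d^c\omega, \theta^h)$, and similarly for $E'_\RR$. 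Thus it suffices to show that the compact forms $E_{0,\RR}$ and $E'_{0,\RR}$ inside the corresponding standard models are carried to one another by an isomorphism of real string algebroids.

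Next I would compare the two representatives. Since $[(P,-2i\partial\omega,\theta^h)] = [(P,-2i\partial\omega',\theta^{h'})]$ in $H^1(\cS)$, Proposition \ref{lemma:deRhamC} gives $g \in \cG_P$ (a holomorphic automorphism of $P$) and $B \in \Omega^{2,0}$ with
\begin{equation*}
-2i\partial\omega' = -2i\partial\omega + CS(g\theta^h) - CS(\theta^{h'}) - d\la g\theta^h \wedge \theta^{h'}\ra + dB.
\end{equation*}
Replacing $h$ by $g \cdot h$ and using \eqref{eq:RautP} together with $\theta^{g\cdot h} = g\theta^h$, I may assume $g = \mathrm{id}$, so that $h$ and $h'$ are two $K$-reductions of the same holomorphic bundle $P$ with $-2i\partial\omega' - (-2i\partial\omega) = CS(\theta^h) - CS(\theta^{h'}) - d\la\theta^h\wedge\theta^{h'}\ra + dB$. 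Writing $h' = e^{iu}h$ for $iu \in \Omega^0(i\ad P_h)$ and invoking Lemma \ref{lem:CSRinvariant}, the right-hand side equals $-2i\partial \tilde R(h',h) + dB''$ for a real $(1,1)$-form $\tilde R(h',h)$ and $B'' \in \Omega^{2,0}$; comparing $(3,0)+(2,1)$ and $(1,2)+(0,3)$ components as in the last paragraph of the proof of Lemma \ref{lemma:BCrealforms}, I get that $\partial(\omega' - \omega - \tilde R(h',h))$ is $\partial$ of a real $(1,1)$-form up to $\partial$-exact terms, hence $\omega' = \omega + \tilde R(h',h) + \partial\alpha + \dbar\bar\alpha$ for some $(1,0)$-form $\alpha$ — i.e. the Aeppli class $[\omega']$ and the Bott-Chern secondary class match in the appropriate sense. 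This is precisely the statement that the pairs $(\omega,h)$ and $(\omega',h')$ define the same point in the relevant moduli of compact forms, and I would then read off from the explicit models $E_{0,\RR}$, $E'_{0,\RR}$ (Example \ref{ex:real-form-in-standard}) that the $B$-field transform by $\partial\alpha + \dbar\bar\alpha$ composed with the change of reduction $e^{iu}$ provides an explicit isomorphism of real string algebroids $E_\RR \xrightarrow{\sim} E'_\RR$; this is where I expect to lean on the computations of Section \ref{sec:AeppliPic} that the proposition alludes to, identifying the space of compact forms with a torsor under $H^{1,1}_A(X,\RR)$ and the change-of-reduction action.

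Finally, the uniqueness of the map $r$ and the commutativity $c \circ r = s$ are formal: $r([Q]) := [E_\RR]$ is well defined by the first part, $c(r([Q])) = [E_Q] = s([Q])$ holds because complexifying a compact form of $E_Q$ recovers $E_Q$ up to isomorphism (the definition of compact form, Definition \ref{def:realform}), and any map $r'$ with $c\circ r' = s$ must send $[Q]$ to the class of a real string algebroid complexifying $E_Q$, hence to $[E_\RR]$ by uniqueness. The main obstacle I anticipate is the middle step: carefully tracking the $B$-field and the adjoint/change-of-reduction terms through the isomorphisms so that the two model compact forms are matched on the nose, rather than merely having the same invariants; this requires the bookkeeping lemmas of Section \ref{sec:AeppliPic} (on $\Sigma_A(Q,\RR)$ and the Aeppli-class description of compact forms) to convert the cohomological coincidence established above into an honest isomorphism of real Courant algebroids.
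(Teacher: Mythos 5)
Your reduction to standard models, the comparison of representatives via Proposition \ref{lemma:deRhamC} and Lemma \ref{lem:CSRinvariant}, and the final formal paragraph (definition of $r$, the identity $c\circ r=s$, uniqueness) are all fine and consistent with the paper. The gap is in the middle step. From
\begin{equation*}
-2i\partial\omega' = -2i\partial\omega + CS(\theta^h)-CS(\theta^{h'})-d\la\theta^h\wedge\theta^{h'}\ra + dB
= -2i\partial\omega + 2i\partial\tilde R(h',h) + dB''
\end{equation*}
with $B''\in\Omega^{2,0}$, type decomposition only yields $\partial\bigl(\omega'-\omega+\tilde R(h',h)\bigr)=\tfrac{i}{2}\dbar B''$ for a $\partial$-closed $B''$; it does \emph{not} yield $\omega'-\omega\mp\tilde R(h',h)\in\Im(\partial\oplus\dbar)$. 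Your claimed conclusion---that the two compact forms define the same Aeppli class---is in fact false in general: by Definition \ref{def:ApQ} and the discussion around \eqref{eq:partialmap}, the set $\Sigma_A(Q,\RR)=\mathbf{B}_Q/\sim_A$ is an affine space modelled on $\ker\bigl(\partial\colon H^{1,1}_A(X,\RR)\to H^1(\Omega^{2,0}_{cl})\bigr)$, which is typically nonzero, so a single Bott-Chern algebroid carries compact forms with genuinely distinct Aeppli classes. What your computation actually proves is only that the Aeppli class $[\omega'-\omega+\tilde R(h',h)]$ lies in that kernel.

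As a consequence, the machinery you invoke at the end cannot close the argument: by Proposition \ref{prop:ApPicA}, equality of Aeppli classes characterizes orbits of the \emph{Hamiltonian} subgroup $\Pic_A(Q)$, whereas the isomorphism you need must come from the full group $\Aut(E_Q)\cong\Pic(Q)$, whose orbits are strictly larger. The missing ingredient is the transitivity of the $\Pic(Q)$-action on $\mathbf{B}_Q$, which the paper establishes in Lemma \ref{lem:retraction} by exhibiting an explicit path in $\mathbf{B}_{Q_0}$ joining any two compact forms (built from $\tilde R(h_t,h)$ and Chern--Simons correction terms) and integrating the surjective infinitesimal action of Lemma \ref{lem:LiePicactionBQ}. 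Once one has $\underline f\in\Aut(E_Q)$ with $\underline f(E_\RR)=E'_\RR$, restriction to $E_\RR$ immediately gives the isomorphism of real string algebroids, with no cohomological matching required. Your plan, as written, would only handle the special case in which the two compact forms happen to share an Aeppli class.
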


\begin{proof} 
By Proposition \ref{lem:retraction} in the Appendix, given compact forms $E_\RR,E'_\RR \subset E_Q$ there exists $g \in \Aut(E_Q)$ such that $g(E_\RR) = E'_\RR$. Restricted to $E_\RR$, $g$ induces an isomorphism of real string algebroids, proving the first part of the statement. By Lemma \ref{lemma:bricksglue} any isomorphism $\psi \colon Q \to Q'$ induces an isomorphism of complex string algebroids $\underline{\psi} \colon E_Q \to E_{Q'}$. Using this, we can define $r$ by $r([Q]) = [E_\RR]$, for any compact form $E_\RR \subset E_Q$. Uniqueness follows from the first part, which implies injectivity of $c$ on $r(H^1(\cS_\R))$.

\end{proof}

\begin{remark}\label{rem:BCstringclass}
When the holomorphic principal bundle $p \colon P \to X$ underlying $Q$ has trivial automorphisms there is a more amenable characterization of the Bott-Chern condition using \emph{real string classes}, in the sense of Redden \cite{Redden}. To see this, notice that $\cG_P = \{1\}$ implies that $[Q] \in H^1(\cS)$ determines uniquely a de Rham cohomology class
$$
[p^*H + CS(\theta)] \in H^3(P,\CC)
$$
for any choice representative $[(P,H,\theta)] = [Q]$  (see Proposition \ref{lemma:deRhamC}). Then, Lemma \ref{lemma:BCrealforms} implies that $Q$ is Bott-Chern if and only if the pullback of $[p^*H + CS(\theta)]$ to $P_h \subset P$, for any reduction $h$ of $P$, is a real string class.
\end{remark}

\subsection{Chern correspondence for string algebroids}\label{sec:Chernco}

We start by introducing the type of objects which play the role of the Chern connection in our context. We fix a complex manifold $X$.

\begin{definition}\label{def:unitarylift}
Let $E_\RR$ be a real string algebroid over $\underline X$. A \emph{horizontal lift} of $T\underline{X}$ to $E_\RR$ is given by a subbundle $W \subset E_\RR$ such that
$$
\rk W = \dim_\RR \underline X, \qquad \textrm{and} \qquad W \cap \Ker \pi = \{0\}.
$$
\end{definition}

Following \cite[Prop. 3.4]{GF}, it is not difficult to see that a horizontal lift $W \subset E_\RR$ is equivalent to a real symmetric $2$-tensor $\sigma$ on $\underline{X}$ and an isotropic splitting $\lambda \colon T\underline X \to E_\RR$ such that
\begin{equation}\label{eq:Wlift}
W = \{\lambda(V) + \sigma(V): V \in T\underline{X}\}.
\end{equation}
Recall that $\lambda$ induces a connection $\theta_\RR$ on $P_\RR$, a three-form $H_\RR$ on $\underline{X}$, and an isomorphism
\begin{equation}\label{eq:E0R}
E_\RR \cong E_{0,\RR} : = T \underline X \oplus\ad P_\RR \oplus T^*\underline{X},
\end{equation}
so that the string algebroid structure on $E_{0,\RR}$ is as in Example \ref{ex:real-form-in-standard}.

Let $E$ be a complex string algebroid with underlying smooth principal $G$-bundle $\underline{P}$. We assume that $G$ is reductive, and fix a maximal compact subgroup $K \subset G$. Given a compact form $E_\RR \subset E$ (see definition \ref{def:realform}), the Cartan involution on $\mathfrak{g}$ determined by the compact Lie subalgebra $\mathfrak{k} \subset \mathfrak{g}$ combined with the underlying reduction $P_\RR \subset \underline{P}$ induces a well-defined involution
\begin{align}\label{eq:Cartaninv}
\begin{split}
\Omega^0(\ad \underline{P}) & \to \Omega^0(\ad \underline{P})\\ 
s & \mapsto s^{*_h}
\end{split}
\end{align}
whose fixed points are given by $\Omega^0(\ad P_\RR)$.

\begin{lemma}[Chern correspondence]\label{lem:Cherncorr}
Let $(E,L)$ be a pair given by a complex string algebroid $E$ over $X$ and a lifting $L \subset E$ of $T^{0,1}X$. Then, any compact form $E_\RR \subset E$ determines uniquely a horizontal lift $W \subset \E_\RR$ such that
\begin{equation}\label{eq:L=LW}
L = \{e \in W \otimes \CC \; | \; \pi(e) \in T^{0,1} X \} \subset E.
\end{equation}
\end{lemma}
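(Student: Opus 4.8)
The plan is to exploit the real structure on $E$ coming from the compact form, and to build $W$ directly from $L$ by "conjugating and adding". Write $c\colon E\to E$ for the complex conjugation with fixed locus $E_\RR$ (available since, by the complexification diagram \eqref{eq:realform}, $E=E_\RR\otimes\CC$), and recall that the anchor $\pi$ of $E$ is the $\CC$-linear extension of the anchor of $E_\RR$, so that $\pi\circ c=\overline{(\,\cdot\,)}\circ\pi$, where on the right we use the conjugation on $T\underline X\otimes\CC$ fixing $T\underline X$. First I would set $\overline L:=c(L)$. Since $\pi(L)=T^{0,1}X$ we get $\pi(\overline L)=T^{1,0}X$, and since $c$ preserves the $\CC$-bilinear extension of the pairing, $\overline L$ is again isotropic of rank $n$.

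The key elementary points I would then record are: (i) $L\cap\overline L=0$, because any $e$ in the intersection has $\pi(e)\in T^{0,1}X\cap T^{1,0}X=0$ and $\pi|_L$ is injective; (ii) consequently $L\oplus\overline L$ is a rank $2n$ subbundle of $E$ which maps isomorphically onto $T\underline X\otimes\CC$ under $\pi$, the map being fibrewise injective with kernel $(L\oplus\overline L)\cap\Ker\pi=0$ and both bundles having rank $2n=\dim_\RR\underline X$; (iii) $L\oplus\overline L$ is $c$-stable, hence is the complexification of a unique real subbundle
\[
W:=(L\oplus\overline L)\cap E_\RR\subset E_\RR,\qquad W\otimes\CC=L\oplus\overline L.
\]
Then $\rk W=2n=\dim_\RR\underline X$, and $W\cap\Ker(\pi\colon E_\RR\to T\underline X)=0$ since this is the real part of $(L\oplus\overline L)\cap\Ker(\pi\colon E\to T\underline X\otimes\CC)=0$ (using $\Ker(\pi|_{E_\RR})\otimes\CC=\Ker(\pi|_E)$). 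Thus $W$ is a horizontal lift of $T\underline X$ in the sense of Definition \ref{def:unitarylift}.

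Next I would check the compatibility $L=\{e\in W\otimes\CC\;|\;\pi(e)\in T^{0,1}\underline X\}$: writing $e=\ell+\overline{\ell'}\in L\oplus\overline L$, one has $\pi(e)\in T^{0,1}X$ iff $\pi(\overline{\ell'})=0$ iff $\overline{\ell'}=0$ (because $\pi|_{\overline L}$ is injective), i.e.\ iff $e\in L$. For uniqueness, suppose $W'$ is any horizontal lift with $L=\{e\in W'\otimes\CC\;|\;\pi(e)\in T^{0,1}\underline X\}$; then $W'\otimes\CC\supseteq L$, and being the complexification of a real bundle it is $c$-stable, hence contains $\overline L$ as well, hence contains $L\oplus\overline L$; comparing ranks ($\rk_\CC(W'\otimes\CC)=\rk_\RR W'=2n=\rk(L\oplus\overline L)$) forces $W'\otimes\CC=L\oplus\overline L=W\otimes\CC$, i.e.\ $W'=W$.

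I do not expect a genuine obstacle here: the only points needing a little care are the bookkeeping that $L\oplus\overline L$ is a genuine constant‑rank subbundle and the precise compatibilities of $c$ with $\pi$ and with $\Ker\pi$, all of which are immediate from the complexification diagram \eqref{eq:realform}. The one thing this conceptual argument does \emph{not} produce is the explicit coordinate form of $W$ (a symmetric $2$‑tensor and an isotropic splitting, as in \cite[Prop. 3.4]{GF}), which is what feeds into the structure equation of Proposition \ref{propo:Chernclassic}; to obtain that one unwinds $W=(L\oplus\overline L)\cap E_\RR$ through the model $E_0$ of Definition \ref{def:E0}, the description $L=(-\gamma,-\beta)T^{0,1}X$ of Lemma \ref{lemma:liftings}, and the reality condition \eqref{eq:Cartaninv}, but this is not needed for the statement of the lemma itself.
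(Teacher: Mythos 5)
Your proof is correct, but it takes a genuinely different route from the paper. The paper works entirely in the explicit model: it fixes an isotropic splitting $\lambda_0$, identifies $E\cong E_0$, writes $L=(-\gamma,-\beta)(T^{0,1}X)$ via Lemma \ref{lemma:liftings} and a candidate horizontal lift as $(-b,-a)\{V+\sigma(V)\}$, and then solves the condition \eqref{eq:L=LW} to get the closed formulas \eqref{eq:WChern} for $(\omega,b,a)$ in terms of $(\gamma,\beta)$ (with $a=\beta+\beta^*$ via the Cartan involution), checking independence of $\lambda_0$ at the end. You instead use the real structure $c$ on $E=E_\RR\otimes\CC$ and set $W\otimes\CC=L\oplus c(L)$, which gives existence and uniqueness by transversality and a rank count; this is coordinate-free, makes the uniqueness argument cleaner, and in fact recovers exactly the paper's $W$ (in the proof of Proposition \ref{propo:Chernclassic} one sees $W\otimes\CC=e^{-i\omega}\lambda(T^{1,0}X)\oplus e^{i\omega}\lambda(T^{0,1}X)$, and the second summand is $L$ while the first is $c(L)$ since $\lambda$ is real and $\omega$ is a real form). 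What your argument does not deliver, as you correctly flag, is the explicit data $(\omega,b,a)$ of \eqref{eq:WChern}, which the paper reuses heavily later (Lemma \ref{lemma:PicQactexp}, Lemma \ref{lem:JW}, Proposition \ref{propo:Chernclassic}); the only cosmetic slip is that $c$ is conjugate-linear, so it conjugates rather than preserves the $\CC$-bilinear pairing, which still suffices for $c(L)$ to be isotropic.
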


\begin{proof}
We choose an isotropic splitting $\lambda_0 \colon T\underline X \to E_\RR$. We will use the same notation for the $\CC$-linear extension of $\lambda_0$ to the complexification $E$. Via the isomorphism \eqref{eq:E0R} induced by $\lambda_0$, we obtain by complexification an isomorphism of complex string algebroids
$$
\underline{f}_0 \colon E_0 \to E
$$
inducing the identity on $A_{\underline{P}}$, and such that $\lambda_0 = (\underline{f}_0)_{|T\underline{X}}$. Then, by Lemma \ref{lemma:liftings} the lifting $L$ determines uniquely $(\gamma,\beta) \in \Omega^{1,1 + 0,2} \oplus \Omega^{0,1}(\ad \underline P)$ such that
$$
L_0 := \underline{f}_0^{-1}(L) = (-\gamma,-\beta)(T^{0,1}X).
$$
Furthermore, given a horizontal lift $W \subset \E_\RR$, there exists a uniquely determined pair $(b,a) \in \Omega^{2}\oplus \Omega^{1}(\ad P_\RR)$ and a real symmetric $2$-tensor $\sigma$ on $\underline{X}$ such that
$$
W_0 := \underline{f}_0^{-1}(W) = (-b,-a)\{V + \sigma(V): V \in T\underline{X}\} \subset E_0.
$$
The isotropic condition for \eqref{eq:L=LW} implies that $\sigma$ is a symmetric tensor of type $(1,1)$. Denote the associated hermitian form by
$$
\omega = \sigma(J,) \in \Omega^{1,1}_\RR,
$$
where $J$ denotes the almost complex structure of $X$.
Then, condition \eqref{eq:L=LW} implies
\begin{align*}
(-\gamma,-\beta)(T^{0,1}X) &= (i\omega -b,-a)(T^{0,1}X)\\
& = (i\omega -b - \la a^{0,1} \wedge a^{1,0} \ra,-a^{0,1})(T^{0,1}X)
\end{align*}
and therefore
$$
- i\omega + b^{1,1 + 0,2} + \la a^{0,1} \wedge a^{1,0} \ra = \gamma, \qquad a^{0,1} = \beta.
$$
From this it follows that
\begin{equation}\label{eq:WChern}
\begin{split}
\omega & = - \Im \; (\gamma^{1,1} - \la a^{0,1} \wedge a^{1,0} \ra)\\
b & = \Re \; (\gamma^{1,1} - \la a^{0,1} \wedge a^{1,0} \ra) + \gamma^{0,2} + \overline{\gamma^{0,2}}\\
a & = \beta + \beta^*,
\end{split}
\end{equation}
where $\beta^*$ is defined combining the involution \eqref{eq:Cartaninv} with the conjugation of complex differential forms. It is not difficult to see that \eqref{eq:WChern} is independent of the choice of splitting $\lambda_0$.
\end{proof}

\begin{remark}
Similarly as in the classical Chern correspondence for principal bundles, 
the involutivity of the lifting $L \subset E$ is not required for the proof of Lemma \ref{lem:Cherncorr}.  
\end{remark}

Let $Q$ be a Bott-Chern algebroid over $X$ with underlying principal $G$-bundle $P$. Let $(E,L,\psi)$ be a diagram for $Q$ (see Definition \ref{def:Moritabrick}).
Without loss of generality, we will assume that $Q$ and $E$ have the same underlying smooth principal $G$-bundle $\underline{P}$, with complex gauge group $\cG_{\underline{P}}$. Recall that Lemma \ref{lemma:bricks} establishes the existence of a unique diagram isomorphism  $\underline{f} \colon E \to E_Q$ between $(E,L,\psi)$ and $(E_Q,T^{0,1}X,\Id_Q)$. By Definition \ref{def:BCtype}, $E$ admits a compact form $E_\RR \subset E$ with structure group $K$. Our next result unravels the data determined by $E_\RR \subset E$ in relation to the fixed Bott-Chern algebroid $Q$, using the Chern correspondence.

\begin{proposition}\label{propo:Chernclassic}
Let $(E,L,\psi)$ be a diagram for $Q$. Then, any compact form $E_\RR \subset E$ determines uniquely a triple $(\omega,h,\varphi)$, where 
\begin{enumerate}

\item $\omega \in \Omega^{1,1}_\RR$ and $h \in \Omega^0(P/K)$ is a reduction of $P$ to $K \subset G$, such that
\begin{equation*}\label{eq:misteriouseqomwithg}
dd^c \omega + \la F_{gh} \wedge F_{gh} \ra = 0,
\end{equation*}
where $g \in \cG_{\underline{P}}$ is covered by $\underline{f} \colon E \to E_Q$, the unique diagram isomorphism between $(E,L,\psi)$ and $(E_Q,T^{0,1}X,\Id_Q)$,

\item $\varphi \colon Q_0\to Q$ is an isomorphism of string algebroids given by a commutative diagram
\begin{equation}\label{eq:unitarybrickiso}
\xymatrix{
0 \ar[r] & T^*X \ar[r] \ar[d]^{id} & Q_0 \ar[r] \ar[d]^{\varphi} & A_{P} \ar[r] \ar[d]^{id} & 0\\
0 \ar[r] & T^*X \ar[r] & Q \ar[r] & A_{P} \ar[r] & 0,
}
\end{equation}
where the string algebroid structure on $Q_0$ is given by $(P,-2i\partial \omega,\theta^{gh})$.
\end{enumerate}
Furthermore, the data $(\omega,h,\varphi)$ recovers the flag $W \subset E_\RR \subset E$, where $W$ is the horizontal lift given by $E_\R$ via the Chern correspondence, and the three-form $H_\RR$ and connection $\theta_\RR$ induced by $W$ are given by
\begin{equation}\label{eq:HthetaChern}
H_\RR = d^c \omega, \qquad \theta_\RR = g^{-1}\theta^{gh}.
\end{equation}
\end{proposition}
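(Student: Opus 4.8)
The plan is to run the Chern correspondence of Lemma~\ref{lem:Cherncorr} on the pair $(E,L)$ together with the compact form $E_\RR\subset E$, to read off the resulting horizontal lift $W\subset E_\RR$ in terms of differential forms, and then to transport this description across the canonical Morita brick isomorphism $\underline f\colon E\to E_Q$ supplied by Lemma~\ref{lemma:bricks} so as to compare with the fixed Bott--Chern algebroid $Q$.

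\emph{Producing the triple.} Applying Lemma~\ref{lem:Cherncorr} to $(E,L)$ and $E_\RR$ yields the unique horizontal lift $W\subset E_\RR$ satisfying \eqref{eq:L=LW}. By \eqref{eq:Wlift}, $W$ is the same datum as an isotropic splitting $\lambda\colon T\underline X\to E_\RR$ --- hence, via \eqref{eq:E0R}, a connection $\theta_\RR$ on the $K$-bundle $P_\RR$ underlying $E_\RR$ and a real three-form $H_\RR$ --- together with a real symmetric $2$-tensor $\sigma$, which by the isotropy argument in the proof of Lemma~\ref{lem:Cherncorr} is of type $(1,1)$; set $\omega:=\sigma(J\cdot,\cdot)\in\Omega^{1,1}_\RR$. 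Take $h\in\Omega^0(P/K)$ to be the reduction of $P$ to $K$ determined by $P_\RR\subset\underline P$, regarding $P_\RR$ as a reduction of the common smooth bundle $\underline P$; and let $g\in\cG_{\underline P}$ be the gauge transformation covered by $\underline f$ (it exists since $\underline f$ covers the identity on $X$).

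\emph{The isomorphism class of $Q$ and the form $\varphi$.} Choose the auxiliary isotropic splitting in the proof of Lemma~\ref{lem:Cherncorr} to be a real one coming from $E_\RR$, so that $E\cong E_0$ for a triple $(\underline P,H_c,\theta_c)$ with $H_c$ real and $\theta_c$ a connection on $P_\RR$, as in Example~\ref{ex:real-form-in-standard}; then $L$ corresponds to $(\gamma,\beta)$ satisfying \eqref{eq:liftingcond} and $W$ to $(b,a)$ and $\sigma$, with the relations \eqref{eq:WChern}. Proposition~\ref{prop:QLexp} gives the class of $Q\cong Q_L$ in $H^1(\cS)$ as that of $\big(P_L,\,H_c^{3,0+2,1}+\partial\gamma^{1,1}-2\la\beta,F_{\theta_c}^{2,0}\ra,\,\theta_c+\beta\big)$, where $P_L$ is $\underline P$ with holomorphic structure $\theta_c^{0,1}+\beta$, which $\psi$ identifies with $P$ (the underlying bundle map being $g$). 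Substituting the expression for $\gamma^{1,1}$ in terms of $\omega$ from \eqref{eq:WChern}, using the reality of $H_c$ and $(b,a)$ together with the lifting conditions \eqref{eq:liftingcond}, absorbing the remaining discrepancy into a $B\in\Omega^{2,0}$, and swapping $\theta_c+\beta$ for a Chern connection by means of Lemma~\ref{lem:CSRinvariant} --- this is the reality computation already performed in the proof of Lemma~\ref{lemma:BCrealforms}, using $-2i\partial\omega=d^c\omega-d(i\omega)$ from \eqref{eq:delta-tauR-dc-tauR} --- one arrives at $[Q]=[(P,-2i\partial\omega,\theta^{gh})]\in H^1(\cS)$. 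A concrete witness of this equality is an isomorphism of string algebroids $\varphi\colon Q_0\to Q$ restricting to the identity on $T^*X$ and on $A_P$, i.e. fitting into \eqref{eq:unitarybrickiso}, with $Q_0$ modelled on $(P,-2i\partial\omega,\theta^{gh})$; and since $(P,-2i\partial\omega,\theta^{gh})$ is then a genuine element of $H^1(\cS)$ it satisfies $dH+\la F_\theta\wedge F_\theta\ra=0$, which, using $d(-2i\partial\omega)=dd^c\omega$ and $F_{gh}=F_{\theta^{gh}}$, is precisely $dd^c\omega+\la F_{gh}\wedge F_{gh}\ra=0$.

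\emph{Recovering the flag, uniqueness, and the main obstacle.} By construction $W\subset E_\RR\subset E$ is the horizontal lift given by the Chern correspondence, so it is recovered from the triple; tracking the connection and three-form of $W$ through the identifications above --- in particular through the splitting realizing $Q_0$ on $(P,-2i\partial\omega,\theta^{gh})$ and through $g$ --- yields $H_\RR=d^c\omega$ and $\theta_\RR=g^{-1}\theta^{gh}$ as in \eqref{eq:HthetaChern}, the latter recording that on the $E_Q$-side the transported lift $\underline f(W)$ induces, for the standard lifting $T^{0,1}X$, the Chern connection of the reduction $g(P_\RR)$. Uniqueness of $(\omega,h,\varphi)$ then follows from uniqueness of $W$ in Lemma~\ref{lem:Cherncorr}: $\omega$, $\theta_\RR$, $H_\RR$ and the reduction $h=P_\RR$ are forced, and $\varphi$ is determined as the representative of the fixed class $[Q]$ compatible with \eqref{eq:unitarybrickiso} and with these identifications. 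The technical heart of the proof is the cohomological bookkeeping in the second step: matching the $(2,1)+(3,0)$-part of the Chern--Simons data built from $(\gamma,\beta,b,a)$ against $-2i\partial\omega$, correctly absorbing the ambiguity by a $(2,0)$-form and by $g$, and using reality to land exactly on $d^c\omega$ and $\theta^{gh}$ rather than on a merely cohomologous datum; everything else is a formal transport of structure along $\underline f$.
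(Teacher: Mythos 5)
Your overall strategy (run the Chern correspondence on $(E,L,E_\RR)$, then transport along the unique Morita brick isomorphism $\underline f$) is the same as the paper's, but the execution of the middle step has a genuine gap. The paper does \emph{not} work in a generic real splitting: it uses the isotropic splitting $\lambda$ determined by $W$ itself, so that $L_0=\underline f_\lambda^{-1}(L)=e^{i\omega}(T^{0,1}X)$, i.e.\ $\beta=0$ and $\gamma=-i\omega$ in the notation of Lemma~\ref{lemma:liftings}. In that frame, involutivity of $L_0$ gives directly $H_\RR^{1,2+0,3}=i\dbar\omega$ and $F_{\theta_\RR}^{0,2}=F_{\theta_\RR}^{2,0}=0$, hence $H_\RR=d^c\omega$ by reality and $\theta_\RR$ is already a Chern connection; Proposition~\ref{prop:QLexp} then identifies $Q_{L_0}$ with the exact model $(P',-2i\partial\omega,\theta_\RR)$ with no Chern--Simons correction, and the only remaining input is that $\underline f$ covers $g$, whence $g\theta_\RR=\theta^{gh}$ and $\varphi=\psi\circ f_\lambda\circ\varphi_0^{-1}$ is the identity on $T^*X$ and $A_P$. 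Your route through a generic splitting and the Lemma~\ref{lemma:BCrealforms}-style bookkeeping only controls the class $[Q]\in H^1(\cS)$: the equivalence relation in Proposition~\ref{lemma:deRhamC} allows an arbitrary bundle automorphism and an arbitrary $B\in\Omega^{2,0}$, so it neither produces a $\varphi$ inducing the identity on $A_P$ as \eqref{eq:unitarybrickiso} demands, nor guarantees that the $(1,1)$-form in the representative is the specific $\omega=\sigma(J\cdot,\cdot)$ coming from $W$ --- the Bott--Chern corrections $\tilde R(h',h)$ entering via Lemma~\ref{lem:CSRinvariant} shift the $(1,1)$-form by a term you never show vanishes. ``Using reality to land exactly on $d^c\omega$ and $\theta^{gh}$'' is precisely the claim that needs proof, and your argument does not deliver it.

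Separately, the final clause --- that $(\omega,h,\varphi)$ \emph{recovers} the flag $W\subset E_\RR\subset E$ --- is a reconstruction statement, not a tautology. The paper proves it by exhibiting the inverse: from $(\omega,h,\varphi)$ one builds the model compact form $E_\RR^0\equiv(P_h,d^c\omega,g^{-1}\theta^{gh})$ with lifting $L_0=e^{i\omega}T^{0,1}\underline X$, takes the unique isomorphism $\tilde{\underline f}\colon E_0\to E$ with $\tilde{\underline f}(L_0)=L$ determined by $\varphi$ and Lemma~\ref{lemma:bricksglue}, and sets $E_\RR=\tilde{\underline f}(E_\RR^0)$ and $W=\tilde{\underline f}(\{V+\sigma(V)\})$. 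Your one-line appeal to uniqueness of $W$ in Lemma~\ref{lem:Cherncorr} does not substitute for this: uniqueness of $W$ given $E_\RR$ says nothing about whether two distinct compact forms could yield the same triple, which is exactly what the explicit inverse construction rules out.
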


\begin{proof}
Given a compact form $E_\RR \subset E$, the principal $K$-bundle underlying $E_\RR$ induces a reduction $h \in \Omega^0(P/K)$. Furthermore, the horizontal lift $W$ determined by $L$ in Lemma \ref{lem:Cherncorr} is equivalent to a pair $(\omega,\lambda)$ where $\omega \in \Omega^{1,1}_\RR$ and $\lambda \colon T\underline X \to E_\RR$ is an isotropic splitting  such that
\begin{equation*}
W \otimes \CC = e^{-i\omega}\lambda(T^{1,0}X) \oplus e^{i\omega}\lambda(T^{0,1}X)
\end{equation*}
for $L = e^{i\omega}\lambda(T^{0,1}X)$. Recall that $\lambda$ induces a connection $\theta_\RR$ on $P_\RR$, a three-form $H_\RR$ on $\underline{X}$, and an isomorphism \eqref{eq:E0R}.

Via \eqref{eq:E0R}, we obtain by complexification an isomorphism of complex string algebroids
$$
\underline{f}_\lambda \colon E_0 \to E
$$
inducing the identity on $A_{\underline{P}}$, and such that $\lambda = (\underline{f}_\lambda)_{|T\underline{X}}$ and
\begin{equation*}\label{eq:V+exp}
\begin{split}
\underline{f}_\lambda^{-1}(W \otimes \CC) & = e^{-i\omega}(T^{1,0}X)\oplus L_0
\end{split}
\end{equation*}
for $L_0 = \underline{f}_\lambda^{-1}(L) = e^{i\omega}(T^{0,1}X)$.

Hence the involutivity of $L_0$ combined with Lemma \ref{lemma:liftings}, yields
$$
H_\RR^{1,2 + 0,3} - i\dbar \omega = 0, \qquad F_{\theta_\RR}^{0,2} = F_{\theta_\RR}^{2,0} = 0.
$$
Therefore, using that $H_\RR$ is real, $H_\RR = d^c \omega$. The reduction of $E_0$ by $L_0$, is the string algebroid $Q_0' = Q_{L_0}$ determined by the triple $(P',-2i\partial\omega,\theta_\RR)$ (see Proposition \ref{prop:QLexp}), for $P' = (\underline{P},\theta_\RR^{0,1})$, where we have used that
$$
H_\RR^{3,0 + 2,1} - \partial (i\omega) = - 2i \partial\omega.
$$
We obtain a string algebroid isomorphism
\begin{equation*}\label{eq:stringunitarybrickidbis}
\xymatrix{
0 \ar[r] & T^*X \ar[r] \ar[d]^{id} & Q_0' \ar[r] \ar[d]^{f_\lambda} & A_{P'} \ar[r] \ar[d]^{id} & 0\\
0 \ar[r] & T^*X \ar[r] & Q_{L} \ar[r] & A_{P'} \ar[r] & 0.
}
\end{equation*}
Consider the unique diagram isomorphism $\underline{f} \colon E \to E_Q$ between $(E,L,\psi)$ and $(E_Q,T^{0,1}X,\Id_Q)$, covering $g \in \cG_{\underline{P}}$. The condition that $g \colon P' \to P$ is an isomorphism implies now that
$$
g\theta_\RR = \theta^{gh}.
$$
We use now the notation $\equiv$ to denote the algebroids, as in Definition \ref{def:Q0} or Example \ref{ex:real-form-in-standard}, given by a tuple. By Proposition \ref{lemma:deRhamC}, $g$ induces an isomorphism
\begin{equation}\label{eq:varphi_0}
\varphi_0 \colon Q_0' \equiv (P',-2i \partial\omega,\theta_\RR) \mapsto Q_0 \equiv (P,-2i \partial\omega,\theta^{gh})
\end{equation}
and therefore $\varphi := \psi \circ f_\lambda \circ \varphi_0^{-1} \colon Q_0 \to Q$ has the required form \eqref{eq:unitarybrickiso}, which proves (1), (2), and \eqref{eq:HthetaChern}.

Conversely, given $(\omega,h,\varphi)$ as in the statement, we have a real string algebroid $E_{0,\RR} \equiv (P_h,d^c\omega,g^{-1}\theta^{gh})$ with complexification $E_0$ and lifting
$$
L_0 = e^{i\omega}T^{0,1}X
$$
such that $Q_{L_0} = Q_0' \equiv (P',-2i \partial\omega,g^{-1}\theta^{gh})$. Consider the isomorphism 
$$
\varphi_0 \colon Q_0' \to Q_0 \equiv (P,-2i \partial\omega,\theta^{gh})
$$
induced by $g$ as in \eqref{eq:varphi_0}, and the unique isomorphism $\tilde{\underline{f}} \colon E_0 \to E$ such that $\tilde{\underline{f}}(L_0) = L$, given by the diagram
\begin{equation*}\label{eq:diagunderlineftilde}
\xymatrix @R=.5pc {
		E_0 \ar@{-->}[dr] \ar[dd]_{\tilde{\underline{f}}} & & \\
		& Q_0' \ar[rd]^{\varphi \circ \varphi_0} \ar[dd]_{\tilde{f}} & \\ 
		E \ar@{-->}[dr] & & Q. \\
		& Q_L \ar[ru]_{\psi}  & 
	}
\end{equation*}
Then, we define the compact form and horizontal lift by
$$
E_\RR := \tilde{\underline{f}}(E_{0,\RR}), \qquad W := \tilde{\underline{f}}(\{V + \sigma(V): V \in T\underline{X}\}),
$$
where $\sigma$ is the symmetric tensor determined by $\omega$.
\end{proof}

\begin{remark}\label{rem:metrics}
As we will see in Lemma \ref{lemma:BQexp}, compact forms $E_\RR \subset E_Q$ extend naturally the notion of \emph{metric} on $Q$ introduced in \cite{grst}.
\end{remark}

\section{Moment maps}\label{sec:mmapmod}

\subsection{Conformally balanced metrics and moment maps}
\label{sec:balancedconf}

Let $X$ be a compact complex manifold of dimension $n$. Consider the space 
$$
\Omega^{1,1}_{>0} \subset \Omega^{1,1}_\RR
$$ of positive $(1,1)$-forms on $X$, sitting inside the vector space of real $(1,1)$-forms $\Omega^{1,1}_\RR$ as an open subspace. We will use the convention that, for $\omega \in \Omega^{1,1}_{>0}$, 
$$
\omega(V,JV)>0
$$
for any nonzero $V\in TX$, where we recall that $J$ is the almost complex structure on $X$. That is, $\omega(,J)$ defines a hermitian metric. We use the notation $(\omega,b)$ for the elements of $T\Omega^{1,1}_{>0}$, the total space of the tangent bundle
$$
T\Omega^{1,1}_{>0} \cong \Omega^{1,1}_{>0} \times \Omega^{1,1}_\RR,
$$
and the notation $(\dot \om, \dot b)$ for elements in the tangent bundle of $T\Omega^{1,1}_{>0}$ at $(\om,b)$. The space $T\Omega^{1,1}_{>0}$
has a natural integrable complex structure given by
\begin{equation}\label{eq:JTOmega}
\mathbf{J}(\dot \omega,\dot{b}) = (-\dot b, \dot \omega).
\end{equation}
 Consider the partial action of the additive group of complex two-forms
\begin{align}\label{eq:Om2Caction}
\begin{split}
\Omega^2_\CC \times T\Omega^{1,1}_{>0} &\to T\Omega^{1,1}_\RR\\ 
(B,(\omega,b)) &\mapsto (\omega + \Re \; B^{1,1},b + \Im \; B^{1,1}),
\end{split}
\end{align}
preserving the complex structure $\mathbf{J}$. This section is devoted to the study of a Hamiltonian action of the subgroup of purely imaginary two-forms $i \Omega^2 \subset \Omega^2_\CC$ for a natural family of K\"ahler structures on $T\Omega^{1,1}_{>0}$.

To define the family of symplectic structures of our interest, we fix a smooth volume form $\mu$ on $X$ compatible with the complex structure. For any $\omega \in \Omega^{1,1}_{>0}$, we define a function $f_\omega$ by
\begin{equation}\label{eq:dilaton}
\frac{\omega^n}{n!} = e^{2f_\omega} \mu.
\end{equation}
We will call $f_\omega$ the \emph{dilaton function} of the hermitian metric $\omega$ with respect to $\mu$. 

\begin{definition}\label{def:dilatonfunctional}
	Given $\ell \in \RR \backslash \{2\}$, the $\ell$-\emph{dilaton functional} on $T\Omega^{1,1}_{>0}$ is
	\begin{equation*}\label{eq:Dilfunctional}
	M_\ell(\omega,b) := \int_X e^{-\ell f_\omega} \frac{\omega^n}{n!}.
	\end{equation*}
\end{definition}

Associated to the functionals $M_\ell$ there is a family of exact $(1,1)$-forms defined by
\begin{equation}\label{eq:Omegal}
\Omega_\ell := -d\mathbf{J}d \log M_\ell.
\end{equation}
The following family of $1$-form potentials plays a key role in the present work
\begin{equation}\label{eq:lambdal}
\lambda_\ell := -\mathbf{J}d \log M_\ell = -\frac{1}{M_\ell}\mathbf{J}d M_\ell.
\end{equation}

\begin{lemma}\label{lemma:lambdaOmegal}
The forms $\lambda_\ell$ and $\Omega_\ell$, evaluated at the tangent vectors $v = (\dot \omega, \dot b)$ and $v_j = (\dot \omega_j, \dot b_j)$ at the point $(\om,b)\in T\Omega^{1,1}_{>0}$, are given by
	\begin{equation}\label{eq:lambdaOmegal}
	\begin{split}
	\lambda_\ell(v) & = \frac{ \ell-2}{2 M_\ell} \int_X \dot b\wedge e^{-\ell f_\omega} \frac{\omega^{n-1}}{(n-1)!},\\
	\Omega_\ell(v_1,v_2) & =  \frac{\ell-2}{2M_\ell} \int_X (\dot \omega_1 \wedge \dot b_2 - \dot \omega_2 \wedge \dot b_1) \wedge e^{-\ell f_\omega} \frac{\omega^{n-2}}{(n-2)!}\\
	& \phantom{ {} = } + \frac{\ell( \ell-2)}{4 M_\ell} \int_X (\Lambda_\omega \dot b_1 \Lambda_\omega \dot \omega_2 - \Lambda_\omega\dot b_2 \Lambda_\omega \dot \omega_1) e^{-\ell f_\omega} \frac{\omega^{n}}{n!}\\
	& \phantom{ {} = } + \left(\frac{\ell-2}{2M_\ell}\right)^2\left(\int_X \Lambda_\omega(\dot \omega_1) e^{-\ell f_\omega} \frac{\omega^n}{n!}\right)\left(\int_X \Lambda_\omega(\dot b_2) e^{-\ell f_\omega} \frac{\omega^n}{n!}\right)\\
	& \phantom{ {} = } - \left(\frac{\ell-2}{2M_\ell}\right)^2\left(\int_X \Lambda_\omega(\dot \omega_2) e^{-\ell f_\omega} \frac{\omega^n}{n!}\right)\left(\int_X \Lambda_\omega(\dot b_1) e^{-\ell f_\omega} \frac{\omega^n}{n!}\right).
	\end{split}
	\end{equation}
\end{lemma}

\begin{proof}
	Let $(\dot \omega,\dot b)$ denote a tangent vector at $(\omega,b) \in \Omega^{1,1}_{>0}$. Using that 
	$$
	M_\ell = \int_X e^{(2-\ell) f_\omega} \mu
	$$ 
	it follows that
	\begin{align*}
	d M_\ell(\dot \omega,\dot b) = \frac{2-\ell}{2}\int_X \Lambda_\omega(\dot \omega) e^{-\ell f_\omega} \frac{\omega^n}{n!}
	\end{align*}
	where we have used that the variation $2 \delta f_\omega(\dot \omega) = \Lambda_\omega \dot \omega$ by definition of $f_\omega$. Thus, the first part of \eqref{eq:lambdaOmegal} follows from \eqref{eq:JTOmega}. As for the second formula, we calculate
	\begin{align*}
	d\mathbf{J}d M_\ell((\dot \omega_1,&\dot b_1),(\dot \omega_2,\dot b_2)) \\ 
	& {} = \frac{2- \ell}{2} \int_X (\dot b_2 (-\ell(\Lambda_\omega \dot \omega_1)/2) - \dot b_1 (-\ell(\Lambda_\omega \dot \omega_2)/2)) \wedge e^{-\ell f_\omega} \frac{\omega^{n-1}}{(n-1)!} \\
	&  \phantom{ {} = } +  \frac{2 - \ell}{2} \int_X (\dot b_2 \wedge \dot \omega_1 - \dot b_1 \wedge \dot \omega_2) \wedge e^{-\ell f_\omega} \frac{\omega^{n-2}}{(n-2)!}\\
	& {} = \frac{\ell(2 - \ell)}{4} \int_X (\Lambda_\omega \dot b_1 \Lambda_\omega \dot \omega_2 - \Lambda_\omega\dot b_2 \Lambda_\omega \dot \omega_1) e^{-\ell f_\omega} \frac{\omega^{n}}{n!}\\
	&  \phantom{ {} = } + \frac{2 - \ell}{2} \int_X (\dot \omega_1 \wedge \dot b_2 - \dot \omega_2 \wedge \dot b_1) \wedge e^{-\ell f_\omega} \frac{\omega^{n-2}}{(n-2)!},
	\end{align*}
	and therefore \eqref{eq:lambdaOmegal} follows from
	$$
	\Omega_\ell = -\frac{1}{M_\ell} d\mathbf{J}d M_\ell + \frac{1}{(M_\ell)^2}dM_\ell \wedge  \mathbf{J}d M_\ell.
	$$
\end{proof}

We provide next a formula for the associated family of symmetric tensors, obtaining K\"ahler metrics for certain values of the parameter $\ell$. Given $(\omega,b) \in T\Omega^{1,1}_{>0}$, we denote by
$$
b_0 = b - \frac{\Lambda_\omega b}{n} \omega
$$
the primitive part of $b$ with respect to $\omega$.

\begin{lemma}\label{lemma:gl}
	The symmetric tensor $g_\ell = \Omega_\ell(,\mathbf{J})$ at $(\omega,b)$, evaluated at $v = (\dot \omega, \dot b)$, is given by
	\begin{equation}\label{eq:gl}
	\begin{split}
	g_\ell (v,v) & =  \frac{2 - \ell}{2 M_\ell} \int_X (|\dot \omega_0|^2 + |\dot b_0|^2) e^{-\ell f_\omega} \frac{\omega^{n}}{n!}\\
	& \phantom{ {} = } + \frac{2 - \ell}{2 M_\ell}\Bigg{(}\frac{\ell}{2} - \frac{n-1}{n}\Bigg{)} \int_X (|\Lambda_\omega \dot b|^2 + | \Lambda_\omega \dot \omega|^2) e^{-\ell f_\omega} \frac{\omega^{n}}{n!}\\
	& \phantom{ {} = }  + \left(\frac{2-\ell}{2M_\ell}\right)^2\left(\int_X \Lambda_\omega \dot \omega e^{-\ell f_\omega} \frac{\omega^n}{n!}\right)^2
	 + \left(\frac{2-\ell}{2M_\ell}\right)^2\left(\int_X \Lambda_\omega\dot b e^{-\ell f_\omega} \frac{\omega^n}{n!}\right)^2.
	\end{split}
	\end{equation}
	In particular, $g_\ell$ is K\"ahler if $2 - \frac{2}{n} < \ell < 2$ and $-g_\ell$ is K\"ahler if $\ell > 2$.
\end{lemma}
\begin{proof}
The proof of \eqref{eq:gl} is straightforward from \eqref{eq:JTOmega} and \eqref{eq:lambdaOmegal}. The K\"ahler property of $-g_\ell$ for $\ell > 2$ follows from the Cauchy-Schwarz inequality, which implies
$$
\frac{1}{M_\ell}\left(\int_X \Lambda_\omega\dot b\, e^{-\ell f_\omega} \frac{\omega^n}{n!}\right)^2 \leq \int_X |\Lambda_\omega \dot b|^2 e^{-\ell f_\omega} \frac{\omega^{n}}{n!}.
$$
\end{proof}

Consider the action of the additive group of purely imaginary two-forms induced by \eqref{eq:Om2Caction}
\begin{align*}
i \Omega^2 \times T\Omega^{1,1}_{>0} &\to T\Omega^{1,1}_{>0}\\ (i B,(\omega,b)) &\mapsto (\omega,b+B^{1,1}).
\end{align*}
Since the $i \Omega^2$-action preserves both $\mathbf{J}$ and $M_\ell$, it also preserves the one-form $\lambda_\ell$ (see \eqref{eq:lambdal}). Thus, by \eqref{eq:Omegal}, the action is Hamiltonian and there exists an equivariant moment map, which we calculate in the following result.

\begin{proposition}\label{prop:mmap}
	The action of $i\Omega^2$ on $T\Omega^{1,1}_{>0}$ is Hamiltonian, with equivariant moment map
\begin{equation}\label{eq:mmapcl}
\la \mu_\ell(\omega,b),B \ra = \frac{2-\ell}{2M_\ell} \int_X B\wedge e^{-\ell f_\omega} \frac{\omega^{n-1}}{(n-1)!}.
\end{equation}
Upon restriction to the subgroup $i\Omega^2_{ex} \subset i\Omega^2$ of imaginary exact $2$-forms on $X$, zeros of the moment map are given by $\ell$-conformally balanced metrics, that is,
\begin{equation*}\label{eq:lcbalanced}
d(e^{-\ell f_\omega} \omega^{n-1})= 0.
\end{equation*}
\end{proposition}

\begin{proof}
The $i\Omega^2$-action is Hamiltonian, with moment map
	$$
	\la \mu_\ell(\omega,b),iB \ra = - \lambda_\ell (iB \cdot (\omega,b)) = - \lambda_\ell (0,B), 
	$$
	where $iB \cdot (\omega,b) \in T_{(\omega,b)}T\Omega^{1,1}_{>0}$ denotes the infinitesimal action of $iB$ on $(\omega,b)$. Formula \eqref{eq:mmapcl} follows now from \eqref{eq:lambdaOmegal}. The last part of the statement is straightforward and is left to the reader.
\end{proof}

To the knowledge of the authors, the previous result provides the first moment map interpretation of the conformally balanced equation in the literature. In particular, for $\ell = 0$ we obtain a symplectic interpretation of balanced metrics. Similarly, when $X$ admits a holomorphic volume form $\Omega$ and we take
\begin{equation}\label{eq:muOmega2}
\mu = (-1)^{\frac{n(n-1)}{2}}i^n\Omega \wedge \overline{\Omega}
\end{equation}
and $\ell = 1$, Proposition \ref{prop:mmap} characterizes hermitian metrics with holonomy for the Bismut connection contained in $\operatorname{SU}(n)$ as a moment map condition (see e.g. \cite{grst}, cf. Corollary \ref{cor:mmapHS}). Observe that for these two interesting cases we cannot ensure that the metric $\pm g_\ell$ in \eqref{eq:gl} is K\"ahler.

\subsection{K\"ahler reduction and the Calabi system}\label{sec:mmapHS}

Let $E_\RR$ be a real string algebroid with underlying principal $K$-bundle $P_\RR$ over our compact complex manifold $X$ (see Section \ref{sec:BCreal}). 
Let $G$ be the complexification of $K$. Let $E$ be the complexification of $E_{\RR}$, with underlying principal $G$-bundle $\underline{P} = P_\RR \times_K G$. Given a horizontal lift $W \subset E_\RR$ of $T\underline{X}$ to $E_\RR$ (see Definition \ref{def:unitarylift}) we define
\begin{equation*}\label{eq:LW}
L_W := \{e \in W \otimes \CC \; | \; \pi(e) \in T^{0,1}\underline X \} \subset E.
\end{equation*}
Consider the set of horizontal lifts of $T\underline{X}$ to $E_\RR$ such that $L_W$ is isotropic
$$
\mathcal{W} := \{W \subset E_\RR \; | \; W \; \textrm{is a horizontal lift and} \; L_W \; \textrm{is isotropic}\}.
$$
Recall from Section \ref{sec:Chernco} that any $W \in \mathcal{W}$ induces the following data: a real $(1,1)$-form $\omega \in \Omega^{1,1}_\RR$ on $\underline{X}$, a three-form $H_\RR$, a connection $\theta_\RR$ on $P_\RR$, and an isomorphism $E_\RR \cong E_{0,\RR}$ (see \eqref{eq:E0R}), so that the Courant structure on $E_{0,\RR}$ is as in Definition \ref{def:E0}. In particular, there is a well-defined forgetful map
\begin{equation}\label{eq:WA}
\mathcal{W} \longrightarrow \Omega^{1,1}_\RR\times \cA,
\end{equation}
where $\cA$ denotes the space of principal connections on $P_\RR$. Furthermore, via $E_\RR \cong E_{0,\RR}$, we have
$$
W = \{V + \sigma(V): V \in T\underline{X}\},
$$
where $\sigma = \omega(,J)$. The following result is a straightforward consequence of the Chern correspondence in Lemma \ref{lem:Cherncorr}.

\begin{lemma}\label{lem:Cherncorrsets}
Denote by $\cL$ the set of isotropic subbundles $L \subset E$ mapping isomorphically to $T^{0,1} X$ under $\pi \colon E \to T\underline{X} \otimes \CC$. Then, there is a bijection
\begin{align}
\begin{split}\label{eq:Cherncorrsets}
\mathcal{W} &\to \cL\\
W  &\mapsto L_W.
\end{split}
\end{align}
\end{lemma}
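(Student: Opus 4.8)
The plan is to read this bijection off directly from the Chern correspondence of Lemma~\ref{lem:Cherncorr}, applied with the compact form taken to be $E_\RR$ itself sitting inside its own complexification $E$, so that $E_\RR \subset E$ is a compact form in the sense of Definition~\ref{def:realform}. Everything needed is already in Section~\ref{sec:Chernco}; what remains is to check that the assignment is well defined in both directions and to invoke the uniqueness clause.

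First I would check that $W \mapsto L_W$ takes values in $\cL$. If $W \in \mathcal{W}$ then, by Definition~\ref{def:unitarylift}, the anchor restricts to a real isomorphism $\pi|_W \colon W \to T\underline X$; extending $\CC$-linearly we obtain an isomorphism $W \otimes \CC \cong T\underline X \otimes \CC$ under $\pi$, so $L_W = (\pi|_{W \otimes \CC})^{-1}(T^{0,1}X)$ is a subbundle of $E$ on which $\pi$ restricts to an isomorphism onto $T^{0,1}X$, and it is isotropic by the very definition of $\mathcal{W}$. Hence $L_W \in \cL$ and $W \mapsto L_W$ is a well-defined map $\mathcal{W} \to \cL$.

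Next I would construct its inverse. An element $L \in \cL$ is an isotropic subbundle of $E$ mapping isomorphically to $T^{0,1}X$; by the remark following Lemma~\ref{lem:Cherncorr}, the involutivity of $L$ plays no role there, so the Chern correspondence applies verbatim and produces a uniquely determined horizontal lift $W(L) \subset E_\RR$ with $L_{W(L)} = L$. Since $L$ is isotropic we have $W(L) \in \mathcal{W}$, and the composition $L \mapsto W(L) \mapsto L_{W(L)} = L$ is the identity, so $W \mapsto L_W$ is surjective. Injectivity is precisely the uniqueness clause of Lemma~\ref{lem:Cherncorr}: if $W_1, W_2 \in \mathcal{W}$ satisfy $L_{W_1} = L_{W_2}$, then both are horizontal lifts of $T\underline X$ to $E_\RR$ realizing the same lifting, hence $W_1 = W_2$. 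This gives the bijection.

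There is no real obstacle here: the analytic content — the explicit relations \eqref{eq:WChern} between the data defining $L$ and those defining $W$, and the compatibility of the notion of horizontal lift with the output of Lemma~\ref{lem:Cherncorr} — has already been carried out. The only points deserving a word of care are the identification of $\cL$ with the set of (not necessarily involutive) liftings of $T^{0,1}X$, granted by that remark, and the observation that $E_\RR$ may legitimately serve as a compact form of $E$.
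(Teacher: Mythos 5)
Your proposal is correct and matches the paper's intent exactly: the paper gives no separate proof, stating only that the lemma is a straightforward consequence of the Chern correspondence in Lemma \ref{lem:Cherncorr}, and your write-up is precisely the expected fleshing-out — well-definedness from the rank and isotropy conditions, surjectivity from the existence clause of Lemma \ref{lem:Cherncorr} (applicable to non-involutive $L$ by the remark following it), and injectivity from its uniqueness clause. Nothing further is needed.
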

The sets $\mathcal{W}$ and $\cL$ have natural structures of affine space modelled on the vector spaces $\Omega^{1,1}_\RR \oplus \Omega^2 \oplus \Omega^1(\ad P_\RR)$ and $\Omega^{1,1 + 0,2} \oplus \Omega^{0,1}(\ad \underline P)$, respectively (see Lemma \ref{lemma:liftings} and Lemma \ref{lem:Cherncorr}). It is not difficult to see that the map \eqref{eq:Cherncorrsets} is affine, and thus the natural complex structure on $\cL$ given by multiplication by $i$ induces a complex structure $\mathbf{J}$ on $\cW$ making \eqref{eq:Cherncorrsets} holomorphic.

\begin{lemma}\label{lem:JW}
Any element $W \in \cW$ induces a natural bijection $\cW \cong \Omega^{1,1}_\RR \oplus \Omega^2 \oplus \cA$. Via this identification, $\mathbf{J}$ is given by
 \begin{equation}\label{eq:JW}
\mathbf{J}_{|W}(\dot \omega,\dot b, \dot a) = (-\dot b^{1,1},\dot \omega + i \dot b^{0,2} - i \overline{\dot b^{0,2}},J \dot a)
\end{equation}
for $(\dot \omega,\dot b, \dot a) \in \Omega^{1,1}_\RR \oplus \Omega^2 \oplus \Omega^1(\ad P_\RR)$ and $J \dot a := i \dot a^{0,1} - i \dot a^{1,0}$. Consequently, the forgetful map $\mathcal{W} \longrightarrow \cA$ induced by \eqref{eq:WA} is holomorphic.
\end{lemma}

\begin{proof}
Without loss of generality, we fix an isotropic splitting $\lambda_0 \colon T\underline{X} \to E_\RR$, with induced connection $\theta_0$ on $P_\RR$. Via the isomorphism $E_\RR \cong E_{0,\RR}$ induced by $\lambda_0$, as in \eqref{eq:E0R}, an element $W \in \cW$ is given by a triple 
\begin{equation*}\label{eq:cWexp}
(\omega,b,\theta_\RR) \in  \Omega^{1,1}_\RR \times \Omega^2 \times \cA,
\end{equation*}
with corresponding horizontal lift
$$
W = (-b,-a) \{V + \omega(V,J): V \in T\underline{X}\},
$$
for $a = \theta_\RR - \theta_0$, and isotropic subbundle (see Lemma \ref{lem:Cherncorr})
\begin{align*}
L_W &= (i\omega -b,-a)(T^{0,1}X)\\
& = (i\omega - b^{1,1 + 0,2} - \la a^{0,1} \wedge a^{1,0} \ra,-a^{0,1})(T^{0,1}X)\\
& = (i\omega - b^{1,1 + 0,2} - \tfrac{i}{2}\la a \wedge Ja \ra^{1,1} , -a^{0,1})(T^{0,1}X).
\end{align*}
Thus, the differential of the map \eqref{eq:Cherncorrsets} at $W \equiv (\omega,b,\theta_\RR)$ can be identified with the linear map
\begin{align*}
\Omega^{1,1}_\RR \oplus \Omega^2 \oplus \Omega^1(\ad P_\RR) & \longrightarrow \Omega^{1,1 + 0,2} \oplus \Omega^{0,1}(\ad \underline P)\\
(\dot \omega,\dot b,\dot a) & \longmapsto (\dot b^{1,1 + 0,2} - i(\dot \omega - \tfrac{1}{2}\la \dot a \wedge Ja \ra^{1,1} - \tfrac{1}{2}\la a \wedge J\dot a \ra^{1,1}),\dot a^{0,1}),
\end{align*}
and the induced complex structure is given by
$$
\mathbf{J}_{|W}(\dot \omega,\dot b, \dot a) = (- \dot b^{1,1} + \la \dot a \wedge a \ra^{1,1} ,\dot \omega + \la J\dot a \wedge a \ra^{1,1} + i \dot b^{0,2} - i \overline{\dot b^{0,2}}, J \dot a).
$$
Taking now $\lambda_0$ to be the isotropic splitting induced by $W$ we have $a = 0$ and the statement follows (for the last part see e.g. \cite{Don}).
\end{proof}

Consider now the natural left action of $\Aut(E_\RR)$ on $\cW$, given by
\begin{equation}\label{eq:AutEr-left-actioncW}
\begin{split}
\Aut(E_\RR) \times \cW & \longrightarrow \cW\\
(\underline{f},W) & \longmapsto  \underline{f} \cdot W : = \underline{f}(W).
\end{split}
\end{equation}
Our goal is to find a Hamiltonian action on $\cW$ induced by \eqref{eq:AutEr-left-actioncW} and study its symplectic reduction. For this, we need a better understanding of the action \eqref{eq:AutEr-left-actioncW}. Our next result shows that \eqref{eq:AutEr-left-actioncW} preserves the complex structure $\mathbf{J}$, and furthermore it extends the classical action of the gauge group $\cG_{P_\RR}$ on the space of connections $\cA$. Recall from \cite[App. A]{grt2} that there is a well-defined group homomorphism
$$
\sigma_{P_\RR} \colon \cG_{P_\RR} \to H^3(X,\RR)
$$
defined as in Corollary \ref{cor:PicQ}, inducing an exact sequence
\begin{equation*}\label{eq:ERses}
  \xymatrix{
0 \ar[r] & \Omega^2_{cl} \ar[r] & \Aut(E_\RR) \ar[r] &  \ar[r] \Ker \sigma_{P_\RR} &  \ar[r]^{\sigma_{P_\RR} \qquad} \cG_{P_\RR} & H^3(X,\RR).
  }
\end{equation*}

\begin{lemma}\label{lem:WA}
The action \eqref{eq:AutEr-left-actioncW} preserves $\mathbf{J}$. Furthermore, the forgetful map \eqref{eq:WA} jointly with the action \eqref{eq:AutEr-left-actioncW} induce a commutative diagram
\begin{equation*}
\xymatrix @R=.5pc {
		\Aut(E_\RR) \times \cW \ar[r] \ar[dd] & \cW \ar[dd] \\
		& \\
		\Ker \sigma_{P_\RR} \times \Omega^{1,1}_\RR \times \cA \ar[r] & \Omega^{1,1}_\RR \times \cA ,
	}
\end{equation*}
where the bottom arrow is induced by the left $\cG_{P_\RR}$-action on $\Omega^{1,1}_\RR \times \cA $, given by $g\cdot(\omega,\theta_\RR) = (\omega,g\theta_\RR)$.
\end{lemma}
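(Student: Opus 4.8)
The plan is to reduce everything to an explicit computation in the standard model $E_{\RR}\cong E_{0,\RR}$ determined by a choice of isotropic splitting $\lambda_0$, exactly as in the proof of Lemma \ref{lem:JW}. First I would recall from \cite[App. A]{grt2} (cf. \cite[Cor. 4.2]{grt}) the explicit description of an element $\underline f\in\Aut(E_\RR)$, when $E_\RR$ is standard, as a pair $(g,\tau)\in\cG_{P_\RR}\times\Omega^2$ satisfying the real analogue of \eqref{eq:MoritaPicexp}, acting on sections $V+r+\xi\in E_{0,\RR}$ by the real analogue of \eqref{eq:gtauaction}. Given $W\in\cW$ represented by a triple $(\omega,b,\theta_\RR)$ as in the proof of Lemma \ref{lem:JW}, one computes $\underline f(W)$ directly: the automorphism $(g,\tau)$ fixes the ``tensorial'' part $\{V+\omega(V,J\cdot)\}$ up to a $B$-field shift and conjugates the connection part. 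Tracking the $B$-field and connection shifts through the identifications $E_\RR\cong E_{0,\RR}$ and then applying the Chern correspondence (Lemma \ref{lem:Cherncorr}) to normalize the resulting horizontal lift — precisely the bookkeeping carried out in the proof of Lemma \ref{lemma:PicQactexp} — yields that $\underline f\cdot W$ is again of the form $(\omega',b',g\theta_\RR)$ with $\omega',b'$ given by explicit (but unenlightening) formulas, and in particular that the connection component transforms by $\theta_\RR\mapsto g\theta_\RR$. This gives the commutative diagram, since by construction $\underline f$ projects to $g\in\Ker\sigma_{P_\RR}\subset\cG_{P_\RR}$ under the exact sequence, and $\cG_{P_\RR}$ acts on $\cA$ in the standard way $g\cdot\theta_\RR=g\theta_\RR$ while fixing $\omega$.

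For the claim that \eqref{eq:AutEr-left-actioncW} preserves $\mathbf J$, the cleanest route is functoriality rather than a second hands-on computation. By Lemma \ref{lem:Cherncorrsets}, the map $W\mapsto L_W$ is an $\Aut(E)$-equivariant bijection $\cW\to\cL$, where $\Aut(E_\RR)\subset\Aut(E)$ acts on $\cL$ by $\underline f\cdot L=\underline f(L)$. The complex structure on $\cL$ is multiplication by $i$ on the affine model $\Omega^{1,1+0,2}\oplus\Omega^{0,1}(\ad\underline P)$, and the $\Aut(E)$-action on $\cL$ is affine (it is by the explicit $B$-field-and-gauge transformations of Lemma \ref{lemma:liftings}, which act affinely on the parameter $(\gamma,\beta)$), hence $\CC$-linear on the modelling vector space; therefore it commutes with multiplication by $i$. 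Transporting this back along the holomorphic bijection \eqref{eq:Cherncorrsets} shows that the $\Aut(E_\RR)$-action on $\cW$ preserves $\mathbf J$.

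The main obstacle is the first part: keeping the numerous shift terms straight when one composes (i) the passage from a standard $E_{0,\RR}$ associated to $W$ to the one associated to $\underline f\cdot W$, (ii) the automorphism $(g,\tau)$, and (iii) the renormalization of the horizontal lift via the Chern correspondence — this is the same source of bulk as in Lemma \ref{lemma:PicQactexp}, and one should simply invoke that computation with minor real-structure modifications rather than redo it. The second part, by contrast, is essentially a formality once one observes that both the identification \eqref{eq:Cherncorrsets} and the $\Aut(E)$-action on $\cL$ are affine. One should also note, for the statement about the bottom row, that the $\cG_{P_\RR}$-action on $\Omega^{1,1}_\RR\times\cA$ is well-defined precisely because an element of $\Aut(E_\RR)$ only affects $\omega$ through a $B$-field, which is invisible after projecting to $\Omega^{1,1}_\RR$; this is what makes the diagram commute rather than merely quasi-commute.
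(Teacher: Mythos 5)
Your proposal is correct and follows essentially the same route as the paper: the $\mathbf{J}$-invariance is obtained exactly as in the paper, via equivariance of the Chern correspondence \eqref{eq:Cherncorrsets} and the fact that the induced $\Aut(E)$-action on $\cL$ is affine-holomorphic, while the commutativity of the diagram is verified by the same explicit computation of $(g,\tau)(W)$ in the standard model $E_{0,\RR}$. The only (harmless) difference is that your step invoking a Chern-correspondence renormalization as in Lemma \ref{lemma:PicQactexp} is not needed here: since $(g,\tau)$ is real, $(g,\tau)(W)$ is already presented as a real $(B,a)$-transform of $W_\omega$, so one reads off directly that the symmetric tensor $\omega$ is unchanged and the connection becomes $g\theta_\RR$.
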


\begin{proof}
For the first part, observe that the map \eqref{eq:Cherncorrsets} is equivariant for the action of $\Aut(E_\RR)$ on $\cL$, given by
\begin{equation}\label{eq:AutEr-left-actioncL}
\begin{split}
\Aut(E_\RR) \times \cL & \longrightarrow \cL\\
(\underline{f},L) & \longmapsto  \underline{f} \cdot L : = \underline{f}(L).
\end{split}
\end{equation}
Using that \eqref{eq:AutEr-left-actioncL} is induced by the natural complex $\Aut(E)$-action on $\cL$ (defined by the same formula), we obtain that $\mathbf{J}$ is preserved by  \eqref{eq:AutEr-left-actioncW}.

As for the second part, without loss of generality we fix an isotropic splitting $\lambda_0 \colon T\underline{X} \to E_\RR$, with induced connection $\theta_0$ on $P_\RR$. Via the induced isomorphism $E_\RR \cong E_{0,\RR}$, as in \eqref{eq:E0R}, an element $W \in \cW$ is given by a triple 
\begin{equation*}\label{eq:cWexp2}
(\omega,b,\theta_\RR) \in  \Omega^{1,1}_\RR \times \Omega^2 \times \cA,
\end{equation*}
with corresponding horizontal lift
$$
W = (-b,\theta_0 - \theta_\RR) W_\omega
$$
for $W_\omega := \{V + \omega(V,J) \; | \; V \in T\underline{X}\}$. An element in $\Aut(E_\RR) \cong \Aut(E_{0,\RR})$ is given by a pair $(g,\tau) \in \cG_{P_\RR} \times \Omega^2$ satisfying (cf. Lemma \ref{lem:HomQQ})
$$
d \tau = CS(g^{-1}\theta_0) - CS(\theta_0) - d \la g^{-1} \theta_0 \wedge \theta_0 \ra
$$
and the action \eqref{eq:AutEr-left-actioncW} is
\begin{align*}
(g,\tau)(W) & = (\tau - b + \la a^g \wedge \theta_0 - \theta_\RR \ra,g(a^g + \theta_0 - \theta_\RR))(W_\omega)\\
& = (\tau - b + \la a^g \wedge \theta_0 - \theta_\RR \ra,\theta_0 - g\theta_\RR)(W_\omega)
\end{align*}
for $a^g = g^{-1} \theta_0 - \theta_0$. Thus, the statement follows.
\end{proof}

Consider the open subset $\Omega^{1,1}_{>0} \subset \Omega^{1,1}_\RR$ given by the positive $(1,1)$-forms on $X$. The phase space for our symplectic reduction is the following open subset of $\cW$
$$
\cW_+ = \{W \in  \cW \;|\; \omega(,J) > 0\} \subset \cW.
$$
To define our family of symplectic structures, we fix a smooth volume form $\mu$ on $X$ compatible with the complex structure. For any $\omega \in \Omega^{1,1}_{>0}$, we define the \emph{dilaton function} $f_\omega \in C^\infty(X)$ as in \eqref{eq:dilaton}.

\begin{definition}\label{def:dilatonfunctionalW}
	Given $\ell \in \RR \backslash \{2\}$, the $\ell$-\emph{dilaton functional} on $\cW_+$ is
	\begin{equation}\label{eq:DilfunctionalW}
	M_\ell(W) := \int_X e^{-\ell f_\omega} \frac{\omega^n}{n!}.
	\end{equation}
\end{definition}

Observe that $M_\ell$ is the pullback of the functional in Definition \ref{def:dilatonfunctional} by the projection $\cW \to \Omega^{1,1}_\RR$ induced by \eqref{eq:WA}. In the sequel we fix $\ell \in \RR \backslash \{2\}$. Associated to the functional $M_\ell$ there is a one-form $\lambda_\ell \in \Omega^1(\cW_+)$ on $\cW_+$, given by
\begin{equation*}\label{eq:lambdal2}
\lambda_\ell := -\mathbf{J}d \log M_\ell = -\frac{1}{M_\ell}\mathbf{J}d M_\ell.
\end{equation*}

\begin{lemma}\label{lem:lambdalW}
The one-form $\lambda_\ell$ is preserved by the $\Aut(E_\RR)$-action. Furthermore,
\begin{equation}\label{eq:lambdalW}
\lambda_{\ell|W}(\dot \omega,\dot b, \dot a) = \frac{ \ell-2}{2 M_\ell} \int_X \dot b \wedge e^{-\ell f_\omega} \frac{\omega^{n-1}}{(n-1)!}
\end{equation}
for $(\dot \om,\dot b, \dot a)\in T_W\cW_+ \cong \Omega^{1,1}_\RR \oplus \Omega^2 \oplus \Om^1(\ad P_\R)$.
\end{lemma}

\begin{proof}
The first part of the statement is a direct consequence of Lemma \ref{lem:JW} and Lemma \ref{lem:WA}. As for formula \eqref{eq:lambdalW}, without loss of generality, we fix an isotropic splitting $\lambda_0 \colon T\underline{X} \to E_\RR$ with induced connection $\theta_0$ on $P_\RR$. By the proof of Lemma \ref{lem:JW} combined with Lemma \ref{lemma:lambdaOmegal}, the one-form $\lambda_\ell$ is
\begin{equation}\label{eq:lambdalWgen}
\begin{split}
\lambda_{\ell|W}(\dot \omega,\dot b, \dot a) & = \frac{\ell-2}{2 M_\ell} \int_X (\dot b^{1,1} - \la \dot a \wedge a \ra^{1,1})\wedge e^{-\ell f_\omega} \frac{\omega^{n-1}}{(n-1)!},
\end{split}
\end{equation}
for $a = \theta_\RR - \theta_0$. Taking now $\lambda_0$ to be the isotropic splitting induced by $W$ we have $a = 0$ and the statement follows.
\end{proof}

Similarly as in Section \ref{sec:balancedconf}, we endow $\cW_+$ with an $\Aut(E_\RR)$-invariant exact $(1,1)$-form defined by
\begin{equation}\label{eq:Omegal2}
\Omega_\ell :=- d\mathbf{J}d \log M_\ell.
\end{equation}
We calculate next a formula for $\Omega_\ell$ and the symmetric two-tensor $g_\ell = \Omega_\ell(,\mathbf{J})$. We use the notation in Lemma \ref{lemma:gl} for the decomposition of two-forms into primitive and non-primitive parts.

\begin{lemma}\label{lemma:lambdaOmegalW}
The evaluation of $\Om_\ell$ and $g_\ell$, along tangent vectors $v = (\dot \om, \dot{b}, \dot a)$ and $v_j = (\dot \om_j, \dot{b}_j, \dot a_j)$ at the point $(\om, b, a)$, is given by:
	\begin{equation}\label{eq:OmegalW}
	\begin{split}
	\Omega_\ell(v_1,v_2) & {} =  \frac{\ell - 2}{M_\ell} \int_X \la \dot a_1 \wedge\dot a_2 \ra \wedge e^{-\ell f_\omega} \frac{\omega^{n-1}}{(n-1)!}\\
	& \phantom{ {} = } + \frac{ \ell-2}{2M_\ell} \int_X (\dot \omega_1 \wedge \dot b_2 - \dot \omega_2 \wedge \dot b_1) \wedge e^{-\ell f_\omega} \frac{\omega^{n-2}}{(n-2)!}\\
	& \phantom{ {} = } + \frac{\ell( \ell-2)}{4 M_\ell} \int_X (\Lambda_\omega \dot b_1 \Lambda_\omega \dot \omega_2 - \Lambda_\omega\dot b_2 \Lambda_\omega \dot \omega_1) e^{-\ell f_\omega} \frac{\omega^{n}}{n!}\\
	& \phantom{ {} = } + \left(\frac{\ell-2}{2M_\ell}\right)^2\left(\int_X \Lambda_\omega(\dot \omega_1) e^{-\ell f_\omega} \frac{\omega^n}{n!}\right)\left(\int_X \Lambda_\omega(\dot b_2) e^{-\ell f_\omega} \frac{\omega^n}{n!}\right)\\
	& \phantom{ {} = } - \left(\frac{\ell-2}{2M_\ell}\right)^2\left(\int_X \Lambda_\omega(\dot \omega_2) e^{-\ell f_\omega} \frac{\omega^n}{n!}\right)\left(\int_X \Lambda_\omega(\dot b_1) e^{-\ell f_\omega} \frac{\omega^n}{n!}\right).
	\end{split}
	\end{equation}
	\begin{equation}\label{eq:glW}
	\begin{split}
	g_\ell (v,v) & {} =  \frac{\ell - 2}{M_\ell} \int_X \la \dot a \wedge J \dot a \ra \wedge e^{-\ell f_\omega} \frac{\omega^{n-1}}{(n-1)!}\\
&  \phantom{ {} = } +  \frac{2 - \ell}{2 M_\ell} \int_X (|\dot \omega_0|^2 + |\dot b_0^{1,1}|^2) e^{-\ell f_\omega} \frac{\omega^{n}}{n!}\\
	& \phantom{ {} = } + \frac{2 - \ell}{2 M_\ell}\Bigg{(}\frac{\ell}{2} - \frac{n-1}{n}\Bigg{)} \int_X (|\Lambda_\omega \dot b|^2 + | \Lambda_\omega \dot \omega|^2) e^{-\ell f_\omega} \frac{\omega^{n}}{n!}\\
	& \phantom{ {} = }  + \left(\frac{2-\ell}{2M_\ell}\right)^2\left(\int_X \Lambda_\omega \dot \omega e^{-\ell f_\omega} \frac{\omega^n}{n!}\right)^2
	 + \left(\frac{2-\ell}{2M_\ell}\right)^2\left(\int_X \Lambda_\omega\dot b e^{-\ell f_\omega} \frac{\omega^n}{n!}\right)^2.
	\end{split}
	\end{equation}
\end{lemma}

\begin{proof}
We fix an isotropic splitting $\lambda_0 \colon T\underline{X} \to E_\RR$. Formulae \eqref{eq:OmegalW} and \eqref{eq:glW} follow by taking first the exterior derivative in \eqref{eq:lambdalWgen} and then setting $\lambda_0$ to be the splitting induced by $W$, combined with Lemma \ref{lemma:lambdaOmegal} and Lemma \ref{lem:JW}.
\end{proof}

\begin{remark}\label{rem:glnondeg}
Arguing as in the proof of Lemma \ref{lemma:gl}, one can prove that $g_\ell$ (respectively $-g_\ell$) induces a pseudo-K\"ahler metric along the subbundle $\Omega^{1,1}_\RR \oplus \Omega^{1,1}_\RR \oplus \Omega^1(\ad P_\RR) \subset T\cW_+$ provided that $2 - \frac{2}{n} < \ell < 2$ (respectively $\ell > 2$).
\end{remark}

By Lemma \ref{lem:lambdalW}, the action of $\Aut(E_\RR)$ on $(\cW_+,\Omega_\ell)$ is Hamiltonian, with moment map
$$
\la \mu_\ell(W),\zeta \ra = - \lambda_\ell (\zeta \cdot W)
$$
for $\zeta \in \Lie \Aut(E_\RR)$, where $\zeta \cdot W$ denotes the infinitesimal action. The following explicit formula follows from the proof of Lemma \ref{lem:WA}. Recall that any $W \in \cW$ determines an isotropic splitting $\lambda \colon T\underline{X} \to E_\RR$ with connection $\theta_\RR$, and via the isomorphism \eqref{eq:E0R} the Lie algebra $ \Lie \Aut(E_\RR)$ can be identified with (cf. Lemma \ref{lem:LiePic})
\begin{equation}\label{eq:LieAutER}
 \Lie \Aut(E_\RR) \cong  \{(s,B) \; | \; d(B - 2\la s, F_{\theta_\RR} \ra) = 0 \} \subset \Omega^0(\ad P_\RR) \times \Omega^2.
\end{equation}

\begin{proposition}\label{prop:mmapW}
The action of $\Aut(E_\RR)$ on $(\cW_+,\Omega_\ell)$ is Hamiltonian with equivariant moment map
	\begin{equation}\label{eq:mmapclW}
	\la \mu_\ell(W),\zeta \ra = \frac{ \ell-2}{2M_\ell} \int_X B\wedge e^{-\ell f_\omega} \frac{\omega^{n-1}}{(n-1)!}.
	\end{equation}
\end{proposition}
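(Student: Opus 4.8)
The plan is to leverage the general principle that, given a Lie group acting on a symplectic manifold and preserving a $1$-form primitive $\lambda$ of the symplectic form, the contraction $\zeta \mapsto -\iota_{\zeta}\lambda$ is an equivariant moment map. Concretely, I would proceed as follows. First, by Lemma \ref{lem:lambdalW} the $1$-form $\lambda_\ell$ is preserved by the $\Aut(E_\RR)$-action, and by \eqref{eq:Omegal2} we have $\Omega_\ell = -d\lambda_\ell$. Therefore, for any $\zeta \in \Lie\Aut(E_\RR)$, regarded as a vector field on $\cW_+$ via the infinitesimal action, Cartan's formula gives $L_\zeta \lambda_\ell = \iota_\zeta d\lambda_\ell + d\iota_\zeta\lambda_\ell = 0$, so that $\iota_\zeta \Omega_\ell = -d\iota_\zeta\lambda_\ell = d\la\mu_\ell,\zeta\ra$ once we set $\la\mu_\ell(W),\zeta\ra := -\lambda_{\ell|W}(\zeta\cdot W)$. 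This shows the action is Hamiltonian with this moment map; equivariance follows from $\Aut(E_\RR)$-invariance of $\lambda_\ell$ together with the standard computation $\la\mu_\ell(\underline{f}\cdot W),\zeta\ra = -\lambda_\ell(\zeta\cdot(\underline f\cdot W)) = -\lambda_\ell(\underline f_*(\Ad(\underline f^{-1})\zeta\cdot W)) = -\lambda_\ell(\Ad(\underline f^{-1})\zeta\cdot W) = \la\mu_\ell(W),\Ad(\underline f^{-1})\zeta\ra$, i.e. $\mu_\ell$ is $\Ad^*$-equivariant.

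Second, I would compute the explicit formula \eqref{eq:mmapclW}. This requires identifying $\zeta\cdot W \in T_W\cW_+$ for $\zeta = (s,B)$ under the identification $T_W\cW_+ \cong \Omega^{1,1}_\RR \oplus \Omega^2 \oplus \Omega^1(\ad P_\RR)$, and then applying formula \eqref{eq:lambdalW}. Following the proof of Lemma \ref{lem:WA}: fixing the isotropic splitting $\lambda_0$ \emph{induced by $W$} (so that $a = \theta_\RR - \theta_0 = 0$ and $W$ is represented by the triple $(\omega,b,\theta_\RR)$ with $b$ the two-form part), one differentiates the action of $(g_t,\tau_t) = (e^{ts},\ldots)$ on $W$. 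From the formula $(g,\tau)(W) = (\tau - b + \la a^g \wedge \theta_0 - \theta_\RR\ra,\theta_0 - g\theta_\RR)(W_\omega)$ in the proof of Lemma \ref{lem:WA}, differentiating at $t=0$ with $a^g$ vanishing to first order in the relevant term gives that the two-form component of $\zeta\cdot W$ is (up to terms that are pure $(2,0)+(0,2)$ or otherwise killed by the pairing with $\omega^{n-1}$) simply $B$, exactly as for $\Lie\Pic(Q_0)$ in Lemma \ref{lem:LiePic} and \ref{lem:LiePicactionBQ}. Plugging $\dot b = B$ into \eqref{eq:lambdalW} and inserting a sign yields $\la\mu_\ell(W),\zeta\ra = -\frac{\ell-2}{2M_\ell}\int_X (-B)\wedge e^{-\ell f_\omega}\frac{\omega^{n-1}}{(n-1)!} = \frac{\ell-2}{2M_\ell}\int_X B\wedge e^{-\ell f_\omega}\frac{\omega^{n-1}}{(n-1)!}$, which is \eqref{eq:mmapclW}. (One should double-check that only the $\Omega^{1,1}_\RR$-component of the two-form part of $\zeta \cdot W$ survives the wedge with $\omega^{n-1}$; since $\omega^{n-1}$ has bidegree $(n-1,n-1)$, only the $(1,1)$-part of $B$ contributes, consistent with the Aeppli-cohomology bookkeeping used earlier.)

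The main obstacle I anticipate is the bookkeeping in the second step: carefully tracking how the infinitesimal action of $(s,B) \in \Lie\Aut(E_\RR)$ on $W$ decomposes under $T_W\cW_+ \cong \Omega^{1,1}_\RR\oplus\Omega^2\oplus\Omega^1(\ad P_\RR)$, in particular verifying that the $\ad P_\RR$-component (the variation of $\theta_\RR$, which is $d^{\theta_\RR}s$ for $g_t = e^{ts}$) does not contribute to $\lambda_\ell$ evaluated on $\zeta\cdot W$. But this is already handled by formula \eqref{eq:lambdalW}, which depends only on the $\dot b$-component; and the identification of that component with $B$ is precisely the computation underlying Lemma \ref{lem:LiePicactionBQ} and the proof of Lemma \ref{lem:WA}, both available. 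So the abstract argument is immediate from $\Omega_\ell = -d\lambda_\ell$ and $\Aut(E_\RR)$-invariance of $\lambda_\ell$, and the formula is a one-line substitution into \eqref{eq:lambdalW}. The proof is therefore short, essentially: ``Immediate from Lemma \ref{lem:lambdalW}, \eqref{eq:Omegal2}, and the fact that a group preserving an exact symplectic form with invariant primitive acts in a Hamiltonian way with moment map $-\iota_\zeta\lambda$; formula \eqref{eq:mmapclW} follows by evaluating \eqref{eq:lambdalW} on the infinitesimal action, whose two-form part is $B$ as in the proof of Lemma \ref{lem:WA}.''
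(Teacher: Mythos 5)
Your argument is exactly the paper's: the proposition is stated there with essentially no separate proof because, as you observe, Hamiltonianity and equivariance follow at once from the $\Aut(E_\RR)$-invariance of the primitive $\lambda_\ell$ (Lemma \ref{lem:lambdalW}) together with the exactness $\Omega_\ell = d\lambda_\ell$, and the explicit formula is obtained by evaluating \eqref{eq:lambdalW} on the infinitesimal action read off from the proof of Lemma \ref{lem:WA}. The only thing to tidy is the sign bookkeeping: since $\lambda_\ell = -\mathbf{J}d\log M_\ell$ one has $\Omega_\ell = +d\lambda_\ell$ rather than $-d\lambda_\ell$, and extracting the triple from $(g,\tau)(W) = (\tau - b + \la a^g\wedge \theta_0-\theta_\RR\ra,\;\theta_0 - g\theta_\RR)(W_\omega)$ gives $\dot b = -B$ in the splitting induced by $W$, after which $\la\mu_\ell(W),\zeta\ra = -\lambda_{\ell|W}(0,-B,d^{\theta_\RR}s)$ yields \eqref{eq:mmapclW} directly, with no extra sign to insert by hand.
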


Consider the $\Aut(E_\RR)$-invariant subspace of `integrable' horizontal lifts
\begin{equation}\label{eq:Wint}
\cW^{0} = \{W \in \cW \; | \; [L_W,L_W] \subset L_W\} \subset\cW,
\end{equation}
and define $\cW^{0}_+ = \cW^{0} \cap \cW_+$. Via \eqref{eq:Cherncorrsets}, $\cW_+^{0}$ maps to an open set of the space of liftings of $T^{0,1}X$ to the complexification $E$ of $E_\RR$, which defines 
a complex subspace of $\cL$. Thus, $\cW_+^{0} \subset\cW_+$ is (formally) a complex submanifold, and inherits an exact $(1,1)$-form denoted also by $\Omega_\ell$. Similarly as in Section \ref{sec:Ham}, we define the following group of `Hamiltonian' automorphisms of $E_\RR$. Recall from Lemma \ref{lem:PicAeppli} that there is Lie algebra homomorphism
\begin{equation*}
\mathbf{d} \colon \Lie \Aut(E) \to H^{2}(X,\CC),
\end{equation*}
which defines a normal Lie subalgebra $\Ker \mathbf{d} \subset \Lie \Aut(E)$.

\begin{definition}\label{def:PicAR}
Define the subgroup $\cH \subset \Aut(E_\RR)$ as the set of elements $\underline{f} \in \Aut(E_\RR)$ such that there exists a smooth family $\underline{f}_t \in \Aut(E_\RR)$ with $t \in [0,1]$, satisfying $\underline{f}_0 = \Id_{E_\RR}$, $\underline{f}_1 = \underline{f}$, and 
\begin{equation}\label{eq:Hamcond2}
\mathbf{d}(\zeta_t) = 0, \quad \textrm{ for all $t$.}
\end{equation}
\end{definition}

We are ready to prove the main result of this section.

\begin{proposition}\label{prop:mmapCalabil}
	The $\cH$-action on $(\cW_+^0,\Omega_\ell)$ is Hamiltonian, with equivariant moment map induced by \eqref{eq:mmapclW}. Furthermore, zeros of the moment map are given by solutions of the Calabi system with level $\ell$, defined by
	\begin{equation}\label{eq:Calabil}
	\begin{split}
	F_{\theta_\RR}\wedge \omega^{n-1} & = 0,  \qquad \qquad \qquad \qquad \; F_{\theta_\RR}^{0,2} = 0,\\
	d (e^{-\ell f_\omega}\omega^{n-1}) & = 0, \qquad  dd^c \om + \la F_{\theta_\RR}\wedge F_{\theta_\RR} \ra = 0.
	\end{split}
	\end{equation}
\end{proposition}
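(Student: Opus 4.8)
The plan is to deduce Proposition \ref{prop:mmapCalabil} from the already-established Hamiltonian picture on the larger space $\cW_+$ (Proposition \ref{prop:mmapW}), by restricting to the complex submanifold $\cW_+^0$ and to the subgroup $\cH$, and then carefully unpacking what vanishing of the moment map means. First I would observe that $\cW_+^0 \subset \cW_+$ is preserved by $\Aut(E_\RR)$ (this is \eqref{eq:Wint}, since $\Aut(E_\RR)$ acts by isomorphisms and hence preserves the involutivity condition $[L_W,L_W] \subset L_W$), and in particular it is preserved by $\cH$. Since $\Omega_\ell$ and $\lambda_\ell$ restrict to $\cW_+^0$ and $\cH \subset \Aut(E_\RR)$, the action of $\cH$ on $(\cW_+^0,\Omega_\ell)$ is Hamiltonian with moment map obtained by restricting \eqref{eq:mmapclW}, composed with the inclusion $\Lie \cH = \Ker \mathbf{a} \hookrightarrow \Lie \Aut(E_\RR)$; equivariance is inherited. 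This gives the first assertion with essentially no work beyond bookkeeping.

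The substance is identifying the zero locus. Here I would argue as follows. Fix $W \in \cW_+^0$, so that by the Chern correspondence (Lemma \ref{lem:Cherncorr}, via Lemma \ref{lem:Cherncorrsets}) $W$ determines $(\omega,\theta_\RR)$ with $\omega(\cdot,J\cdot) > 0$, and involutivity of $L_W$ translates, via Lemma \ref{lemma:liftings}, precisely into the two integrability conditions $F_{\theta_\RR}^{0,2} = 0$ and $dd^c\omega + \la F_{\theta_\RR} \wedge F_{\theta_\RR}\ra = 0$ (the $(1,2)+(0,3)$-part of the anomaly with $H_\RR = d^c\omega$, $\gamma = i\omega$ in the notation there, using $H_\RR$ real so that $H_\RR = d^c\omega$ is forced). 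Thus every point of $\cW_+^0$ already satisfies the right-hand column of \eqref{eq:Calabil} and the $(0,2)$-equation; what remains is to show that $\mu_\ell(W) = 0$ on $\Ker\mathbf{a}$ is equivalent to the two equations in the left-hand column, $F_{\theta_\RR}\wedge\omega^{n-1} = 0$ and $d(e^{-\ell f_\omega}\omega^{n-1}) = 0$.

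For this I would compute the pairing $\la \mu_\ell(W),\zeta\ra$ for $\zeta = (s,B) \in \Ker\mathbf{a} \subset \Lie\Aut(E_\RR)$ using \eqref{eq:mmapclW}: it equals $\frac{\ell-2}{2M_\ell}\int_X B \wedge e^{-\ell f_\omega}\frac{\omega^{n-1}}{(n-1)!}$. The constraint $\mathbf{a}(\zeta) = 0$ means $B^{1,1} - 2\la s,F_{\theta_\RR}^{1,1}\ra = \partial\alpha^{0,1} + \dbar\alpha^{1,0}$ is $\partial\oplus\dbar$-exact (equivalently $B^{1,1} = 2\la s, F_{\theta_\RR}\ra + \partial\oplus\dbar$-exact, using $F_{\theta_\RR}^{0,2} = F_{\theta_\RR}^{2,0} = 0$), while the Lie algebra constraint in \eqref{eq:LieAutER} gives $d(B - 2\la s,F_{\theta_\RR}\ra) = 0$, so $B - 2\la s, F_{\theta_\RR}\ra$ is closed. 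Decomposing into $B = B^{2,0} + B^{1,1} + B^{0,2}$ and pairing against $e^{-\ell f_\omega}\omega^{n-1}$ (a form of bidegree $(n-1,n-1)$), only $B^{1,1}$ contributes, and I would integrate by parts: the $\partial\oplus\dbar$-exact piece of $B^{1,1}$ pairs to zero against $e^{-\ell f_\omega}\omega^{n-1}$ precisely when $\partial\dbar(e^{-\ell f_\omega}\omega^{n-1}) = 0$, i.e. $dd^c(e^{-\ell f_\omega}\omega^{n-1}) = 0$, which combined with its closedness modulo the exact term is what one must extract; and the $2\la s, F_{\theta_\RR}\rangle$ piece pairs to $\frac{\ell-2}{M_\ell}\int_X \la s, F_{\theta_\RR}\wedge e^{-\ell f_\omega}\frac{\omega^{n-1}}{(n-1)!}\ra$, which vanishes for all $s \in \Omega^0(\ad P_\RR)$ iff $F_{\theta_\RR}\wedge\omega^{n-1} = 0$ (here using nondegeneracy of $\la\,,\ra$ on $\fk$ and that $e^{-\ell f_\omega}>0$). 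The free choices of $B^{2,0},B^{0,2}$ (with $s=0$, which are honestly in $\Ker\mathbf{a}$ by \eqref{eq:LieAutER}, being closed) contribute nothing, so they impose no further constraint. Conversely one checks the two equations force $\mu_\ell(W)=0$. One has to then verify the equivalence $d(e^{-\ell f_\omega}\omega^{n-1})=0 \Leftrightarrow dd^c(e^{-\ell f_\omega}\omega^{n-1})=0$ in the presence of the other equations: in one direction it is trivial, and the reverse uses that on a compact complex manifold a $\partial\dbar$-closed form of bidegree $(n-1,n-1)$ that is also constrained appropriately is closed — more precisely, I would argue that $\zeta=(s,B)$ ranges over enough of $\Ker\mathbf{a}$ that vanishing of the pairing is equivalent to $e^{-\ell f_\omega}\omega^{n-1}$ being $d$-closed, by a Hahn–Banach/transposition argument identifying the annihilator.

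The main obstacle I anticipate is the last point: disentangling the two cohomological constraints on $\zeta = (s,B)$ — the de Rham closedness of $B - 2\la s, F_{\theta_\RR}\ra$ from \eqref{eq:LieAutER}, and the Aeppli-exactness of $B^{1,1} - 2\la s, F_{\theta_\RR}^{1,1}\ra$ from $\mathbf{a}(\zeta) = 0$ — and showing the resulting space of test forms $B^{1,1}$ is exactly large enough that vanishing of $\int_X B^{1,1}\wedge e^{-\ell f_\omega}\omega^{n-1}/(n-1)!$ characterizes the balanced condition $d(e^{-\ell f_\omega}\omega^{n-1}) = 0$ and no more, together with keeping track of the $\ad P_\RR$-valued part to get the Hermite--Einstein-type equation $F_{\theta_\RR}\wedge\omega^{n-1}=0$. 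This is the place where the specific definition of $\cH$ via the Aeppli homomorphism $\mathbf a$ is essential, and where I would need to be careful rather than routine; everything else is a transcription of the general symplectic-reduction formalism plus the structural lemmas already proved.
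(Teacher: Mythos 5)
Your overall strategy is the same as the paper's: restrict the Hamiltonian picture of Proposition \ref{prop:mmapW} to $\cW_+^0$ and to $\cH$, read off the right-hand column of \eqref{eq:Calabil} from involutivity of $L_W$ via the Chern correspondence, and then test $\la \mu_\ell(W),\zeta\ra$ against $\zeta=(s,B)\in\Ker\mathbf{a}$. The gap is in the integration by parts at the heart of the last step. Writing $\Psi = e^{-\ell f_\omega}\omega^{n-1}$, an Aeppli-exact test form pairs with $\Psi$ as
\begin{equation*}
\int_X(\partial\alpha^{0,1}+\dbar\alpha^{1,0})\wedge\Psi \;=\; \int_X \alpha^{0,1}\wedge\partial\Psi + \int_X\alpha^{1,0}\wedge\dbar\Psi,
\end{equation*}
because $\alpha^{0,1}\wedge\Psi$ has type $(n-1,n)$, so $\partial(\alpha^{0,1}\wedge\Psi)=d(\alpha^{0,1}\wedge\Psi)$ integrates to zero (and likewise for the other term). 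Vanishing for all $\alpha$ is therefore equivalent to $\partial\Psi=\dbar\Psi=0$, i.e.\ to $d(e^{-\ell f_\omega}\omega^{n-1})=0$, which is exactly the conformally balanced equation with no further work. Your claim that one only detects $\partial\dbar\Psi=0$ is what Bott--Chern-exact test forms $\partial\dbar\phi$ would give, not Aeppli-exact ones; and the repair you propose cannot work as stated, since $dd^c(e^{-\ell f_\omega}\omega^{n-1})=0$ is a Gauduchon-type condition that does not imply $d(e^{-\ell f_\omega}\omega^{n-1})=0$ on a general compact complex manifold, and no transposition or Hahn--Banach argument bridges that. Correcting the integration by parts removes the problem entirely.

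The ``disentangling'' worry you raise at the end is also dissolved by the device the paper uses: for every real one-form $\xi$ and every $s\in\Omega^0(\ad P_\RR)$, the pair $(s,\, d\xi + 2\la s, F_{\theta_\RR}\ra)$ lies in $\Lie\cH$, since it satisfies the closedness condition of \eqref{eq:LieAutER} and $B^{1,1}-2\la s,F_{\theta_\RR}\ra=(d\xi)^{1,1}$ is Aeppli-exact; conversely, every element of $\Lie\cH$ has $B^{1,1}$ of this form, and the remaining $(2,0)+(0,2)$ components of $B$ pair trivially with $\Psi$ by type. Taking $s=0$ yields $\int_X d\xi\wedge\Psi=0$ for all $\xi$, hence $d\Psi=0$ by Stokes; taking $\xi=0$ yields $\int_X\la s,F_{\theta_\RR}\ra\wedge\Psi=0$ for all $s$, hence $F_{\theta_\RR}\wedge\omega^{n-1}=0$ by non-degeneracy of $\la\,,\ra$ and positivity of $e^{-\ell f_\omega}$. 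This explicit family of test elements is exactly large enough, and equality of the annihilators is immediate rather than something to be argued functional-analytically.
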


\begin{proof}
The integrability condition in the definition of $\cW_+^0$ implies that the pair $(\omega,\theta_\RR)$ associated to $W \in \cW_+^0$ via \eqref{eq:WA} satisfies the two equations on the right-hand side of \eqref{eq:Calabil} (see Proposition \ref{propo:Chernclassic}). Assume that $\la \mu_\ell(W),\zeta \ra = 0$ for all $\zeta \in \Lie \; \cH$. Via the identification \eqref{eq:LieAutER}, the condition $\mathbf{d}(\zeta) = 0$ implies that (see Lemma \ref{lem:PicAeppli})
$$
B - 2\la s, F_{\theta_\RR} \ra = d\xi
$$
for some $\xi \in \Omega^1$. Furthermore, for any $\xi \in \Omega^1$ we have
$$
(s, d\xi + 2\la s, F_{\theta_\RR} \ra) \in \Lie \; \cH.
$$
The two equations on the left-hand side of \eqref{eq:Calabil} follow from Proposition \ref{prop:mmapW}.
\end{proof}

By Proposition \ref{prop:mmapCalabil}, the coupled system \eqref{eq:Calabil} can be regarded as a natural analogue of the Hermite-Yang-Mills equations for string algebroids. These equations were originally found in \cite{grst} for $\ell = 1$ in a holomorphic setting, that is, fixing the string algebroid and calculating the critical points of the dilaton functional $M_\ell$ for compact forms in a fixed Aeppli class (see Proposition \ref{prop:ApPicA}). Following \cite{grst}, we will refer to \eqref{eq:Calabil} as the \emph{Calabi system}. As a matter of fact, when the structure group $K$ is trivial, the solutions of \eqref{eq:Calabil} are in correspondence with (complexified) solutions of the Calabi problem for K\"ahler metrics on $X$ (see the proof of Corollary \ref{cor:Calabimoduli} for a precise statement)
\begin{equation}\label{eq:Calabieq}
\frac{\omega^n}{n!} = c \mu, \qquad d \omega = 0,
\end{equation}
for $c \in \RR_{>0}$, which motivates the name for these equations (see \cite{grst}). Thus, in particular, Proposition \ref{prop:mmapCalabil} yields a new moment map interpretation of this classical problem, which shall be compared with \cite{Fine}.

Assume now that $X$ is a (non-necessarily K\"ahler) Calabi-Yau manifold with holomorphic volume form $\Omega$ and we take $\mu$ as in \eqref{eq:muOmega2}
and $\ell = 1$. In this case, the dilaton functional is given by (see e.g. \cite{grst})
$$
e^{-f_\omega} = \|\Omega\|_\omega,
$$
and therefore Proposition \ref{prop:mmapCalabil} characterizes solutions of the Hull-Strominger system \cite{HullTurin,Strom} as a moment map condition.

\begin{corollary}\label{cor:mmapHS}
Let $(X,\Omega)$ be a Calabi-Yau manifold and let $\mu$ defined by \eqref{eq:muOmega2}. Then, the $\cH$-action on $(\cW_+^0,\Omega_1)$ is Hamiltonian, with equivariant moment map induced by \eqref{eq:mmapclW}. Furthermore, zeros of the moment map are given by solutions of the Hull-Strominger system
	\begin{equation}\label{eq:HS}
	\begin{split}
	F_{\theta_\RR}\wedge \omega^{n-1} & = 0,  \qquad \qquad \qquad \qquad \; F_{\theta_\RR}^{0,2} = 0,\\
	d (\|\Omega\|_\omega \omega^{n-1}) & = 0, \qquad  dd^c \om + \la F_{\theta_\RR}\wedge F_{\theta_\RR} \ra = 0.
	\end{split}
	\end{equation}
\end{corollary}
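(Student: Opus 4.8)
The plan is to derive Corollary \ref{cor:mmapHS} as an immediate specialization of Proposition \ref{prop:mmapCalabil} together with the standard dictionary between conformally balanced metrics and the dilaton function for the Calabi--Yau volume form \eqref{eq:muOmega2}. Since \eqref{eq:HS} is precisely the system \eqref{eq:Calabil} with $\ell = 1$ and the third equation rewritten, the only genuine content is the identification of the dilaton function $f_\omega$ with $-\log \|\Omega\|_\omega$ when $\mu$ is given by \eqref{eq:muOmega2}, and then the observation that the Hamiltonian structure is inherited verbatim.

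First I would record that for $\mu = (-1)^{\frac{n(n-1)}{2}} i^n \Omega \wedge \overline{\Omega}$, the defining relation \eqref{eq:dilaton}, namely $\omega^n/n! = e^{2f_\omega}\mu$, combined with the pointwise identity $\|\Omega\|_\omega^2\, \omega^n/n! = (-1)^{\frac{n(n-1)}{2}} i^n \Omega \wedge \overline{\Omega}$ (which is just the definition of the norm of a holomorphic $(n,0)$-form with respect to a hermitian metric), yields $\|\Omega\|_\omega^2 e^{2f_\omega} = 1$, hence $e^{-f_\omega} = \|\Omega\|_\omega$ as stated in the paragraph preceding the corollary. Consequently $e^{-\ell f_\omega}\omega^{n-1} = \|\Omega\|_\omega\, \omega^{n-1}$ for $\ell = 1$, so that the equation $d(e^{-\ell f_\omega}\omega^{n-1}) = 0$ in \eqref{eq:Calabil} becomes $d(\|\Omega\|_\omega\,\omega^{n-1}) = 0$, which is exactly the conformally balanced equation appearing in \eqref{eq:HS}. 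The remaining three equations of \eqref{eq:Calabil} are literally the remaining three equations of \eqref{eq:HS}.

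Next I would invoke Proposition \ref{prop:mmapCalabil} directly: with $\ell = 1$ (which lies in $\RR \setminus \{2\}$, so the construction of $M_\ell$, $\lambda_\ell$, $\Omega_\ell$ in Section \ref{sec:mmapHS} applies), the $\cH$-action on $(\cW_+^0, \Omega_1)$ is Hamiltonian with equivariant moment map induced by \eqref{eq:mmapclW}, and the zero locus consists of solutions of the Calabi system with level $1$. By the identification in the previous paragraph, this zero locus is precisely the set of solutions of \eqref{eq:HS}. This completes the proof; no new estimates or constructions are required beyond Proposition \ref{prop:mmapCalabil}.

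The ``main obstacle'' here is essentially bookkeeping rather than mathematics: one must be careful that the sign and power-of-$i$ conventions in \eqref{eq:muOmega2} are exactly those making $\mu$ a genuine (positive) volume form compatible with the complex structure, so that $f_\omega$ is well defined by \eqref{eq:dilaton}, and that the convention for $\|\Omega\|_\omega$ matches. Given the conventions already fixed in the paper (in particular the discussion around \eqref{eq:muOmega2intro}--\eqref{eq:muOmega2} and the sentence ``the dilaton function is given by $e^{-f_\omega} = \|\Omega\|_\omega$''), this is consistent, and the corollary follows formally. It may also be worth remarking, as the paper does in Remark \ref{rem:metrics}, that the positivity condition $\omega(\cdot,J\cdot) > 0$ built into $\cW_+^0$ is what makes $\omega$ a hermitian metric, so that $\|\Omega\|_\omega$ makes sense along the phase space.
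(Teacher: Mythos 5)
Your proposal is correct and coincides with the paper's own (implicit) argument: the corollary is stated without a separate proof precisely because, as in the paragraph preceding it, the choice \eqref{eq:muOmega2} and \eqref{eq:dilaton} give $e^{-f_\omega} = \|\Omega\|_\omega$, after which everything is a verbatim specialization of Proposition \ref{prop:mmapCalabil} to $\ell = 1$. Your extra care about sign/volume-form conventions and the positivity built into $\cW_+^0$ is consistent with the paper and adds nothing that conflicts with it.
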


To the knowledge of the authors, this result provides the first symplectic interpretation of the Hull-Strominger system in the mathematics literature (see \cite{Waldram} for an alternative construction in the physics literature).

\section{Moduli metric and infinitesimal Donaldson-Uhlenbeck-Yau}\label{sec:modulimetric}

\subsection{Gauge fixing}\label{sec:gaugefixing}

Let $X$ be a compact complex manifold of dimension $n$. We fix a smooth volume form $\mu$ compatible with the orientation. The moduli space of solutions of the \emph{Calabi system with level $\ell$} on $(X,\mu)$ is defined as the set of classes of `gauge equivalent' solutions of \eqref{eq:Calabil}. More precisely, it is given by the symplectic quotient
\begin{equation}\label{eq:moduliMell}
\cM_\ell := \mu_\ell^{-1}(0)/\cH,
\end{equation}
where $\mu_\ell$ is the moment map in Proposition \ref{prop:mmapCalabil}.  
In this section we study some basic features of the geometry 
of $\cM_\ell$ and point out some directions for future research. We will proceed formally, ignoring subtleties coming from the theory of infinite dimensional manifolds and Lie groups. 
For simplicity, we will assume that $K$ is semi-simple.

Our first goal is to undertake a \emph{gauge fixing} for solutions of the linearized Calabi system \eqref{eq:Calabil}, whereby the complex structure \eqref{eq:JW} and the symmetric tensor $g_\ell$ in \eqref{eq:glW} descend to the moduli space via symplectic reduction. Difficulties will arise, due to the fact that $g_\ell$ is neither a definite pairing nor non-degenerate (see Remark \ref{rem:glnondeg}). Throughout this section, we fix a real string algebroid $E_\RR$ with principal $K$-bundle $P_h$, the level $\ell \in \RR$, and $W \in \cW^0_+$ solving the Calabi system \eqref{eq:Calabil}, that is, such that $\mu_\ell(W) = 0$. Recall that $W$ determines a holomorphic principal $G$-bundle $P$, a conformally balanced hermitian form $\omega \in \Omega^{1,1}_{>0}$, and a Hermite-Yang-Mills Chern connection $\theta^h$ on $P$ (via the fixed reduction $P_h \subset P$). 
 
We start by characterizing the tangent space to $\cM_\ell$ at $[W]$. By Lemma \ref{lem:JW}, an infinitesimal variation of our horizontal lift $W$ is given by 
$$
(\dot \omega,\dot b, \dot a) \in \Omega^{1,1}_\RR \oplus \Omega^2 \oplus \Omega^1(\ad P_h).
$$

\begin{lemma}\label{lem:Calabillinear+int}
The combined linearization of the Calabi system \eqref{eq:Calabil} and the integrability condition in \eqref{eq:Wint} is given by the linear equations
\begin{equation}\label{eq:Calabillinear+int}
\begin{split}
d^h \dot a \wedge \omega^{n-1} + (n-1) F_h \wedge \dot \omega \wedge \omega^{n-2} & = 0  ,\\
d \Big{(}e^{-\ell f_\omega}\Big{(}(n-1)  \dot \omega \wedge \omega^{n-2} - \frac{\ell}{2} (\Lambda_\omega \dot \omega) \omega^{n-1}\Big{)}\Big{)} & = 0,\\
\dbar \dot a^{0,1} & = 0, \\
d^c \dot \omega + 2\la \dot a, F_h \ra - d \dot b & = 0.
\end{split}
\end{equation}
\end{lemma}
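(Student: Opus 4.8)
The plan is to linearize each of the four equations in the Calabi system \eqref{eq:Calabil} and the integrability condition \eqref{eq:Wint} separately, at a fixed solution $W \in \cW^0_+$, with respect to the variation $(\dot\omega,\dot b,\dot a) \in \Omega^{1,1}_\RR \oplus \Omega^2 \oplus \Omega^1(\ad P_h)$. Recall that by Proposition~\ref{propo:Chernclassic} the data $W$ is equivalent to the tuple $(\omega,H_\RR = d^c\omega,\theta_\RR = \theta^h)$, and that the integrability of $L_W$ is encoded, via Lemma~\ref{lemma:liftings} and the proof of Proposition~\ref{propo:Chernclassic}, by the vanishing of $F_{\theta_\RR}^{0,2}$ together with $(H_\RR - i\dbar\omega)^{1,2+0,3} = 0$. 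The latter is automatic once $H_\RR = d^c\omega$ is real, so the only nontrivial integrability constraint on the \emph{variation} is $\dbar \dot a^{0,1} = 0$, coming from differentiating $F_{\theta_\RR + \dot a}^{0,2} = 0$ at $\theta_\RR = \theta^h$ (using $(F_{\theta_\RR})^{0,2} = 0$, so the quadratic term drops); this gives the third line of \eqref{eq:Calabillinear+int}. The fourth line is the linearization of $dd^c\omega + \la F_{\theta_\RR}\wedge F_{\theta_\RR}\ra = 0$, but as an equation for the \emph{lifting}, not just its class: differentiating the expression $(H_\RR + d\gamma - 2\la\beta,F_{\theta_c}\ra - \dots)^{1,2+0,3}=0$ from Lemma~\ref{lemma:liftings}, one picks up $d^c\dot\omega + 2\la \dot a,F_h\ra - d\dot b = 0$ (the $d\dot b$ term being the variation of the $\gamma$-shift $\dot\gamma$ repackaged as $\dot b$ in the real parametrization of $\cW$ used in Lemma~\ref{lem:JW}).

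For the first line of \eqref{eq:Calabillinear+int}, I would differentiate $F_{\theta_\RR}\wedge\omega^{n-1} = 0$ directly: $\tfrac{d}{dt}\big(F_{\theta_\RR + t\dot a}\wedge(\omega + t\dot\omega)^{n-1}\big)_{t=0} = d^h\dot a \wedge \omega^{n-1} + (n-1) F_h \wedge \dot\omega\wedge\omega^{n-2}$, using that the Chern/connection curvature varies by $d^h\dot a$ to first order. For the second (conformally balanced) line, I would differentiate $d(e^{-\ell f_\omega}\omega^{n-1}) = 0$, using the identity $2\,\delta f_\omega(\dot\omega) = \Lambda_\omega\dot\omega$ (already invoked in the proof of Lemma~\ref{lemma:lambdaOmegal}) to get $\delta(e^{-\ell f_\omega}) = -\tfrac{\ell}{2}(\Lambda_\omega\dot\omega)e^{-\ell f_\omega}$, together with $\delta(\omega^{n-1}) = (n-1)\dot\omega\wedge\omega^{n-2}$; collecting terms inside the $d$ gives exactly the stated expression. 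None of these computations is deep — each is a one-line Leibniz rule — so the writeup is essentially a matter of bookkeeping, keeping track of which of the two "anomaly-type" equations (the one for $F^{0,2}$ versus the one for the $(1,2)+(0,3)$-part of the twisted three-form) produces which linearized constraint.

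The only genuine subtlety — and the step I expect to require the most care — is the correct identification of the variation $\dot b$ and the placement of the $d\dot b$ versus $d^c\dot\omega$ terms in the last equation. This is because the real model $\cW \cong \Omega^{1,1}_\RR \oplus \Omega^2 \oplus \cA$ of Lemma~\ref{lem:JW} packages the complex $(\gamma,\beta)$-data of Lemma~\ref{lemma:liftings} via $\gamma = -i\omega + b^{1,1+0,2} + \la a^{0,1}\wedge a^{1,0}\ra$ and $\beta = a^{0,1}$, so the variation $\dot\gamma$ mixes $\dot\omega$, $\dot b$, and a term quadratic in $a$ that vanishes at the splitting induced by $W$ itself (as in the proof of Lemma~\ref{lem:JW}, where one chooses $\lambda_0$ adapted to $W$ so that $a = 0$). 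Carrying out the linearization \emph{at that adapted splitting} kills the cross-terms and yields the clean form above; I would make this choice explicit at the start of the proof to streamline the computation. Finally, I would remark that the real three-form constraint forces the $(1,2)+(0,3)$ and $(3,0)+(2,1)$ parts of the linearized anomaly equation to be conjugate, so that the single real equation $d^c\dot\omega + 2\la\dot a,F_h\ra - d\dot b = 0$ captures all of it, consistent with the count in \eqref{eq:Calabillinear+int}.
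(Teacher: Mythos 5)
Your proposal is correct and follows essentially the same route as the paper: linearize each equation of \eqref{eq:Calabil} by the Leibniz rule (using $2\,\delta f_\omega(\dot\omega)=\Lambda_\omega\dot\omega$ for the conformally balanced line), and obtain the third and fourth lines of \eqref{eq:Calabillinear+int} from the linearized involutivity of $L_W$ via Lemma \ref{lemma:liftings}, whose $(1,2)+(0,3)$ three-form part, combined with its conjugate, yields $d^c\dot\omega+2\la\dot a,F_h\ra-d\dot b=0$ and hence subsumes the naive linearization $d(d^c\dot\omega+2\la\dot a,F_h\ra)=0$ of the anomaly equation. The only quibble is the sentence claiming that ``the only nontrivial integrability constraint on the variation is $\dbar\dot a^{0,1}=0$,'' which is contradicted (and correctly repaired) two sentences later when you extract the fourth line from the very same integrability condition.
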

\begin{proof}
The linearization of \eqref{eq:Calabil} is
\begin{equation}\label{eq:Calabillinear0}
\begin{split}
d^h \dot a \wedge \omega^{n-1} + (n-1) F_h \wedge \dot \omega \wedge \omega^{n-2} & = 0  ,\\
d \Big{(}e^{-\ell f_\omega}\Big{(}(n-1)  \dot \omega \wedge \omega^{n-2} - \frac{\ell}{2} (\Lambda_\omega \dot \omega) \omega^{n-1}\Big{)}\Big{)} & = 0,\\
\dbar \dot a^{0,1} & = 0, \\
d(d^c\dot \omega + 2 \la \dot a ,F_h \ra) & = 0,
\end{split}
\end{equation}
while the integrability condition $[L_W,L_W] \subset L_W$ (see \eqref{eq:Wint}) implies at the infinitesimal level that (see Lemma \ref{lemma:liftings} and Lemma \ref{lem:Cherncorr})
\begin{equation}\label{eq:liftingcondinf2}
	\begin{split}
\dbar \dot a^{0,1} & = 0,\\
d \dot b^{0,2} + \dbar (\dot b^{1,1 } - i\dot \omega) - 2 \la \dot a^{0,1},F_h\ra & = 0.
\end{split}
\end{equation}
The second equation in \eqref{eq:liftingcondinf2} yields
$$
d^c \dot \omega = d \dot b - 2\la \dot a, F_h \ra,
$$
and therefore \eqref{eq:liftingcondinf2} implies the last two equations in \eqref{eq:Calabillinear0}. Thus, the tangent to $\mu_\ell^{-1}(0)\subset \cW^0_+$ is characterized by the linear equations \eqref{eq:Calabillinear+int}.
\end{proof}

We denote by ${\bf L} (\dot \om, \dot b, \dot a)$ the differential operator defined by the left-hand side of equations \eqref{eq:Calabillinear+int}. We turn next to the study of the infinitesimal action, in order to define a complex. From the proof of Proposition \ref{prop:mmapCalabil}, we can identify elements $\zeta\in \Lie \cH$ with pairs 
$$
\zeta = (u,B) \in \Lie \; \Omega^0(\ad P_h) \oplus \Omega^2
$$ 
satisfying
\begin{equation}\label{eq:LieHcond}
B - 2 \la u,F_h\ra = d \xi,
\end{equation}
for a real one-form $\xi \in \Omega^1$, and the infinitesimal action at $W$ is
\begin{align}\label{eq:infinitesimalactionmod}
(u,B)\cdot W = (0,B,d^h u) = (0,d\xi + 2 \la u,F_h \ra, d^h u).
\end{align}
Define the vector space
$$
\cR := \Omega^{2n}(\ad P_h) \oplus \Omega^{2n-1} \oplus
\Omega^{0,2}(\ad \underline P) \oplus \Omega^3, 
$$
so that ${\bf L} (\dot \om, \dot b, \dot a)\in \cR$, and consider the complex of degree one differential operators
\begin{equation}
  \label{eq:TWMcomplexring}
(\widehat{S}^*) \qquad \qquad  \Omega^0(\ad P_h) \oplus  \Omega^1 \lra{\widehat{\mathbf{P}}}\Omega^{1,1}_\RR \oplus \Omega^2 \oplus \Omega^1(\ad P_h) \lra{\mathbf{L}} \cR
,
\end{equation}
where
$$
\widehat{\mathbf{P}}(u,\xi) = (0,d\xi + 2\la u,F_h \ra,d^h u).
$$
The cohomology $H^1(\widehat{S}^*) := \frac{\Ker \mathbf{L}}{\Im\, \widehat{\mathbf{P}}}$ can be formally identified with the tangent space $T_{[W}] \cM_\ell$. Observe that the elements of $\Lie \cH$ do not correspond to sections of a vector bundle, due to the condition \eqref{eq:LieHcond}, which we circumvent by introducing the operator $\widehat{\mathbf{P}}$. Our next result shows that the moduli space $\cM_\ell$ is finite dimensional. The proof builds on the infinitesimal moduli construction in \cite{grt}. 


\begin{lemma}\label{lem:sesTWM}
The sequence \eqref{eq:TWMcomplexring} is an elliptic complex of differential
operators. Consequently, the cohomology $H^1(\widehat{S}^*)$ is finite dimensional.
\end{lemma}

\begin{proof}
Ellipticity of \eqref{eq:TWMcomplexring} follows as in \cite[Prop. 4.4]{grt}.
\end{proof}

Our strategy to build a complex structure induced by \eqref{eq:JW} on the moduli space is to work orthogonally to the image of the operator $\widehat{\mathbf{P}}$ with respect to the indefinite pairing $g_\ell$ in \eqref{eq:glW} (cf. \cite{McSisca}). The existence of this complex structure will automatically yield a symmetric tensor of type $(1,1)$, since the two-form $\Omega_\ell$ in \eqref{eq:Omegal2} is well defined on the cohomology $H^1(\widehat{S}^*)$ by Proposition \ref{prop:mmapCalabil}. Our construction relies on a technical condition already found in \cite{grst}, which we explain next. Consider the indefinite $L^2$-pairing on the domain of the operator $\widehat{\mathbf{P}}$ in \eqref{eq:TWMcomplexring} induced by $\omega$ and $\la\, , \ra$
\begin{equation}\label{eq:L2ell}
	\begin{split}
\la(u,\xi),(u,\xi)\ra_\ell  & {} =  \frac{2 - \ell}{M_\ell} \Bigg{(}\int_X \la u,u \ra  \frac{\omega^n}{n!} + \frac{1}{2} \int_X \xi \wedge J \xi \wedge \frac{\omega^{n-1}}{(n-1)!}\Bigg{)},
	\end{split}
\end{equation}
where $M_\ell$ is the value of the functional \eqref{eq:DilfunctionalW} at the solution $W$.

\begin{lemma}\label{lem:P*}
The following operator provides an adjoint of $\widehat{\mathbf{P}}$ for the pairings \eqref{eq:L2ell} and \eqref{eq:glW}
$$
\widehat{\mathbf{P}}^* \colon \Omega^{1,1}_\RR \oplus \Omega^2 \oplus \Omega^1(\ad P_h) \to  \Omega^0(\ad P_h) \oplus  \Omega^1
$$ 
where $\widehat{\mathbf{P}}^* = \widehat{\mathbf{P}}^*_0 \oplus \widehat{\mathbf{P}}^*_1$ is defined by
\begin{align*}
\widehat{\mathbf{P}}^*_0(\dot \omega,\dot b,\dot a) & = \frac{1}{(n-1)!}* \Big{(}e^{-\ell f_\omega}\Big{(} d^h J \dot a \wedge \omega^{n-1} - (n-1)  F_h \wedge \dot b \wedge \omega^{n-2}\Big{)}\Big{)},\\
\widehat{\mathbf{P}}^*_1(\dot \omega,\dot b,\dot a) & =\frac{1}{(n-1)!}*d \Big{(}e^{-\ell f_\omega}\Big{(}(n-1)  \dot b^{1,1} \wedge \omega^{n-2} - \frac{\ell}{2} (\Lambda_\omega \dot b) \omega^{n-1}\Big{)}\Big{)}.
\end{align*}
\end{lemma}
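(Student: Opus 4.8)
The statement to prove is that the operator $\widehat{\mathbf{P}}^* = \widehat{\mathbf{P}}^*_0 \oplus \widehat{\mathbf{P}}^*_1$ written above is the formal adjoint of $\widehat{\mathbf{P}}$ with respect to the pairing \eqref{eq:L2ell} on the source and the pairing $g_\ell$ of \eqref{eq:glW} on the target. So the task is purely to verify the identity
$$
g_\ell\big(\widehat{\mathbf{P}}(u,\xi),\,(\dot \omega,\dot b,\dot a)\big) = \big\langle (u,\xi),\,\widehat{\mathbf{P}}^*(\dot \omega,\dot b,\dot a)\big\rangle_\ell
$$
for all $(u,\xi) \in \Omega^0(\ad P_h) \oplus \Omega^1$ and all $(\dot \omega,\dot b,\dot a) \in \Omega^{1,1}_\RR \oplus \Omega^2 \oplus \Omega^1(\ad P_h)$, by integration by parts on the compact manifold $X$. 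The first step is to substitute $\widehat{\mathbf{P}}(u,\xi) = (0, d\xi + 2\langle u, F_h\rangle, d^h u)$ into the explicit formula \eqref{eq:glW} for $g_\ell$. Because the first slot of $\widehat{\mathbf{P}}(u,\xi)$ vanishes, the $|\dot\omega_0|^2$, $|\Lambda_\omega\dot\omega|^2$, and $(\int \Lambda_\omega\dot\omega\,\cdots)^2$ contributions drop out and only the ``$\dot b$'' and ``$\dot a$'' terms of $g_\ell$ survive, evaluated with $\dot b \rightsquigarrow d\xi + 2\langle u, F_h\rangle$ and $\dot a \rightsquigarrow d^h u$; one should also remember that the $\dot b$-terms of $g_\ell$ are symmetric bilinear in the two $\dot b$-arguments, so one argument stays the generic $\dot b$ and the other becomes $d\xi + 2\langle u,F_h\rangle$.

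The second step is to separate the computation into the $u$-part and the $\xi$-part. For the $\xi$-part, the term $\frac{\ell-2}{M_\ell}\int_X \langle \dot a \wedge J\dot a\rangle \wedge \cdots$ does not see $\xi$ (its $\dot a$-slot is $d^h u$), so the only $\xi$-dependence comes from the $\dot b$-terms of $g_\ell$ with $\dot b \rightsquigarrow d\xi$; here one integrates by parts to move $d$ off $\xi$, landing on $e^{-\ell f_\omega}\big((n-1)\dot b^{1,1}\wedge\omega^{n-2} - \tfrac{\ell}{2}(\Lambda_\omega\dot b)\omega^{n-1}\big)$, which (after applying $*$ and using $\xi\wedge *\eta = \langle\xi,\eta\rangle\,\mathrm{vol}$ with the $e^{-\ell f_\omega}\omega^{n-1}/(n-1)!$ weight built into \eqref{eq:L2ell}) is exactly $\langle \xi, \widehat{\mathbf{P}}^*_1(\dot\omega,\dot b,\dot a)\rangle_\ell$. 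The key algebraic identities needed are the standard ones converting primitive-component inner products of $2$-forms into wedge products with powers of $\omega$: for $(1,1)$-forms $\alpha,\beta$ on an $n$-fold, $(n-1)\alpha\wedge\beta\wedge\omega^{n-2} - (\Lambda_\omega\alpha)(\Lambda_\omega\beta)\omega^n/n \cdot(\text{const}) = \langle\alpha_0,\beta_0\rangle\,\omega^n/\cdots$, together with $\alpha\wedge J\xi\wedge\omega^{n-1}/(n-1)! = -\langle\xi^{1,0}+\xi^{0,1}, \cdot\rangle$ type formulas; I would look these up rather than rederive them. For the $u$-part, the $\langle \dot a\wedge J\dot a\rangle$ term contributes $\frac{\ell-2}{M_\ell}\int_X \langle d^h u \wedge J\dot a\rangle\wedge e^{-\ell f_\omega}\omega^{n-1}/(n-1)!$; integrate by parts moving $d^h$ (using that $d^h$ and $d$ agree on $\ad P_h$-valued forms up to the connection and $F_h$-terms, and that $\langle\cdot,\cdot\rangle$ is $\ad$-invariant) to produce a $\langle u, d^h(J\dot a\wedge e^{-\ell f_\omega}\omega^{n-1})\rangle$-type term; simultaneously the $\dot b \rightsquigarrow 2\langle u, F_h\rangle$ substitution in the $\dot b$-terms of $g_\ell$ produces the $F_h\wedge\dot b\wedge\omega^{n-2}$-type contribution. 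Collecting, these should assemble into precisely $\frac{2-\ell}{M_\ell}\int_X \langle u, \widehat{\mathbf{P}}^*_0(\dot\omega,\dot b,\dot a)\rangle\,\omega^n/n!$, matching the $u$-part of \eqref{eq:L2ell}.

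\textbf{Main obstacle.} The genuine difficulty is bookkeeping rather than conceptual: one must track the Hodge-star/primitive-decomposition conversions and the numerical coefficients (the $(n-1)$, the $\ell/2$, the $\frac{2-\ell}{M_\ell}$ versus $\frac{\ell-2}{M_\ell}$ signs) carefully enough that the $\dot b^{1,1}$-part, the $\Lambda_\omega\dot b$-part, and the cross terms coming from the two distinct $\dot b$-contributions in $g_\ell$ all land on the correct term of $\widehat{\mathbf{P}}^*_1$, and that the $d^h$-integration-by-parts on the bundle term together with the $F_h$-contraction term from $\dot b\rightsquigarrow 2\langle u,F_h\rangle$ combine (rather than double-count or cancel incorrectly) into $\widehat{\mathbf{P}}^*_0$. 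A subtle point worth isolating is the $J$-twist: verifying that $J\dot a = i\dot a^{0,1} - i\dot a^{1,0}$ interacts correctly with conjugation and with $\partial/\dbar$ under integration by parts so that the formal adjoint of $u \mapsto d^h u$ against the $J$-twisted pairing comes out with the stated sign. I would handle this by first doing the scalar ($K$ trivial) case to pin down all coefficients against the formula for $\widehat{\mathbf{P}}^*_1$, then reinstating the $\ad P_h$-valued terms, using only $\ad$-invariance of $\langle\cdot,\cdot\rangle$ and the Bianchi/Hermite-Yang-Mills properties of $F_h$ recorded in Section~\ref{sec:BCback}. No new input beyond Stokes' theorem and the linear algebra of primitive forms is required, so the proof is essentially a (careful) computation.
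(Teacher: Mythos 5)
Your plan coincides with the paper's proof: one polarizes $g_\ell$ from \eqref{eq:glW} at $\widehat{\mathbf{P}}(u,\xi)=(0,\,d\xi+2\la u,F_h\ra,\,d^h u)$ (so the $\dot\omega$-terms drop), integrates by parts to move $d$ and $d^h$ onto the test data, and converts to the pairing \eqref{eq:L2ell} using $*^2_{|\Omega^{2n-1}}=-1$ and the action of the Hodge star on one-forms. The one ingredient you omit is that the identity is proved at a solution $W$ of the Calabi system \eqref{eq:Calabil}: the equations $F_h\wedge\omega^{n-1}=0$ and $d(e^{-\ell f_\omega}\omega^{n-1})=0$ are needed to make $\Lambda_\omega\la u,F_h\ra$ vanish pointwise and to kill the cross term coming from the squared-integral summand $\big(\int_X\Lambda_\omega\dot b\,e^{-\ell f_\omega}\omega^n/n!\big)^2$ of $g_\ell$, without which $\widehat{\mathbf{P}}^*$ would acquire extra terms absent from the stated formula.
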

 	
\begin{proof}
The proof follows from a straightforward calculation using integration by parts. Setting $v = (\dot \omega, \dot b,\dot a)$, $y = (u,\xi)$, and using \eqref{eq:glW} and\eqref{eq:Calabil} we have
	\begin{equation*}
	\begin{split}
	g_\ell (v,\widehat{\mathbf{P}}y) & {} =  \frac{\ell - 2}{M_\ell} \int_X \la \dot a \wedge J d^h u \ra \wedge e^{-\ell f_\omega} \frac{\omega^{n-1}}{(n-1)!}\\
&  \phantom{ {} = } -  \frac{2 - \ell}{2 M_\ell} \int_X \dot{b}^{1,1} \wedge (d\xi + 2 \la u,F_h \ra) \wedge e^{-\ell f_\omega} \frac{\omega^{n-2}}{(n-2)!}\\
	& \phantom{ {} = } + \frac{(2 - \ell)\ell}{4 M_\ell} \int_X (\Lambda_\omega \dot b) d\xi \wedge e^{-\ell f_\omega} \frac{\omega^{n-1}}{(n-1)!}\\
	& = \frac{2 - \ell}{M_\ell (n-1)!} \int_X \la u,d^h J \dot a \wedge \omega - (n-1) F_h \wedge \dot b \ra \wedge e^{-\ell f_\omega} \omega^{n-2}\\
&  \phantom{ {} = } -  \frac{2 - \ell}{2 M_\ell (n-1)!} \int_X \xi \wedge d \Big{(}e^{-\ell f_\omega}\Big{(}(n-1)  \dot b^{1,1} \wedge \omega^{n-2} - \frac{\ell}{2} (\Lambda_\omega \dot b) \omega^{n-1}\Big{)}\Big{)}.
	\end{split}
\end{equation*}
The statement follows from $*_{|\Omega^{2n-1}}^2 = -1$ and the action of the Hodge star operator on one-forms
$$
* \xi = J \xi \wedge \frac{\omega^{n-1}}{(n-1)!}.
$$
\end{proof}

Consider now the $L^2$-orthogonal decomposition of $\Omega^1$ induced by the de Rham differential
$$
\Omega^1 =  \Im \; d \oplus \Im \; d^* \oplus \cH^1
$$  
and define a differential operator 
\begin{equation}
   \label{eq:Loperator}
 \begin{array}{cccc}
  \cL : & \Om^0(\ad P_h) \times \Im \; d^*  & \rightarrow & \Om^0(\ad P_h) \times \Im \; d^* \\
         & (u, \xi) & \mapsto & \widehat{\mathbf{P}}^*\circ \widehat{\mathbf{P}}(u,\xi).\\
         \end{array}
 \end{equation}
We state next the key condition on the solution $W$ of \eqref{eq:Calabil} which we need to assume for our argument.

\begin{customcond}{A}\label{ConditionA}
The kernel of $\cL$ vanishes.
\end{customcond}

A geometric characterization of Condition \ref{ConditionA} is mentioned in Remark \ref{rem:ConditionAgeometry}.
On the practical side, this hypothesis will enable us to construct the complex structure on the moduli space. We build on the following result from \cite{grst}. Using $\om$ and a choice of invariant positive-definite bilinear form on $\mathfrak{k}$, we endow the domain of $\cL$ with an $L^2$ norm (possibly different from \eqref{eq:L2ell}, which may be indefinite) and extend the domain of $\cL$ to an appropriate Sobolev completion.

\begin{proposition}[\cite{grst}]\label{prop:zeroindex}
The operator $\cL$ is Fredholm with zero index.
\end{proposition}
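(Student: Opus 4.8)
The statement that $\cL$ is Fredholm of index zero should follow by identifying $\cL$ with a self-adjoint (with respect to a positive-definite $L^2$ inner product) second-order elliptic operator, or at least with an operator whose symbol is that of a Laplacian up to lower-order terms. The plan is: (1) write out the explicit form of $\widehat{\mathbf{P}}$ and $\widehat{\mathbf{P}}^*$, compute the principal symbol of $\cL = \widehat{\mathbf{P}}^* \circ \widehat{\mathbf{P}}$ on $\Omega^0(\ad P_h) \times \Im\, d^*$, and verify it is elliptic; (2) observe that $\cL$ is formally self-adjoint for the \emph{indefinite} pairing \eqref{eq:L2ell} by construction (it is $\widehat{\mathbf{P}}^* \widehat{\mathbf{P}}$ for adjoint $\widehat{\mathbf{P}}^*$), but since we have completed the domain with respect to an auxiliary \emph{positive-definite} $L^2$ norm, we need to argue Fredholmness and index in that functional-analytic setting; (3) conclude index zero.

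First I would recall that $\widehat{\mathbf{P}}(u,\xi) = (0, d\xi + 2\la u, F_h\ra, d^h u)$, so $\widehat{\mathbf{P}}$ is a first-order differential operator whose symbol at a covector $\eta$ is, on the $\xi$-component, $\eta \wedge \cdot$ restricted to $\Im\, d^* $, and on the $u$-component, Clifford-type multiplication $u \mapsto \eta \otimes u$ coming from $d^h$. Composing with $\widehat{\mathbf{P}}^*$, whose leading terms (visible in Lemma \ref{lem:P*}) are, schematically, $d^h$ contracted against $\omega^{n-1}$ in the $\widehat{\mathbf{P}}^*_0$-slot and $d$ of $\dot b^{1,1}\wedge\omega^{n-2}$ in the $\widehat{\mathbf{P}}^*_1$-slot, one sees that on the relevant subspace the composition $\cL$ has the symbol of $|\eta|^2 \cdot \Id$ up to the conformal weight $e^{-\ell f_\omega}$ (which is a nowhere-vanishing positive function and hence does not affect ellipticity). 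The restriction to $\Im\, d^*$ in the second factor is exactly what is needed to kill the large kernel of $d$ and make the $\xi$-part elliptic — this is the standard trick for gauge-fixed linearizations. So $\cL$ is an elliptic operator of order two between the (Sobolev completions of the) same space, and therefore Fredholm.

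For the index: an elliptic operator $\cL \colon \Gamma(V) \to \Gamma(V)$ acting between sections of the \emph{same} vector bundle $V = (\ad P_h) \oplus (\text{forms in } \Im\, d^*)$ — where here one uses that $\Im\, d^*$ is itself the space of sections of a bundle cut out by a (pseudodifferential) projection, or alternatively one works on the ambient $\Omega^1$ and restricts — has index zero whenever it has self-adjoint principal symbol, e.g. when the symbol equals $|\eta|^2\Id$. Concretely: the index of an elliptic operator depends only on its principal symbol; the symbol computed above is $\mathrm{Id}_V \cdot |\eta|^2$ (times the positive function $e^{-\ell f_\omega}$), which is the symbol of the (positive) Laplace-type operator $e^{-\ell f_\omega}\Delta$ on $\Gamma(V)$; that operator is formally self-adjoint for the positive $L^2$ structure, hence has index zero; therefore $\cL$ has index zero. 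I would invoke \cite{grst} here both because the statement is attributed there and because the computation is identical to the one carried out in that reference for the $\ell = 1$ case — the extra parameter $\ell$ enters only through the harmless conformal factor $e^{-\ell f_\omega}$ and the lower-order zeroth-order terms involving $F_h$, neither of which changes the symbol.

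The main obstacle I expect is bookkeeping rather than conceptual: verifying carefully that, after restricting the $\xi$-variable to $\Im\, d^* = (\ker d)^{\perp}$ and after the various contractions with powers of $\omega$ and the conformal weight $e^{-\ell f_\omega}$, the principal symbol of $\cL$ really is (a positive multiple of) the identity — in particular that there is no degeneracy mixing the $u$ and $\xi$ blocks at the symbol level, and that the Hodge-star identities quoted in the proof of Lemma \ref{lem:P*} (namely $*_{|\Omega^{2n-1}}^2 = -1$ and $*\xi = Jd\xi \wedge \omega^{n-1}/(n-1)!$) combine correctly to produce the Laplacian symbol. Once the symbol computation is pinned down, ellipticity, Fredholmness, and the vanishing of the index are automatic from standard elliptic theory; so the proof reduces to that one symbol check, exactly as in \cite[Prop. 3.xx]{grst}, whose argument carries over verbatim with $1$ replaced by $\ell$.
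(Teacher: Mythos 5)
The paper does not actually prove this proposition: it is imported verbatim from \cite{grst}, and the only related argument in the text is the remark in Lemma \ref{lem:sesTWM} that ellipticity of the complex $(\widehat S^*)$ ``follows as in [Prop.\ 4.4]{grt}''. Your outline is the standard route and is correct in substance: the symbol of $\widehat{\mathbf{P}}$ is block-diagonal (the $F_h$-terms are zeroth order, so there is no $u$--$\xi$ mixing at leading order), the restriction to $\Im\, d^*$ is exactly the gauge-fixing that makes the $\xi$-block elliptic, and the conformal factor $e^{-\ell f_\omega}$ is harmless. One point deserves sharper wording than ``the symbol is $|\eta|^2\Id$'': because $\widehat{\mathbf{P}}^*$ is the adjoint for the \emph{indefinite} pairings \eqref{eq:L2ell} and \eqref{eq:glW}, it differs from the positive-definite formal adjoint by an invertible, self-adjoint, zeroth-order Gram endomorphism $A$ (built from the signs $(\ell-2)$ versus $(2-\ell)$ and from the possibly indefinite form $\la\,,\ra$ on $\mathfrak{k}$), so the symbol of $\cL$ is of the form $\pm|\eta|^2$ on the various blocks rather than positive. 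This still gives ellipticity, and the index vanishes because the positive-definite formal adjoint of $\cL$ is $A\cL A^{-1}$ up to lower order, whence $\operatorname{ind}\cL=\operatorname{ind}\cL^{\dagger}=-\operatorname{ind}\cL$. The same indefiniteness is why one cannot conclude $\ker\cL=\ker\widehat{\mathbf{P}}$ and why Condition \ref{ConditionA} is a genuine hypothesis; you do not make that mistake, but it is worth noting that your phrase ``self-adjoint with respect to a positive-definite $L^2$ inner product'' is literally false for $\cL$ itself and only holds after the conjugation just described. With that caveat, the argument is complete modulo the symbol bookkeeping you already identify.
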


Assuming Condition \ref{ConditionA}, we obtain a natural gauge fixing via a $g_\ell$-orthogonal decomposition
\begin{equation}\label{eq:perp}
\Omega^{1,1}_\RR \oplus \Omega^2 \oplus \Omega^1(\ad P_h) = \Im \; \widehat{\mathbf{P}} \oplus (\Im \; \widehat{\mathbf{P}})^{\perp_{g_\ell}}.
\end{equation}

\begin{lemma}\label{lem:gaugefixing}
Assume Condition \ref{ConditionA}. Then, there exists an orthogonal decomposition \eqref{eq:perp} for the pairing $g_\ell$ in \eqref{eq:glW}. Consequently, for any element $v \in \Omega^{1,1}_\RR \oplus \Omega^{1,1}_\RR \oplus \Omega^1(\ad P_h)$ there exists a unique $\Pi v \in \Im \; \widehat{\mathbf{P}}$ such that $(\dot \omega,\dot b, \dot a) = v - \Pi v$ solves the linear equations
\begin{equation*}\label{eq:gaugefixing}
\begin{split}
d \Big{(}e^{-\ell f_\omega}\Big{(}(n-1)  \dot b^{1,1} \wedge \omega^{n-2} - \frac{\ell}{2} (\Lambda_\omega \dot b) \omega^{n-1}\Big{)} & = 0,\\
d^h J \dot a \wedge \omega^{n-1} - (n-1) F_h \wedge \dot b \wedge \omega^{n-2} & = 0.
\end{split}
\end{equation*}
\end{lemma}
\begin{proof}
Notice first that from the non-degeneracy of $\la\,, \ra$, the pairing given in \eqref{eq:L2ell} is non-degenerate. Thus
$$
\ker \, {\bf{\widehat P}}^*= (\Im\,\widehat{\mathbf{P}})^{\perp_{g_\ell}}.
$$
If $v\in\Im \; \widehat{\mathbf{P}} \cap (\Im \; \widehat{\mathbf{P}})^{\perp_{g_\ell}}$, then $v = \widehat{\mathbf{P}}(y)$ for $y \in \Om^0(\ad P_h) \times \Im \; d^*$. But then $\widehat{\mathbf{P}}^*\circ \widehat{\mathbf{P}}(y)=0 $ and, by Condition \ref{ConditionA}, $v=0$. Thus
\begin{equation}\label{eq:glImpnondeg}
\Im \; \widehat{\mathbf{P}} \cap (\Im \; \widehat{\mathbf{P}})^{\perp_{g_\ell}}=\lbrace 0 \rbrace.
\end{equation}
Let $v\in \Omega^{1,1}_\RR \oplus \Omega^{1,1}_\RR \oplus \Omega^1(\ad P_h)$. The condition
$$
v-{\bf \widehat P}(y)\in (\Im \;\widehat{\mathbf{P}})^{\perp_{g_\ell}}
$$
for some $y\in \Om^0(\ad P_h) \times \Im \; d^* $
is equivalent to 
\begin{equation}
 \label{eq:solvingdirectsum}
{\bf \widehat P}^*(v)={\bf{\widehat P}}^*\circ\widehat{\mathbf{P}} (y).
\end{equation}
But by Proposition \ref{prop:zeroindex} and Condition \ref{ConditionA}, $ \bf{\widehat P}^*\circ{\bf \widehat P}$ is surjective. Then, by elliptic regularity, one can solve \eqref{eq:solvingdirectsum} for $y \in \Om^0(\ad P_h) \times \Im \; d^*$. The orthogonal decomposition follows. The last statement of the lemma comes from the expression of $\bf{\widehat P}^*$ in Lemma \ref{lem:P*}.
\end{proof}
The above lemma suggests to define the space of harmonic representatives of the complex \eqref{eq:TWMcomplexring}:
$$
\cH^1(\widehat S^*)=\ker \bf{L} \cap \ker {\bf\widehat P}^*.
$$
Our next result provides our gauge fixing mechanism for the linearization of the Calabi system \eqref{eq:Calabillinear+int}. 

\begin{proposition}\label{prop:Calabillineargaugefixed}
Assume Condition \ref{ConditionA}. Then, the inclusion $\cH^1(\widehat S^*)\subset \ker \bf{L}$ induces an isomorphism
$$
\cH^1(\widehat S^*)\simeq H^1(\widehat S^*).
$$
More precisely, any class in the cohomology $H^1(\widehat S^*)$ of the complex \eqref{eq:TWMcomplexring} admits a unique representative $(\dot \om,\dot{b},\dot a)$ solving the linear equations
\begin{equation}\label{eq:Calabillineargaugefixed}
\begin{split}
d^h \dot a \wedge \omega^{n-1} + (n-1) F_h \wedge \dot \omega \wedge \omega^{n-2} & = 0  ,\\
d \Big{(}e^{-\ell f_\omega}\Big{(}(n-1)  \dot \omega \wedge \omega^{n-2} - \frac{\ell}{2} (\Lambda_\omega \dot \omega) \omega^{n-1}\Big{)}\Big{)} & = 0,\\
\dbar \dot a^{0,1} & = 0, \\
d^c \dot \omega + 2\la \dot a, F_h \ra - d \dot b & = 0,\\
d \Big{(}e^{-\ell f_\omega}\Big{(}(n-1)  \dot b^{1,1} \wedge \omega^{n-2} - \frac{\ell}{2} (\Lambda_\omega \dot b) \omega^{n-1}\Big{)}\Big{)} & = 0,\\
d^h J \dot a \wedge \omega^{n-1} - (n-1) F_h \wedge \dot b \wedge \omega^{n-2} & = 0.
\end{split}
\end{equation}
\end{proposition}

\begin{proof}
The correspondence between $H^1(\widehat S^*)$ and the space of solutions of \eqref{eq:Calabillineargaugefixed} follows from Lemma \ref{lem:Calabillinear+int} and Lemma \ref{lem:gaugefixing}.
\end{proof}

\begin{remark}\label{rem:ConditionAgeometry}
Condition \ref{ConditionA} is secretly a geometric condition. 
To see this, denote by $E$ the complexification of $E_\RR$ and consider the $\Aut(E)$-action on the space of compact forms of $Q := Q_{L_W}$ (see Proposition \ref{lem:retraction}). By Lemma \ref{lem:LiePicactionBQ}, there is a partial inverse for the infinitesimal action which sends an infinitesimal variation $(\dot \omega + \
 \dot \upsilon,iu)$ of $E_\RR$ to the Lie algebra element  $\zeta(\dot \omega + \dot \upsilon,iu) = (i u, - i \dot \omega  + i\Im \; \dot \upsilon) \in \Lie \Aut(E)$. Denote by $\Aut(Q)$ the group of automorphisms of $Q$. Then, one can prove that a solution $W$ of the Calabi system \eqref{eq:Calabil} with $h^{0}(\ad P) = 0$ satisfies Condition \ref{ConditionA} if and only if the following holds: an infinitesimal variation $(\dot \omega + \dot \upsilon,iu)$ of $E_\RR$ along the Aeppli class $[E_\RR] \in \Sigma_A(Q,\RR)$ solves the linearization of the Calabi system 
only if $\zeta(\dot \omega + \dot \upsilon,iu) \in \Lie \Aut(Q)$. This shall be compared with a classical result in K\"ahler geometry, which states that solutions of the linearized constant scalar curvature equation, for K\"ahler metrics in a fixed K\"ahler class, are in bijective correspondence with Hamiltonian Killing vector fields. 
\end{remark}

\subsection{The moduli space metric}\label{sec:metric}

We are ready to prove our main result, which shows that the gauge fixing in Proposition \ref{prop:Calabillineargaugefixed} enables us to descend the complex structure \eqref{eq:JW} and the symmetric tensor $g_\ell$ in \eqref{eq:glW} to the moduli space $\cM_\ell$, via the symplectic reduction in Proposition \ref{prop:mmapCalabil}.

\begin{theorem}\label{thm:metric}
Assume Condition \ref{ConditionA}.
Then, the tangent space to $\cM_\ell$ at $[W]$, identified with the space of solutions of the gauge fixed linear equations
\eqref{eq:Calabillineargaugefixed}, inherits a complex structure $\mathbf{J}$ and a (possibly degenerate) metric $g_\ell$  such that $\Omega_\ell = g_\ell(\mathbf{J},)$, given respectively by \eqref{eq:JW} and \eqref{eq:glW}, and where $\Om_\ell$ stands for the restriction of \eqref{eq:OmegalW}.
\end{theorem}
\begin{proof}
The fact that $H^1(S^*)$ inherits a complex structure follows from Proposition \ref{prop:Calabillineargaugefixed}, using that $\mathbf{J}$ in \eqref{eq:JW} preserves \eqref{eq:Calabillineargaugefixed}. The formula for the metric is a direct consequence of Lemma \ref{lem:lambdalW} and Proposition \ref{prop:mmapW}.
\end{proof}

\begin{remark}
Assume that $h^{0,2}_{\dbar}(X) = 0$ and $h^{0}(\ad P) = 0$, where $h^{0,2}_{\dbar}(X)$ denotes the dimension of the $(0,2)$ Dolbeault cohomology group and $h^{0}(\ad P) = \dim H^{0}(\ad P)$. Then, it is not difficult to see that any $[(\dot \omega,\dot b,\dot a)] \in H^1(\widehat{S}^*)$ admits a representative with $\dot b = \dot b^{1,1}$. Thus, relying on Remark \ref{rem:glnondeg}, we expect that \eqref{eq:glW} leads to a non-degenerate metric at least for $\ell > 2 - \frac{2}{n}$.
\end{remark}

We study next the structure of the metric \eqref{eq:glW} along the fibres of a natural map from $\cM_\ell$ to the moduli space of holomorphic principal $G$-bundles. As we will see shortly, the moduli space metric constructed in Theorem \ref{thm:metric} is `semi-topological', in the sense that fibre-wise it can be expressed in terms of classical cohomological quantities associated to a gauge-fixed variation of the solution. Denote by
$$
\cA^0 = \{ \theta_\RR \in \cA \; | \; F_{\theta_\RR}^{0,2}  =0\}
$$
the space of integrable connections on $P_h = P_\RR$. Via the classical Chern correspondence, we can identify $\cA^0$ with the space of structures of holomorphic principal $G$-bundle on $\underline{P} := P_h \times_K G$, which we denote by $\cC^0$, obtaining a well-defined map
\begin{equation}\label{eq:modulimapWbundle}
\cM_\ell \to \cC^0/\cG_{\underline{P}}.
\end{equation}
By standard theory, $\cC^0/\cG_{\underline P}$ is the well-studied moduli space of holomorphic principal $G$-bundles over $X$ with fixed topological bundle $\underline{P}$.
 As before, we fix a solution $W$ of \eqref{eq:Calabil} and consider the corresponding point$$
[P] \in \cC^0/\cG_{\underline{P}}.
$$ 
We start by characterizing the tangent space to the fibre of \eqref{eq:modulimapWbundle} over the class $[P]$, using the gauge fixing in Proposition \ref{prop:Calabillineargaugefixed}.

\begin{lemma}\label{lem:Calabillinearfibre}
Assume Condition \ref{ConditionA} and $h^{0}(\ad P) = 0$. Then, any infinitesimal variation in the fibre of \eqref{eq:modulimapWbundle} over $[P]$ at $[W]$ admits a unique representative of its class in $H^1(S^*)$ of the form $(\dot \omega,\dot b, - Jd^hs+d^hs')$, for $s,s' \in \Omega^0(\ad P_h)$, solving the linear equations

\begin{equation}\label{eq:Calabillinearfiberfull}
\begin{split}
- d^h J d^h s \wedge \omega^{n-1} + (n-1) F_h \wedge \dot \omega \wedge \omega^{n-2} & = 0  ,\\
d \Big{(}e^{-\ell f_\omega}\Big{(}(n-1)  \dot \omega \wedge \omega^{n-2} - \frac{\ell}{2} (\Lambda_\omega \dot \omega) \omega^{n-1}\Big{)}\Big{)} & = 0,\\
d^c (\dot \omega - 2\la s, F_h \ra) - d(\dot b - 2\la s', F_h \ra) & = 0,\\
d \Big{(}e^{-\ell f_\omega}\Big{(}(n-1)  \dot b^{1,1} \wedge \omega^{n-2} - \frac{\ell}{2} (\Lambda_\omega \dot b) \omega^{n-1}\Big{)}\Big{)} & = 0,\\
- d^h J d^h s' \wedge \omega^{n-1} + (n-1) F_h \wedge \dot b \wedge \omega^{n-2} & = 0.
\end{split}
\end{equation}
\end{lemma}

\begin{proof}
Let $(\dot \omega,\dot b, \dot a) \in \Omega^{1,1}_\RR \oplus \Omega^2 \oplus \Omega^1(\ad P_h)$ be an infinitesimal variation of the solution $W$ of \eqref{eq:Calabil}. Assuming that it is tangent to the fibre over $[P]$, there exists $r \in \Omega^0(\ad \underline{P})$ such that
$$
\dot a^{0,1} = \dbar r.
$$
Then we can write uniquely
$$
\dot a = - J d^h s + d^h s'
$$
for $s , s' \in \Omega^0(\ad P_h)$. The statement follows from Proposition \ref{prop:Calabillineargaugefixed} using that $(d^h)^2 s \wedge \omega^{n-1} = [F_h,s] \wedge \omega^{n-1} = 0$ by \eqref{eq:Calabil}.
\end{proof}

\begin{remark}
Using that $\theta^h$ is Hermite-Yang-Mills and that $h^0(\ad P)$ vanishes, by the first and last equations in \eqref{eq:Calabillinearfiberfull} the elements $s$ and $s'$ are uniquely determined by $\dot \omega$ and $\dot b$. 
\end{remark}

The gauge fixed system \eqref{eq:Calabillinearfiberfull} for variations along the fibres of \eqref{eq:modulimapWbundle} allows us to define Aeppli and Bott-Chern cohomology classes. Recall that
\begin{equation}\label{eq:A-cohomology}
\begin{split}
H^{p,q}_{A}(X) & = \frac{\Ker(dd^c\colon \Omega^{p,q} \to \Omega^{p+1,q+1})}{\Im(\partial \oplus \dbar \colon \Omega^{p-1,q}\oplus \Omega^{p,q-1} \to \Omega^{p,q})},\\
H^{p,q}_{BC}(X) & = \frac{\Ker(d\colon \Omega^{p,q} \to \Omega^{p+1,q}\oplus \Omega^{p,q+1})}{\Im(dd^c\colon \Omega^{p-1,q-1} \to \Omega^{p,q})}.
\end{split}
\end{equation}
Let $(\dot \omega,\dot b, -Jd^hs+d^hs')$ be, as in Lemma \ref{lem:Calabillinearfibre}, a solution of \eqref{eq:Calabillinearfiberfull}.
From the third equation in \eqref{eq:Calabillinearfiberfull} we obtain
$$
dd^c (\dot \omega - 2\la s, F_h \ra) = 0, \qquad d d^c (\dot b - 2\la s', F_h \ra) = 0,
$$
and we can define the variation of the `complexified Aeppli class' of the solution (cf. Proposition \ref{prop:ApPicA}) by
\begin{align*}
\dot{\mathfrak{a}} & = \Re \; \dot{\mathfrak{a}}+i \Im \; \dot{\mathfrak{a}}\\
                   & = [\dot \omega - 2\la s,F_h \ra] + i[\dot b- 2\la s', F_h \ra ] \in H^{1,1}_A(X).
\end{align*}
Notice that, by Lemma \ref{lem:WA}, the balanced class
$$
\mathfrak{b} = \frac{1}{(n-1)!}[e^{-\ell f_\omega}\omega^{n-1}] \in H^{n-1,n-1}_{BC}(X,\RR)
$$
is independent of the representative in $[W] \in \cM_\ell$. Thus, using the second and fourth equations in \eqref{eq:Calabillinearfiberfull}, we define the variations of the `complexified balanced class' by
\begin{align*}
\dot{\mathfrak{b}} & = \Re \; \dot{\mathfrak{b}} + i \Im \; \dot{\mathfrak{b}}\\
                   & = [\Re \; \dot \nu] + i[\Im \; \dot \nu] \in H^{n-1,n-1}_{BC}(X),
\end{align*}
where $\dot \nu \in \Omega^{n-1,n-1}$ is defined by 
$$
(n-1)!\Re \; \dot \nu :=e^{-\ell f_\omega}(n-1)  \dot \omega_0 \wedge \omega^{n-2} + \frac{n(2- \ell)-2}{2n} e^{-\ell f_\omega} (\Lambda_\omega \dot \omega) \omega^{n-1},
$$
$$
(n-1)!\Im \; \dot \nu :=e^{-\ell f_\omega}(n-1)  \dot b_0 \wedge \omega^{n-2} + \frac{n(2- \ell)-2}{2n} e^{-\ell f_\omega} (\Lambda_\omega \dot b) \omega^{n-1}.
$$
The subscript $0$ stands for the primitive $(1,1)$-forms
$$
\dot \omega_0 = \dot \omega - \frac{1}{n} (\Lambda_\omega \dot \omega)\omega,\qquad \dot b_0 = \dot b - \frac{1}{n} (\Lambda_\omega \dot b) \omega.
$$
The variation of the balanced class $\mathfrak{b}$ of $\omega$ corresponds in our notation to $\Re \; \dot{\mathfrak{b}}$. For the next result, we use the duality pairing $H^{1,1}_A(X) \cong H^{n-1,n-1}_{BC}(X)^*$ between the Aeppli and Bott-Chern cohomologies.

\begin{lemma}
The pairing between $\Re \; \dot{\mathfrak{b}}$ and $\Re \; \dot{\mathfrak{a}}$ is given by:
\begin{equation*}\label{eq:L2normdilaton0}
\begin{split}
\Re \; \dot{\mathfrak{b}} \cdot  \Re \; \dot{\mathfrak{a}} & = - \int_X |\dot \omega_0|^2 e^{-\ell f_\omega} \frac{\omega^{n}}{n!} + \frac{n(2- \ell)-2}{2n}\int_X e^{-\ell f_\omega} |\Lambda_\omega \dot \omega|^2 \frac{\omega^n}{n!} \\
	& \phantom{ {}= } + 2 \int_X \la d^h s \wedge J d^h s\ra \wedge e^{-\ell f_\omega}\frac{\omega^{n-1}}{(n-1)!}.
\end{split}
	\end{equation*}
\end{lemma}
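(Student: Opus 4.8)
The plan is to compute the pairing $\Re\;\dot{\mathfrak{b}}\cdot\Re\;\dot{\mathfrak{a}}$ directly from the definitions of the two classes, using the duality between $H^{1,1}_A(X)$ and $H^{n-1,n-1}_{BC}(X)$, and then simplify the resulting integrals using the linearized Calabi system \eqref{eq:Calabillinearfiberfull} and the structure equations \eqref{eq:Calabil} satisfied by the solution $W$. Since the Bott-Chern/Aeppli duality pairing is represented by $\int_X$ of the wedge of representatives, I would start by writing
$$
\Re\;\dot{\mathfrak{b}}\cdot\Re\;\dot{\mathfrak{a}} = \int_X \Re\;\dot\nu \wedge (\dot\omega - 2\la s,F_h\ra),
$$
and substitute the explicit formula
$$
(n-1)!\,\Re\;\dot\nu = e^{-\ell f_\omega}(n-1)\dot\omega_0\wedge\omega^{n-2} + \tfrac{n(2-\ell)-2}{2n}e^{-\ell f_\omega}(\Lambda_\omega\dot\omega)\omega^{n-1}.
$$

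First I would split the integral into the part pairing $\Re\;\dot\nu$ with $\dot\omega$ and the part pairing it with $-2\la s,F_h\ra$. For the first part, I would use the pointwise identities relating wedge products with powers of $\omega$ to the Hermitian inner product: for a primitive $(1,1)$-form $\dot\omega_0$ one has $\dot\omega_0\wedge\dot\omega_0\wedge\tfrac{\omega^{n-2}}{(n-2)!} = -|\dot\omega_0|^2\tfrac{\omega^n}{n!}$, while the trace parts contribute $(\Lambda_\omega\dot\omega)^2$ terms with the appropriate combinatorial coefficients; collecting these gives precisely the first two terms on the right-hand side, namely $-\int_X|\dot\omega_0|^2 e^{-\ell f_\omega}\tfrac{\omega^n}{n!}$ and $\tfrac{n(2-\ell)-2}{2n}\int_X e^{-\ell f_\omega}|\Lambda_\omega\dot\omega|^2\tfrac{\omega^n}{n!}$. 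For the second part, I would use the first equation of \eqref{eq:Calabillinearfiberfull}, namely $-d^hJd^hs\wedge\omega^{n-1} + (n-1)F_h\wedge\dot\omega\wedge\omega^{n-2} = 0$, to rewrite $F_h\wedge\dot\omega\wedge\omega^{n-2}$ in terms of $d^hJd^hs\wedge\omega^{n-1}$; then integration by parts (moving one $d^h$ onto $s$ and using that $\theta^h$ is Hermite--Yang--Mills so $F_h\wedge\omega^{n-1}=0$ and $d(e^{-\ell f_\omega}\omega^{n-1})=0$) converts the term $-2\int_X\la s,F_h\ra\wedge\Re\;\dot\nu$ into $2\int_X\la d^hs\wedge Jd^hs\ra\wedge e^{-\ell f_\omega}\tfrac{\omega^{n-1}}{(n-1)!}$, which is the third term. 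Here I would need to be careful that the conformal factor $e^{-\ell f_\omega}$ passes through the integration by parts correctly, which is exactly where the balanced condition $d(e^{-\ell f_\omega}\omega^{n-1})=0$ is used.

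The main obstacle I anticipate is bookkeeping the combinatorial constants when decomposing $\dot\omega = \dot\omega_0 + \tfrac{1}{n}(\Lambda_\omega\dot\omega)\omega$ and expanding all the wedge products against $\omega^{n-2}$ and $\omega^{n-1}$ — in particular making sure the coefficient $\tfrac{n(2-\ell)-2}{2n}$ emerges correctly from combining the $(\Lambda_\omega\dot\omega)\omega^{n-1}$ term of $\Re\;\dot\nu$ with the trace part of $\dot\omega$, rather than being an artifact of a sign or normalization error. A secondary subtlety is ensuring that the wedge-product integrals only see the $(1,1)$-parts, so that the decomposition $\dot b = \dot b^{1,1}$ (available by \eqref{eq:cohomologymain}, as noted in the remark after Theorem \ref{thm:metric}) and the analogous reduction for $\dot\omega$ are legitimate; once the types are under control the computation is a routine, if lengthy, application of Hodge-theoretic identities on Hermitian manifolds together with the linearized equations. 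I would present the $\dot\omega$--$\dot\omega$ terms first, then the integration by parts for the $F_h$ term, and finally assemble the three contributions.
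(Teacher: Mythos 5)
Your proposal is correct and follows essentially the same route as the paper: both start from the duality pairing $\int_X \Re\,\dot\nu\wedge(\dot\omega-2\la s,F_h\ra)$, handle the $\dot\omega$--$\dot\omega$ terms via the primitive decomposition identities, and convert the $\la s,F_h\ra$ term using the first equation of \eqref{eq:Calabillinearfiberfull} together with $F_h\wedge\omega^{n-1}=0$ and the conformally balanced condition. The only cosmetic difference is that the paper packages your integration by parts as the vanishing of $\int_X\Delta_{\tilde\omega}\la s,s\ra\,\tilde\omega^n/n!$ for the balanced metric $\tilde\omega=e^{-\ell f_\omega/(n-1)}\omega$, which is the same computation.
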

\begin{proof}
Define $\tilde \omega = e^{-\ell f_\omega/n-1} \omega$. Using that $\tilde \omega$ is balanced, we have
	\begin{align*}
	\Delta_{\tilde \omega}\la s,s\ra  :&= 2i\Lambda_{\tilde \omega}\dbar \partial \la s,s\ra \\
	& = 4i\la \Lambda_{\tilde \omega}\dbar \partial^hs,s\ra + 2 \Lambda_{\tilde \omega} \la(d^h)^cs \wedge d^h s\ra\\
	& = 4i\la\Lambda_{\tilde \omega}\dbar \partial^hs ,s\ra + 2 \Lambda_{\tilde \omega} \la Jd^h s \wedge d^h s\ra.
	\end{align*}
By equation $F_h \wedge \omega^{n-1} = 0$, we can express $d^hJd^h$ as follows
\begin{equation*}\label{eq:linHEHYM}
- (d^h J d^h s) \wedge \omega^{n-1} 
 = (2 i \dbar \partial^h s) \wedge \omega^{n-1} - [F_h,s] \wedge \omega^{n-1} = (2 i \dbar \partial^h s) \wedge \omega^{n-1}
\end{equation*}
and hence the first equation in \eqref{eq:Calabillinearfiberfull} gives
\begin{align*}
	\Delta_{\tilde \omega}\la s,s\ra \frac{\tilde \omega^{n}}{n!} & = - 2\la F_h,s\ra \wedge \dot \omega_0 \wedge \frac{e^{-\ell f_\omega}\omega^{n-2}}{(n-2)!} + 2 \Lambda_{\tilde \omega} \la Jd^h s \wedge d^h s\ra \frac{\tilde \omega^{n}}{n!}.
\end{align*}
Finally, we calculate
	\begin{align*}
\Re \; \dot{\mathfrak{b}} \cdot \Re \; \dot{\mathfrak{a}} 
	& =  \int_X \Re \; \dot \nu \wedge (\dot \omega_0 + (\Lambda_\omega \dot \omega)\omega/n - 2 \la s ,F_h\ra)\\
	& = \int_X \dot \omega_0 \wedge \dot \omega_0 \wedge e^{-\ell f_\omega} \frac{\omega^{n-2}}{(n-2)!} + \frac{n(2- \ell)-2}{2n}\int_X e^{-\ell f_\omega} |\Lambda_\omega \dot \omega|^2 \frac{\omega^n}{n!} \\
	& \phantom{ {} =} - 2\int_X \la s,F_h\ra\wedge e^{-\ell f_\omega}\dot \omega_0 \wedge \frac{\omega^{n-2}}{(n-2)!}\\
		& = - \int_X |\dot \omega_0|^2 e^{-\ell f_\omega} \frac{\omega^{n}}{n!} + \frac{n(2- \ell)-2}{2n}\int_X e^{-\ell f_\omega} |\Lambda_\omega \dot \omega|^2 \frac{\omega^n}{n!} \\
	& \phantom{{} =}  + 2 \int_X \Lambda_{\tilde \omega} \la d^h s \wedge J d^h s\ra \frac{\tilde \omega^{n}}{n!}.
	\end{align*}
\end{proof}

Note that we have a similar formula for the pairing $\Im \; \dot{\mathfrak{b}} \cdot \Im \; \dot{\mathfrak{a}}$.
We calculate next our formula for the metric in the fibres of \eqref{eq:modulimapWbundle}.

\begin{theorem}\label{thm:metricfibre}
Assume Condition \ref{ConditionA} and $h^{0}(\ad P) = 0$.
Let $(\dot \om, \dot b, -Jd^hs+d^hs')$ be an element in the tangent of the fiber of \eqref{eq:modulimapWbundle} solving equations \eqref{eq:Calabillinearfiberfull}. Denote by $\dot{\mathfrak{b}}$ and $\dot{\mathfrak{a}}$ the associated variations of complex Bott-Chern class and Aeppli class. Then
\begin{equation}\label{eq:metricfibre}
\begin{split}
g_\ell & = 
\frac{2 - \ell}{2 M_\ell}  \Bigg{(} \frac{2-\ell}{2M_\ell} (\Re \; \dot{\mathfrak{a}} \cdot \mathfrak{b})^2 - \Re \; \dot{\mathfrak{a}} \cdot \Re \; \dot{\mathfrak{b}} + \frac{2-\ell}{2M_\ell} (\Im \; \dot{\mathfrak{a}} \cdot \mathfrak{b})^2 - \Im \; \dot{\mathfrak{a}} \cdot\Im \; \dot{\mathfrak{b}} \Bigg{)}
\end{split}
	\end{equation}
\end{theorem}

\begin{proof}
The proof follows from Theorem \ref{thm:metric} and Lemma \ref{lem:Calabillinearfibre} by a straightforward calculation. E.g., for $v = (\dot \omega, 0,-Jd^h s)$ we have
\begin{equation*}
	\begin{split}
	g_\ell(v,v) & {} =  \frac{\ell - 2}{M_\ell} \int_X \la d^h s \wedge J d^h s \ra \wedge e^{-\ell f_\omega} \frac{\omega^{n-1}}{(n-1)!} +  \frac{2 - \ell}{2 M_\ell} \int_X |\dot \omega_0^{1,1}|^2 e^{-\ell f_\omega} \frac{\omega^{n}}{n!}\\
	& \phantom{ {} = } + \frac{2 - \ell}{2 M_\ell}\Bigg{(}\frac{\ell}{2} - \frac{n-1}{n} \Bigg{)} \int_X | \Lambda_\omega \dot \omega|^2 e^{-\ell f_\omega} \frac{\omega^{n}}{n!} + \left(\frac{2-\ell}{2M_\ell}\right)^2( \Re \; \dot{\mathfrak{a}} \cdot \mathfrak{b})^2\\
& = \frac{2- \ell}{2M_\ell} \Bigg{(} - 2\int_X \la d^h s \wedge J d^h s \ra \wedge e^{-\ell f_\omega} \frac{\omega^{n-1}}{(n-1)!} + \int_X |\dot \omega_0^{1,1}|^2 e^{-\ell f_\omega} \frac{\omega^{n}}{n!} \Bigg{)}\\
	& \phantom{ {} = } + \frac{2 - \ell}{2 M_\ell}\Bigg{(}- \frac{n(2 - \ell) - 2}{2n} \int_X | \Lambda_\omega \dot \omega|^2 e^{-\ell f_\omega} \frac{\omega^{n}}{n!}\Bigg{)} + \left(\frac{2-\ell}{2M_\ell}\right)^2( \Re \; \dot{\mathfrak{a}} \cdot \mathfrak{b})^2\\
& = \frac{2 - \ell}{2 M_\ell}(- \Re \; \dot{\mathfrak{b}} \cdot  \Re \; \dot{\mathfrak{a}} ) + \left(\frac{2-\ell}{2M_\ell}\right)^2( \Re \; \dot{\mathfrak{a}} \cdot \mathfrak{b})^2	.
	\end{split}
\end{equation*}
\end{proof}

When the structure group $K$ is trivial, the solutions of \eqref{eq:Calabil} are in correspondence with (complexified) solutions of the Calabi problem for K\"ahler metrics on $X$ (see \eqref{eq:Calabieq}). 
In the next result we show that, when $\ell < 2$, formula \eqref{eq:metricfibre} defines a positive-definite K\"ahler metric on the `complexified K\"ahler moduli space' of K\"ahler metrics on $X$ with prescribed volume form.

\begin{corollary}\label{cor:Calabimoduli}
Assume that $K$ is trivial. Then, the moduli space $\cM_\ell$ in \eqref{eq:moduliMell} is non-empty if and only if $X$ is K\"ahler and $E_\RR$ is isomorphic to the standard exact Courant algebroid $T \underline X \oplus T^* \underline X$. In that case, there is a bijection
\begin{equation}\label{eq:modulibijectionexact}
\cM_\ell \cong \mathcal{K}_X \times H^{0,2}(X)
\end{equation}
where $\mathcal{K}_X \subset H^{1,1}(X)$ denotes the complexification of the K\"ahler cone of $X$. Furthermore, provided that $h^{0,2}(X) = 0$ and $\ell < 2$, \eqref{eq:metricfibre} defines a positive-definite K\"ahler metric on $\cM_\ell$ with K\"ahler potential
\begin{equation}\label{eq:K0exact}
\mathcal{K} 
= - \frac{2-\ell}{2}\log \Big{(}(\Re \; \mathfrak{a})^n/n!\Big{)} - \frac{\ell}{2}\log \int_X \mu
\end{equation}
for $\mathfrak{a} \in H^{1,1}(X)$.
\end{corollary} 

\begin{proof}
As mentioned in Section \ref{sec:mmapHS}, when $K$ is trivial, a solution of \eqref{eq:Calabil} satisfies \eqref{eq:Calabieq}. Therefore, $\cM_\ell \neq \emptyset$ implies that $E_\RR \cong T \underline X \oplus T^* \underline X$ and that $X$ is K\"ahler (see \eqref{eq:HthetaChern}).  Using this fact combined with Lemma \ref{lem:JW}, it is not difficult to see that $\cM_\ell$ is bijective with the quotient
$$
\{(\omega,b) \in \Omega^{1,1}_{>0} \times \Omega^2 \; | \; df_\omega = 0, d\omega = 0, db = 0 \}/\{ (0,d\xi) \; | \; \xi \in \Omega^1  \}.
$$
Applying Yau's solution of the Calabi Conjecture \cite{Yau0}, it follows that $\cM_\ell \neq \emptyset$ provided that $E_\RR \cong T \underline X \oplus T^* \underline X$ and that $X$ is K\"ahler.

Using the Hodge decomposition for $H^2(X,\CC)$ it in not difficult to see that the bijection \eqref{eq:modulibijectionexact} is defined by 
$$
[(\omega,b)] \mapsto (\mathfrak{a},[b^{0,2}]) \in \mathcal{K}_X \times H^{0,2}(X),
$$
where $\mathfrak{a} := [\omega + ib^{1,1}]$ and we have used the $\partial\dbar$-Lemma to identify $[b^{1,1}] \in H^{1,1}_A(X,\RR) \cong H^{1,1}(X,\RR)$. The condition $d f_\omega = 0$ implies
$$
M_\ell = e^{-\ell f_\omega} \frac{(\Re \; \mathfrak{a})^n}{n!}, \qquad \mathfrak{b} = \frac{e^{-\ell f_\omega}}{(n-1)!}\mathfrak{a}^{n-1}
$$
for any $[(\omega,b)] \in \cM_\ell$, and also the equalities 
\begin{align*}
\Re \; \dot{\mathfrak{b}} & = e^{-\ell f_\omega}\Bigg{(}\frac{(\Re \; \dot{\mathfrak{a}}_0) \cdot (\Re \; \mathfrak{a})^{n-2}}{(n-2)!} + \frac{(n(2- \ell)-2)}{2}\frac{\Re \; \dot{\mathfrak{a}} \cdot (\Re \; \mathfrak{a})^{n-1}}{(\Re \; \mathfrak{a})^n}\frac{(\Re \; \mathfrak{a})^{n-1}}{(n-1)!}\Bigg{)},\\
\Im \; \dot{\mathfrak{b}} & = e^{-\ell f_\omega}\Bigg{(}\frac{(\Im \; \dot{\mathfrak{a}}_0) \cdot (\Re \; \mathfrak{a})^{n-2}}{(n-2)!} + \frac{(n(2- \ell)-2)}{2}\frac{\Im \; \dot{\mathfrak{a}} \cdot (\Re \; \mathfrak{a})^{n-1}}{(\Re \; \mathfrak{a})^n}\frac{(\Re \; \mathfrak{a})^{n-1}}{(n-1)!}\Bigg{)},
\end{align*}
where $\dot{\mathfrak{a}}_0$ stands for the primitive part of $\dot{\mathfrak{a}}$ via the Lefschetz decomposition, and we have used again the $\partial\dbar$-Lemma to identify $H^{1,1}_A(X) \cong H^{1,1}(X)$ and $H^{n-1,n-1}_{BC}(X) \cong H^{n-1,n-1}(X)$. By \cite[Prop. 5.16]{grst}, Condition \ref{ConditionA} holds for any point in $\cM_\ell$ and Theorem \ref{thm:metricfibre} applies. Substituting the previous formulae in \eqref{eq:metricfibre} we obtain
\begin{equation}\label{eq:gellK=1}
\begin{split}
g_\ell =  {} & - \frac{(2 - \ell)n!}{2(\Re \; \mathfrak{a})^{n}} \Bigg{(}\frac{(\Re \; \dot{\mathfrak{a}}_0)^2 \cdot (\Re \; \mathfrak{a})^{n-2}}{(n-2)!} + \frac{(\Im \; \dot{\mathfrak{a}}_0)^2 \cdot (\Re \; \mathfrak{a})^{n-2}}{(n-2)!} \Bigg{)}\\
& + \frac{n(2-\ell)}{2((\Re \; \mathfrak{a})^{n})^2} \Bigg{(} (\Re \; \dot{\mathfrak{a}} \cdot (\Re \; \mathfrak{a})^{n-1})^2 + (\Im \; \dot{\mathfrak{a}} \cdot (\Re \; \mathfrak{a})^{n-1})^2 \Bigg{)}
\end{split}
\end{equation}
and therefore, using that $ \tilde{\mathfrak{a}}_0^2 \cdot (\Re \; \mathfrak{a})^{n-2} < 0$ for any non-trivial primitive class $\tilde{\mathfrak{a}}_0 \in H^{1,1}(X,\RR)$, it follows that $g_\ell$ is positive provided that $h^{0,2}(X) = 0$ and $\ell < 2$. The metric is K\"ahler by Proposition \ref{prop:mmapCalabil} and formula \eqref{eq:moduliMell}, and \eqref{eq:K0exact} follows from
$$
M_\ell = \Bigg{(}\frac{(\Re \; \mathfrak{a})^n}{n!\int_X \mu} \Bigg{)}^{\frac{2-\ell}{2}} \int_X \mu.
$$
%

\end{proof}

A case of special interest where the previous result applies is when $X$ admits a holomorphic volume form $\Omega$ and we take $\mu$ as in \eqref{eq:muOmega2}
and $\ell = 1$. In this case, \eqref{eq:Calabil} is equivalent to the condition of $\operatorname{SU}(n)$-holonomy for the metric and \eqref{eq:gellK=1} matches (up to homothety) Strominger's formula for the special K\"ahler metric on the `complexified K\"ahler moduli' for $X$ \cite[Eq. (4.1)]{CD}. As a consequence, this classical moduli space is recovered, along with its Weil-Petersson metric, via pseudo-K\"ahler reduction in Corollary \ref{cor:Calabimoduli}. It is interesting to observe that the formula for the \emph{holomorphic prepotential} on a Calabi-Yau threefold, given by the natural cubic form on $H^{1,1}(X)$, breaks as soon as we split the K\"ahler class into the Aeppli and Bott-Chern parameters $\mathfrak{a}$ and $\mathfrak{b}$. 

On a (non-necessarily K\"ahler) Calabi-Yau threefold $(X,\Omega)$ and for a suitable choice of the structure group $K$, the equations \eqref{eq:Calabil} are equivalent to the Hull-Strominger system \cite{HullTurin,Strom} provided that $\ell = 1$ and we take $\mu$ as in \eqref{eq:muOmega2} (see Corollary \ref{cor:mmapHS}). In this case, our formula 
for the moduli space K\"ahler potential reads
\begin{equation}\label{eq:K0}
\mathcal{K} = - \log \int_X \|\Omega\|_\omega \frac{\omega^3}{6}.
\end{equation}
Formula \eqref{eq:K0} shall be compared with \cite[Eq. (1.3)]{CDM}. For this interesting system of equations, the physics of string theory predicts that the moduli space metric \eqref{eq:glW} should be positive definite along the fibres of \eqref{eq:modulimapWbundle}. More precisely, we have the following \emph{physical conjecture}.

\begin{conjecture}\label{conj:Kahlerpotential0}
Formula \eqref{eq:K0} defines the K\"ahler potential for a K\"ahler metric in the moduli space of solutions of the Hull-Strominger system, for fixed bundle $P$ and fixed Calabi-Yau threefold $(X,\Omega)$.
\end{conjecture}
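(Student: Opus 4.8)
The statement labeled Conjecture \ref{conj:Kahlerpotential0} is explicitly a conjecture, so my task is not to prove it outright but to explain how one would attempt to prove it and where the difficulty lies. The plan is to reduce the conjecture to the positivity of the fibre-wise metric $g_1$ computed in Theorem \ref{thm:metricfibre}, specialized to the Hull-Strominger case $\ell=1$, $n=3$, with $\mu$ given by \eqref{eq:muOmega2}, and then to understand why the general positivity obstruction \eqref{eq:ineqmoduliintro} should never be triggered on the locus of genuine solutions.

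First I would unwind the identifications. By Corollary \ref{cor:mmapHS} the zero locus of the moment map on $\cW^0_+$ with $\ell=1$ and $\mu=(-1)^{n(n-1)/2}i^n\Omega\wedge\overline\Omega$ is exactly the set of solutions of the Hull-Strominger system \eqref{eq:HS}, and $e^{-f_\omega}=\|\Omega\|_\omega$, so $M_1=\int_X\|\Omega\|_\omega\,\omega^3/6$ and $-\log M_1$ is the candidate K\"ahler potential \eqref{eq:K0}. By Proposition \ref{prop:mmapCalabil} and the construction in Section \ref{sec:metric}, $\Omega_\ell=-d\mathbf{J}d\log M_\ell$ descends to a closed $(1,1)$-form on the smooth locus of $\cM_\ell$ via symplectic reduction, with $\Omega_\ell=g_\ell(\mathbf{J}\cdot,\cdot)$. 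Hence the conjecture is equivalent to the assertion that $g_1\geq 0$ on $T_{[W]}\cM_1$, and in fact (for strict K\"ahlerity) that $g_1>0$. The second half of the plan is then to localize further: restrict to the fibre of the map \eqref{eq:modulimapWbundle} to the moduli of holomorphic $G$-bundles, where Theorem \ref{thm:metricfibre} gives the explicit formula \eqref{eq:metricfibre}, and then treat the transverse (bundle) directions separately using the fact that along those directions, by the first line of \eqref{eq:glWmoduliintro} with $\ell=1<2$, the metric is conformal to the Atiyah-Bott-Donaldson metric on Hermite-Yang-Mills connections, whose signature is controlled by $\la\,,\ra$; here one would need to assume that $\la\,,\ra$ is chosen so that this contribution is positive (e.g.\ a negative multiple of the Killing form, as is standard for heterotic compactifications).

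The technical heart is the fibre-wise positivity. Feeding $\ell=1$, $n=3$ into \eqref{eq:metricfibre} and using the intermediate lemma relating $\Re\dot{\mathfrak b}\cdot\Re\dot{\mathfrak a}$ to $-\int_X|\dot\omega_0|^2e^{-f_\omega}\omega^3/6+\ldots+2\int_X\la d^hs\wedge Jd^hs\ra\wedge e^{-f_\omega}\omega^2/2$, one sees that $g_1$ is a sum of a manifestly nonnegative term coming from the primitive parts $|\dot\omega_0|^2,|\dot b_0^{1,1}|^2$ and the $\la d^h s\wedge Jd^h s\ra$ curvature terms, plus an indefinite term of the schematic form $-\tfrac{1}{2n}\big(n(2-\ell)-2\big)|\Lambda_\omega\dot\omega|^2$-type contributions together with the squared period integrals $(\Re\dot{\mathfrak a}\cdot\mathfrak b)^2$. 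For $\ell=1$, $n=3$ one has $n(2-\ell)-2=1>0$, so the potentially dangerous term is the one multiplying $|\Lambda_\omega\dot\omega|^2$ with the "wrong" relative sign against the squared period; positivity thus amounts to a Cauchy-Schwarz-type inequality
\begin{equation*}
\Bigl(\int_X \Lambda_\omega\dot\omega\, e^{-f_\omega}\frac{\omega^3}{6}\Bigr)^2 \;\le\; M_1\int_X |\Lambda_\omega\dot\omega|^2 e^{-f_\omega}\frac{\omega^3}{6}
\end{equation*}
combined with control of the cross term between $|\dot\omega_0|^2$ and the $d^h s$ term through the gauge-fixed linear system \eqref{eq:Calabillinearfiberfull}. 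The Cauchy-Schwarz step is immediate; what is \emph{not} immediate is that the gauge fixing (Proposition \ref{prop:Calabillineargaugefixed}, resting on Condition \ref{ConditionA}) is compatible with extracting a clean sign, i.e.\ that the representative minimizing the relevant norm in each cohomology class genuinely realizes $\Re\dot{\mathfrak a}\cdot\Re\dot{\mathfrak b}\le 0$ with the correct constant. This is precisely the content of inequality \eqref{eq:ineqmoduliintro}, and I do not expect an elementary a priori proof — this is why the statement is a conjecture rather than a theorem.

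Consequently, the realistic route, and the one the paper signals in Appendix \ref{sec:physics2}, is \emph{not} purely PDE-theoretic: one invokes the physics of heterotic string compactifications, where the moduli space metric arises from dimensional reduction of a ten-dimensional action whose kinetic terms are positive definite by unitarity, so that the effective four-dimensional sigma-model metric on the moduli of solutions is automatically positive. The mathematical shape of such an argument would be: (i) identify $-\log M_1$ with the dilaton-dependent prefactor in the reduced $\mathcal{N}=1$ supergravity K\"ahler potential; (ii) match the Weil-Petersson-type pairing on deformations $(\dot\omega,\dot b,\dot a)$ with the physical metric on massless scalar fluctuations; (iii) conclude positivity from unitarity of the parent theory. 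The main obstacle — and the reason this remains open — is step (ii): making precise, in the non-K\"ahler torsional setting, the dictionary between gauge-fixed infinitesimal deformations of a Hull-Strominger solution and the physical moduli fields, including a rigorous treatment of the $\alpha'$-corrected Bianchi identity $dd^c\omega+\la F\wedge F\ra=0$ and of the Green-Schwarz mechanism encoded in the string algebroid. A complete proof would require either this physical input made rigorous, or a genuinely new analytic inequality controlling the interaction of the conformally balanced condition with the Hermite-Yang-Mills condition along deformations; I expect the latter to be the genuinely hard mathematical problem, closely tied to the conjectural stability condition and the integral-of-the-moment-map heuristic discussed after Theorem \ref{thm:DUYinfinitesimalintro}.
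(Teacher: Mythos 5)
You correctly treat this statement as what it is --- a conjecture --- and your reduction of its mathematical content to the positive-definiteness of the fibre-wise metric \eqref{eq:metricfibre} at $\ell=1$, $n=3$ is exactly the paper's own reading (it is restated as Conjecture \ref{conj:theorem2}, with \eqref{eq:ineqmoduli} as the resulting inequality), and you rightly point to Appendix \ref{sec:physics2} as the actual source of evidence. The one place where your account diverges from the paper is in how the physics input is organized: the appendix does not argue via unitarity of kinetic terms in a dimensional reduction, but by equating two expressions for the gravitino mass --- the universal $N=1$ supergravity formula $m_{3/2}=c_0e^{K/2}\cW$ and the Gukov-type formula of \cite{GLM} with denominator $\int_X\|\Omega\|_\omega\omega^3/6$ --- solving for $e^K$, and then imposing $\delta\cW=0$ so that the four-dimensional dilaton \eqref{eq:phi4} becomes the dilaton functional; this yields \eqref{eq:Kphys}, which differs from \eqref{eq:K0} by an overall factor of $3$ and additive constants (harmless for positivity, but worth noting). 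Your approach buys a cleaner statement of where a purely mathematical proof would have to go (the interaction of the conformally balanced and Hermite--Yang--Mills linearizations under the gauge fixing of Proposition \ref{prop:Calabillineargaugefixed}); the paper's approach buys a concrete, checkable formula for $K$ that can be matched against \cite[Eq. (1.3)]{CDM}. One small caution: the Cauchy--Schwarz inequality you display bounds the \emph{positive} squared-period term by the $|\Lambda_\omega\dot\omega|^2$ norm, so it cannot by itself offset the negative coefficient $\tfrac{1}{2M_1}(\tfrac{\ell}{2}-\tfrac{n-1}{n})<0$ at $\ell=1$, $n=3$; you acknowledge this, but the phrasing ``the Cauchy--Schwarz step is immediate'' slightly overstates what that step delivers.
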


Conjecture \ref{conj:Kahlerpotential0} is based on a Gukov-type formula \cite{Gukov} for the \emph{gravitino mass} in $4$-dimensional heterotic flux compatifications derived in \cite{GLM}, combined with \eqref{eq:K0} and a universal relation between the moduli K\"ahler potential, the \emph{superpotential}, and the gravitino mass. Further details on physical aspects of this conjecture will appear elsewhere.

Combined with Theorem \ref{thm:metricfibre}, Conjecture \ref{conj:Kahlerpotential0} leads us to an interesting physical prediction relating the variations of the Aeppli classes and balanced classes of solutions in the special case of the Hull-Strominger system on a Calabi-Yau threefold.

\begin{conjecture}\label{conj:theorem2}
If $(X,\Omega,P)$ admits a solution of the Hull-Strominger system, then \eqref{eq:metricfibre} is positive definite. In particular, the variations of the Aeppli and balanced classes of nearby solutions must satisfy
\begin{equation}\label{eq:ineqmoduli}
\Re \; \dot{\mathfrak{a}} \cdot \Re \; \dot{\mathfrak{b}} < \frac{1}{2\int_X \|\Omega\|_\omega \frac{\omega^3}{6}} (\Re \; \dot{\mathfrak{a}} \cdot \mathfrak{b})^2.
\end{equation}
\end{conjecture}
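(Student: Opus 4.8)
The plan is to deduce Conjecture \ref{conj:theorem2} from the physical input isolated in Conjecture \ref{conj:Kahlerpotential0}, and then to read off the inequality \eqref{eq:ineqmoduli} as an elementary algebraic consequence of the semi-topological formula \eqref{eq:metricfibre}. First I would fix $\ell = 1$ and choose $\mu$ as in \eqref{eq:muOmega2}, so that by Corollary \ref{cor:mmapHS} the zero locus of the moment map on $(X,\Omega,P)$ consists precisely of solutions of the Hull--Strominger system, and the dilaton functional specialises to $M_1 = \int_X \|\Omega\|_\omega \tfrac{\omega^3}{6}$. By Theorem \ref{thm:metric}, under Condition \ref{ConditionA} and \eqref{eq:cohomologymain} the symplectic reduction endows $\cM_1$ near $[W]$ with the pseudo-K\"ahler datum $(\mathbf{J}, g_1, \Omega_1)$ whose local K\"ahler potential is $-\log M_1$, which is exactly formula \eqref{eq:K0}. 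Restricting to the fibres of the forgetful map \eqref{eq:modulimapWbundle} to the moduli of holomorphic $G$-bundles, Theorem \ref{thm:metricfibre} rewrites $g_1$ in the cohomological form \eqref{eq:metricfibre}. The physical prediction discussed in Appendix \ref{sec:physics2}, namely Conjecture \ref{conj:Kahlerpotential0}, asserts that \eqref{eq:K0} is the potential of a genuine (positive definite) K\"ahler metric on this moduli space; granting this, $g_1$ is positive definite along the fibres of \eqref{eq:modulimapWbundle}, which is the first assertion of Conjecture \ref{conj:theorem2}.

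For the second assertion I would specialise \eqref{eq:metricfibre} to tangent vectors of the form $v = (\dot\omega, 0, -Jd^h s)$, the family treated in the proof of Theorem \ref{thm:metricfibre}. The displayed computation there yields
\[
g_\ell(v,v) = \frac{2-\ell}{2M_\ell}\bigl(-\Re\,\dot{\mathfrak{a}}\cdot\Re\,\dot{\mathfrak{b}}\bigr) + \Bigl(\frac{2-\ell}{2M_\ell}\Bigr)^{2}(\Re\,\dot{\mathfrak{a}}\cdot\mathfrak{b})^{2}.
\]
Setting $\ell = 1$, rearranging and using $M_1 > 0$, positivity of $g_1(v,v)$ for every such $v$ with $\dot{\mathfrak{a}} \neq 0$ is equivalent to $\Re\,\dot{\mathfrak{a}}\cdot\Re\,\dot{\mathfrak{b}} < \tfrac{1}{2M_1}(\Re\,\dot{\mathfrak{a}}\cdot\mathfrak{b})^{2}$, which is \eqref{eq:ineqmoduli}; here one uses Lemma \ref{lem:Calabillinearfibre} to guarantee that, under \eqref{eq:cohomologymain}, the gauge-fixed representative realises the variations $\dot{\mathfrak{a}}$, $\dot{\mathfrak{b}}$ attached to the deformation, and that varying $s$ jointly with $\dot\omega$ sweeps out the relevant part of $H^{1,1}_A(X)$. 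The symmetric choice $v = (0,\dot b, d^h s')$ gives the companion inequality in $\Im\,\dot{\mathfrak{a}}$, $\Im\,\dot{\mathfrak{b}}$, and the two together are equivalent to positive definiteness of \eqref{eq:metricfibre}.

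The hard part --- in fact the whole of the open mathematical content --- is to prove positive definiteness of \eqref{eq:metricfibre} \emph{without} appealing to the physics, i.e.\ to establish
\[
\Re\,\dot{\mathfrak{a}}\cdot\Re\,\dot{\mathfrak{b}} + \Im\,\dot{\mathfrak{a}}\cdot\Im\,\dot{\mathfrak{b}} \;<\; \frac{1}{2M_1}\bigl((\Re\,\dot{\mathfrak{a}}\cdot\mathfrak{b})^{2} + (\Im\,\dot{\mathfrak{a}}\cdot\mathfrak{b})^{2}\bigr)
\]
for the gauge-fixed deformations of a Hull--Strominger solution. Unlike the K\"ahler case of Corollary \ref{cor:Calabimoduli}, where $df_\omega = 0$ fuses $\mathfrak{a}$ and $\mathfrak{b}$ into a single class and positivity follows from the Hodge--Riemann bilinear relations together with convexity of $\log$ of the cubic form, in the genuinely non-K\"ahler regime the balanced class $\mathfrak{b}$ itself varies over $\cM_1$ and the Aeppli and Bott--Chern pairings decouple, so no direct Hodge-theoretic mechanism is visible. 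A plausible line of attack is to use the intermediate identity established just before Theorem \ref{thm:metricfibre} to express $\Re\,\dot{\mathfrak{a}}\cdot\Re\,\dot{\mathfrak{b}}$ in terms of $|\dot\omega_0|^{2}$-type integrals and the curvature correction $\int_X \langle d^h s \wedge Jd^h s\rangle \wedge e^{-f_\omega}\tfrac{\omega^{2}}{2}$, and then to control this correction via the Hermite--Yang--Mills equation and the conformally balanced condition; turning such an estimate into a definite sign, or else pinpointing the global obstruction that the possible failure of \eqref{eq:ineqmoduli} would detect, is precisely what remains to be understood and is taken up further in Appendix \ref{sec:physics2}.
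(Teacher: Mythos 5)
Your proposal matches the paper's own derivation: the statement is a conjecture whose only "proof" is the implication (Conjecture \ref{conj:Kahlerpotential0}) $+$ (Theorem \ref{thm:metricfibre}) $\Rightarrow$ positivity of \eqref{eq:metricfibre}, with \eqref{eq:ineqmoduli} read off by setting $\ell=1$ and rearranging the displayed computation for $v=(\dot\omega,0,-Jd^hs)$, exactly as you do. You also correctly identify that the genuine mathematical content --- establishing positivity without the physical input --- remains open, which is precisely the paper's stance.
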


Formula \eqref{eq:ineqmoduli} provides a potential obstruction to the existence of solutions of the Hull-Strominger system around a given solution. For example, if we fix $\Re \; \dot{\mathfrak{a}}$, the possible variations in the balanced class $\Re \; \dot{\mathfrak{b}}$ are constrained by the duality pairing $\Re \; \dot{\mathfrak{a}} \cdot \Re \; \dot{\mathfrak{b}}$, via an effective bound in terms of the balanced class of the given solution and the value of the dilaton functional. We expect this phenomenon to be related to some global obstruction to the existence of solutions. It would be interesting to obtain a physical explanation for the inequality \eqref{eq:ineqmoduli}. 

\subsection{Infinitesimal Donaldson-Uhlenbeck-Yau}\label{sec:DUY}

We discuss next the relation between $\cM_\ell$ and the moduli space of string algebroids $Q$ over $X$ with fixed class $[E_Q] = [E] \in H^1(\underline{\cS})$ (see Lemma \ref{lem:smap}). This relation is suggested by the correspondence between the moduli space of solutions of the Hermite-Yang-Mills equations and the moduli space of polystable principal bundles, given by the Donaldson-Uhlenbeck-Yau Theorem \cite{Don,UY}. 
In the case of our interest, $E$ is the complexification of a compact form $E_\RR$. 

In order to establish this relation, notice that the proof of Lemma \ref{lem:WA} shows that \eqref{eq:AutEr-left-actioncW} extends to a left $\Aut(E)$-action
\begin{equation}\label{eq:AutE-left-actioncW}
\begin{split}
\Aut(E) \times \cW & \longrightarrow \cW\\
(\underline{f},W) & \longmapsto  \underline{f} \cdot W : = \underline{f}(W')
\end{split}
\end{equation}
where $W' := W(\underline{f}^{-1}(E_\RR),L_W) \subset \underline{f}^{-1}(E_\RR)$ is the horizontal subspace induced by the Chern correspondence in Lemma \ref{lem:Cherncorr}. Similarly as in Lemma \ref{lem:WA}, the forgetful map \eqref{eq:WA} jointly with the action \eqref{eq:AutE-left-actioncW} induce a commutative diagram
\begin{equation*}
\xymatrix @R=.5pc {
		\Aut(E) \times \cW \ar[r] \ar[dd] & \cW \ar[dd] \\
		& \\
		\Ker \sigma_{\underline P} \times \cA \ar[r] & \cA ,
	}
\end{equation*}
where $\Ker \sigma_{\underline P} \subset \cG_{\underline P}$ is as in Corollary \ref{cor:PicQ} and the bottom arrow is induced by the action of the complex gauge group $\cG_{\underline P}$ on $\cA $ (see e.g. \cite{Don}). Consider the isomorphism $\cA^0 \cong \cC^0$ between the space of integrable connections $\cA^0$ on $P_h$ and the space $\cC^0$ of holomorphic principal $G$-bundle structures on $\underline{P}$, given by the classical Chern correspondence. Consider the subgroup $\Aut_{dR}(E) \subset \Aut(E)$ as in Definition \ref{def:PicA}. Then, the set-theoretical Chern correspondence in Lemma \ref{lem:Cherncorrsets} induces a diagram
\begin{equation}\label{eq:modulidiagram}
\xymatrix @R=.5pc {
\cM_\ell \ar[r] & \cW^0/\Aut_{dR}(E) \ar[r]^{\cong} \ar[dd] & \cL^0/\Aut_{dR}(E) \ar[dd] \\
	&	& \\
	&	 \cW^0/\Aut(E) \ar[r]^{\cong} \ar[dd] & \cL^0/\Aut(E) \ar[dd] \ar[dd],\\
		&	& \\
	&	 \cA^0/\Ker \sigma_{\underline P} \ar[r]^{\cong} \ar[dd] & \cC^0/\Ker \sigma_{\underline P} \ar[dd],\\
		&	& \\
	&	 \cA^0/\cG_{\underline P} \ar[r]^{\cong} & \cC^0/\cG_{\underline P},
	}
\end{equation}
where $\cL^0$ denotes the space of liftings of $T^{0,1}X$ to $E$ and $\sigma_{\underline P}$ is as in \eqref{eq:Picardses}.

Let us analyse briefly the tower of moduli spaces on the right-hand side of the diagram  \eqref{eq:modulidiagram}. 
Firstly, $\cC^0/\cG_{\underline P}$ is the moduli space of holomorphic principal $G$-bundles over $X$ with fixed topological bundle $\underline{P}$, as considered in Section \ref{sec:metric}. The fibre of the map 
$$
\cC^0/\Ker \sigma_{\underline P} \to \cC^0/\cG_{\underline P}
$$ 
over $[P]$ is discrete (see \eqref{eq:PicardsesLie}). Assuming that the automorphism group $\cG_P$ of $P$ is trivial, the fibre is parametrized by $\Im \; \sigma_{\underline P} \subset H^3(X,\CC)$. As for the moduli space $\cL^0/\Aut(E)$, we have the following.

\begin{lemma}\label{lem:ModuliBC}
The set $\cL^0/\Aut(E)$ parametrizes isomorphism classes of string algebroids $Q$ over $X$ with $[E_Q] = [E] \in H^1(\underline{\cS})$ (see Lemma \ref{lem:smap}).
\end{lemma}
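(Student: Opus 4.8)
\textbf{Proof strategy for Lemma \ref{lem:ModuliBC}.}

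The plan is to build an explicit bijection between $\cL^0/\Aut(E)$ and the set of isomorphism classes of string algebroids $Q$ with $[E_Q] = [E]$, using the reduction construction of Proposition \ref{prop:QL} in one direction and the Morita brick theory of Section \ref{sec:Morita} in the other. First I would define the map: given a lifting $L \in \cL^0$, Proposition \ref{prop:QL} produces a string algebroid $Q_L = L^\perp/L$, and I would send $[L] \mapsto [Q_L] \in H^1(\cS)$. By Lemma \ref{lemma:bricks} the tuple $(E,L,\Id_{Q_L})$ is a Morita brick for $Q_L$, so $[E_{Q_L}] = s([Q_L]) = [E] \in H^1(\underline\cS)$; hence the map lands in the claimed subset. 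The main point to check for well-definedness is that $\Aut(E)$-equivalent liftings give isomorphic string algebroids: if $\underline f \in \Aut(E)$ and $L' = \underline f(L)$, then $\underline f$ restricts to an isomorphism $L^\perp \to L'^\perp$ descending to an isomorphism $Q_L \to Q_{L'}$ (this is precisely the diagram \eqref{eq:Morita2.0}), which upgrades to an isomorphism of string algebroids because $\underline f$ covers an automorphism of $A_{\underline P}$ compatible with the $\rho$-maps.

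Next I would prove surjectivity. Given a string algebroid $Q$ with $[E_Q] = [E]$, Lemma \ref{lem:red-for-all-Q} realizes $Q$ as $Q_{L_0}$ for the canonical lifting $L_0 = T^{0,1}X \subset E_Q$, and the hypothesis $[E_Q] = [E]$ together with Proposition \ref{lemma:deRhamCsmooth} gives an isomorphism of complex string algebroids $\underline g \colon E_Q \to E$. Then $L := \underline g(L_0) \in \cL^0$ satisfies $Q_L \cong Q_{L_0} \cong Q$ (the involutivity of $L$ is preserved by $\underline g$, so indeed $L \in \cL^0$ and not merely in $\cL$). Hence $[L] \mapsto [Q]$.

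For injectivity I would argue as follows: suppose $L_1, L_2 \in \cL^0$ with an isomorphism of string algebroids $\psi \colon Q_{L_1} \to Q_{L_2}$. Then $(E,L_1,\Id)$ and $(E,L_2,\psi^{-1})$ are both Morita bricks for $Q_{L_1}$ (after identifying $Q_{L_1}$ with $Q_{L_2}$ via $\psi$, or more cleanly: $(E,L_1,\psi)$ and $(E,L_2,\Id_{Q_{L_2}})$ are Morita bricks for $Q_{L_2}$). By Lemma \ref{lemma:bricksglue} there is a (unique) isomorphism $\underline f \colon E \to E$ with $\underline f(L_1) = L_2$, i.e. $\underline f \in \Aut(E)$ with $L_2 = \underline f \cdot L_1$, so $[L_1] = [L_2]$ in $\cL^0/\Aut(E)$. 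The main obstacle I anticipate is bookkeeping around the fact that elements of $\cL^0/\Aut(E)$ are equivalence classes of \emph{liftings}, whereas Morita bricks also carry the isomorphism datum $\psi$; the trick is to observe that Lemma \ref{lemma:bricksglue} lets one absorb $\psi$ into the choice of gluing isomorphism $\underline f$, so no information is lost or spuriously added. Once that identification is made, both directions are essentially the contents of Lemma \ref{lem:red-for-all-Q} and Lemma \ref{lemma:bricksglue}, and the proof reduces to assembling them; I would keep it short by citing these results rather than re-deriving the Courant-algebroid-level computations.
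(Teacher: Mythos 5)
Your proposal is correct and follows essentially the same route as the paper: the forward map $[L]\mapsto[Q_L]$ is well defined by the diagram \eqref{eq:Morita2.0} and lands in the right set by Lemma \ref{lemma:bricks}, surjectivity comes from transporting the canonical lifting $T^{0,1}X\subset E_Q$ through an isomorphism $E_Q\to E$, and injectivity is exactly the uniqueness statement of Lemma \ref{lemma:bricksglue} applied to the two Morita bricks $(E,L_1,\psi)$ and $(E,L_2,\Id)$. The only cosmetic difference is that the paper phrases the converse direction as well-definedness of the inverse map (checking independence of the chosen isomorphism $E_Q\to E$), whereas you package the same content as an injectivity check; the two are equivalent.
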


\begin{proof}
Any element $L \in \cL^0$ determines a string algebroid $Q_L$ with $[E_{Q_L}] = [E]\in H^1(\underline{\cS})$ (see Lemma \ref{lemma:bricks}). 
If $L$ and $L'$ are in the same $\Aut(E)$-orbit then by \eqref{eq:Morita2.0} it follows that $Q_L$ and $Q_{L'}$ are isomorphic. Conversely, given a string algebroid $Q$ with $[E_Q] = [E]$, then any choice of isomorphism $\underline{f} \colon E_Q \to E$ determines $L = \underline{f}(T^{0,1}X) \in \cL^0$. For a different choice of isomorphism $\underline{f}' \colon E_Q \to E$, we have $L' = \underline{f}'\circ \underline{f}^{-1} \cdot L$ which lies in the same $\Aut(E)$-orbit. Finally, if $\psi \colon Q \to Q'$ is an isomorphism of string algebroids, then there exists a unique isomorphism $\tilde{\underline{f}} \colon E_Q \to E_{Q'}$ in a diagram
\begin{equation*}\label{eq:diagunderlineftilde2}
\xymatrix @R=.5pc {
		E_Q \ar@{-->}[dr] \ar[dd]_{\tilde{\underline{f}}} & & \\
		& Q \ar[rd]^{\Id_Q} \ar[dd]_{\psi} & \\ 
		E_{Q'} \ar@{-->}[dr] & & Q\\
		& Q' \ar[ru]_{\psi}  & 
	}
\end{equation*}
which determines $\tilde L = \underline{f} \circ \tilde{\underline{f}}^{-1}(T^{0,1}X) \in \cL^0$. By Lemma \ref{lemma:bricksglue}, $\tilde L \in  \Aut(E) \cdot L$.
\end{proof}

\begin{remark}
The local geometry of the bigger moduli space of string algebroids over $X$ with varying $[E_Q] \in H^1(\underline{\cS})$ has been recently understood in \cite{grt2} via the construction of a Kuranishi slice theorem.
\end{remark}

By the previous lemma, $\cL^0/\Aut(E)$ is the moduli space of string algebroids $Q$ over $X$ with fixed complex string algebroid $E$, while $\cL^0/\Aut_{dR}(E)$ is a \emph{Teichm\"uller space} for string algebroids. We analyse next in detail the infinitesimal structure of the Teichm\"uller space when $E$ is the complexification of $E_\RR$, towards a Donaldson-Uhlenbeck-Yau type theorem for the Calabi system. For this, we fix a solution $W$ of \eqref{eq:Calabil} and consider the associated element $L_W \in \cL^0$ via the Chern correspondence in Lemma \ref{lem:Cherncorrsets}. Relying on Lemma \ref{lemma:liftings} and Section \ref{sec:metric}, the tangent space of $\cL^0/\Aut_{dR}(E)$ at $[L_W]$ is given (formally) by the cohomology of the complex 
\begin{equation}
  \label{eq:TLMcomplexring}
(\widehat{C}^*) \qquad \Omega^0(\ad \underline P) \oplus  \Omega^1_\CC \lra{\widehat{\mathbf{P}}^c} \Omega^{1,1 + 0,2} \oplus \Omega^{0,1}(\ad \underline{P}) \lra{\mathbf{L}^c} \Omega^{1,2 + 0,3} \oplus \Omega^{0,2}(\ad \underline{P})
,
\end{equation}
where 
\begin{align*}
\widehat{\mathbf{P}}^c(r,\xi) & = (d\xi^{0,1} + \dbar \xi^{1,0} + 2\la r,F_h \ra,\dbar r),\\
\mathbf{L}^c(\dot \gamma,\dot \beta) & = (d \dot \gamma^{0,2} + \dbar \dot \gamma^{1,1} - 2 \la \dot \beta,F_h \ra , \dbar \dot \beta).
\end{align*}
We show next that the Teichm\"uller space $\cL^0/\Aut_{dR}(E)$ is finite dimensional. 

\begin{lemma}\label{lem:sesTLM}
The sequence \eqref{eq:TLMcomplexring} is an elliptic complex of differential
operators. Consequently, the cohomology $H^1(\widehat{C}^*)$ of \eqref{eq:TLMcomplexring} is finite dimensional.
\end{lemma}

Ellipticity of the complex $\widehat{C}^*$ can be easily obtained via comparison with the Dolbeault complex of the holomorphic vector bundle underlying $Q_{L_W}$ (cf. \cite{AGS,OssaSvanes} and arXiv version 1503.07562v1 of reference \cite{grt}).

Our strategy to compare $H^1(\widehat{C}^*)$ with the tangent to the moduli space of solutions of the Calabi system, given by $H^1(\widehat{S}^*)$ as in Lemma \ref{lem:sesTWM}, is to work orthogonally to the image of the operator $\widehat{\mathbf{P}}^c$ with respect to the indefinite pairing $g_\ell$ in \eqref{eq:glW}. Notice here that the Chern correspondence in Lemma \ref{lem:Cherncorrsets} induces an isomorphism
\begin{equation*}\label{eq:infChern}
\begin{split}
\Upsilon \colon \Omega^{1,1}_\RR \oplus \Omega^2 \oplus \Omega^1(\ad P_h) & \longrightarrow \Omega^{1,1 + 0,2} \oplus \Omega^{0,1}(\ad \underline P)\\
(\dot \omega,\dot b,\dot a) & \longmapsto (\dot b^{1,1 + 0,2} - i\dot \omega,\dot a^{0,1}),
\end{split}
\end{equation*}
which we use to define the pairing $g_\ell$ on $\Omega^{1,1 + 0,2} \oplus \Omega^{0,1}(\ad \underline P)$.

\begin{theorem}\label{thm:DUYinfinitesimal}
Assume Condition \ref{ConditionA} and $h^{0,1}_A(X) = h^{0}(\ad P) = 0$.
Then, the cohomology of the complexes \eqref{eq:TWMcomplexring} and \eqref{eq:TLMcomplexring} are canonically isomorphic
$
H^1(\widehat{S}^*) \cong H^1(\widehat{C}^*).
$
\end{theorem}

\begin{proof}
Using the conditions $h^{0,1}_A(X) = h^{0}(\ad P) = 0$ one can easily prove that
$$
\Im \; \widehat{\mathbf{P}}\cap \mathbf{J} \Im \; \widehat{\mathbf{P}} = \{0\}.
$$
Then, via the isomorphism $\Upsilon$, we have equalities
\begin{align*}
\Upsilon^{-1}(\Im \; \widehat{\mathbf{P}}^c) & = \Im \; \widehat{\mathbf{P}}\oplus \mathbf{J} \Im \; \widehat{\mathbf{P}}\\
\Upsilon^{-1}(\Ker \mathbf{L}^c) & = \{(\dot \omega,\dot b,\dot a) \; | \; \dbar \dot a^{0,1} = 0,  d^c \dot \omega + 2\la \dot a, F_h \ra - d \dot b = 0\}
\end{align*}
Assuming Condition \ref{ConditionA}, there is a $g_\ell$-orthogonal projection $\Pi$ as in Lemma \ref{lem:gaugefixing} and we consider the map
\begin{equation*}
\begin{split}
\Pi^c \colon \Omega^{1,1}_\RR \oplus \Omega^2 \oplus \Omega^1(\ad P_h) & \longrightarrow (\Im \; \widehat{\mathbf{P}}\oplus \mathbf{J} \Im \; \widehat{\mathbf{P}})^{\perp_{g_\ell}}\\
v & \longmapsto \Pi^c v:= -\mathbf{J}(\Id - \Pi) \mathbf{J} (\Id - \Pi) v.
\end{split}
\end{equation*}
We take $y_j \in \Omega^0(\ad P_h) \oplus \Omega^1$, for $j = 1,2$, and check that it is well defined
\begin{align*}
g_\ell(\Pi^c v,\widehat{\mathbf{P}}(y_1) + \mathbf{J} \widehat{\mathbf{P}}(y_2)) & = g_\ell(v - \Pi v,\mathbf{J} \widehat{\mathbf{P}}(y_2))\\ 
& \phantom{{} =} + \Omega_\ell(\Pi \mathbf{J}(v - \Pi v), \widehat{\mathbf{P}}(y_1)) + g_\ell(\Pi \mathbf{J}(v - \Pi v),\widehat{\mathbf{P}}(y_2))\\
& = g_\ell(v - \Pi v,\mathbf{J} \widehat{\mathbf{P}}(y_2)) + g_\ell(\mathbf{J}(v - \Pi v),\widehat{\mathbf{P}}(y_2)) = 0.
\end{align*}
For the second equality we used that $\Im \; \Pi \subset \Im \; \widehat{\mathbf{P}}$, that $\mu_\ell$ is equivariant, and also $\mu_\ell(W) = 0$. By Proposition \ref{prop:Calabillineargaugefixed}, there is an equality
$$
\cH^1(\widehat S^*):= \ker \mathbf{L} \cap \ker \widehat{\mathbf{P}}^* = (\Im \; \widehat{\mathbf{P}}\oplus \mathbf{J} \Im \; \widehat{\mathbf{P}})^{\perp_{g_\ell}} \cap \Upsilon^{-1}(\Ker \mathbf{L}^c)
$$
and therefore, since $\Pi^c$ preserves $\Upsilon^{-1}(\Ker \mathbf{L}^c)$, it induces a well-defined surjective map
\begin{equation*}
\begin{split}
\Pi^c \colon \Upsilon^{-1}(\Ker \mathbf{L}^c) & \longrightarrow \cH^1(\widehat S^*).
\end{split}
\end{equation*}
We claim that this map induces an isomorphism $H^1(\widehat{C}^*) \cong \cH^1(\widehat S^*)$. To see this, notice that \eqref{eq:glImpnondeg} implies that $\Pi \mathbf{J} \Im \; \widehat{\mathbf{P}} = 0$, as 
$$
g_\ell(\widehat{\mathbf{P}}(y_1), \Pi \mathbf{J} \widehat{\mathbf{P}}(y_2)) = - \Omega_\ell(\widehat{\mathbf{P}}(y_1), \widehat{\mathbf{P}}(y_2)) = 0.
$$
for any $y_1,y_2$. Then, if $v = \widehat{\mathbf{P}}(y_1) + \mathbf{J} \widehat{\mathbf{P}}(y_2)$ it follows that
\begin{align*}
\Pi^c v & =  v - \Pi v + \mathbf{J} \Pi \mathbf{J}(v - \Pi v)\\
& = (\Id - \Pi)\mathbf{J} \widehat{\mathbf{P}}(y_2) + \mathbf{J} \Pi \mathbf{J} (\Id - \Pi)\mathbf{J}\widehat{\mathbf{P}}(y_2) = \mathbf{J} \widehat{\mathbf{P}}(y_2) - \mathbf{J} \Pi \widehat{\mathbf{P}}(y_2) = 0.
\end{align*}
Conversely, if $\Pi^c v = 0$:
$$
v = \Pi v - \mathbf{J}\Pi \mathbf{J}(v - \Pi v) \in \Im \; \widehat{\mathbf{P}}\oplus \mathbf{J} \Im \; \widehat{\mathbf{P}},
$$
and therefore $H^1(\widehat{C}^*) \cong \cH^1(\widehat S^*)$, as claimed. 
\end{proof}

Our Theorem \ref{thm:DUYinfinitesimal} can be regarded as an infinitesimal Donaldson-Uhlenbeck-Yau type theorem, relating the moduli space of solutions of the Calabi system with the Teichm\"uller space $\cL^0/\Aut_{dR}(E)$ for string algebroids. This strongly suggests that---if we shift our perspective and consider the Calabi system as equations
\begin{equation}\label{eq:CalabilQLW}
\begin{split}
F_h\wedge \omega^{n-1} & = 0,\\
d (e^{-\ell f_\omega}\omega^{n-1}) & = 0,
\end{split}
\end{equation}
for a compact form $E_\RR \subset E_Q$ on a fixed string algebroid $Q$ along a fixed Aeppli class $\mathfrak{a} \in \Sigma_A(Q,\RR)$ (see Proposition \ref{prop:ApPicA})---the existence of solutions should be related to a stability condition in the sense of Geometric Invariant Theory. This was essentially the point of view taken in \cite{grst}. The precise relation with stability in our context is still unclear, as the balanced class $\mathfrak{b} \in H^{n-1,n-1}_{BC}(X,\RR)$ of the solution varies in the moduli space $\cM_\ell$. Recall that $\mathfrak{b}$ is required to measure slope stability of the holomorphic bundle in the classical Donaldson-Uhlenbeck-Yau Theorem \cite{Don,UY} (see also \cite{lt}). The conjectural stability condition which characterizes the existence of solutions of \eqref{eq:CalabilQLW} should be for pairs given by string algebroid $Q$ of Bott-Chern type equipped with a `complexified Aeppli class' (see Appendix \ref{sec:moduliapp}). It must be closely related to the properties of the \emph{integral of the moment map} $\mu_\ell$ for compact forms in a fixed Aeppli class, given by the $\ell$-dilaton functional (cf. \cite{grst}). We speculate that there is a relation between this new form of stability and the conjectural inequality \eqref{eq:ineqmoduli}. This may lead to an obstruction to the global existence which goes beyond the slope stability of the bundle and the balanced property of the manifold (cf. \cite{Yau2}).

\subsection{Examples}\label{sec:example}

We present an interesting class of examples of solutions of the Calabi system where Condition \ref{ConditionA} holds, and Theorem \ref{thm:metric}, Theorem \ref{thm:metricfibre} and Theorem \ref{thm:DUYinfinitesimal} apply. These examples are non-K\"ahler solutions of \eqref{eq:Calabil} obtained by deformation of a solution of the Calabi problem for K\"ahler metrics, as in \eqref{eq:Calabieq}, equipped with a polystable vector bundle. Our method is inspired by the one used in \cite{AGF1} to find solutions of the Hull-Strominger system on K\"ahler Calabi-Yau manifolds. 

Let $X$ be a compact K\"ahler manifold equipped with smooth volume form $\mu$ compatible with the orientation and a K\"ahler class $\mathbf{k} \in H^{1,1}(X,\RR)$. Let $V_0$ and $V_1$ be $\mathbf{k}$-stable holomorphic vector bundles over $X$ with vanishing first Chern class and the same second Chern character
$$
ch_2(V_0) = ch_2(V_1) \in H^{2,2}(X,\RR).
$$
Given $\ell,\epsilon \in \RR$, consider the system of equations
\begin{equation}\label{eq:Calabilepsilon}
	\begin{split}
	F_{h_0}\wedge \omega^{n-1} & = 0,\\
	F_{h_1}\wedge \omega^{n-1} &= 0,\\
	d (e^{-\ell f_\omega}\omega^{n-1}) & = 0,\\
	dd^c \om - \epsilon \tr_0 F_{h_0} \wedge F_{h_0} + \epsilon \tr_1 F_{h_1} \wedge F_{h_1} & = 0.
	\end{split}
\end{equation}
for a hermitian form $\omega$ on $X$ and hermitian metrics $h_j$ in $V_j$. Taking $P$ to be the bundle of split frames of $V_0 \oplus V_1$, any solution of \eqref{eq:Calabilepsilon} provides a solution of the Calabi system \eqref{eq:Calabil} for
$$
\la\,,\ra_\epsilon = - \epsilon \tr_0  + \epsilon \tr_1.
$$ 
Combining the Donaldson-Uhlenbeck-Yau Theorem \cite{Don,UY} with Yau's solution of the Calabi Conjecture \cite{Yau0}, there exists a unique solution $(\omega_0,h_{0,0},h_{1,0})$ of \eqref{eq:Calabilepsilon} for $\epsilon = 0$ with $[\omega_0] = \mathbf{k}$. Notice here that such solution must be necessarily K\"ahler (see \cite{grst}), that is, $d \omega_0 = 0$. 

\begin{proposition}\label{prop:deformation}
Assume $\ell > 2 - \tfrac{2}{n}$ and $h^{0,1}(X) = 0$, and let $(X,V_0,V_1)$ be as above. Then, there exists $\epsilon_0 >0$ and a smooth family $(\omega_\epsilon,h_{0,\epsilon},h_{1,\epsilon})$ of solutions of \eqref{eq:Calabilepsilon} parametrized by $[0,\epsilon_0[$ such that Condition \ref{ConditionA} holds for sufficiently small $\epsilon > 0$. Furthermore, $(\omega_\epsilon,h_{0,\epsilon},h_{1,\epsilon})$ converge uniformly in $C^\infty$ norm to $(\omega_0,h_{0,0},h_{1,0})$ as $\epsilon \to 0$.
\end{proposition}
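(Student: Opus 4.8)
The strategy is a standard implicit function theorem argument (cf. \cite{AGF1}), deforming away from the K\"ahler solution at $\epsilon=0$. First I would set up the right functional-analytic framework: fix Sobolev completions and consider the map $\Phi$ sending a triple $(\omega,h_0,h_1)$ (suitably normalized, e.g. with $[\omega]$ prescribed modulo the ambiguity in \eqref{eq:Calabilepsilon} and the hermitian metrics normalized by a volume/determinant condition) together with the parameter $\epsilon$ to the residuals of the four equations in \eqref{eq:Calabilepsilon}. Since for $\epsilon = 0$ the last equation decouples and forces $\omega$ K\"ahler, the Donaldson-Uhlenbeck-Yau theorem \cite{Don,UY} and Yau's theorem \cite{Yau0} give the unique base solution $(\omega_0,h_{0,0},h_{1,0})$ with $[\omega_0] = \mathbf{k}$. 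The key point is that the linearization $D_{(\omega,h_0,h_1)}\Phi$ at this base solution is an isomorphism between the appropriate slices: the Hermite-Einstein part linearizes to (a perturbation of) the Laplacian on $\Omega^0(\ad P_{h_j})$, which is invertible on the orthogonal complement of $H^0(\ad P_j)$ by stability; the conformally balanced equation linearizes, using $\ell > 2 - \tfrac 2n$ and $h^{0,1}(X) = 0$, to an invertible operator on the relevant space of $(1,1)$-forms modulo the kernel coming from the Aeppli ambiguity (this is where the sign condition on $\ell$, already visible in Lemma \ref{lemma:gl} and Remark \ref{rem:glnondeg}, enters decisively); and the anomaly equation $dd^c$-part is handled by the $\partial\bar\partial$-lemma on the K\"ahler manifold $X$. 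Having shown $D\Phi$ is an isomorphism transverse to $\epsilon$, the implicit function theorem produces a smooth family $(\omega_\epsilon,h_{0,\epsilon},h_{1,\epsilon})$ for $\epsilon \in [0,\epsilon_0[$, and elliptic regularity (bootstrapping on each of the elliptic equations) upgrades these to smooth solutions converging in $C^\infty$ to the base solution as $\epsilon \to 0$.

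It remains to verify Condition \ref{ConditionA} for small $\epsilon > 0$. Here I would argue by an openness/semicontinuity argument: Condition \ref{ConditionA} is the vanishing of $\ker \cL$ for the operator $\cL$ of \eqref{eq:Loperator}, which by Proposition \ref{prop:zeroindex} is Fredholm of index zero, so its kernel is upper semicontinuous in the $C^\infty$ family of solutions. Thus it suffices to check that Condition \ref{ConditionA} holds \emph{at} $\epsilon = 0$, i.e. for the K\"ahler solution with its polystable bundle. For the K\"ahler solution this should follow from \cite[Prop. 5.16]{grst} (invoked in the proof of Corollary \ref{cor:Calabimoduli}), where Condition \ref{ConditionA} is established for points of the K\"ahler moduli space equipped with a polystable bundle --- the operator $\cL$ at $\epsilon = 0$ block-decomposes into the scalar/$(1,1)$-form part (handled on the K\"ahler side) and the bundle parts (handled by stability of $V_0$ and $V_1$ and the vanishing $h^0(\ad P) = 0$ coming from polystability plus $c_1 = 0$). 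One must check that the vanishing $ch_2(V_0) = ch_2(V_1)$ and $c_1(V_j) = 0$ guarantee $p_1(P) = 0$ so that the string algebroid framework applies, and that the relevant cohomological hypotheses \eqref{eq:cohomologymain} persist: $h^{0,1}_A(X) = h^{0,1}(X) = 0$ by the $\partial\bar\partial$-lemma, $\Ker \partial = H^{0,2}(X)$ which may be nonzero but whose contribution has been accounted for, and $h^0(\ad P) = 0$ by polystability.

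The main obstacle I expect is the analysis of the linearized operator for the conformally balanced equation on a \emph{non-K\"ahler} deformation, and in particular making precise the claim that the combined linearization of \eqref{eq:Calabilepsilon} is surjective onto the correct slice when the K\"ahler form is perturbed. At $\epsilon = 0$ this is clean because the equations decouple and the anomaly equation is vacuous up to the $dd^c$-term, but for $\epsilon \neq 0$ the anomaly equation couples $\omega$ and the connections through $\la F \wedge F\ra$, so one must track how the $dd^c$-operator interacts with the conformally-balanced operator --- this is exactly the kind of coupled elliptic system whose invertibility is delicate and relies on the sign of $\ell$. A secondary subtlety is correctly identifying the normalization that kills the gauge/Aeppli ambiguities so that the implicit function theorem is applied to an honest isomorphism rather than merely a surjection with finite-dimensional kernel; once the slices are chosen as in Section \ref{sec:gaugefixing} (using Condition \ref{ConditionA} at $\epsilon=0$ to get the gauge fixing), this becomes bookkeeping, but it must be done carefully to match the hypotheses of Theorem \ref{thm:metric}, Theorem \ref{thm:metricfibre} and Theorem \ref{thm:DUYinfinitesimal}.
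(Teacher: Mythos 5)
Your existence step matches the paper exactly: the authors likewise dispose of the family $(\omega_\epsilon,h_{0,\epsilon},h_{1,\epsilon})$ by invoking the implicit function theorem argument of \cite{AGF1} (cf. \cite[Lem. 5.17]{grst}), so the linearization analysis you sketch there is not an issue.

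The verification of Condition \ref{ConditionA} is where you diverge from the paper, and where there is a genuine gap. You propose to apply upper semicontinuity of $\dim \Ker \cL_\epsilon$ down to $\epsilon = 0$ and to certify the $\epsilon=0$ case by citing \cite[Prop. 5.16]{grst}. But at $\epsilon = 0$ the pairing $\la\,,\ra_\epsilon = -\epsilon \tr_0 + \epsilon \tr_1$ vanishes identically, so the base configuration is not a solution of the Calabi system in the sense in which Condition \ref{ConditionA} is formulated (Section \ref{sec:gaugefixing} assumes $\la\,,\ra$ non-degenerate), and \cite[Prop. 5.16]{grst} is used in this paper only for the trivial structure group case of Corollary \ref{cor:Calabimoduli}; it does not cover the degenerate-pairing limit with a nontrivial bundle. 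One can indeed check by hand that the formally defined operator $\cL_0$ has zero kernel --- it decouples, the form block is killed by the positivity available for $\ell > 2-\tfrac{2}{n}$ together with $h^{0,1}(X)=0$, and the bundle block by $h^0(\ad P)=0$ --- but that computation is exactly the content you omit, and there is the further wrinkle that the domain $\Omega^0(\ad P_{h_\epsilon}) \times \Im\, d^{*_{\omega_\epsilon}}$ of $\cL_\epsilon$ varies with $\epsilon$, so the semicontinuity statement needs an identification of domains. The paper avoids all of this by never considering $\Ker \cL_0$ as a whole: it applies semicontinuity only to the bundle block $U_{\epsilon,0}$ (whose kernel at $\epsilon=0$ vanishes by stability), uses an auxiliary one-parameter family $U_{\epsilon,\lambda}$ to conclude that any $(u_\epsilon,\xi_\epsilon) \in \Ker \cL_\epsilon$ has $u_\epsilon = 0$, and then kills $\xi_\epsilon$ at the given $\epsilon$ itself by computing $g_\ell(v,v)=0$ for $v=(0,d\xi_\epsilon,0)$, which forces $(d\xi_\epsilon)^{1,1}=0$ when $\ell > 2-\tfrac{2}{n}$ and then $d\xi_\epsilon = 0$ using $h^{0,1}(X)=0$. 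To make your route rigorous you would have to supply the direct computation of $\Ker \cL_0$ and handle the varying domain; as written, the step ``it suffices to check Condition \ref{ConditionA} at $\epsilon=0$'' rests on a citation that does not apply.
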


\begin{proof}
Existence of the family of solutions $(\omega_\epsilon,h_{0,\epsilon},h_{1,\epsilon})$ follows as in \cite{AGF1} by application of an implicit function theorem argument (cf. \cite[Lem. 5.17]{grst}). We prove now that any such solution satisfies Condition \ref{ConditionA} for sufficiently small $\epsilon$. Denote by $P_{h_\epsilon}$ the bundle of split unitary frames for $h_\epsilon = h_{0,\epsilon} \times h_{1,\epsilon}$. For $\cL_\epsilon$ as in \eqref{eq:Loperator} and $(u,\xi) \in \Om^0(\ad P_{h_\epsilon}) \times \Im \; d^*$, the condition $\cL_\epsilon(u,\xi) = 0$ is equivalent to
\begin{align*}
d \Big{(}e^{-\ell f_{\omega_\epsilon}}\Big{(}(n-1) ((d \xi)^{1,1} + 2 \la u,F_{h_\epsilon}\ra_\epsilon) \wedge \omega_\epsilon^{n-2} - \frac{\ell}{2} (\Lambda_{\omega_\epsilon} d \xi) \omega_\epsilon^{n-1}\Big{)}\Big{)} & = 0,\\
d^{h_\epsilon} J d^{h_\epsilon} u \wedge \omega_\epsilon^{n-1} - (n-1) F_{h_\epsilon} \wedge (d \xi + 2 \la u,F_{h_\epsilon} \ra_\epsilon) \wedge \omega_\epsilon^{n-2} & = 0.
\end{align*}
Consider the family of elliptic operators
\begin{equation*}
   \label{eq:Loperatorlambda0}
 \begin{array}{cccc}
U_{\epsilon,0} : & \Om^0(\ad P_{h_\epsilon})  & \rightarrow & \Om^{2n}(\ad P_{h_\epsilon})
         \end{array}
 \end{equation*}
defined by
\begin{align*}
U_{\epsilon,0}(u) & = d^{h_\epsilon} J d^{h_\epsilon} u \wedge \omega_\epsilon^{n-1} - (n-1) F_{h_\epsilon} \wedge (2 \la u,F_{h_\epsilon} \ra_\epsilon) \wedge \omega_\epsilon^{n-2}.
\end{align*}
By hypothesis, $\tilde U_{0,0}$ is elliptic with zero kernel, and therefore $U_{\epsilon,0}$ has vanishing kernel for sufficiently small $\epsilon$ by upper semi-continuity of $\dim \Ker U_{\epsilon,0}$. Notice that $U_{\epsilon,0}$ can be regarded as an operator $\Om^0(\ad P_{h_0}) \rightarrow \Om^0(\ad P_{h_0})$ by a gauge transformation depending only on $h_\epsilon$. Let $\epsilon > 0$ such that $\Ker U_{\epsilon,0} = \{0\}$, and assume that $(u_\epsilon,\xi_\epsilon) \in \Ker \cL_\epsilon$. Given $\lambda \in \RR$, consider the family of elliptic operators
\begin{equation*}
   \label{eq:Loperatorlambda}
 \begin{array}{cccc}
U_{\epsilon,\lambda} : & \Om^0(\ad P_{h_\epsilon})  & \rightarrow & \Om^0(\ad P_{h_\epsilon})
         \end{array}
 \end{equation*}
defined by
\begin{align*}
U_{\epsilon,\lambda}(u) & = d^{h_\epsilon} J d^{h_\epsilon} u \wedge \omega_\epsilon^{n-1} - (n-1) F_{h_\epsilon} \wedge (\lambda d \xi_\epsilon + 2 \la u,F_{h_\epsilon} \ra_\epsilon) \wedge \omega_\epsilon^{n-2}.
\end{align*} 
By upper semi-continuity of $\dim \Ker U_{\epsilon,\lambda}$ we have that $\Ker U_{\epsilon,\lambda} = \{0\}$ for sufficiently small $\lambda$. Since $\lambda u_\epsilon \in \Ker U_{\epsilon,\lambda}$, it follows that $u_\epsilon = 0$. Using now Lemma \ref{lem:P*} and setting $v = (0,d\xi_\epsilon,0)$, we have $g_\ell (v,v) = 0$ and therefore
\begin{align*}
\int_X |((d\xi)^{1,1})_0|^2  e^{-\ell f_{\omega_\epsilon}} \frac{\omega_\epsilon^{n}}{n!} + \Bigg{(}\frac{\ell}{2} - \frac{n-1}{n}\Bigg{)} \int_X | \Lambda_{\omega_\epsilon} d \xi|^2 e^{-\ell f_{\omega_\epsilon}} \frac{\omega_\epsilon^{n}}{n!} = 0.
\end{align*}
For $\ell > 2 - \tfrac{2}{n}$ this implies $(d\xi)^{1,1} = 0$, and therefore $\partial \dbar \xi^{0,1} = 0$. Finally, using that $h^{0,1}(X) = 0$ we conclude $\xi^{0,1} = \dbar \phi$ for some complex valued function $\phi$, and hence $d\xi = 0$.
\end{proof}

We finish with concrete examples where the hypotheses of Proposition \ref{prop:deformation} are satisfied. We will take $X$ to be a Calabi-Yau threefold with holomorphic volume form $\Omega$, and $\mu$ as in \eqref{eq:muOmega2}. We choose a K\"ahler class $\mathbf{k}$, and $\mathbf{k}$-stable bundles $V_0$ and $V_1$ such that
$$
c_1(V_j) = 0, \qquad c_2(V_j) = c_2(X)
$$
(see \cite{AGF1,GF2} and references therein for constructions of such bundles). In this setup, $h^{0,1}(X) = h^{0,2}(X) = 0$ and $h^0(\End V_0) = h^0(\End V_1) = 0$. Hence, the hypotheses of Proposition \ref{prop:deformation} hold, and Theorem \ref{thm:metric}, Theorem \ref{thm:metricfibre} and Theorem \ref{thm:DUYinfinitesimal} apply. 

Our choice of bundles $V_0,V_1$ can be interpreted, geometrically, as a deformation of the special K\"ahler metric on the `complexified K\"ahler moduli' for the Calabi-Yau manifold $X$ (see Corollary \ref{cor:Calabimoduli}). More precisely, Proposition \ref{prop:deformation} combined with Theorem \ref{thm:metricfibre} gives a family of pseudo-K\"ahler metrics $g_{\ell,\epsilon}$ (see \eqref{eq:metricfibre}) in a non-empty open subset of $H^{1,1}(X) \cong H^{1,1}_A(X)$, for $(\ell,\epsilon) \in ]\tfrac{4}{3},2[ \times [0,\epsilon_0[$. Here, the fibre of \eqref{eq:modulimapWbundle} over $[P]$ (for $P$ the bundle of split frames of $V_0 \oplus V_1$) is regarded as a subset of $H^{1,1}(X)$ via \eqref{eq:modulidiagram}, Lemma \ref{lem:fibreModQP}, and the $\partial\dbar$-Lemma. The special K\"ahler metric in the `complexified K\"ahler moduli' of $X$ is recovered (up to homothety) in the $\epsilon \to 0$ limit of this family (see the proof of Corollary \ref{cor:Calabimoduli}). The case of the Hull-Strominger equations corresponds to $\ell = 1$, and it is not covered by our result.

\begin{example}\label{example:CY}
Let $X$ be a complete intersection Calabi-Yau threefold. By \cite[Cor. 2.2]{Huyb}, $TX$ has unobstructed deformations parametrized by $ H^1(\End TX)$. Since $TX$ is stable for any K\"ahler class, any pair of small deformations $V_0$ and $V_1$ of $TX$ are also stable. For the quintic hypersurface $h^1(\End TX) = 224$ and we obtain a family of deformations of the special K\"ahler metric on $H^{1,1}(X)$ of dimension $450$, parametrized by a non-empty open subset of
$$
H^1(\End TX) \times H^1(\End TX) \times ]\tfrac{4}{3},2[ \times [0,\epsilon_0[.
$$
\end{example}

\appendix

\section{The space of compact forms}\label{sec:AeppliPic}

\subsection{Gauge action on compact forms}

Let $Q$ be a Bott-Chern algebroid over a complex manifold $X$, with underlying principal $G$-bundle $P$. We fix a maximal compact subgroup $K\subset G$. Consider the automorphism group $\Aut(E_Q)$ of $Q$, as defined in Section \ref{sec:picard}. This section is devoted to the study of the interplay between $\Aut(E_Q)$ and the space of compact forms with structure group $K$ on the complex string algebroid $E_Q$ (see Lemma \ref{lem:red-for-all-Q}). 
We introduce the following notation for the space of compact forms on $E_Q$ with structure group $K$
$$
\mathbf{B}_Q = \{E_\RR \subset E_Q \; | \; E_\RR \; \textrm{is a compact form}\}.
$$
There is a natural left $\Aut(E_Q)$-action
\begin{equation}\label{eq:Pic-left-actionER}
\begin{split}
\Aut(E_Q) \times \mathbf{B}_Q & \longrightarrow \mathbf{B}_Q\\
(\underline{f},E_\RR) & \longmapsto  \underline{f} \cdot E_\RR : = \underline{f}(E_\RR)
\end{split}
\end{equation}
which extends the classical action of the complex gauge group $\cG_{\underline{P}}$ on the space of reductions $\Omega^0(P/K)$. More precisely, there is a commutative diagram
\begin{equation*}
\xymatrix @R=.5pc {
		\Aut(E_Q) \times \mathbf{B}_Q \ar[r] \ar[dd] & \mathbf{B}_Q \ar[dd] \\
		& \\
		\Ker \sigma_{\underline{P}} \times \Omega^0(P/K) \ar[r] & \Omega^0(P/K),
	}
\end{equation*}
where $\Ker \sigma_{\underline{P}} \subset \cG_{\underline{P}}$ is the subgroup defined by \eqref{eq:Picardses} and the bottom arrow is induced by the left $\cG_{\underline{P}}$-action on $\Omega^0(P/K)$. In order to obtain a better understanding of this action, we start by giving a more explicit description of the space $\mathbf{B}_Q$ for the case of a string algebroid given by a triple $(P,H,\theta)$ (see Proposition \ref{lemma:deRhamC} and Definition \ref{def:Q0}). For this, we apply Proposition \ref{propo:Chernclassic} to the canonical diagram $(E_Q,T^{0,1}X,\Id_Q)$ for $Q$.

\begin{lemma}\label{lemma:BQexp}
	Let $Q_0$ be the string algebroid given by a triple $(P,H,\theta)$. Then, $\mathbf{B}_{Q_0}$ can be regarded canonically as the subset
	$$
	\mathbf{B}_{Q_0} \subset \Omega^{1,1}_\RR \oplus \Omega^{2,0} \times \Omega^0(P/K)
	$$
	given by
	\begin{equation}\label{eq:BQexp}
	\mathbf{B}_{Q_0} =    \Big\{(\omega + \upsilon,h) \st d \upsilon = H + 2i\partial \omega + CS(\theta) - CS(\theta^h) - d\la \theta \wedge \theta^h \ra \Big\}.
	\end{equation}
\end{lemma}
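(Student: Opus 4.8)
The plan is to unpack Proposition~\ref{propo:Chernclassic} for the canonical Morita brick $(E_{Q_0},T^{0,1}X,\Id_{Q_0})$ and to rewrite, purely in terms of differential forms, the isomorphism of string algebroids that the proposition produces. (If $Q_0$ is not Bott--Chern both sides of \eqref{eq:BQexp} are empty, so we may assume that a compact form exists.)

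First I would observe that for this Morita brick the brick isomorphism $\underline f\colon E_{Q_0}\to E_{Q_0}$ of Lemma~\ref{lemma:bricks} is the identity, hence covered by $g=\Id_{\underline P}$. Proposition~\ref{propo:Chernclassic} then assigns bijectively to each compact form $E_\RR\subset E_{Q_0}$ a triple $(\omega,h,\varphi)$, where $\omega\in\Omega^{1,1}_\RR$, $h\in\Omega^0(P/K)$ satisfies $dd^c\omega+\langle F_h\wedge F_h\rangle=0$, and $\varphi\colon Q_0'\to Q_0$ is an isomorphism of string algebroids inducing the identity on $T^*X$ and on $A_P$ as in \eqref{eq:unitarybrickiso}, with $Q_0'$ the model attached to the triple $(P,-2i\partial\omega,\theta^h)$. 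The equation $dd^c\omega+\langle F_h\wedge F_h\rangle=0$ is precisely what makes $(P,-2i\partial\omega,\theta^h)$ a legitimate triple in the sense of Proposition~\ref{lemma:deRhamC}, so nothing is lost by dropping it from the final description.

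The remaining step is to encode $\varphi$. Since $\varphi$ induces the identity on $A_P$, Proposition~\ref{lemma:deRhamC} applied with $g=\Id_{\underline P}$ shows that the existence of such a $\varphi$ is equivalent to the existence of a form $\upsilon\in\Omega^{2,0}$ with
\[
	d\upsilon = H + 2i\partial\omega + CS(\theta) - CS(\theta^h) - d\langle\theta\wedge\theta^h\rangle ;
\]
the $(2,0)$-form defining $\varphi$ is exactly such a $\upsilon$, and it is uniquely recovered from $\varphi$ because two isomorphisms $Q_0'\to Q_0$ with the same $(2,0)$-part differ by an automorphism of $Q_0$ with vanishing $B$-field and hence agree. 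Thus, for fixed $(\omega,h)$, the admissible $\varphi$'s are in bijection with the solutions $\upsilon\in\Omega^{2,0}$ of the displayed equation; composing with the bijection of the previous paragraph yields the canonical identification $E_\RR\mapsto(\omega+\upsilon,h)$ of $\mathbf B_{Q_0}$ with the subset in \eqref{eq:BQexp}. As a consistency check one applies $d$ to the displayed equation and uses $dH+\langle F_\theta\wedge F_\theta\rangle=0$ together with $dCS(\theta)=\langle F_\theta\wedge F_\theta\rangle$ to recover $dd^c\omega+\langle F_h\wedge F_h\rangle=0$, confirming that this equation is automatic.

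The step that requires genuine care---the main obstacle---is the bookkeeping hidden in the previous paragraph: $Q_0'$ and $Q_0$ are written in models that use the \emph{different} connections $\theta^h$ and $\theta$ to split $A_P$, so presenting $\varphi$ as an honest $B$-field transform forces one first to pass between these splittings, and this is what generates the Chern--Simons correction $CS(\theta)-CS(\theta^h)-d\langle\theta\wedge\theta^h\rangle$. This expression is basic on $X$ by Remark~\ref{rem:CSexp}, but keeping the bidegrees and signs of all the terms straight, and checking that the resulting $\upsilon$ really is of type $(2,0)$, is where the argument has to be carried out with attention. Everything else is a formal consequence of Propositions~\ref{propo:Chernclassic} and \ref{lemma:deRhamC}.
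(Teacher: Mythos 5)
Your proof is correct and follows essentially the same route as the paper's: apply Proposition \ref{propo:Chernclassic} to the canonical Morita brick $(E_{Q_0},T^{0,1}X,\Id_{Q_0})$ and then encode the resulting isomorphism $\varphi$ as a $(2,0)$-form $\upsilon$ via the equivalence relation of Proposition \ref{lemma:deRhamC} with $g=\Id$. The paper's version is terser---it simply writes $\varphi=(\upsilon,\theta-\theta^h)$ acting as in \eqref{eq:BA} and records the explicit formulas for $E_\RR$ and $W$---but the logical content, including the splitting-change bookkeeping that produces the Chern--Simons correction, is the same.
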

\begin{proof}
Let $E_\RR \in \mathbf{B}_{Q_0}$ and consider the triple $(\omega,h,\varphi)$ corresponding to the canonical diagram $(E_{Q_0},T^{0,1}X,\Id_{Q_0})$ for $Q_0$ via Proposition \ref{propo:Chernclassic}. Then, $\varphi$ is given explicitly by
	\begin{equation}\label{eq:psiexp}
	\varphi = (\upsilon,\theta - \theta^h)
	\end{equation}
acting as in \eqref{eq:BA}, where $\upsilon \in \Omega^{2,0}$ satisfies the condition in \eqref{eq:BQexp} (see Proposition \ref{lemma:deRhamC}), and therefore $E_\RR$ can be identified with a triple $(\omega + \upsilon,h)$ as in the statement. 

Observe that the compact form and horizontal subspace corresponding to a triple $(\omega + \upsilon,h)$ are given by
\begin{equation}\label{eq:ERombh}
E_\RR = (\upsilon -i\omega,\theta - \theta^h)(E_{\RR,h}) \subset E_{Q_0},
\end{equation}
where 
\begin{equation}
 \label{eq:ER}
E_{\RR,h}: = T \underline X \oplus\ad P_h \oplus T^*\underline{X},
\end{equation}
 and
\begin{equation}\label{eq:Wombh}
W \otimes \CC = (\upsilon-2i\omega,\theta - \theta^h)(T^{1,0}X) \oplus T^{0,1}X.
\end{equation}
\end{proof}

Our next result provides an explicit formula for the $\Aut(E_Q)$-action on $\mathbf{B}_Q$ in terms of the model in Lemma \ref{lemma:BQexp}. In the sequel, we will use the notation $\boldsymbol{\omega} = (\omega + \upsilon,h)$ for the elements in $\mathbf{B}_{Q_0}$ and identify $E_0 = E_{Q_0}$.

\begin{lemma}\label{lemma:PicQactexp}
	Let $Q_0$ be the string algebroid given by a triple $(P,H,\theta)$. Let $\underline{f} = (g,\tau) \in \Aut(E_0)$ (see Lemma \ref{lem:HomQQ}) and $\boldsymbol{\omega} = (\omega + \upsilon,h) \in \mathbf{B}_{Q_0}$. Then,  
\begin{equation*}\label{eq:PicQactexp}
\underline{f} \cdot \boldsymbol{\omega} = (\omega' + \upsilon',gh),
\end{equation*}
where (for $a^g = g^{-1}\theta - \theta$)
\begin{equation}\label{eq:PicQactexp2}
\begin{split}
\omega' & = \omega - \operatorname{Im} (\tau + \langle a^g  \wedge \theta - \theta^h \rangle + \langle g \theta^h - \theta \wedge \theta^{gh} - g\theta^h \rangle )^{1,1},\\
\upsilon' & = \upsilon + (\tau + \la a^g \wedge \theta - \theta^h \ra + \la g \theta^h - \theta \wedge \theta^{gh} - g\theta^h \ra)^{2,0} \\
&  \phantom{ {} = } - \overline{(\tau + \la g \theta^h - \theta \wedge \theta^{gh} - g\theta^h \ra)^{0,2}}.
\end{split}
\end{equation}
\end{lemma}
\begin{proof}
Let $\boldsymbol{\omega} = (\omega + \upsilon,h) \in \mathbf{B}_{Q_0}$ with real form \eqref{eq:ERombh}. Then, for $\underline{f} = (g,\tau)$ we have
\begin{align*}
\underline{f}(E_\RR) & = (\upsilon-i\omega + \tau + \la a^g \wedge \theta - \theta^h \ra,g(a^g + \theta - \theta^h))(E_{\RR,gh})\\
& = (\upsilon-i\omega + \tau + \la a^g \wedge \theta - \theta^h \ra, \theta - g\theta^h)(E_{\RR,gh}),
\end{align*}
where $E_{\RR,gh}$ is as in \eqref{eq:ER}. Using that $(0,g\theta^h - \theta^{gh})(E_{\RR,gh}) = E_{\RR,gh}$, we obtain
\begin{align*}
\underline{f}(E_\RR) & = (\upsilon-i\omega + \tau + \la a^g \wedge \theta - \theta^h \ra, \theta - g\theta^h)(0,g\theta^h - \theta^{gh})(E_{\RR,gh})\\
& = (\upsilon-i\omega + \tau + \la a^g \wedge \theta - \theta^h \ra + \la \theta - g\theta^h \wedge g\theta^{h} - \theta^{gh}\ra, \theta - \theta^{gh})(E_{\RR,gh}).
\end{align*}
Let $W' \subset \underline{f}(E_\RR)$ be the horizontal subspace determined by the canonical diagram $(E_{Q_0},T^{0,1}X,\Id_{Q_0})$ via the Chern correspondence. Following the proof of Lemma \ref{lem:Cherncorr}, we set
$$
(b_0,a_0) = (\upsilon-i\omega + \tau + \la a^g \wedge \theta - \theta^h \ra + \la \theta - g\theta^h \wedge g\theta^{h} - \theta^{gh}\ra, \theta - \theta^{gh}).
$$
There exists $(b,a) \in \Omega^{2}\oplus \Omega^{1}(\ad P_{\RR,gh})$ and a real symmetric tensor $\sigma'$ with associated differential form
$$
\omega' = \sigma'(J,) \in \Omega^{1,1}_\RR,
$$
uniquely determined by the condition
$$
W_0' := (-b_0,-a_0)(W') = (-b,-a)\{V + \sigma'(V): V \in T\underline{X}\} \subset E_{\RR,gh} \otimes \CC.
$$
Next, we define $(\gamma,\beta) \in \Omega^{1,1 + 0,2} \oplus \Omega^{0,1}(\ad \underline P)$ by
$$
(-b_0,-a_0)(T^{0,1}X) = (-\gamma,-\beta)(T^{0,1}X).
$$
More explicitly,
\begin{equation*}
\gamma = b_0^{1,1 + 0,2}, \qquad \beta = 0,
\end{equation*}
and therefore \eqref{eq:WChern} implies $a = 0$, and
\begin{equation}\label{eq:WChernexpQ0}
\begin{split}
\omega' & = - \Im \; b_0^{1,1} = \omega - \Im (\tau + \la a^g \wedge \theta - \theta^h \ra + \la \theta - g\theta^h \wedge g\theta^{h} - \theta^{gh}\ra)^{1,1},\\
b & = \Re \; b_0^{1,1} + b_0^{0,2} + \overline{b_0^{0,2}}\\ 
& = \Re \;(\tau + \la a^g \wedge \theta - \theta^h \ra + \la \theta - g\theta^h \wedge g\theta^{h} - \theta^{gh}\ra)^{1,1} \\
& \phantom{ {} = } + (\tau + \la \theta - g\theta^h \wedge g\theta^{h} - \theta^{gh}\ra)^{0,2} + \overline{(\tau + \la \theta - g\theta^h \wedge g\theta^{h} - \theta^{gh}\ra)^{0,2}}.
\end{split}
\end{equation}
The first equation in \eqref{eq:WChernexpQ0} gives the formula for $\omega'$ in \eqref{eq:PicQactexp2}. To obtain the formula for $\upsilon'$, notice that \eqref{eq:Wombh} implies that
$$
W' \otimes \CC = (\upsilon'-2i\omega',\theta - \theta^{gh})(T^{1,0}X) \oplus T^{0,1}X
$$
and, on the other hand,
\begin{align*}
W' \otimes \CC & = (b_0,a_0)(W_0')\\
& = ((b_0 - b)^{1,1 + 2,0} - i\omega',\theta - \theta^{gh})(T^{1,0}X) \oplus T^{0,1}X.
\end{align*}
Therefore, we conclude
\begin{align*}
\upsilon' & = (b_0 - b)^{2,0}\\
& = \upsilon + (\tau + \la a^g \wedge \theta - \theta^h \ra + \la \theta - g\theta^h \wedge g\theta^{h} - \theta^{gh}\ra)^{2,0}\\
& \phantom{ {} = } - \overline{(\tau + \la \theta - g\theta^h \wedge g\theta^{h} - \theta^{gh}\ra)^{0,2}},
\end{align*}
as claimed.
\end{proof}

\subsection{Contractibility and transitivity}


Using Lemma \ref{lemma:PicQactexp}, we want to calculate a formula for the infinitesimal $\Aut(E_Q)$-action on $\mathbf{B}_Q$ in terms of the model $Q_0$. For this, we characterize next the tangent space to $\mathbf{B}_{Q_0}$. 

\begin{lemma}\label{lemma:TBQ}
	The tangent space of $\mathbf{B}_{Q_0}$ at $\boldsymbol{\omega} = (\omega + \upsilon,h) \in \mathbf{B}_{Q_0}$ is given by the subspace
	$$
	T_{\boldsymbol{\omega}}\mathbf{B}_{Q_0} \subset \Omega^{1,1}_\RR \oplus \Omega^{2,0} \oplus \Omega^0(i \ad P_h),
	$$
	defined by
	\begin{equation}\label{eq:TBQ}
	T_{\boldsymbol{\omega}}\mathbf{B}_{Q_0} = \{(\dot \omega + \dot \upsilon,iu) \; | \; d (\dot \upsilon + 2i\la \theta^h - \theta \wedge \partial^h u \ra ) = 2i\partial (\dot \omega + 2\la u,F_h \ra) \}.
	\end{equation}
\end{lemma}

\begin{proof}
	Showing that the right-hand side of \eqref{eq:TBQ} is contained in $T_{\boldsymbol{\omega}}\mathbf{B}_{Q_0}$ is a formality, by taking derivatives along a curve $(\omega_t+\upsilon_t,h_t)$ in $\mathbf{B}_{Q_0}$. To see this, we define
	$$
	C_t = CS(\theta) - CS(\theta^{h_t}) - d\la \theta \wedge \theta^{h_t} \ra
	$$  
	and use Remark \ref{rem:CSexp} combined with Lemma \ref{lem:CSRinvariant} to calculate
	\begin{align*}
	\frac{d}{dt}_{|t = 0} C_t & = \frac{d}{dt}_{|t = 0}\Big{(}CS(\theta^h) - CS(\theta^{h_t}) - d\la \theta \wedge \theta^{h_t}\ra - d\la \theta^h \wedge \theta^{h_t} \ra + d\la \theta^h \wedge \theta^{h_t} \ra\Big{)}\\
	& = \frac{d}{dt}_{|t = 0}\Big{(} 2 \la \theta^h - \theta^{h_t} \wedge F_{h} \ra + d\la \theta^h - \theta \wedge \theta^{h_t} \ra \Big{)}\\
	& = 4i\partial \la u, F_h \ra - 2i d \la \theta^h - \theta \wedge \partial^h u \ra.
	\end{align*}
	Here, we have used the formula for the infinitesimal variation of the Chern connection with respect to $iu \in \Omega^0(i \ad P_h)$ (see Lemma \ref{lem:CSRinvariant}):
	$$
	\frac{d}{dt}_{|t = 0} \theta^{e^{itu}h} = -2i\partial^h u \in \Omega^{1,0}(i\ad P_h).
	$$
	Conversely, given $(\dot \omega + \dot \upsilon,iu)$ satisfying 
	\begin{equation*}\label{eq:ddotb}
	d (\dot \upsilon + 2i\la \theta^h - \theta \wedge \partial^h u \ra ) = 2i\partial (\dot \omega + 2\la u,F_h \ra),
	\end{equation*}
	we define, for $t \in \RR$, $\boldsymbol{\omega}_t = (\omega_t + \upsilon_t,h_t)$ by
	\begin{equation}\label{eq:pathofmetrics}
	\begin{split}
	h_t & = e^{itu}h,\\
	\omega_t & = \omega + t(\dot \omega + 2 \la u,F_{h}\ra ) - \tilde R(h_t,h),\\
	\upsilon_t & = \upsilon + t (\dot \upsilon + 2i\la \theta^h - \theta \wedge \partial^h u \ra) - \int_0^t \la \theta^{h_s} - \theta^h \wedge 2i \partial^{h_s} u \ra ds\\
	& \phantom{ {} = } - \la \theta \wedge \theta^{h_t}\ra + \la \theta \wedge \theta^{h}\ra + \la \theta^h \wedge \theta^{h_t} \ra,
	\end{split}
	\end{equation}
	where $\tilde R(h_t,h)$ is defined as in Lemma \ref{lem:CSRinvariant}. We claim that $\boldsymbol{\omega}_t$ satisfies the equation on the right-hand side of \eqref{eq:BQexp} and therefore $\boldsymbol{\omega}_t \in \mathbf{B}_{Q_0}$ for all $t$. To see this, using Lemma \ref{lem:CSRinvariant}, we calculate
	\begin{align*}
	d \upsilon_t & = H + 2i\partial \omega_t + CS(\theta) - CS(\theta^{h}) - d\la \theta \wedge \theta^{h_t}\ra + d \la \theta^h \wedge \theta^{h_t} \ra \\
	& \phantom{ {} = }+ 2i\partial \tilde R(h_t,h)  - d \left(\int_0^t \la \theta^{h_s} - \theta^h \wedge 2i \partial^{h_s} u \ra ds \right)\\
	& = H + 2i \partial \omega_t + CS(\theta) - CS(\theta^{h_t}) - d \la \theta \wedge \theta^{h_t} \ra.
	\end{align*}
	Finally, using Proposition \ref{prop:Donaldson} and Lemma \ref{lem:CSRinvariant} again,
\begin{equation}\label{eq:partialtomegat}
\begin{split}
	\dot \omega_t & : = \frac{d}{dt} \omega_t = \dot \omega + 2 \la u,F_{h}\ra - 2 \la u,F_{h_t}\ra,\\
	\dot \upsilon_t & : = \frac{d}{dt} \upsilon_t = \dot \upsilon + \la \theta^h - \theta \wedge 2i\partial^{h} u \ra - \la \theta^{h_t} - \theta \wedge 2i\partial^{h_t} u \ra,
\end{split}
\end{equation}
and thus the tangent vector of $\boldsymbol{\omega}_t := (\omega_t + \upsilon_t,h_t)$ at $t = 0$ is $(\dot \omega + \dot \upsilon,iu)$.
\end{proof}

We are ready to calculate the Lie algebra action induced by \eqref{eq:Pic-left-actionER}
\begin{equation}\label{eq:PicQaction0}
\rho \colon \Lie \Aut(E_Q) \to \Gamma (T\mathbf{B}_{Q}).
\end{equation}
Recall that, for any choice of reduction $h \in \Omega^0(P/K)$, the Cartan involution induces a well-defined involution \eqref{eq:Cartaninv}.

\begin{lemma}\label{lem:LiePicactionBQ}
The Lie algebra action \eqref{eq:PicQaction0} is surjective. Furthermore, for any choice of representative $[(P,H,\theta)] = [Q] \in H^1(\cS)$ and isomorphism $Q \cong Q_0$, the induced action $\rho_0 \colon \Lie \Aut(E_0) \to \Gamma (T\mathbf{B}_{Q_0})$ is given by
	\begin{equation}\label{eq:LiePicQ0}
	\rho_0(\zeta)_{|\boldsymbol{\omega}} = \Big{(} \dot \omega + \dot \upsilon,\tfrac{1}{2}(s - s^{*_h})\Big{)},
	\end{equation}
	for $\zeta = (s,B) \in \Lie \Aut(E_0)$ (see Lemma \ref{lem:LiePic}) and $\boldsymbol{\omega}= (\omega + \upsilon,h) \in \mathbf{B}_{Q_0}$, where
	\begin{equation}\label{eq:LiePicQ0exp}
	\begin{split}
	\dot \omega & = - \Im \; (B^{1,1} + 2 \la \theta^h - \theta \wedge \dbar s \ra), \\
	\dot \upsilon 
	& = B^{2,0} - \overline{B^{0,2}} + \la \theta^h - \theta \wedge \partial^hs^{*_h} + \partial^\theta s \ra.
	\end{split}
	\end{equation}		
\end{lemma}

\begin{proof}
We start by proving \eqref{eq:LiePicQ0}. Let $\underline{f}_t = (g_t,\tau_t) \in \Aut(E_0)$ be a one-parameter subgroup and let
$$
(s,B):= \frac{d}{dt}_{|t = 0}\underline{f}_t.
$$
Then, taking derivatives in \eqref{eq:PicQactexp2} at $t = 0$ we obtain
$$
\frac{d}{dt}_{|t = 0}\underline{f}_t \cdot \boldsymbol{\omega} = \Big{(} \dot \omega + \dot \upsilon,\tfrac{1}{2}(s - s^{*_h})\Big{)}
$$
where
\begin{equation*}
\begin{split}
\dot \omega & = - \operatorname{Im} (B + \langle d^\theta s \wedge \theta - \theta^h \rangle + \langle \theta^h - \theta \wedge d^hs \rangle )^{1,1}\\
& = - \operatorname{Im} (B + 2\langle \theta^h - \theta \wedge \dbar s \rangle)^{1,1}, \\
\dot \upsilon & = (B + \la d^\theta s \wedge \theta - \theta^h \ra + \la \theta^h - \theta \wedge -\partial^h(s - s^{*_h}) + d^h s \ra)^{2,0} \\
&  \phantom{ {} = } - \overline{B^{0,2}} - \overline{\la \theta^h - \theta \wedge -\partial^h(s - s^{*_h}) + d^h s \ra^{0,2}}\\
& = B^{2,0} - \overline{B^{0,2}} + \la \theta^h - \theta \wedge \partial^\theta s +\partial^h s^{*_h} \ra.
\end{split}
\end{equation*}
Finally, we prove the surjectivity of \eqref{eq:LiePicQ0}. Given $(\dot \omega + \dot \upsilon,iu) \in T_{\boldsymbol{\omega}}\mathbf{B}_{Q_0}$, taking imaginary parts in \eqref{eq:TBQ},
	$$
	d (- \Im \; (\dot \upsilon + 2i \la \theta^h - \theta \wedge  \partial^h u \ra) + \dot \omega + 2 \la u,F_h \ra) = 0
	$$
	and therefore 
	$$
	\zeta = (i u, - i \dot \omega  + i\Im \; (\dot \upsilon + 2i \la \theta^h - \theta \wedge  \partial^h u \ra) + i \la \theta - \theta^h \wedge d^\theta u + d^h u \ra)
	$$ 
	is an element in $\Lie \Aut(E_0)$ which satisfies $\rho_0(\zeta) \cdot \boldsymbol{\omega} = (\dot \omega + \dot \upsilon,iu)$.
\end{proof}

To finish this section we prove the main structural property of the space of compact forms $\mathbf{B}_Q$. In particular, we obtain the analogous statement to the uniqueness, up to isomorphism, of the compact form of a holomorphic principal bundle.

\begin{proposition}\label{lem:retraction}
The space $\mathbf{B}_Q$ is contractible. Consequently, the $\Aut(E_Q)$-action \eqref{eq:Pic-left-actionER} on $\mathbf{B}_Q$ is transitive.
\end{proposition}
\begin{proof}
We work with a model $Q_0 \cong Q$ as in \eqref{eq:BQexp}, and fix $\boldsymbol{\omega} = (\omega + \upsilon,h) \in \mathbf{B}_{Q_0}$. Using \eqref{eq:BQexp} and Lemma \ref{lem:CSRinvariant}, given $\boldsymbol{\omega}' = (\omega' + \upsilon',h') \in \mathbf{B}_{Q_0}$ we have
\begin{align}\label{eq:tautauprime}
\begin{split}
d(\upsilon' - \upsilon) - d\la \theta \wedge \theta^h & \ra + d\la \theta \wedge \theta^{h'} \ra 
 = 2i\partial (\omega' - \omega + \tilde R(h',h))\\
& \phantom{ {} = } + d\la \theta^h \wedge \theta^{h'} \ra + d\left(\int_0^1 \la \theta^{h} - \theta^{h_t} \wedge 2i \partial^{h_t} u \ra dt\right),
\end{split}
\end{align}
where $h_t = e^{itu}$ for $u \in \Omega^0(\ad P_h)$ such that $h' = e^{iu}h$. Therefore, setting
\begin{equation*}\label{eq:taudotbdot}
\begin{split}
\dot \omega & = \omega' - \omega + \tilde R(h',h) - 2\la u,F_h \ra, \\
\dot \upsilon & = \upsilon' - \upsilon - \la \theta \wedge \theta^h \ra + \la \theta \wedge \theta^{h'} \ra - \la \theta^h \wedge \theta^{h'} \ra\\
& \phantom{ {} = } - \int_0^1 \la \theta^{h} - \theta^{h_t} \wedge 2i \partial^{h_t} u \ra dt  - 2i\la \theta^h - \theta \wedge \partial^h u \ra,
\end{split}
\end{equation*}
it follows that $(\dot \omega + \dot \upsilon,iu) \in T_{\boldsymbol{\omega}}\mathbf{B}_{Q_0}$ (described as in \eqref{eq:TBQ}). Consider the curve $\boldsymbol{\omega}_t \in \mathbf{B}_{Q_0}$ defined as in \eqref{eq:pathofmetrics}. Explicitly, this is given by
\begin{align*}
h_t & = e^{itu}h,\\
\omega_t & = \omega + t(\omega' - \omega + \tilde R(h',h)) - \tilde R(h_t,h),\\
\upsilon_t & = \upsilon + t \left(\upsilon' - \upsilon - \la \theta \wedge \theta^h \ra + \la \theta \wedge \theta^{h'} \ra - \la \theta^h \wedge \theta^{h'} \ra - \int_0^1 \la \theta^{h} - \theta^{h_s} \wedge 2i \partial^{h_s} u \ra ds\right)\\
& \phantom{ {} = }- \int_0^t \la \theta^{h_s} - \theta^h \wedge 2i \partial^{h_s} u \ra ds - \la \theta \wedge \theta^{h_t}\ra + \la \theta \wedge \theta^{h}\ra + \la \theta^h \wedge \theta^{h_t} \ra. 
\end{align*}
For $t = 1$ we have $\boldsymbol{\omega}_{1} = \boldsymbol{\omega}'$, 
and therefore a continuous deformation retraction of $\mathbf{B}_{Q_0}$ (in $C^\infty$-topology, say) is defined by
\begin{align*}
F \colon [0,1] \times \mathbf{B}_{Q_0} & \to \mathbf{B}_{Q_0} \\ (s,\boldsymbol{\omega}') &\mapsto \boldsymbol{\omega}_{1-s}.
\end{align*}
One can then check that this retraction is independent of our choice of model $Q_0 \cong Q$.

The last part of the statement follows from the surjectivity of the infinitesimal action (Lemma \ref{lem:LiePicactionBQ}), and the contractibility of the space $\mathbf{B}_Q$.
\end{proof}

\section{Aeppli classes}\label{sec:moduliapp}

\subsection{Aeppli classes and Hamiltonian orbits}\label{sec:Aporbit}

The goal of this section is to find an explanation for the variations of `complexified Aeppli classes' appearing in formula \eqref{eq:metricfibre} for the fibre-wise moduli metric. We start by extending the notion of Aeppli class on a Bott-Chern algebroid introduced in \cite{grst} to our setup. The proof of the following result follows from \eqref{eq:BQexp} and the properties of the Bott-Chern secondary characteristic class in Proposition \ref{prop:Donaldson} and Lemma \ref{lem:CSRinvariant}. 
Observe that the Aeppli cohomology group $H^{1,1}_A(X)$ \eqref{eq:A-cohomology} admits a canonical real structure $H^{1,1}_A(X,\RR)$.

\begin{lemma}\label{lem:Apmap}
There is a well-defined map
$$
Ap \colon \mathbf{B}_Q \times \mathbf{B}_Q \to H^{1,1}_A(X,\RR)
$$
that satisfies the cocycle condition
\begin{equation}\label{eq:Apcocycle}
Ap(\boldsymbol{\omega}_2,\boldsymbol{\omega}_0) = Ap(\boldsymbol{\omega}_2,\boldsymbol{\omega}_1) + Ap(\boldsymbol{\omega}_1,\boldsymbol{\omega}_0)
\end{equation}
for any triple of elements in $\mathbf{B}_Q$. 
More explicitly, given $Q_0 \cong Q$ determined by a triple $(P,H,\theta)$ (see Definition \ref{def:Q0}),
\begin{equation}\label{eq:Apexp}
Ap_0(\boldsymbol{\omega}',\boldsymbol{\omega}) = [\omega' - \omega + \tilde R(h',h)] \in H^{1,1}_A(X,\RR),
\end{equation}
where $\tilde R(h',h)$ is as in Lemma \ref{lem:CSRinvariant}. 
\end{lemma}


As a straightforward consequence of the cocycle condition \eqref{eq:Apcocycle}, we obtain that the map $Ap$ induces an equivalence relation in $\mathbf{B}_Q$ defined by
$$
\boldsymbol{\omega} \sim_A \boldsymbol{\omega}' \quad \textrm{\emph{if and only if}} \quad Ap(\boldsymbol{\omega},\boldsymbol{\omega}') = 0.
$$

\begin{definition}\label{def:ApQ}
The set of \emph{Aeppli classes} of $Q$ is the quotient $$\Sigma_A(Q,\RR) := \mathbf{B}_Q/\sim_A.$$
\end{definition}

The set $\Sigma_A(Q,\RR) $ has a natural structure of affine space. To see this, denote by $\Omega^{2,0}_{cl}$ the sheaf of (holomorphic) closed $(2,0)$-forms on $X$. Recall from \cite[Lem. 2.11]{grt2} that there is a group homomorphism
\begin{equation}\label{eq:sigmaP}
\sigma_P \colon \cG_P \to H^1(\Omega^{2,0}_{cl}),
\end{equation}
where $H^1(\Omega^{2,0}_{cl})$ denotes the first \v Cech cohomology of the sheaf of closed $(2,0)$-forms on $X$, defined by
$$
\sigma_P(g) = [CS(g\theta^h) - CS(\theta^h) - d\la g\theta^h \wedge \theta^h \ra ] \in  H^1(\Omega^{2,0}_{cl}),
$$
for any choice of reduction $h \in \Omega^0(P/K)$. Here we use \cite{G2} (see also \cite[Lem. 3.3]{grt2}) to identify
\begin{equation}\label{eq:kcohomologyleqk}
H^1(\Omega^{2,0}_{cl}) \cong \frac{\Ker \; d \colon \Omega^{3,0} \oplus \Omega^{2,1} \to \Omega^{4,0} \oplus \Omega^{3,1} \oplus \Omega^{2,2}}{ \Im \; d \colon \Omega^{2,0} \to\Omega^{3,0} \oplus \Omega^{2,1}}.
\end{equation}
Using \eqref{eq:A-cohomology} and the isomorphism \eqref{eq:kcohomologyleqk}, we define a map 
\begin{equation}\label{eqannex:partialmap}
\partial \colon H^{1,1}_A(X) \to H^1(\Omega^{2,0}_{cl})/\Im \; \sigma_P,
\end{equation}
induced by the $\partial$ operator on forms acting on representatives. Then, $\Sigma_A(Q,\RR) $ has a natural structure of affine space modelled on the intersection of $H^{1,1}_A(X,\RR)$ with the kernel of \eqref{eqannex:partialmap} (see \cite[Sec. 3.3]{grst}).

We give next an alternative construction of the set of Aeppli classes in terms of a normal subgroup
$$
\Aut_A(E_Q) \subset \Aut(E_Q)
$$
associated to the Aepply cohomology $H^{1,1}_A(X)$. To define $\Aut_A(E_Q)$, one considers a Lie algebra homomorphism 
\begin{equation*}\label{eq:aepplimap3}
\mathbf{a} \colon \Lie \Aut(E_Q) \to H^{1,1}_A(X),
\end{equation*}
given, when choosing an isomorphism $Q\cong Q_0$, by
\begin{equation*}\label{eq:d0sB}
\mathbf{a}_0(s,B) = [B^{1,1} - 2\la s,F_\theta^{1,1} \ra] \in  H^{1,1}_A(X).
\end{equation*}
The properties of $\mathbf{a}$ are analogue to those of $\mathbf{d}$, and follow as in Lemma \ref{lem:PicAeppli} using equation \eqref{eq:Picinvariance}. The definition of $\Aut_A(E_Q)$ is as in Definition \ref{def:PicA} and its Lie algebra is $\Ker \mathbf{a}$. By \eqref{eq:Pic-left-actionER}, we obtain an induced left action
\begin{equation}\label{eq:PicA-left-actionER}
\Aut_A(E_Q) \times \mathbf{B}_Q \longrightarrow \mathbf{B}_Q.
\end{equation} 


\begin{proposition}\label{prop:ApPicA}
A pair of $\boldsymbol{\omega},\boldsymbol{\omega}' \in \mathbf{B}_Q$ are in the same $\Aut_A(E_Q)$-orbit if and only if they define the same Aeppli class
$$
[\boldsymbol{\omega}] = [\boldsymbol{\omega}'] \in \Sigma_A(Q,\RR).
$$
\end{proposition}

\begin{proof}
We fix a model $Q_0 \cong Q$ determined by a triple $(P,H,\theta)$ (see Definition \ref{def:Q0}). Let $\boldsymbol{\omega},\boldsymbol{\omega}' \in \mathbf{B}_{Q_0}$ and consider the curve $\boldsymbol{\omega}_t \in \mathbf{B}_{Q_0}$ joining $\boldsymbol{\omega}$ and $\boldsymbol{\omega}'$, constructed in Proposition \ref{lem:retraction}. Then, the Aeppli map along the curve is
\begin{align*}
Ap(\boldsymbol{\omega}_t,\boldsymbol{\omega}) & = [t(\omega' - \omega + \tilde R(h',h)) - \tilde R(h_t,h) + \tilde R(h_t,h)] = t Ap(\boldsymbol{\omega}',\boldsymbol{\omega}).
\end{align*}
Assume first that $\boldsymbol{\omega} \sim_A \boldsymbol{\omega}'$, which implies  $\boldsymbol{\omega} \sim_A \boldsymbol{\omega}_t$ for all $t$ by the previous equation. Taking derivatives along the curve we obtain (see \eqref{eq:partialtomegat})
\begin{align*}
\dot \omega_t & : =\frac{d}{dt}\omega_t 
= \omega' - \omega + \tilde R(h',h) - 2 \la u,F_{h_t}\ra,\\
\dot \upsilon_t & : = \frac{d}{dt} \upsilon_t 
= \upsilon' - \upsilon - \la \theta \wedge \theta^h \ra + \la \theta \wedge \theta^{h'} \ra - \la \theta^h \wedge \theta^{h'} \ra\\
& \phantom{ {} := \frac{d}{dt} \upsilon_t  =}  - \int_0^1 \la \theta^{h} - \theta^{h_s} \wedge 2i \partial^{h_s} u \ra ds - \la \theta^{h_t} - \theta \wedge 2i\partial^{h_t} u \ra,
\end{align*}
which corresponds to the infinitesimal action of 
$$
\zeta_t = (i u, - i \dot \omega_t  + i\Im \; (\dot \upsilon_t + 2i \la \theta^{h_t} - \theta \wedge  \partial^{h_t} u \ra) + i \la \theta - \theta^{h_t} \wedge d^\theta u + d^{h_t} u \ra).
$$
Evaluating in the Lie algebra homomorphism in Lemma \ref{lem:PicAeppli}
\begin{align*}
\mathbf{a}_0(\zeta_t) & = [- i \dot \omega_t + 2i \la \theta - \theta^{h_t} \wedge \dbar u \ra) - 2i \la u , F_\theta \ra]\\
& = -i Ap(\boldsymbol{\omega}',\boldsymbol{\omega}) + 2i[\la u , F_{h_t} \ra  + \la \theta - \theta^{h_t} \wedge \dbar u \ra) - \la u , F_\theta \ra]\\
& = 2i[\la u , F_{h_t} + \dbar (\theta - \theta^{h_t}) - F_\theta \ra] = 0,
\end{align*}
where in the last equality we have used that
\begin{equation}\label{eq:F11trans}
(F_{h_t} - F_\theta)^{1,1} = \dbar (\theta^{h_t} - \theta).
\end{equation}
Therefore, $\zeta_t \in \Lie \Aut_A(E_{Q_0})$ for all $t$ (see Definition \ref{def:PicA}), which proves the `if part' of the statement.

Conversely, assume that there exists a curve $\boldsymbol{\omega}_t = (\omega_t + \upsilon_t,h_t) \in \mathbf{B}_{Q_0}$ joining $\boldsymbol{\omega}$ and $\boldsymbol{\omega}'$, and a one-parameter family of Lie algebra elements $\zeta_t = (s_t,B_t) \in \Ker \mathbf{a}_0$, such that
$$
\rho_0(\zeta_t)_{|\boldsymbol{\omega}_t} := \Big{(} - \Im \; (B^{1,1}_t + 2 \la \theta^{h_t} - \theta \wedge \dbar s_t \ra) + \tilde \upsilon_t,\tfrac{1}{2}(s_t - s_t^{*_{h_t}})\Big{)} = (\dot \omega_t + \dot{\upsilon}_t,\dot h_t h_t^{-1}),
$$
for a suitable $(2,0)$-form $\tilde \upsilon_t$ (see \eqref{eq:LiePicQ0exp}). Taking derivatives of the Aeppli map along the curve
\begin{align*}
\frac{d}{dt}Ap(\boldsymbol{\omega}_t,\boldsymbol{\omega}) & = [\dot \omega_t -2i \la  \dot h_t h_t^{-1}, F_{h_t}\ra]\\
& = -\Im [B_t^{1,1} + 2 \la \theta^{h_t} - \theta \wedge \dbar s_t \ra - 2 \la  s_t, F_{h_t}\ra]\\
& = -\Im [B_t^{1,1} - 2 \la  s_t, F_{h_t} - \dbar(\theta^{h_t} - \theta) \ra]\\
& = -\Im \; \mathbf{a}_0 (\zeta_t) = 0,
\end{align*}
which proves the statement. For the last equality, we have used \eqref{eq:F11trans} combined with \eqref{eq:aepplimap0}, while the second equality follows from
$$
\Im \; \la  s_t, F_{h_t}\ra = -i \la  \dot h_t h_t^{-1}, F_{h_t}\ra.
$$
\end{proof}


\subsection{Complexified Aeppli classes}\label{subsec:moduliapp}

To provide the desired explanation for the variations of `complexified Aeppli classes' appearing in formula \eqref{eq:metricfibre}, we dwell further into the geometry of of the Teichm\"uller space $\cL^0/\Aut_{dR}(E)$ for string algebroids. Recall here that the infinitesimal Donaldson-Uhlenbeck-Yau type Theorem \ref{thm:DUYinfinitesimal} identifies the tangent to the moduli space $\cM_\ell$ with the tangent to the Teichm\"uller space. 

By Lemma \ref{lem:ModuliBC}, the fibre over $[P] \in \cC^0/\Ker \sigma_{\underline P}$ of the natural map 
\begin{equation}\label{eq:moduliQP}
\cL^0/\Aut(E) \to \cC^0/\Ker \sigma_{\underline P} 
\end{equation}
parametrizes isomorphism classes of string algebroids with underlying principal $G$-bundle $P$. To give a cohomological interpretation of this fibre, consider again the sheaf  $\Omega^{2,0}_{cl}$ of (holomorphic) closed $(2,0)$-forms on $X$ and the homomorphism \eqref{eq:sigmaP}. The quotient
$$
H^1(\Omega^{2,0}_{cl})/\Im \; \sigma_P.
$$
can be identified with the set of isomorphism classes of string algebroids with underlying holomorphic principal $G$-bundle $P$ (see \cite[Prop. 3.11]{grt2}). We want to describe the fibre of \eqref{eq:moduliQP} as a natural subspace of $H^1(\Omega^{2,0}_{cl})/\Im \; \sigma_P$. Consider the natural map from Aeppli to Bott-Chern cohomology induced by the $\dbar$ operator:
\begin{equation}\label{eq:dbar02}
H^{1,1}_A(X) \lra{\dbar} \overline{H^1(\Omega^{2,0}_{cl})} := \frac{\Ker \; d \colon \Omega^{1,2} \oplus \Omega^{0,3} \to \Omega^{2,2} \oplus \Omega^{1,3} \oplus \Omega^{0,4}}{ \Im \; d \colon \Omega^{0,2} \to\Omega^{1,2} \oplus \Omega^{0,3}}.
\end{equation}

\begin{lemma}\label{lem:fibreModQP}
The fibre of \eqref{eq:moduliQP} over $[P]$ is an affine space modelled on the image of the map
\begin{equation}\label{eq:partialmapind}
\partial \colon \ker \dbar \to H^1(\Omega^{2,0}_{cl})/\Im \; \sigma_P
\end{equation}
induced by \eqref{eqannex:partialmap}, where $\ker \dbar \subset H^{1,1}_A(X)$ is defined by \eqref{eq:dbar02} 
\end{lemma}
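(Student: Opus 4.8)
The plan is to combine the description of the moduli spaces on the right hand side of \eqref{eq:modulidiagram} with the known cohomological classification of string algebroids over a fixed holomorphic principal bundle. By Lemma \ref{lem:ModuliBC}, the fibre of \eqref{eq:moduliQP} over $[P]$ is the set of isomorphism classes of string algebroids with underlying holomorphic principal $G$-bundle $P$, which by \cite[Prop. 3.11]{grt2} is identified with the quotient $H^1(\Omega^{2,0}_{cl})/\Im\;\sigma_P$. The first step is therefore to record this identification carefully, together with the affine structure of $H^1(\Omega^{2,0}_{cl})/\Im\;\sigma_P$ over itself, so that the question becomes: which translations in $H^1(\Omega^{2,0}_{cl})/\Im\;\sigma_P$ are realized by varying the string algebroid structure while keeping $P$ fixed.

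Next I would pin down the range of the variation. Fixing a reduction $h\in\Omega^0(P/K)$ and a representative $[(P,-2i\partial\omega,\theta^h)]$ of a Bott-Chern class as in Lemma \ref{lemma:BCrealforms}, any nearby string algebroid with the same $P$ is obtained by replacing $\omega$ by $\omega + \dot\omega$ (modulo $\Im(\partial\oplus\dbar)$ and the $\cG_P$-action); under \eqref{eq:kcohomologyleqk} the change of class is $[-2i\partial\dot\omega]$, i.e. the image of $[\dot\omega]\in H^{1,1}_A(X)$ under the map $\partial$ of \eqref{eqannex:partialmap} (up to the harmless factor $-2i$, which I would absorb into the identification, or alternatively keep track of explicitly). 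The constraint $dd^c\omega + \la F_h\wedge F_h\ra = 0$ must continue to hold, and its linearization forces $dd^c\dot\omega = 0$, i.e. $\dot\omega$ represents a genuine Aeppli class. The remaining constraint is that the new triple still defines a string algebroid of the correct type, i.e. that the $(1,2)+(0,3)$-part of the three-form is exact of the right form; at the infinitesimal level this is exactly the condition that $\dbar\dot\omega$ vanish in $\overline{H^1(\Omega^{2,0}_{cl})}$ of \eqref{eq:dbar02} (since the full three-form $-2i\partial\omega$ is already of type $(3,0)+(2,1)$, so reality of the original $H_\RR = d^c\omega$ is what couples the $\partial$ and $\dbar$ conditions). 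This is why the relevant source is $\ker\dbar \subset H^{1,1}_A(X)$ and not all of $H^{1,1}_A(X)$.

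Then I would assemble these two observations: the map $[\dot\omega]\mapsto [\partial\dot\omega]$ from $\ker\dbar$ to $H^1(\Omega^{2,0}_{cl})/\Im\;\sigma_P$ sends infinitesimal (hence, after integrating along the affine structure, finite) variations of the string algebroid with fixed $P$ onto the fibre, and two classes in $\ker\dbar$ give the same string algebroid precisely when their difference lies in the kernel of \eqref{eq:partialmapind}, which by construction is $\ker\partial \cap \ker\dbar$ together with the classes coming from $\Im\;\sigma_P$ (this last point uses the definition of $\sigma_P$ via Chern--Simons forms and the identification \eqref{eq:kcohomologyleqk}). Hence the fibre is an affine space modelled on $\Img$ of \eqref{eq:partialmapind}, as claimed. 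Throughout one uses that $P$ is fixed, so that the $\cG_P$-action (responsible for $\Im\;\sigma_P$) is the only identification beyond cohomological equivalence.

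\textbf{Main obstacle.} The delicate point is the bookkeeping between the De Rham-type presentation \eqref{eq:kcohomologyleqk} of $H^1(\Omega^{2,0}_{cl})$ and the Aeppli group $H^{1,1}_A(X)$: one must check that the operator $\partial$ really descends to a well-defined map \eqref{eqannex:partialmap} on cohomology (i.e. that $\Im(\partial\oplus\dbar)$ in the Aeppli denominator maps into $\Im\,d|_{\Omega^{2,0}}$ in \eqref{eq:kcohomologyleqk}), and, more importantly, that the ``reality'' coupling which produces the extra constraint $\dbar\dot\omega = 0$ is correctly isolated — this is precisely the phenomenon that in Lemma \ref{lemma:BCrealforms} turned a complex three-form into $-2i\partial(\Img B^{1,1} - R)$. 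I expect the verification that the fibre is exactly $\Img$ of \eqref{eq:partialmapind}, and not merely contained in it (surjectivity onto the fibre), to require the most care, using Proposition \ref{lemma:deRhamC} to integrate infinitesimal deformations and the $\partial\dbar$-type arguments already appearing in the proof of Lemma \ref{lemma:BCrealforms}.
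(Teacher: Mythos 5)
Your opening identification is where the argument breaks. Lemma \ref{lem:ModuliBC} identifies $\cL^0/\Aut(E)$ with isomorphism classes of string algebroids $Q$ satisfying $[E_Q]=[E]$ for the \emph{fixed} complex string algebroid $E$; the fibre of \eqref{eq:moduliQP} over $[P]$ is therefore only the subset of string algebroid classes over $P$ arising by reduction from $E$, not the whole affine space $H^1(\Omega^{2,0}_{cl})/\Im\,\sigma_P$ of \cite[Prop. 3.11]{grt2}. Determining which translations of that affine space preserve reducibility from $E$ is precisely the content of the lemma, and your proposal never isolates this constraint: it is not a reality condition on $H_\RR=d^c\omega$. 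The paper's proof is direct and global. An element of the fibre is a lifting $L=(-\gamma,-\beta)(T^{0,1}X)\subset E$ inducing $[P]$; after gauging $\beta$ to zero using \eqref{eq:Picardses}, the involutivity condition of Lemma \ref{lemma:liftings} becomes $d\gamma^{0,2}+\dbar\gamma^{1,1}=0$, which says exactly that the \emph{complex} class $[\gamma^{1,1}]\in H^{1,1}_A(X)$ lies in $\ker\dbar$ of \eqref{eq:dbar02}; Proposition \ref{prop:QLexp} then gives the change of isomorphism class as $[\partial\gamma^{1,1}]$, and the residual ambiguity is exactly $\Im\,\sigma_P$ via Proposition \ref{lemma:deRhamC}. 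No linearization or integration of infinitesimal deformations is needed.

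A second, related problem is your choice of parameter. The relevant datum is the complex-valued form $\gamma^{1,1}$, whose real and imaginary parts encode both the $b$-field and the metric directions; varying only a real hermitian form $\dot\omega$ in the Bott--Chern presentation $(P,-2i\partial\omega,\theta^h)$ produces class differences lying in $i\,\partial\big(H^{1,1}_A(X,\RR)\big)$, which need not coincide with $\partial(\ker\dbar)$ (the latter is the maximal complex subspace of $\partial(H^{1,1}_A(X,\RR))$ inside $H^1(\Omega^{2,0}_{cl})$), and in any case this parametrization loses track of the condition $[E_Q]=[E]$, which for Bott--Chern presentations is an extra constraint in $H^1(\underline\cS)$ rather than automatic. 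To salvage your route you would have to (i) reinstate the fixed smooth class $[(\underline P,d^c\omega',\theta^{h'})]=[E]$ and (ii) complexify the Aeppli parameter; at that point you essentially reproduce the paper's argument with the liftings $(\gamma,0)$.
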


\begin{proof}
Fix a lifting $L_0 \in \cL^0$ and denote by $P$ the induced holomorphic principal $G$-bundle structure on $\underline{P}$. Without loss of generality, we fix an isotropic splitting $\lambda_0 \colon T\underline{X} \to E$ and regard 
$$
\cL^0 \subset \Omega^{1,1 + 0,2} \oplus \Omega^{0,1}(\ad \underline{P}). 
$$
We can choose $\lambda_0$ such that $L_0 = (0,0)$, with induced three-form $H \in \Omega^{3,0 + 2,1}$ and connection $\theta^h$, for some choice of reduction $h \in \Omega^0(P/K)$. Then, by Proposition \ref{prop:QLexp}, if $L = (\gamma,\beta) \in \cL^0$ induces $[P] \in \cC^0/\Ker \sigma_{\underline P}$ it follows that $\beta$ is in the $\Ker \sigma_{\underline P}$-orbit of $0$. By \eqref{eq:Picardses}, we can `gauge' $\beta$ and assume that $(\gamma,\beta) = (\gamma,0)$. Hence,
\begin{equation*}\label{eq:conjugatepartial}
d\gamma^{0,2} + \dbar \gamma^{1,1} = 0
\end{equation*}
and $\gamma$ induces a class
$$
[\gamma^{1,1}] \in \ker \dbar \subset H^{1,1}_A(X).
$$
The change in the isomorphism class of the string algebroid, from $L_0$ to $L$, is (see Proposition \ref{prop:QLexp})
$$
\partial([\gamma^{1,1}]):= [\partial\gamma^{1,1}] \in H^1(\Omega^{2,0}_{cl})/\Im \; \sigma_P.
$$
An element $(\gamma',0) \in \cL^0$ is in the same $\Aut(E)$-orbit as $(\gamma,0)$ if and only if the corresponding string algebroids are isomorphic (see Lemma \ref{lem:ModuliBC}). This is equivalent to the existence of $g \in \cG_P$ and $B \in \Omega^{2,0}$ such that (see Proposition \ref{lemma:deRhamC})
$$
\partial \gamma'^{1,1}  = \partial \gamma^{1,1} + CS(g\theta^h) - CS(\theta^h) - d\la g\theta^h \wedge \theta^h\ra + dB.
$$
Thus, the induced map from the fibre of \eqref{eq:moduliQP} over $[P]$ to $H^1(\Omega^{2,0}_{cl})/\Im \; \sigma_P$ is well defined and injective. Surjectivity onto the image of \eqref{eq:partialmapind} follows from Proposition \ref{eq:conjugatepartial}.
\end{proof}

We turn next to the study of the map $\cL^0/\Aut_{dR}(E) \to \cL^0/\Aut(E)$.
In order to make the link with Aeppli classes we shall consider instead
the map
\begin{equation}\label{eq:moduliQAQ}
\cL^0/\Aut_{A}(E) \to \cL^0/\Aut(E),
\end{equation}
where $\Aut_{A}(E)$ is the group in Section \ref{sec:Aporbit}. The tangent to $\cL^0/\Aut_{A}(E)$ at the class of $L \in \cL^0$ is given (formally) by the cohomology of the complex 
\begin{equation}
  \label{eq:TLcomplex}
    (C^*) \quad \Lie \Aut_A(E)  \lra{\mathbf{P}^c} \Omega^{1,1 + 0,2} \oplus \Omega^{0,1}(\ad \underline{P}) \lra{\mathbf{L}^c} \Omega^{1,2 + 0,3} \oplus \Omega^{0,2}(\ad \underline{P})
,
\end{equation}
where $\Lie \Aut_A(E) \subset \Omega^0(\ad \underline{P}) \oplus \Omega_\CC^2$
and
\begin{align*}
\mathbf{P}^c(r,B) & = (B^{1,1 + 0,2}, \dbar r),\\
\mathbf{L}^c(\dot \gamma,\dot \beta) & = (d \dot \gamma^{0,2} + \dbar \dot \gamma^{1,1} - 2 \la \dot \beta,F_h \ra , \dbar \dot \beta).
\end{align*}
The following result is stated without proof.

\begin{lemma}\label{lem:sesTLM-2}
Assume that $h^0(\ad P) = 0$ and $h^{0,1}_A(X) = 0$. Then, there is an exact sequence
\begin{equation*}\label{eq:sesTWM}
0 \lra{} \Ker \partial \lra{} H^1(\widehat C^*) \lra{}  H^1(C^*) \lra{} 0
\end{equation*}
where
\begin{equation}\label{eq:partial02}
H^{0,2}_{\dbar}(X) \lra{\partial} H^{1,2}_{BC}(X).
\end{equation}
\end{lemma}

We want to characterize the tangent to the fibre of \eqref{eq:moduliQAQ}. Strikingly, this infinitesimal study requires the classical Futaki invariant for the principal bundle $P$ (see \cite[App. A]{grst}). Let $\mathfrak{b} \in  H^{n-1,n-1}_{BC}(X,\RR)$ be a Bott-Chern class. Then, the Futaki invariant of $P$ is given by a Lie algebra homomorphism
$$
\cF_{\mathfrak{b}} \colon \Lie \; \cG_P \to \CC
$$
which provides an obstruction to the existence of solutions of the Hermite-Yang-Mills equations for a given balanced metric on $X$ with class $\mathfrak{b}$ (and hence in particular of \eqref{eq:Calabil}). Using the duality pairing $H^{1,1}_A(X) \cong H^{n-1,n-1}_{BC}(X)^*$ between the Aeppli and Bott-Chern cohomologies, the Futaki invariant can be regarded as the Lie algebra homomorphism
\begin{align*}
\cF \colon \Lie \; \cG_P &\to H^{1,1}_A(X) \\ s &\mapsto [\la s, F_h\ra ]
\end{align*}
for any choice of reduction $h \in \Omega^0(P/K)$. Using Lemma \ref{lem:CSRinvariant}, it is not difficult to see that \eqref{eq:partialmapind} induces a well-defined map
\begin{equation}\label{eq:partialtildeinf}
\partial \colon \ker \dbar / \Im \; \cF \to H^1(\Omega^{2,0}_{cl})/\Im \; d \sigma_P,
\end{equation}
where $\ker \dbar \subset H^{1,1}_A(X)$ is defined by \eqref{eq:dbar02}.

\begin{lemma}\label{lem:fibreModQAQ}
Let $L \in \cL^0$ with induced principal bundle $P$. Then, the tangent to the fibre of \eqref{eq:moduliQAQ} over $[L] \in \cL^0/\Aut(E)$ is isomorphic to the kernel of \eqref{eq:partialtildeinf}.
\end{lemma}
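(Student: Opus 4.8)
The plan is to compute the tangent to the fibre of \eqref{eq:moduliQAQ} at $[L]$ directly from the complexes introduced in Section~\ref{sec:DUY}, mirroring the argument in Lemma~\ref{lem:fibreModQP} but now at the infinitesimal level and incorporating the extra divisor $\Aut(Q_L)$. Recall that the tangent to $\cL^0/\Aut_A(E)$ at $[L]$ is $H^1(C^*)$ (the cohomology of \eqref{eq:TLcomplex}), while the tangent to $\cL^0/\Aut(E)$ at $[L]$ is the cohomology of the analogous complex with $\Lie \Aut_A(E)$ replaced by $\Lie \Aut(E)$. The fibre of \eqref{eq:moduliQAQ} over $[L]$ is $\Aut_A(E)\backslash \Aut(E)/\Aut(Q_L)$, so its tangent space at the base point is the quotient
$$
\frac{\Lie \Aut(E)}{\Lie \Aut_A(E) + \Lie \Aut(Q_L)},
$$
where $\Lie \Aut(Q_L) \subset \Lie \Aut(E)$ is the stabilizer of $L$, i.e. the elements $\zeta = (s,B)$ whose infinitesimal action $\mathbf{P}^c(s,B) = (B^{1,1+0,2},\dbar s)$ vanishes. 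First I would identify this quotient with a subquotient of $H^{1,1}_A(X)$ via the Aeppli map $\mathbf{a}$ of Lemma~\ref{lem:PicAeppli}: the numerator $\Lie \Aut(E)$ surjects onto $H^{1,1}_A(X)$ through $\mathbf{a}$ (this is essentially the content of \eqref{eq:PicardsesLie} together with the exact sequence for $\Aut(E)$), the subspace $\Lie \Aut_A(E) = \Ker \mathbf{a}$ maps to $0$ by definition, and so $\mathbf{a}$ descends to an isomorphism from $\Lie\Aut(E)/\Lie\Aut_A(E)$ onto $H^{1,1}_A(X)$; I then need to determine the image of $\Lie \Aut(Q_L)$ under this descended map.

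\textbf{Key steps.} The main computation is to show that $\mathbf{a}(\Lie \Aut(Q_L))$ is precisely the complement, inside $\ker \dbar / \Im \cF$, of the kernel of \eqref{eq:partialtildeinf}; equivalently, that the resulting map identifies the fibre tangent space with $\Ker\big(\partial \colon \ker \dbar/\Im\cF \to H^1(\Omega^{2,0}_{cl})/\Im\, d\sigma_P\big)$. I would proceed as follows. (i) Fix an isotropic splitting $\lambda_0$ and the explicit model, as in the proof of Lemma~\ref{lem:fibreModQP} and Lemma~\ref{lem:LiePic}, so that elements of $\Lie \Aut(E)$ are pairs $(s,B)$ with $d(B-2\la s,F_{\theta}\ra)=0$ and $\mathbf{a}_0(s,B) = [B^{1,1} - 2\la s, F_\theta^{1,1}\ra]$. (ii) Show that if $\zeta = (s,B)$ stabilizes $L$ then necessarily $B^{1,1+0,2}=0$ and $\dbar s = 0$; feeding this into $\mathbf{a}_0$ and using $d(B-2\la s,F_\theta\ra)=0$ together with $F_\theta^{0,2}=0$ (from $\theta = \theta^h$ Hermite type, after the reduction), one gets $\mathbf{a}_0(\zeta) = -2[\la s, F_h^{1,1}\ra] = -2\cF(s)$ up to the real/Futaki normalization of Section~\ref{sec:moduliapp}, so the image of $\Lie\Aut(Q_L)$ in $H^{1,1}_A(X)$ is exactly $\Im\,\cF$. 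Moreover $B^{1,1}=0$ forces $B^{2,0}$ closed, and the $(3,0)+(2,1)$-shift of the string algebroid class induced by $\zeta$ is $\partial B^{2,0} = d\sigma_P$-type, i.e. lies in $\Im\, d\sigma_P$; this tells us that the landing of $\mathbf{a}(\zeta)$ in $H^1(\Omega^{2,0}_{cl})/\Im\, d\sigma_P$ under $\partial$ is zero, and conversely that the classes in $\ker\dbar/\Im\cF$ which map to zero under \eqref{eq:partialtildeinf} are precisely those realized by stabilizing directions. (iii) Conclude that the fibre tangent is $\mathbf{a}$-isomorphic to $\ker(\partial \colon \ker\dbar/\Im\cF \to H^1(\Omega^{2,0}_{cl})/\Im\, d\sigma_P)$, which is the statement. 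Throughout, the cohomological hypotheses used in Section~\ref{sec:DUY} (implicitly $h^0(\ad P)=0$, $h^{0,1}_A(X)=0$, and the identifications \eqref{eq:kcohomologyleqk}, \eqref{eq:dbar02}) are what make the descended maps well-defined.

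\textbf{Main obstacle.} I expect the delicate point to be keeping track of exactly which pieces of the two-form $B$ in $\zeta = (s,B)$ are genuinely constrained by the stabilizer condition $\mathbf{P}^c(s,B)=0$ versus which are residual gauge (the $\Im\,\widehat{\mathbf{P}}^c$ directions already quotiented out in passing from $\Lie\Aut(E)$ to $H^1$ of the relevant complex), and matching this bookkeeping with the quotient by $\Im\, d\sigma_P$ rather than $\Im\,\sigma_P$ on the target of \eqref{eq:partialtildeinf}. This is the infinitesimal shadow of the subtlety already flagged in the text — that $\kappa$ (Remark~\ref{rem:DeRhamclasses}) and the flux-type homomorphism are only partially understood — so I would be careful to phrase the identification as an isomorphism of tangent spaces at the base point only, invoking Lemma~\ref{lem:CSRinvariant} for the well-definedness of \eqref{eq:partialtildeinf} exactly as stated in the excerpt, and not claim anything global about the fibre itself. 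A secondary routine check is that the Futaki homomorphism $\cF$ does land in $\ker\dbar$ (so that the quotient $\ker\dbar/\Im\cF$ makes sense), which follows from $F_h^{0,2}=0$ and $F_h$ being $\dbar^h$-closed; this I would dispatch in one line.
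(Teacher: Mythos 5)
Your overall strategy is sound and is essentially an abstract reformulation of the paper's argument: the paper works with explicit representatives $(\dot\gamma,0)$ of tangent directions to the $\Aut(E)$-orbit of $L_0$ and sends them to $[\dot\gamma^{1,1}]\in \ker\dbar/\Im\;\cF$, which amounts to the same thing as pushing the quotient $\Lie\Aut(E)/(\Lie\Aut_A(E)+\Lie\Aut(Q_L))$ through the Aeppli map $\mathbf{a}$ of Lemma \ref{lem:PicAeppli}. Your identification of the stabilizer contribution, $\mathbf{a}(\Lie\Aut(Q_L))=\Im\;\cF$ up to normalization, is also the right ingredient (more precisely one gets $\Im\;\cF\cap\ker\partial$, which is what is needed).

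The genuine gap is your claim that $\mathbf{a}$ descends to an isomorphism of $\Lie\Aut(E)/\Lie\Aut_A(E)$ \emph{onto} $H^{1,1}_A(X)$. Writing $\zeta=(s,B)$ with $d(B-2\la s,F_\theta\ra)=0$ and setting $C:=B-2\la s,F_\theta\ra$, one has $\mathbf{a}_0(\zeta)=[C^{1,1}]$ with $C$ a \emph{closed} complex two-form; closedness forces $\partial C^{1,1}=-\dbar C^{2,0}$ with $\partial C^{2,0}=0$ (and the conjugate relations), so $[C^{1,1}]$ lies in the intersection of the kernels of the absolute maps $\partial\colon H^{1,1}_A(X)\to H^1(\Omega^{2,0}_{cl})$ and \eqref{eq:dbar02}, and conversely every class in that intersection arises this way. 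On non-$\partial\dbar$ manifolds — the case of interest — this is a proper subspace of $H^{1,1}_A(X)$, and this failure of surjectivity is precisely where the $\partial$-constraint in the kernel of \eqref{eq:partialtildeinf} comes from. As written, your steps (i)--(ii) would yield $H^{1,1}_A(X)/\Im\;\cF$, which is too big, and the attempted patch ("the classes in $\ker\dbar/\Im\;\cF$ which map to zero under \eqref{eq:partialtildeinf} are precisely those realized by stabilizing directions") cannot be correct: stabilizing directions realize only $\Im\;\cF$, i.e.\ the zero class of $\ker\dbar/\Im\;\cF$, whereas the kernel of \eqref{eq:partialtildeinf} is in general nonzero. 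The correct bookkeeping is that the fibre tangent equals $\Im\;\mathbf{a}/\mathbf{a}(\Lie\Aut(Q_L))=(\ker\partial\cap\ker\dbar)/(\Im\;\cF\cap\ker\partial)$, and this coincides with the kernel of \eqref{eq:partialtildeinf} because $d\sigma_P(s)=2\,\partial\cF(s)$, hence $\Im\;d\sigma_P=\partial(\Im\;\cF)$ and $\partial^{-1}(\Im\;d\sigma_P)\cap\ker\dbar=(\ker\partial\cap\ker\dbar)+\Im\;\cF$. Once you replace the surjectivity claim by this computation of $\Im\;\mathbf{a}$, your argument closes and recovers the paper's proof.
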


\begin{proof}
We build on the proof of Lemma \ref{lem:fibreModQP}, following the same notation. We fix a lifting $L_0 \in \cL^0$ and an isotropic splitting $\lambda_0 \colon T\underline{X} \to E$. If $(\gamma,0), (\gamma',0) \in \cL^0$ represent elements over $[L_0] \in \cL^0/\Aut(E)$ 
there exists $(g,\tau) \in \Aut(E)$ (see Lemma \ref{lem:HomQQ}) such that $g \in \cG_P \cap \Ker \cG_{\underline{P}}$ and 
$$
\gamma' = \gamma - \tau^{1,1 + 0,2}.
$$
Therefore, if $(\dot \gamma,0), (\dot \gamma',0)$ are tangent to the fibre over $[L_0]$ we have (see Definition \ref{def:PicA})
$$
\dot \gamma'^{1,1} - \dot \gamma^{1,1} - 2\la s , F_h \ra \in \Im \; \partial \oplus \dbar
$$
for $s \in \Lie \; \cG_P$. Thus, the map
$$
[(\dot \gamma,0)] \mapsto [\dot \gamma^{1,1}] \in \ker \; \partial \subset \ker \dbar/\Im \; \cF
$$
is well defined and injective. Surjectivity follows from Lemma \ref{lemma:liftings}.
\end{proof}

As a straightforward consequence of Lemma \ref{lem:fibreModQP} and Lemma \ref{lem:fibreModQAQ}, we obtain the following cohomological interpretation of the tangent space to the fibres of the map between moduli spaces
\begin{equation}\label{eq:moduliQAP}
\cL^0/\Aut_A(E) \to \cC^0/\cG_{\underline P}
\end{equation}
induced by \eqref{eq:modulidiagram}. Relying on Theorem \ref{thm:DUYinfinitesimal}, this provides the desired explanation for the `complexified Aeppli classes' appearing in formula \eqref{eq:metricfibre} for the fibre-wise moduli metric.

\begin{proposition}\label{prop:Tfibre}
The tangent space to the fibre of \eqref{eq:moduliQAP} over $[P]$ is isomorphic to $ \ker \dbar/\Im \; \cF \subset H^{1,1}_A(X)/\Im \; \cF$, where $\dbar$ is as in \eqref{eq:dbar02}.
\end{proposition}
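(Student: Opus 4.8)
The plan is to assemble the statement directly from the two fibration analyses already carried out, namely Lemma \ref{lem:fibreModQP} for the map $\cL^0/\Aut(E)\to \cC^0/\Ker\sigma_{\underline P}$ and Lemma \ref{lem:fibreModQAQ} for the map $\cL^0/\Aut_A(E)\to \cL^0/\Aut(E)$, and then to compose the two fibrations in the tower \eqref{eq:modulidiagram}. First I would observe that, since the fibre of $\cC^0/\Ker\sigma_{\underline P}\to \cC^0/\cG_{\underline P}$ over $[P]$ is discrete (it is governed by $\Im\sigma_P\subset H^1(\Omega^{2,0}_{cl})$, equivalently by \eqref{eq:PicardsesLie} it has vanishing tangent space), the tangent to the fibre of \eqref{eq:moduliQAP} over $[P]$ agrees with the tangent to the fibre of the composite $\cL^0/\Aut_A(E)\to \cC^0/\Ker\sigma_{\underline P}$ over the corresponding point $[P]\in \cC^0/\Ker\sigma_{\underline P}$.

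Next I would set up the short exact sequence of tangent spaces attached to the composable pair
\[
\cL^0/\Aut_A(E) \lra{} \cL^0/\Aut(E) \lra{} \cC^0/\Ker\sigma_{\underline P}.
\]
At the level of tangent spaces to fibres over $[P]$, there is an exact sequence
\[
0 \lra{} T(\text{fibre of }\eqref{eq:moduliQAQ}) \lra{} T(\text{fibre of }\eqref{eq:moduliQAP}) \lra{} T(\text{fibre of }\eqref{eq:moduliQP}).
\]
By Lemma \ref{lem:fibreModQAQ} the left-hand term is $\ker\bigl(\partial\colon \ker\dbar/\Im\cF\to H^1(\Omega^{2,0}_{cl})/\Im d\sigma_P\bigr)$, and by Lemma \ref{lem:fibreModQP} the right-hand term is $\Im\bigl(\partial\colon \ker\dbar\to H^1(\Omega^{2,0}_{cl})/\Im\sigma_P\bigr)$, whose tangent (infinitesimal) version is $\Im\bigl(\partial\colon \ker\dbar/\Im\cF\to H^1(\Omega^{2,0}_{cl})/\Im d\sigma_P\bigr)$, the last identification using Lemma \ref{lem:CSRinvariant} exactly as in the passage from \eqref{eq:partialmapind} to \eqref{eq:partialtildeinf}. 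The middle term therefore fits into
\[
0 \lra{} \ker\partial \lra{} T(\text{fibre of }\eqref{eq:moduliQAP}) \lra{} \Im\partial \lra{} 0
\]
with $\partial\colon \ker\dbar/\Im\cF\to H^1(\Omega^{2,0}_{cl})/\Im d\sigma_P$, and a choice of splitting identifies the middle term with $\ker\partial\oplus\Im\partial\cong \ker\dbar/\Im\cF$, which is the claimed $\ker\dbar/\Im\cF\subset H^{1,1}_A(X)/\Im\cF$.

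The step I expect to be the genuine obstacle is making the middle arrow in the exact sequence above precise and canonical: one must check that an infinitesimal deformation $(\dot\gamma,0)\in\cL^0$ tangent to the fibre of \eqref{eq:moduliQAP} over $[P]$ can always be brought, modulo $\Aut_A(E)$, into the normal form with $\dot\beta=0$ and $d\dot\gamma^{0,2}+\dbar\dot\gamma^{1,1}=0$ used in Lemmas \ref{lem:fibreModQP} and \ref{lem:fibreModQAQ}, and that the resulting class $[\dot\gamma^{1,1}]\in H^{1,1}_A(X)/\Im\cF$ is independent of the gauging. This is the same ``gauge fixing'' bookkeeping that appears in Lemma \ref{lem:fibreModQP}'s proof (using \eqref{eq:Picardses} to kill $\dot\beta$ via $\Ker\sigma_{\underline P}$, and \eqref{eq:PicardsesLie} at the infinitesimal level), combined with the observation from Definition \ref{def:PicA} that the $\Aut_A(E)$-action on $\gamma^{1,1}$ is precisely translation by $\Im(\partial\oplus\dbar)$ together with $2\la s,F_h\ra$ for $s\in\Lie\cG_P$, which is why the $\Futaki$-image $\Im\cF$ must be quotiented out. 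Once this normal form and its well-definedness are in place, the identification with $\ker\dbar/\Im\cF$ is immediate, and exactness of the displayed sequences is a formal diagram chase; I would then note that this is exactly the cohomological home of the complexified Aeppli variations $\dot{\mathfrak a}$ appearing in \eqref{eq:metricfibre}, via Theorem \ref{thm:DUYinfinitesimal}.
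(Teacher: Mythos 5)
Your proposal is correct and follows essentially the same route as the paper, which obtains the proposition directly by combining Lemma \ref{lem:fibreModQP} and Lemma \ref{lem:fibreModQAQ} across the composed fibration; your exact-sequence bookkeeping is just an explicit spelling-out of that combination. The canonicity issue you flag is resolved exactly as you suggest, via the normal form $(\dot\gamma,0)$ with $d\dot\gamma^{0,2}+\dbar\dot\gamma^{1,1}=0$ already established in the proofs of those two lemmas.
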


\end{document}